\documentclass[12pt,a4paper]{amsart}
\usepackage{amssymb}
\usepackage[T2A]{fontenc}
\usepackage[cp1251]{inputenc}
\usepackage[english]{babel}
\usepackage{srcltx}
\usepackage{upgreek}

\advance\textwidth20mm \advance\hoffset-10mm
\advance\textheight20mm \advance\voffset-10mm
\sloppy


\theoremstyle{plain}
\newtheorem{theorem}{Theorem}[section]
\newtheorem{corollary}[theorem]{Corollary}
\newtheorem{lemma}[theorem]{Lemma}

\newtheorem{proposition}[theorem]{Proposition}
\theoremstyle{remark}
\newtheorem{remark}[theorem]{Remark}
\newtheorem{definition}[theorem]{Definition}

\newtheorem*{example}{Example}
\numberwithin{equation}{section}

\newcommand\R{\mathbb{R}}
\newcommand\N{\mathbb{N}}
\newcommand\Z{\mathbb{Z}}

\newcommand\bsigma{\boldsymbol\upsigma}
\newcommand\bmu{\boldsymbol\upmu}
\newcommand\ba{\mathbf{a}}
\newcommand\<{\langle}
\renewcommand\>{\rangle}

\newcommand\esssup{\operatorname{ess\,sup}}
\newcommand\supp{\operatorname{supp}}
\newcommand\Cdot{{\mskip2mu{\cdot}\mskip2mu}}
\newcommand\Ast[1]{\ast_{\scriptscriptstyle\mskip-2mu #1}}
\newcommand\aomega{{^{*\!}}\omega}
\newcommand\aaomega{{^{*\!*}}\omega}
\newcommand\aDelta{{^{*\!}\!}\varDelta}
\newcommand\aaDelta{{^{*\!*\!}\!}\varDelta}
\newcommand\aj{j^{*}}
\newcommand\aaj{j^{*\!*}}

\begin{document}

\title[$L^p$-bounded Dunkl-type generalized translation operator]
{Positive $L^p$-bounded Dunkl-type generalized\\ translation operator and its applications}

\author{D.~V.~Gorbachev}
\address{D.~Gorbachev, Tula State University,
Department of Applied Mathematics and Computer Science,
300012 Tula, Russia}
\email{dvgmail@mail.ru}

\author{V.~I.~Ivanov}
\address{V.~Ivanov, Tula State University,
Department of Applied Mathematics and Computer Science,
300012 Tula, Russia}
\email{ivaleryi@mail.ru}

\author{S.~Yu.~Tikhonov}
\address{S. Tikhonov, ICREA, Centre de Recerca Matem\`{a}tica, and UAB\\
Campus de Bellaterra, Edifici~C
08193 Bellaterra (Barcelona), Spain}
\email{stikhonov@crm.cat}

\date{\today}
\keywords{Dunkl transform, generalized translation operator, convolution, Riesz
potential} \subjclass{42B10, 33C45, 33C52}

\thanks{The work of D.V.~Gorbachev and V.I.~Ivanov is supported by the Russian Science Foundation under grant N\,18-11-00199 and performed in Tula State University. The work of S.Yu.~Tikhonov is partially supported by MTM 2017-87409-P, 2017 SGR 358, and by the
CERCA Programme of the Generalitat de Catalunya.}

\begin{abstract}
We prove that the spherical mean value of the Dunkl-type generalized
translation operator $\tau^y$ is a positive $L^p$-bounded generalized
translation operator $T^t$. As application, we prove the Young inequality for a
convolution defined by $T^t$, the $L^p$-boundedness of $\tau^y$ on a radial
functions for $p>2$, the $L^p$-boundedness of the Riesz potential for the Dunkl
transform and direct and inverse theorems of approximation theory in
$L^p$-spaces with the Dunkl weight.
\end{abstract}

\maketitle
\tableofcontents

\section{Introduction}
During the last three decades, many important elements of harmonic analysis
with Dunkl weight on $\R^d$ and ${\mathbb{S}}^{d-1}$ were proved; see, e.g.,
the papers by C.F.~Dunkl \cite{Dun89,Dun91,Dun92}, M.~R\"{o}sler
\cite{Ros98,Ros99,Ros03,Ros02}, M.F.E.~de~Jeu \cite{Jeu93,Jeu06},
K.~Trim\`{e}che \cite{Tri01,Tri02}, Y.~Xu \cite{Xu97,Xu00}, and the recent
works \cite{anker, Dai13, Dai15, iv1, iv2}.

Yet there are still several gaps in our knowledge of Dunkl harmonic analysis. In particular, Young's convolution inequality, several important polynomial inequalities, and basic approximation estimates are not established in the general case. One of the main reasons is the lack of tools  related to   the translation operator. Needless to say,  the standard translation operator $f\mapsto f(\Cdot+y)$ plays a crucial
role both in classical approximation theory and harmonic analysis, in particular, to
introduce several smoothness characteristics of $f$. In Dunkl  analysis,
its analogue is the generalized translation operator $\tau^y$ defined by
M.~R\"{o}sler \cite{Ros98}. Unfortunately, the $L^p$-boundedness of $\tau^y$ is
not obtained in general.


To overcome this difficulty,
the spherical mean value of the 
translation operator $\tau^y$ was introduced in \cite{MejTri01} and it was
studied in \cite{Ros03}, where, in particular, its positivity was shown.
Our main goal in this paper is to prove that this operator 
is a positive $L^p$-bounded 
operator $T^t$, which may be considered as a generalized translation operator.
It is worth mentioning that this operator can be applied to problems where 
it is essential to deal with radial multipliers. This is because by virtue of
$T^t$ we can define the convolution operator which coincides with the known
convolution introduced by S.~Thangavelu and Y.~Xu in \cite{ThaXu05} using the
operator $\tau^y$.

For this convolution we prove the Young inequality and, subsequently, an
$L^p$-boundedness of the operator $\tau^y$ on a radial functions for $p>2$. For
$1\leq p\leq2$ it was proved in \cite{ThaXu05}.

Let us mention here two applications of the operator $T^t$. The first one is
the Riesz potential defined in \cite{ThaXu07}, where its boundedness properties
were obtained for the reflection group $\Z_2^d$. For the general case see
\cite{HasMusSif09}. Using the $L^p$-boundedness of the operator $T^t$ allows us
to give a different simple proof, which follows ideas of \cite{ThaXu07}.
Another application is basic inequalities of approximation theory in the
weighted $L^p$ spaces. With the help of the operator $T^t$ one can define
moduli of smoothness, which are equivalent to the $K$-functionals, and prove
the direct and inverse approximation theorems. For the reflection group
$\Z_2^d$, basic approximation inequalities were studied in \cite{Dai13, Dai15}.

The paper is organized as follows. In the next section, we give some basic
notation and facts of Dunkl harmonic analysis. In Section 3, we study the
operator $T^t$, define a convolution operator and prove the Young inequality.
As a consequence, we obtain an $L^p$-boundedness of the operator $\tau^y$ on a
radial functions. The weighted Riesz potential is studied in Section 4. Section
5 consists of a study of interrelation between several classes of entire
functions. We also obtain multidimensional weighted analogues of
Plancherel--Polya--Boas inequalities, which are of their own interest. In
Section~6 we introduce moduli of smoothness and the $K$-functional, associated
to the Dunkl weight, and prove equivalence between them as well as the Jackson
inequality. Section~7 consists of weighted analogues of Nikol'ski\v{i},
Bernstein, and Boas inequalities for entire functions of exponential type. In
Section~8, we obtain that moduli of smoothness are equivalent to the
realization of the $K$-functional. We conclude with Section~9, where we prove
the inverse theorems in $L^p$-spaces with the Dunkl weight.

\bigskip
\section{Notation}
In this section, we recall the basic notation and results of  Dunkl harmonic analysis, see, e.g., 
\cite{Ros02}.

Throughout the paper, $\<x,y\>$ denotes the standard Euclidean scalar product in
 $d$-dimensional Euclidean space $\R^{d}$, $d\in \N$,
equipped with a norm $|x|=\sqrt{\<x,x\>}$.
  For $r>0$ we write
$B_r=\{x\in\R^d\colon |x|\leq r\}$. Define the following function spaces:
\begin{itemize}\renewcommand\labelitemi{{\boldmath$\cdot$}}
  \item $C(\R^d)$ the space of continuous
functions,
  \item $C_b(\R^d)$ the space of bounded continuous functions with the norm
$\|f\|_{\infty}=\sup_{\R^d}|f|$,
  \item $C_0(\R^d)$ the space of continuous functions
which vanish at infinity,
  \item $C^{\infty}(\R^d)$ the space of infinitely
differentiable functions,
 \item $C^{\infty}_{\Pi}(\mathbb{R}^d)$ the space of infinitely
differentiable functions whose derivatives have polynomial growth at infinity,
  \item $\mathcal{S}(\R^d)$ the Schwartz space,
  \item $\mathcal{S}'(\R^d)$
  the space of tempered distributions,
  \item $X(\R_+)$ the space of even functions from $X(\R)$, where $X$ is one of the spaces above,
  \item
 $X_\mathrm{rad}(\R^d)$ the subspace of $X(\R^d)$ consisting of radial functions
 $f(x)=f_{0}(|x|)$.

\end{itemize}

Let a finite subset $R\subset \mathbb{R}^{d}\setminus\{0\}$ be a root system,
$R_{+}$ a positive subsystem of $R$, $G(R)\subset O(d)$ the finite reflection
group, generated by reflections $\{\sigma_{a}\colon a\in R\}$, where
$\sigma_{a}$ is a reflection with respect to hyperplane $\<a,x\>=0$, $k\colon
R\to \mathbb{R}_{+}$ a $G$-invariant multiplicity function. Recall that a
finite subset $R\subset \mathbb{R}^{d}\setminus\{0\}$ is called a root system,
if
\[
R\cap\R a=\{a, -a\}\quad \text{and}\quad \sigma_{a}R=R\ \text{for all}\ a\in R.
\]

Let
\[
v_{k}(x)=\prod_{a\in R_{+}}|\<a,x\>|^{2k(a)}
\]
be the Dunkl weight,
\[
c_{k}^{-1}=\int_{\mathbb{R}^{d}}e^{-|x|^{2}/2}v_{k}(x)\,dx,\quad d\mu_{k}(x)=c_{k}v_{k}(x)\,dx,
\]
and $L^{p}(\mathbb{R}^{d},d\mu_{k})$, $0<p<\infty$, be the space of
complex-valued Lebesgue measurable functions $f$ for which
\[
\|f\|_{p,d\mu_{k}}=\Bigl(\int_{\mathbb{R}^{d}}|f|^{p}\,d\mu_{k}\Bigr)^{1/p}<\infty.
\]
We also assume that $L^{\infty}\equiv C_b$ and $\|f\|_{\infty,d\mu_{k}}=\|f\|_{\infty}$.

\begin{example} If the root system $R$ is $\{\pm e_1,\dots,\pm e_d\}$, where
$\{e_1,\dots,e_d\}$ is an orthonormal basis of $\mathbb{R}^{d}$, then
$v_{k}(x)=\prod_{j=1}^d|x_j|^{2k_j}$, $k_{j}\ge 0$, $G=\mathbb{Z}_2^d$.
\end{example}

Let
\[
D_{j}f(x)=\frac{\partial f(x)}{\partial x_{j}}+
\sum_{a\in R_{+}}k(a)\<a,e_{j}\>\,\frac{f(x)-f(\sigma_{a}x)}{\<a,x\>},\quad
j=1,\dots,d
\]
be differential-differences Dunkl operators and $\Delta_k=\sum_{j=1}^dD_j^2$ be the
Dunkl Laplacian. The Dunkl kernel $e_{k}(x, y)=E_{k}(x, iy)$ is a unique
solution of the system
\[
D_{j}f(x)=iy_{j}f(x),\quad j=1,\dots,d,\qquad f(0)=1,
\]
and it plays the role of a generalized exponential function. Its properties are
similar to those of the classical exponential function $e^{i\<x, y\>}$. Several basic properties
follow from an integral representation 
\cite{Ros99}:
\[
e_k(x,y)=\int_{\mathbb{R}^d}e^{i\<\xi,y\>}\,d\mu_x^k(\xi),
\]
where $\mu_x^k$ is a probability Borel measure,
whose support is contained in
\[
\mathrm{co}{}(\{gx\colon g\in G(R)\}),
\]
the convex hull of the $G$-orbit of $x$ in $\R^d$.
In particular, $|e_k(x,y)|\leq 1$.

For $f\in L^{1}(\mathbb{R}^{d},d\mu_{k})$, the Dunkl transform is defined by the equality
\[
\mathcal{F}_{k}(f)(y)=\int_{\mathbb{R}^{d}}f(x)\overline{e_{k}(x,
y)}\,d\mu_{k}(x).
\]
For $k\equiv0$, \ $\mathcal{F}_{0}$ is the classical Fourier transform $\mathcal{F}$. We
also note that $\mathcal{F}_k(e^{-|\,\cdot\,|^2/2})(y)=e^{-|y|^2/2}$ and
$\mathcal{F}_{k}^{-1}(f)(x)=\mathcal{F}_{k}(f)(-x)$.
Let
\begin{equation}\label{zvzv}
\mathcal{A}_k=\Big\{f\in L^{1}(\mathbb{R}^{d},d\mu_{k})\cap C_0(\R^d)\colon
\mathcal{F}_{k}(f)\in L^{1}(\mathbb{R}^{d},d\mu_{k})\Big\}.
\end{equation}
Let us now list several basic properties of the Dunkl transform.

\begin{proposition}
\textup{(1)} For $f\in L^{1}(\mathbb{R}^{d},d\mu_{k})$, $\mathcal{F}_{k}(f)\in C_0(\R^d)$.

\smallbreak
\textup{(2)} If $f\in\mathcal{A}_k$, we have the pointwise inversion formula
\[
f(x)=\int_{\mathbb{R}^{d}}\mathcal{F}_{k}(f)(y)e_{k}(x, y)\,d\mu_{k}(y).
\]

\textup{(3)} The Dunkl transform leaves the Schwartz space $\mathcal{S}(\R^d)$ invariant.

\smallbreak
\textup{(4)} The Dunkl transform extends to a unitary operator in $L^{2}(\mathbb{R}^{d},d\mu_{k})$.
\end{proposition}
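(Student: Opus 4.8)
The plan is to establish the four items essentially in the order they are stated, leveraging the integral representation $e_k(x,y)=\int_{\R^d}e^{i\<\xi,y\>}\,d\mu_x^k(\xi)$ together with standard Fourier-analytic machinery, transferred via the so-called intertwining operator $V_k$ and its dual $V_k^t$. For item (1), I would argue as in the classical Riemann--Lebesgue lemma: since $|e_k(x,y)|\le 1$, the map $f\mapsto \cF_k(f)$ is bounded from $L^1(d\mu_k)$ to $C_b(\R^d)$; then I would first check the claim on a dense subclass (say $C_c^\infty(\R^d)$ or $\cS(\R^d)$), where the decay at infinity can be read off either from the integral representation of $e_k$ and the classical Riemann--Lebesgue lemma applied to the pushed-forward measures, or from the well-known fact that $\cF_k$ maps $\cS$ into $\cS$ (item (3)); a density/approximation argument, using that $C_0(\R^d)$ is a closed subspace of $C_b(\R^d)$, then upgrades this to all of $L^1(d\mu_k)$.

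For item (2), the pointwise inversion formula on $\cA_k$, the natural route is to first establish a Gaussian-regularized (Abel/Gauss summability) inversion: for $f\in L^1(d\mu_k)$ and $\varepsilon>0$, write
\[
f_\varepsilon(x)=\int_{\R^d}\cF_k(f)(y)\,e_k(x,y)\,e^{-\varepsilon|y|^2/2}\,d\mu_k(y),
\]
interchange the order of integration (justified by $|e_k|\le1$, $f\in L^1$, and the Gaussian factor), and recognize the inner integral as a Dunkl-translated heat kernel, using $\cF_k(e^{-|\cdot|^2/2})=e^{-|\cdot|^2/2}$ and the scaling properties of the Gaussian. One shows $f_\varepsilon\to f$ appropriately (in $L^1$, or uniformly for continuous $f$) as $\varepsilon\to0^+$; when moreover $\cF_k(f)\in L^1(d\mu_k)$, dominated convergence lets us pass to the limit directly under the integral sign on the right-hand side, while on the left $f_\varepsilon\to f$ pointwise because $f\in C_0$. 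This yields the stated formula for every $x$.

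Items (3) and (4) are then comparatively short. For (3), invariance of $\cS(\R^d)$, the key identities are $D_j$-intertwining relations — $\cF_k(D_jf)(y)=iy_j\,\cF_k(f)(y)$ and $\cF_k(x_jf)(y)=iD_j\cF_k(f)(y)$ — which, combined with the fact that the Dunkl operators $D_j$ and multiplication by polynomials preserve the Schwartz class, show that any Schwartz seminorm of $\cF_k(f)$ is controlled by finitely many Schwartz seminorms of $f$; one must be a little careful that the difference quotients appearing in $D_j$ do not destroy rapid decay, but $G$-invariance of $v_k$ and smoothness handle this. For (4), the Plancherel identity, I would use a polarization/density argument: verify $\|\cF_k(f)\|_{2,d\mu_k}=\|f\|_{2,d\mu_k}$ first on $\cS(\R^d)$ via (2) and (3) together with Fubini applied to $\int \cF_k(f)\overline{\cF_k(g)}\,d\mu_k$, then extend by density of $\cS$ in $L^2(d\mu_k)$; surjectivity follows since the inversion formula shows $\cF_k$ has dense range and is an isometry, hence unitary.

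The main obstacle is the justification of the Gaussian-summability inversion in item (2): one must identify $\int_{\R^d} e_k(x,y)e_k(-z,y)e^{-\varepsilon|y|^2/2}\,d\mu_k(y)$ with a Dunkl heat-kernel-type object, control it as an approximate identity with respect to the (possibly highly anisotropic) measure $d\mu_k$, and pass to the limit — this is where the positivity of the measures $\mu_x^k$ and the precise normalization $c_k$ are genuinely used. Everything else is bookkeeping with $|e_k|\le1$ and the intertwining relations.
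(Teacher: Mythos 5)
The paper does not prove this Proposition at all: it is quoted as standard background on the Dunkl transform, with a pointer to the literature (R\"osler's lecture notes \cite{Ros02}; the inversion and Plancherel theorems go back to de~Jeu \cite{Jeu93}). Your outline is essentially the standard proof from those sources --- Riemann--Lebesgue via density for (1), Gauss/Abel summability with the Dunkl heat kernel for (2), the intertwining relations $\mathcal{F}_k(D_jf)(y)=iy_j\mathcal{F}_k(f)(y)$, $\mathcal{F}_k(x_jf)=iD_j\mathcal{F}_k(f)$ for (3), and polarization plus density for (4) --- so there is nothing to compare it against inside the paper, and the strategy is sound. Two caveats on the details you left open. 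First, in (1) your alternative of applying the classical Riemann--Lebesgue lemma to the pushed-forward measures does not work as stated: writing $\mathcal{F}_k(f)(y)=\int e^{-i\<\xi,y\>}\,d\nu_f(\xi)$ with $\nu_f(A)=\int f(x)\mu_x^k(A)\,d\mu_k(x)$ only exhibits $\mathcal{F}_k(f)$ as the Fourier--Stieltjes transform of a finite measure, and such transforms need not vanish at infinity unless one proves absolute continuity of $\nu_f$; your other route (decay on $\mathcal{S}(\R^d)$ via (3), then density and the uniform bound $|e_k|\le1$) is the correct one and suffices. Second, the step you yourself flag as the main obstacle in (2) --- identifying $\int e_k(x,y)e_k(-z,y)e^{-\varepsilon|y|^2/2}\,d\mu_k(y)$ with the Dunkl heat kernel and proving it acts as an approximate identity for $d\mu_k$ --- is precisely the nontrivial content of the references (de~Jeu's estimates, or R\"osler's positivity \cite{Ros99}); as written it is a plan rather than a proof, so your argument is complete only modulo importing that heat-kernel lemma, which is legitimate here since the paper itself treats the whole Proposition as known.
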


Let $\lambda\ge -1/2$ and $J_{\lambda}(t)$ be the classical Bessel function of degree $\lambda$ and
\[
j_{\lambda}(t)=2^{\lambda}\Gamma(\lambda+1)t^{-\lambda}J_{\lambda}(t)
\]
be the normalized Bessel function. Set
\[
b_{\lambda}^{-1}=\int_{0}^{\infty}e^{-t^{2}/2}t^{2\lambda+1}\,dt=2^{\lambda}\Gamma(\lambda+1),\qquad
d\nu_{\lambda}(t)=b_{\lambda}t^{2\lambda+1}\,dt,\quad t\in \R_{+}.
\]
The norm in $L^{p}(\R_{+},d\nu_{\lambda})$, $1\le p<\infty$,
is given by
\[
\|f\|_{p,d\nu_{\lambda}}=\Bigl(\int_{\R_{+}}|f(t)|^{p}\,d\nu_{\lambda}(t)\Bigr)^{1/p}.
\]
Define
$\|f\|_{\infty}=\esssup_{t\in \R_{+}}|f(t)|$.

The Hankel transform is defined as follows
\[
\mathcal{H}_{\lambda}(f)(r)=\int_{\R_{+}}f(t)j_{\lambda}(rt)\,d\nu_{\lambda}(t),\quad r\in \mathbb{R}_{+}.
\]
It is a unitary operator in $L^{2}(\R_{+},d\nu_{\lambda})$ and
$\mathcal{H}_{\lambda}^{-1}=\mathcal{H}_{\lambda}$ \cite[Chap.~7]{BatErd53}.

Note that if $\lambda=d/2-1$, the Hankel transform is a restriction of the
Fourier transform on radial functions and if
$\lambda=\lambda_k=d/2-1+\sum_{a\in R_+}k(a)$ of the Dunkl transform.

Let $\mathbb{S}^{d-1}=\{x'\in\R^d\colon |x'|=1\}$ be the Euclidean sphere and
$d\sigma_k(x')=a_kv_k(x')\,dx'$ be the probability measure on $\mathbb{S}^{d-1}$.
We have
\begin{equation}
\int_{\mathbb{R}^{d}}f(x)\,d\mu_{k}(x)=
\int_{0}^{\infty}\int_{\mathbb{S}^{d-1}}f(tx')\,d\sigma_k(x')\,d\nu_{\lambda_k}(t).\label{eq1}
\end{equation}

We need the following partial case of the Funk--Hecke formula \cite{Xu00}
\begin{equation}
\int_{\mathbb{S}^{d-1}}e_{k}(x, ty')\,d\sigma_k(y')=j_{\lambda_k}(t|x|).\label{eq2}
\end{equation}

Throughout the paper, we will assume that $A\lesssim B$ means that $A\leq C B$
with a constant $C$ depending only on nonessential parameters.

\bigskip
\section{Generalized translation operators and convolutions}

Let $y\in \mathbb{R}^d$ be given. M. R\"{o}sler \cite{Ros98} defined a
generalized translation operator $\tau^y$ in $L^{2}(\mathbb{R}^{d},d\mu_{k})$
by the equation
\[
\mathcal{F}_k(\tau^yf)(z)=e_k(y, z)\mathcal{F}_k(f)(z).
\]
Since $|e_k(y, z)|\leq 1$ then $\|\tau^y\|_{2\to 2}\leq1$. If
$f\in\mathcal{A}_k$ (recall that $\mathcal{A}_k$ is given by \eqref{zvzv}), then, for any $x, y\in \R^d$,
\begin{equation}
\tau^yf(x)=\int_{\mathbb{R}^d}e_k(y, z)e_k(x, z)\mathcal{F}_k(f)(z)\,d\mu_k(z).\label{eq3}
\end{equation}
Note that $\mathcal{S}(\R^d)\subset\mathcal{A}_k\subset
L^{2}(\mathbb{R}^{d},d\mu_{k})$. K. Trim\`{e}che \cite{Tri02} extended the operator $\tau^y$
on $C^{\infty}(\R^d)$.

The explicit expression of $\tau^yf$ is known only in the case of the
reflection group $\Z_2^d$. In particular, in this case $\tau^yf$ is not a positive operator \cite{Ros95}.
Note that in the case of symmetric group $S_d$ the operator
$\tau^yf$ is also not positive \cite{ThaXu05}.

It remains an open question whether $\tau^yf$ is an $L^p$ bounded operator on $\mathcal{S}(\R^d)$ for
$p\neq 2$.
It is known (\cite{Ros95,ThaXu05}) only
  for $G=\Z_2^d$.
 Note that a positive answer would follow from the $L^1$-boundedness.

Let
\[
\lambda_k=d/2-1+\sum_{a\in R_+}k(a).
\]
We have $\lambda_k\geq-1/2$ and, moreover, $\lambda_k=-1/2$ only if $d=1$ and
$k\equiv0$. In what follows we assume that $\lambda_k>-1/2$.

Define another generalized translation operator $T^t\colon
L^{2}(\mathbb{R}^{d},d\mu_{k})\to L^{2}(\mathbb{R}^{d},d\mu_{k})$, $t\in \mathbb{R}$,
by the relation
\[
\mathcal{F}_k(T^tf)(y)=j_{\lambda_k}(t|y|)\mathcal{F}_k(f)(y).
\]
Since $|j_{\lambda_k}(t)|\leq 1$, it is a bounded operator such that
$\|T^t\|_{2\to 2}\leq1$ and
\[
T^tf(x)=\int_{\mathbb{R}^d}j_{\lambda_k}(t|y|)e_k(x,y)\mathcal{F}_k(f)(y)\,d\mu_k(y).
\]
This gives $T^t=T^{-t}$. If $f\in \mathcal{A}_k$, then from \eqref{eq2} and \eqref{eq3} we have (pointwise)
\begin{equation}
T^tf(x)=\int_{\mathbb{R}^d}j_{\lambda_k}(t|y|)e_k(x,
y)\mathcal{F}_k(f)(y)\,d\mu_k(y)=\int_{\mathbb{S}^{d-1}}\tau^{ty'}f(x)\,d\sigma_k(y').\label{eq4}
\end{equation}
Note that the operator $T^t$ is self-adjoint. Indeed, if $f, g\in \mathcal{A}_k$, then
\begin{align*}
\int_{\mathbb{R}^d}T^tf(x)\,g(x)\,d\mu_k(x)&=\int_{\mathbb{R}^d}\int_{\mathbb{R}^d}j_{\lambda_k}(t|y|)e_k(x,
y)\mathcal{F}_k(f)(y)\,d\mu_k(y)\,g(x)\,d\mu_k(x)\\
&=\int_{\mathbb{R}^d}j_{\lambda_k}(t|y|)\mathcal{F}_k(f)(y)\mathcal{F}_k(g)(-y)\,d\mu_k(y)\\
&=\int_{\mathbb{R}^d}j_{\lambda_k}(t|y|)\mathcal{F}_k(g)(y)\mathcal{F}_k(f)(-y)\,d\mu_k(y)\\
&=\int_{\mathbb{R}^d}f(x)\,T^tg(x)\,d\mu_k(x).
\end{align*}

M. R\"{o}sler \cite{Ros03} proved that the spherical mean (with respect to the Dunkl weight) of the operator $\tau^y$, i.e.,
$\int_{\mathbb{S}^{d-1}}\tau^{ty'}f(x)\,d\sigma_k(y')$,
  is a positive operator on $C^{\infty}(\R^d)$ and obtained its integral representation. This implies that
$T^t$ is a positive operator on $C^{\infty}(\R^d)$ and, moreover, for any $t\in\R$, $x\in \R^d$,
\begin{equation}
T^tf(x)=\int_{\mathbb{R}^d}f(z)\,d\sigma_{x,t}^k(z), \label{eq5}
\end{equation}
{ where $\sigma_{x,t}^k$ is a probability Borel measure,
\begin{equation}
\supp \sigma_{x,t}^k\subset\bigcup\limits_{g\in G}\{z\in\R^d\colon |z-gx|\leq
t\} \label{eq5+}
\end{equation}
and the mapping $(x,
t)\to\sigma_{x,t}^k$ is continuous with respect to the weak topology on
probability measures.}

The representation \eqref{eq5} gives a natural extension
of the operator $T^t$ on $C_b(\R^d)$, namely, for $f\in C_b(\R^d)$ we define $T^tf(x)\in C(\R\times\R^d)$ by \eqref{eq5} and, moreover, the estimate
 $\|T^tf\|_{\infty}\leq \|f\|_{\infty}$
 holds.

Note that for
$k\equiv0$, $T^t$ is the usual spherical mean
\begin{equation}
T^tf(x)=S^tf(x)=\int_{\mathbb{S}^{d-1}}f(x+ty')\,d\sigma_0(y').\label{s-operator}
\end{equation}

\begin{theorem}
If $1\leq p\leq\infty$, then, for any $t\in\R$ and $f\in\mathcal{S}(\R^d)$,
\begin{equation}
\|T^tf\|_{p,d\mu_k}\leq \|f\|_{p,d\mu_k}.\label{eq6}
\end{equation}
\end{theorem}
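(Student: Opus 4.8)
The plan is to use the integral representation \eqref{eq5} together with the fact that $\sigma_{x,t}^k$ is a probability measure. For $p=\infty$ the bound has already been noted, and for $p=1$ it should follow from a Fubini argument once we know that the ``adjoint'' action of $T^t$ on the measure $d\mu_k$ is well behaved: since $T^t$ is self-adjoint on $\mathcal{A}_k$ (as shown just above), we expect $\int_{\R^d} |T^tf|\,d\mu_k \le \int_{\R^d}\int_{\R^d}|f(z)|\,d\sigma_{x,t}^k(z)\,d\mu_k(x)$ by the triangle inequality for integrals, and then we need to interchange the order of integration and evaluate $\int_{\R^d} d\sigma_{x,t}^k(z)\,d\mu_k(x)$ as a measure in $z$. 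Self-adjointness applied to $f$ and $g\equiv 1$ (approximated suitably, or used in the form $\int T^t f\,d\mu_k = \int f\,d\mu_k$ for nonnegative Schwartz-like $f$) should show that this ``pushed-forward'' measure is again $d\mu_k$, giving $\|T^tf\|_{1,d\mu_k}\le\|f\|_{1,d\mu_k}$.

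Having secured $p=1$ and $p=\infty$, the plan for general $1<p<\infty$ is Riesz--Thorin interpolation, or more directly Jensen's inequality: because $\sigma_{x,t}^k$ is a \emph{probability} measure, for $1\le p<\infty$ we have pointwise
\[
|T^tf(x)|^p=\Bigl|\int_{\R^d}f(z)\,d\sigma_{x,t}^k(z)\Bigr|^p\le \int_{\R^d}|f(z)|^p\,d\sigma_{x,t}^k(z)=T^t(|f|^p)(x),
\]
where the last equality uses \eqref{eq5} again (valid since $|f|^p\in C_b(\R^d)$ when $f\in\mathcal{S}(\R^d)$). Integrating this against $d\mu_k$ and invoking the $p=1$ result applied to the nonnegative function $|f|^p$ yields
\[
\int_{\R^d}|T^tf|^p\,d\mu_k\le \int_{\R^d}T^t(|f|^p)\,d\mu_k\le\int_{\R^d}|f|^p\,d\mu_k,
\]
which is \eqref{eq6}. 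This route avoids interpolation altogether and only uses the structure already in place.

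The main obstacle I anticipate is rigorously justifying the $p=1$ step, i.e. that the total mass of $z\mapsto\int_{\R^d}d\sigma_{x,t}^k(z)\,d\mu_k(x)$ equals $d\mu_k$ (equivalently, that $\int_{\R^d}T^tf\,d\mu_k=\int_{\R^d}f\,d\mu_k$ for suitable $f$, including nonnegative ones not necessarily in $\mathcal{A}_k$). The self-adjointness identity was proved only for $f,g\in\mathcal{A}_k$, and the constant function $1$ is not in $\mathcal{A}_k$, so one must approximate: take $g=g_\varepsilon\in\mathcal{S}(\R^d)$ with $g_\varepsilon\to 1$ locally uniformly and $0\le g_\varepsilon\le 1$, use the support localization \eqref{eq5+} (which confines $\sigma_{x,t}^k$ to a bounded-radius neighborhood of the $G$-orbit of $x$) together with dominated convergence, and pass to the limit. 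One also needs Tonelli's theorem for the interchange of integrals, which is legitimate since everything in sight is nonnegative. Once this bookkeeping is done, the remaining steps are the short Jensen computation above.
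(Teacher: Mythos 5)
Your argument is correct in substance, but it is a genuinely different route from the paper's. The paper deduces the $L^2$ bound from \eqref{eq4}, the $L^\infty$ bound from \eqref{eq5}, gets $L^1$ by duality from the self-adjointness of $T^t$, interpolates via Riesz--Thorin for $1<p<2$, and returns to duality for $2<p<\infty$. You instead exploit positivity directly: since $\sigma_{x,t}^k$ is a probability measure, Jensen gives the pointwise bound $|T^tf|^p\le T^t(|f|^p)$, so the whole theorem reduces to mass preservation, $\int_{\R^d} T^t h\,d\mu_k\le\int_{\R^d} h\,d\mu_k$ for nonnegative $h$, with no interpolation and no duality. This is more elementary, treats all $1\le p<\infty$ uniformly, and the intermediate inequality $|T^tf|^p\le T^t(|f|^p)$ is of independent use; the paper's proof, on the other hand, needs no measure-theoretic bookkeeping beyond the self-adjointness computation and is shorter given that Riesz--Thorin is standard.

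The one step that needs care is exactly the one you flagged, and your sketch of its repair is viable, with two precisions. First, the self-adjointness identity is available only for pairs in $\mathcal{A}_k$, so it may not be applied with $h=|f|^p$ in place of $f$; and you cannot quote the later lemma asserting $\int T^tf\,d\mu_k=\int f\,d\mu_k$, since its proof in the paper uses the $L^1$-boundedness you are establishing. Second, to run your approximation $g_\varepsilon\to 1$ you need an integrable dominant on the left-hand side, i.e. $T^t\varphi\in L^1(\mathbb{R}^d,d\mu_k)$ a priori for $\varphi\in\mathcal{S}(\R^d)$; this does follow from \eqref{eq5} and \eqref{eq5+}, because every $z\in\supp\sigma_{x,t}^k$ satisfies $|z|\ge|x|-|t|$, whence $|T^t\varphi(x)|\le\sup_{|z|\ge|x|-|t|}|\varphi(z)|$ decays faster than any power. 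With that, self-adjointness against $g_\varepsilon\in\mathcal{S}(\R^d)$, $0\le g_\varepsilon\le1$, $g_\varepsilon\to1$, plus dominated convergence on both sides gives $\int T^t\varphi\,d\mu_k=\int\varphi\,d\mu_k$ for all $\varphi\in\mathcal{S}(\R^d)$; testing against such $\varphi$ identifies the Radon measure $\nu(\cdot)=\int_{\R^d}\sigma_{x,t}^k(\cdot)\,d\mu_k(x)$ with $\mu_k$, and Tonelli then yields $\int T^t h\,d\mu_k=\int h\,d\mu_k$ for every nonnegative Borel $h$, in particular for $|f|$ (the case $p=1$) and for $|f|^p$ (the Jensen step). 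Once stated this way, your proof closes without circularity.
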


\begin{remark}
(i) The inequality $\|T^tf\|_{p,d\mu_k}\leq c\|f\|_{p,d\mu_k}$ was proved in \cite{ThaXu05} for $G=\Z_2^d$.

\smallbreak
(ii) Since $\mathcal{S}(\R^d)$ is dense in $L^{p}(\mathbb{R}^{d},d\mu_{k})$, $1\leq p<\infty$, then for any $t\in \R_+$ the operator $T^t$
can be defined on $L^{p}(\mathbb{R}^{d},d\mu_{k})$ and estimate \eqref{eq6} holds.

\smallbreak
(iii) If $d=1$, $v_k(x)=|x|^{2\lambda+1}$, $\lambda>-1/2$, inequality
\eqref{eq6} was proved in \cite{Che09}. In this case the integral
representation of $T^t$ is of the form:
\begin{equation*}
T^tf(x)=\frac{c_{\lambda}}{2}\int_0^{\pi}\{f(A)(1+B)+f(-A)(1-B)\}\sin^{2\lambda}\varphi\,d\varphi,
\end{equation*}
where, for $(x,t)\neq (0,0)$,
\begin{equation}
c_{\lambda}=\frac{\Gamma(\lambda+1)}{\sqrt{\pi}\Gamma(\lambda+1/2)},\quad
A=\sqrt{x^2+t^2-2xt\cos\varphi},\quad B=\frac{x-t\cos\varphi}{A}.
\label{c-lambda}
\end{equation}
If $\lambda=-1/2$, i.e., $k\equiv0$, then $T^tf(x)=\frac12 \big(f(x+t)+f(x-t)\big)$.
\end{remark}

\begin{proof}
Let $t\in\R_+$ be given and the operator $T^t$ be defined on $\mathcal{S}(\R^d)$ by \eqref{eq5}. 
Using \eqref{eq4}, we have
\[
\sup\{\|T^tf\|_{2}\colon f\in \mathcal{S}(\R^d),\, \|f\|_{2}\leq 1\}\leq1
\]
and $T^t$ can be extended to the space $L^{2}(\mathbb{R}^{d},d\mu_{k})$ with
preservation of norm, moreover, this extension coincides with \eqref{eq4}. Moreover, \eqref{eq5} yields
\begin{equation}
\sup\{\|T^tf\|_{\infty}\colon f\in \mathcal{S}(\R^d),\, \|f\|_{\infty}\leq
1\}\leq1. \label{eq7}
\end{equation}
Since the operator $T^t$ is self-adjoint, then by \eqref{eq7}
\begin{align*}
&\sup\{\|T^tf\|_{1,d\mu_k}\colon f\in \mathcal{S}(\R^d),\, \|f\|_{1,d\mu_k}\leq
1\}\\
&\qquad =\sup\Bigl\{\int_{\mathbb{R}^d}T^tf\,g\,d\mu_k\colon f, g\in \mathcal{S}(\R^d),\
\|f\|_{1,d\mu_k}\leq 1,\ \|g\|_{\infty}\leq
1\Bigr\}\\
&\qquad =\sup\Bigl\{\int_{\mathbb{R}^d}f\,T^tg\,d\mu_k\colon f, g\in \mathcal{S}(\R^d),\
\|f\|_{1,d\mu_k}\leq 1,\ \|g\|_{\infty}\leq
1\Bigr\}\\
&\qquad =\sup\{\|T^tg\|_{\infty}\colon g\in \mathcal{S}(\R^d),\ \|g\|_{\infty}\leq 1\}\leq1.
\end{align*}
Hence, $T^t$ can be extended to $L^{1}(\mathbb{R}^{d},d\mu_{k})$ with
preservation of the norm such that this extension coincides with \eqref{eq4} on
$L^{1}(\mathbb{R}^{d},d\mu_{k})\cap L^{2}(\mathbb{R}^{d},d\mu_{k})$.

By the Riesz--Thorin interpolation theorem we obtain
\[
\sup\{\|T^tf\|_{p,d\mu_k}\colon f\in \mathcal{S}(\R^d),\ \|f\|_{p,d\mu_k}\leq
1\}\leq1,\quad 1\leq p\leq 2.
\]
Let $2<p<\infty$, $1/p+1/p'=1.$ As for $p=1$ we get
\begin{align*}
&\sup\{\|T^tf\|_{p,d\mu_k}\colon f\in \mathcal{S}(\R^d),\, \|f\|_{p,d\mu_k}\leq
1\}\\
&\qquad =\sup\{\|T^tg\|_{p',d\mu_k}\colon g\in \mathcal{S}(\R^d),\, \|g\|_{p',d\mu_k}\leq 1\}\leq1.
\qedhere
\end{align*}
\end{proof}

For any $f_0\in L^{p}(\R_{+},d\nu_{\lambda})$, $1\leq p\leq \infty$,
$\lambda>-1/2$, let us define the Gegenbauer-type translation operator (see,
e.g., \cite{Pla07,Pla09})
\begin{equation*}
R^tf_0(r)=c_\lambda\int_{0}^{\pi}f_0(\sqrt{r^2+t^2-2rt\cos\varphi})\sin^{2\lambda}\varphi\,d\varphi,
\end{equation*}
where $c_\lambda$ is defined by \eqref{c-lambda}. We have that $\|R^t\|_{p\to
p}\leq 1$ and
$\mathcal{H}_{\lambda}(R^tf_0)(r)=j_\lambda(tr)\mathcal{H}_{\lambda}(f_0)(r)$,
where $f_0\in \mathcal{S}(\R_+)$. Taking into account \eqref{eq2} and
\eqref{eq4}, we note that for $\lambda=\lambda_k$ the operator $R^t$ is a
restriction of $T^t$ on radial functions, that is, for $f_0\in
L^{p}(\R_{+},d\nu_{\lambda_k})$,
\begin{equation*}
T^tf_0(|x|)=R^tf_0(r),\quad r=|x|.
\end{equation*}

We also mention the following useful properties of the generalized translation
operator $T^t$.

\begin{lemma} Let $t\in\R$.

\smallbreak
\textup{(1)} If $f\in L^{1}(\mathbb{R}^{d},d\mu_{k})$, then
$\int_{\mathbb{R}^d}T^tf\,d\mu_k=\int_{\mathbb{R}^d}f\,d\mu_k$.

\smallbreak
\textup{(2)} Let $r>0$, $f\in L^{p}(\mathbb{R}^{d},d\mu_{k})$, $1\leq
p<\infty$. If $\supp f\subset B_r$, then $\supp T^tf\subset B_{r+|t|}$. If
$\supp f\subset \R^d\setminus B_r$, $r>|t|$, then $\supp T^tf\subset
\R^d\setminus B_{r-|t|}$.
\end{lemma}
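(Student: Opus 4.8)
The plan is to derive both parts from the integral representation \eqref{eq5}--\eqref{eq5+}, which identifies $T^tf(x)=\int_{\R^d}f(z)\,d\sigma_{x,t}^k(z)$ for a probability measure $\sigma_{x,t}^k$ supported in $\bigcup_{g\in G}\{|z-gx|\le t\}$. Since $\mathcal{S}(\R^d)$ is dense in $L^p$ for $1\le p<\infty$ and the bound \eqref{eq6} lets us pass to limits, it suffices to establish each claim for $f\in\mathcal{S}(\R^d)$ (and for part (1), on $L^1$).

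For part (1), I would use self-adjointness of $T^t$ together with the fact that $T^t$ fixes constants. Concretely, $\int_{\R^d}T^tf\,d\mu_k=\int_{\R^d}(T^tf)\cdot 1\,d\mu_k$; although $1\notin\mathcal{S}(\R^d)$, one approximates by a smooth cutoff $\eta_R$ equal to $1$ on $B_R$, writes $\int (T^tf)\eta_R\,d\mu_k=\int f\,(T^t\eta_R)\,d\mu_k$, and observes that $T^t\eta_R\to 1$ pointwise with $\|T^t\eta_R\|_\infty\le 1$ (by \eqref{eq7}), so dominated convergence gives $\int T^tf\,d\mu_k=\int f\,d\mu_k$ for $f\in\mathcal{S}(\R^d)$; density then extends this to $L^1$. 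Alternatively and more cleanly, one can argue directly on the Fourier side: $\int_{\R^d}T^tf\,d\mu_k=\mathcal{F}_k(T^tf)(0)=j_{\lambda_k}(t\cdot 0)\,\mathcal{F}_k(f)(0)=\mathcal{F}_k(f)(0)=\int_{\R^d}f\,d\mu_k$, using $j_{\lambda_k}(0)=1$ and $\overline{e_k(x,0)}=1$; this works immediately for $f\in L^1\cap L^2$ and then for all $f\in L^1$ by density since both sides are continuous in the $L^1$-norm.

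For part (2), the support statements follow from \eqref{eq5+} by a direct measure-support argument for Schwartz (hence, by density, general $L^p$) functions. If $\supp f\subset B_r$, then for $T^tf(x)\neq 0$ the measure $\sigma_{x,t}^k$ must charge $B_r$, which by \eqref{eq5+} forces $B_r\cap\bigcup_{g\in G}\{|z-gx|\le t\}\ne\emptyset$, i.e.\ $|gx|\le r+t$ for some $g\in G$; since $G\subset O(d)$ acts by isometries, $|gx|=|x|$, so $|x|\le r+|t|$, giving $\supp T^tf\subset B_{r+|t|}$. For the exterior statement, if $\supp f\subset\R^d\setminus B_r$ with $r>|t|$ and $T^tf(x)\ne 0$, then $\sigma_{x,t}^k$ charges $\R^d\setminus B_r$, so some $z$ with $|z-gx|\le t$ has $|z|>r$, whence $|x|=|gx|\ge |z|-|z-gx|>r-|t|$, giving $\supp T^tf\subset\R^d\setminus B_{r-|t|}$. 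The one technical point requiring care is that \eqref{eq5+} is an a.e./pointwise statement on $C_b$ whereas for general $L^p$ the conclusion is about essential support: one handles this by approximating $f\in L^p$ with the prescribed support by Schwartz functions with (slightly enlarged) support, applying the $C_b$/pointwise result, and using \eqref{eq6} to pass to the limit, letting the enlargement shrink to zero. The main obstacle is precisely this density/support bookkeeping — ensuring the approximating sequence can be taken with support in $B_{r+\varepsilon}$ (resp.\ outside $B_{r-\varepsilon}$) and that the limiting support statement is not lost; this is routine but is the only step that is not immediate from \eqref{eq5}--\eqref{eq5+}.
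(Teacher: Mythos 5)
Your proof is correct. Part (2) is essentially the paper's own argument: the same triangle-inequality computation with \eqref{eq5}--\eqref{eq5+} and $|gx|=|x|$ for $g\in G\subset O(d)$; you are in fact slightly more careful than the paper in noting that the density reduction must be done with support-preserving (mollified, slightly enlarged) approximants, a point the paper passes over silently. Part (1), however, takes a genuinely different route. The paper approximates the constant $1$ by Gaussians: it computes $T^t(e^{-s|\Cdot|^2})(x)=e^{-s(|x|^2+t^2)}j_{\lambda_k}(2is|x|t)\le 1$ explicitly via the Gegenbauer-type translation, applies self-adjointness to get $\int T^tf\,e^{-s|x|^2}\,d\mu_k=\int f\,T^t(e^{-s|\Cdot|^2})\,d\mu_k$, and lets $s\to 0$ with dominated convergence. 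Your first variant replaces the Gaussians by compactly supported cutoffs $\eta_R$ and gets $\|T^t\eta_R\|_\infty\le 1$ and $T^t\eta_R\to 1$ pointwise from the probability-measure representation \eqref{eq5} (the measure $\sigma_{x,t}^k$ has compact support, so $T^t\eta_R(x)=1$ for large $R$), which avoids the explicit Bessel computation but leans on R\"osler's positivity result; your second variant, evaluating $\mathcal{F}_k(T^tf)$ at $y=0$ and using $j_{\lambda_k}(0)=1$, $e_k(x,0)=1$, is the shortest of the three and needs only the multiplier definition of $T^t$ on $L^1\cap L^2$ together with continuity of both sides in the $L^1$-norm (via \eqref{eq6}) to conclude for all $f\in L^1$. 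All three arguments rely on the already established facts that $\|T^tf\|_{1,d\mu_k}\le\|f\|_{1,d\mu_k}$ (so the left-hand integral makes sense) and on self-adjointness or the Fourier multiplier relation, so nothing is gained or lost in generality; the paper's Gaussian route is self-contained at the level of the radial translation formula, while yours trades that computation for either positivity or the Fourier inversion at the origin.
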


\begin{proof}
Due to the $L^p$-boundedness of $T^t$ and density of $\mathcal{S}(\R^d)$ in
$L^{p}(\mathbb{R}^{d},d\mu_{k})$ we can assume that $f\in \mathcal{S}(\R^d)$.

\smallbreak
(1) Let $s>0$. By integral representation of $j_{\lambda_k}(z)$ (see, e.g.,
\cite[Sect.~7.12]{BatErd53}) we have
\begin{align*}
T^t(e^{-s|\Cdot|^2})(x)&=R^t(e^{-s(\cdot)^2})(|x|)=
c_{\lambda_k}\int_{0}^{\pi}e^{-s(|x|^2+t^2-2|x|t\cos\varphi)}\sin^{2\lambda_k}\varphi\,d\varphi\\
&=e^{-s(|x|^2+t^2)}c_{\lambda_k}\int_{0}^{\pi}e^{2s|x|t\cos\varphi}\sin^{2\lambda_k}\varphi\,d\varphi
=e^{-s(|x|^2+t^2)}j_{\lambda_k}(2is|x|t),
\end{align*}
and,
in particular,
\[
T^t(e^{-s|\Cdot|^2})(x)\leq e^{-s(|x|^2+t^2)}e^{2s|x|t}=e^{-s(|x|-t)^2}\leq 1.
\]
Using the self-adjointness of $T^t$, we obtain
\[
\int_{\mathbb{R}^d}T^tf(x)\,e^{-s|x|^2}\,d\mu_k(x)=\int_{\mathbb{R}^d}f(x)T^t(e^{-s|\Cdot|^2})(x)\,d\mu_k(x).
\]
Since for any $t\in \R$, $x\in \R^d$,
\[
\lim\limits_{s\to 0}e^{-s|x|^2}=\lim\limits_{s\to 0}T^t(e^{-s|\Cdot|^2})(x)=1,
\]
then by Lebesgue's dominated convergence theorem we derive (1).

\smallbreak
(2) If $\supp f\subset B_r$ and $|x|>r+|t|$, then, in light of \eqref{eq5+} and
\eqref{eq5}, for $z\in \supp \sigma_{x,t}^k$ and $g\in G$, we have that
$$|z|\geq |gx|-|z-gx|=|x|-|z-gx|>r$$ and $f(z)=0$, which yields $T^tf(x)=0$.

If $\supp f\subset \R^d\setminus B_r$, $|x|<r-|t|$, then, for $z\in \supp
\sigma_{x,t}^k$ and $g\in G$, we similarly obtain $|z|\leq
|gx|+|z-gx|=|x|+|z-gx|<r$, $f(z)=0$, and $T^tf(x)=0$.
\end{proof}

Let $g$ be a radial function, $g(y)=g_0(|y|)$, where $g_0(t)$ is defined on
$\R_+$. Note that by virtue of \eqref{eq1}
\begin{equation}
\|g\|_{p,d\mu_k}=\|g_0\|_{p,d\nu_{\lambda_k}},\quad
\mathcal{F}_k(g)(y)=\mathcal{H}_{\lambda_k}(g_0)(|y|).\label{eq10}
\end{equation}

By means of operators $T^t$ and $\tau^y$ define two convolution operators
\begin{equation}
(f\Ast{\lambda_k}g_0)(x)=\int_0^{\infty}T^tf(x)g_0(t)\,d\nu_{\lambda_k}(t),\label{eq11}
\end{equation}
\begin{equation}
(f\Ast{k}g)(x)=\int_{\R^d}f(y)\tau^{x}g(-y)\,d\mu_{k}(y).\label{eq12}
\end{equation}
Note that operator \eqref{eq11} was defined in \cite{ThaXu05}, while \eqref{eq12}
was investigated in \cite{ThaXu05,Tri02}.

S.~Thangavelu and Yu.~Xu \cite{ThaXu05} proved that if $f\in
L^{p}(\mathbb{R}^{d},d\mu_{k})$, $1\leq p\leq\infty$, $g\in L^{1}_\mathrm{rad}(\mathbb{R}^{d},d\mu_{k})$, and $g$ is bounded,
then
\begin{equation}
\|(f\Ast{k}g)\|_{p,d\mu_k}\leq \|f\|_{p,d\mu_k}\|g\|_{1,d\mu_k},\label{eq13}
\end{equation}
and if $1\leq p\leq 2$, $g\in L^{p}_\mathrm{rad}(\mathbb{R}^{d},d\mu_{k})$, then, for any $y\in\R^d$,
\begin{equation}
\|\tau^{y}g\|_{p,d\mu_k}\leq \|g\|_{p,d\mu_k}.\label{eq14}
\end{equation}

Note that additional condition of boundedness $g$ in \eqref{eq13} can be omitted. Indeed, by
H\"{o}lder's inequality
\begin{align*}
|(f\Ast{k}g)(x)|&=\Bigl|\int_{\mathbb{R}^d}f(y)\tau^{x}g(-y)\,d\mu_{k}(y)\Bigr|\\&\leq
\Bigl(\int_{\mathbb{R}^d}|f(y)|^p|\tau^{x}g(-y)|\,d\mu_{k}(y)\Bigr)^{1/p}
\Bigl(\int_{\mathbb{R}^d}|\tau^{x}g(-y)|\,d\mu_{k}(y)\Bigr)^{1-1/p},
\end{align*}
and by \eqref{eq14} for $p=1$ we get
\begin{align*}
\|(f\Ast{\lambda_k}g_0)\|_{p,d\nu_{\lambda_k}}&\leq
\Bigl(\int_{\mathbb{R}^d}\int_{\mathbb{R}^d}|f(y)|^p|\tau^{x}g(-y)|\,d\mu_{k}(y)\,d\mu_k(x)\Bigr)^{1/p}\|g\|_{1,d\mu_{k}}^{1-1/p}\\&
=\Bigl(\int_{\mathbb{R}^d}|f(y)|^p\int_{\mathbb{R}^d}|\tau^{-y}g(x)|\,d\mu_{k}(x)\,d\mu_k(y)\Bigr)^{1/p}\|g\|_{1,d\mu_{k}}^{1-1/p}
\\&\leq\|f\|_{p,d\mu_k}\|g\|_{1,d\mu_{k}}.
\end{align*}

\begin{lemma}\label{lem3.2}
If $f\in \mathcal{A}_k$, $g_0\in L^{1}(\R_{+},d\nu_{\lambda_k})$,
$g(y)=g_0(|y|)$, then, for any $x, y\in\R^d$,
\begin{equation}
(f\Ast{\lambda_k}g_0)(x)=(f\Ast{k}g)(x)=\int_{\R^d}\tau^{-y}f(x)g(y)\,d\mu_k(y), \label{eq15}
\end{equation}
\begin{equation}
\mathcal{F}_k(f\Ast{\lambda_k}g_0)(y)=\mathcal{F}_k(f\Ast{k}g)(y)=\mathcal{F}_k(f)(y)\mathcal{F}_k(g)(y).\label{eq16}
\end{equation}
\end{lemma}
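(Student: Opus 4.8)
The plan is to establish \eqref{eq16} first and then deduce \eqref{eq15} from it by Dunkl-transform inversion, using density/regularity of the functions involved. The key point is that both convolutions are defined by averaging translates against a weight, so their Dunkl transforms should factor as a product of transforms; once the transforms agree and the objects are nice enough to invert, the pointwise identity \eqref{eq15} follows.

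First I would compute $\mathcal{F}_k(f\ast_{\lambda_k}g_0)$. Starting from the definition \eqref{eq11}, interchange the order of integration (justified since $f\in\mathcal{A}_k$ gives $T^tf$ bounded by $\|\mathcal{F}_k(f)\|_{1,d\mu_k}$ and $g_0\in L^1(\R_+,d\nu_{\lambda_k})$):
\begin{align*}
\mathcal{F}_k(f\ast_{\lambda_k}g_0)(y)
&=\int_{\R^d}\Bigl(\int_0^\infty T^tf(x)\,g_0(t)\,d\nu_{\lambda_k}(t)\Bigr)\overline{e_k(x,y)}\,d\mu_k(x)\\
&=\int_0^\infty g_0(t)\,\mathcal{F}_k(T^tf)(y)\,d\nu_{\lambda_k}(t)
=\int_0^\infty g_0(t)\,j_{\lambda_k}(t|y|)\,d\nu_{\lambda_k}(t)\cdot\mathcal{F}_k(f)(y),
\end{align*}
using the defining relation $\mathcal{F}_k(T^tf)(y)=j_{\lambda_k}(t|y|)\mathcal{F}_k(f)(y)$. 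By \eqref{eq10} the last integral is $\mathcal{H}_{\lambda_k}(g_0)(|y|)=\mathcal{F}_k(g)(y)$, which gives the first half of \eqref{eq16}. For the second convolution, I would use \eqref{eq12} together with the known formula $\mathcal{F}_k(\tau^xg)(z)=e_k(x,z)\mathcal{F}_k(g)(z)$ and the fact that $\mathcal{F}_k(h(-\Cdot))(z)=\mathcal{F}_k(h)(-z)$: expanding $\mathcal{F}_k(f\ast_k g)(y)$ and interchanging integrals (again legitimate since $g\in L^1$ radial implies $\tau^x g\in L^1$ with controlled norm by \eqref{eq14} for $p=1$, and $f\in\mathcal{A}_k\subset L^1$), one gets $\mathcal{F}_k(f\ast_k g)(y)=\mathcal{F}_k(f)(y)\mathcal{F}_k(g)(y)$, completing \eqref{eq16}.

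Next, to get \eqref{eq15} I would note that since $f\in\mathcal{A}_k$ and $g\in L^1_{\mathrm{rad}}(\R^d,d\mu_k)$, both $f\ast_{\lambda_k}g_0$ and $f\ast_k g$ lie in $L^1(\R^d,d\mu_k)$ (the former by the Young-type bound following from \eqref{eq6}, the latter by \eqref{eq13}), and their common Dunkl transform $\mathcal{F}_k(f)\mathcal{F}_k(g)$ is in $L^1$ as well because $\mathcal{F}_k(f)\in L^1$ and $\mathcal{F}_k(g)=\mathcal{H}_{\lambda_k}(g_0)$ is bounded. Hence both convolutions belong to $\mathcal{A}_k$ and the pointwise inversion formula of Proposition~(2) applies to each, showing $f\ast_{\lambda_k}g_0=f\ast_k g$ pointwise. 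For the third expression in \eqref{eq15}, I would rewrite $f\ast_k g$ using the self-adjointness/symmetry of $\tau^y$ on suitable classes (namely $\int_{\R^d}f(y)\tau^x g(-y)\,d\mu_k(y)=\int_{\R^d}\tau^{-y}f(x)\,g(y)\,d\mu_k(y)$), which is a change of the order of a double integral once $\tau$ is written via \eqref{eq3}; alternatively one can verify it directly from \eqref{eq4} and Fubini.

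The main obstacle I anticipate is \textbf{justifying all the interchanges of integration} uniformly, since $\tau^y$ itself is not known to be $L^p$-bounded in general; the remedy is to work throughout within the class $\mathcal{A}_k$ (and radial $g$ with $g_0\in L^1$), where every relevant object has an absolutely convergent integral representation via \eqref{eq3}, \eqref{eq4}, and where the $L^1$-bound \eqref{eq14} for $p=1$ on radial functions supplies the needed domination for the $\ast_k$ side. A secondary technical point is the symmetry identity $\int f(y)\tau^x g(-y)\,d\mu_k=\int \tau^{-y}f(x)\,g(y)\,d\mu_k$: I would prove it first for $f,g\in\mathcal{S}(\R^d)$ by Fubini on \eqref{eq3} and then pass to the stated generality by density.
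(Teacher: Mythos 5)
Your proposal is essentially the paper's computation reorganized: you establish \eqref{eq16} first and then recover \eqref{eq15} by inversion, whereas the paper runs the same Fubini arguments pointwise from the start. Using \eqref{eq4} and \eqref{eq3} it shows directly that $(f\Ast{\lambda_k}g_0)(x)$, $(f\Ast{k}g)(x)$ (first for $g\in\mathcal{A}_k$, e.g.\ radial Schwartz) and $\int_{\R^d}\tau^{-y}f(x)g(y)\,d\mu_k(y)$ all coincide with $\int_{\R^d}e_k(x,y)\mathcal{F}_k(f)(y)\mathcal{F}_k(g)(y)\,d\mu_k(y)$, and then reaches general $g_0\in L^{1}(\R_+,d\nu_{\lambda_k})$ through the uniform estimate $|(f\Ast{\lambda_k}(g_0-(g_n)_0))(x)|+|(f\Ast{k}(g-g_n))(x)|\leq 2\|f\|_\infty\|g-g_n\|_{1,d\mu_k}$. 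Both routes reduce everything to the product $\mathcal{F}_k(f)\mathcal{F}_k(g)$, so the transform-first organization buys nothing essentially new, but it is a legitimate variant.

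Two points in your version need tightening. First, the domination you cite for the interchange defining $\mathcal{F}_k(f\Ast{\lambda_k}g_0)$ (the sup-bound on $T^tf$) is not the right one for Fubini over $\R^d\times\R_+$; what is needed is the $L^1$-bound \eqref{eq6}, which you invoke later anyway. Second, and more substantively, your inversion step requires both convolutions to belong to $\mathcal{A}_k$, in particular to be continuous; for $f\Ast{k}g$ with $g$ merely in $L^{1}_{\mathrm{rad}}(\R^d,d\mu_k)$ this is not automatic, nor are the identities $\mathcal{F}_k(\tau^xg)=e_k(x,\Cdot)\mathcal{F}_k(g)$ and $\tau^xg(-y)=\tau^{-y}g(x)$, which follow immediately from \eqref{eq3} only when $g\in\mathcal{A}_k$. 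Without continuity, inversion yields the first equality in \eqref{eq15} only almost everywhere, not for every $x$ as stated. The remedy is exactly the paper's approximation device: take radial $(g_n)_0\in\mathcal{S}(\R_+)$ with $g_n\to g$ in $L^{1}(\R^d,d\mu_k)$; H\"older's inequality together with \eqref{eq14} for $p=1$ gives $|(f\Ast{k}(g-g_n))(x)|\leq\|f\|_\infty\|g-g_n\|_{1,d\mu_k}$ uniformly in $x$, so $f\Ast{k}g$ is a uniform limit of the Schwartz-case convolutions and the identities pass to the limit pointwise. Your closing remark about passing to the stated generality by density is precisely this step; once it is spelled out with this uniform bound, your argument is complete.
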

\begin{proof}
Using \eqref{eq4} and \eqref{eq10}, we get
\begin{align*}
(f\Ast{\lambda_k}g_0)(x)&=\int_0^{\infty}T^tf(x)g_0(t)\,d\nu_{\lambda_k}(t)\\
&=\int_0^{\infty}\int_{\mathbb{R}^d}j_{\lambda_k}(t|y|)e_k(x,
y)\mathcal{F}_k(f)(y)\,d\mu_k(y)g_0(t)\,d\nu_{\lambda_k}(t)\\
&=\int_{\mathbb{R}^d}e_k(x, y)\mathcal{F}_k(f)(y)\mathcal{F}_k(g)(y)\,d\mu_k(y),
\end{align*}
which gives
\[
\mathcal{F}_k(f\Ast{\lambda_k}g_0)(y)=\mathcal{F}_k(f)(y)\mathcal{F}_k(g)(y).
\]
If $g\in \mathcal{A}_k$, then, by \eqref{eq3},
\begin{align*}
(f\Ast{k}g)(x)&=\int_{\R^d}f(y)\tau^{x}g(-y)\,d\mu_{k}(y)\\
&=\int_{\R^d}f(y)\int_{\mathbb{R}^d}e_k(-y, z)e_k(x,
z)\mathcal{F}_k(g)(z)\,d\mu_k(z)\,d\mu_{k}(y)\\
&=\int_{\mathbb{R}^d}e_k(x, z)\mathcal{F}_k(f)(z)\mathcal{F}_k(g)(z)\,d\mu_k(z).
\end{align*}
and hence the first equality in \eqref{eq15} and the second equality in
\eqref{eq16} are valid for $g\in \mathcal{A}_k$.

Assuming that $g_0\in L^{1}(\R_{+},d\nu_{\lambda})$, $(g_n)_0\in
\mathcal{S}(\R_+)$, $g_{n}\to g$ in $L^{1}(\mathbb{R}^{d},d\mu_{k})$, and taking
into account \eqref{eq7}--\eqref{eq12} and \eqref{eq14}, we
arrive at
\begin{align*}
\bigl|(f\Ast{\lambda_k}g_0)(x)-(f\Ast{k}g)(x)\bigr|&\leq\bigl|(f\Ast{\lambda_k}(g_0-(g_n)_0))(x)\bigr|
+\bigl|(f\Ast{k}(g-g_n))(x)\bigr|\\
&\leq 2\|f\|_{\infty}\,\|g-g_n\|_{1, d\mu_k}.
\end{align*}
Thus, the first equality in \eqref{eq15} holds.

Finally, using \eqref{eq3}, we get
\begin{align*}
\int_{\R^d}\tau^{-y}f(x)g(y)\,d\mu_{k}(y)
&=\int_{\R^d}g(y)\int_{\mathbb{R}^d}e_k(-y, z)e_k(x,
z)\mathcal{F}_k(f)(z)\,d\mu_k(z)\,d\mu_{k}(y)\\
&=\int_{\mathbb{R}^d}e_k(x, z)\mathcal{F}_k(f)(z)\mathcal{F}_k(g)(z)\,d\mu_k(z).
\end{align*}
and the second part in \eqref{eq15} is valid.
\end{proof}

Let $y\in \R^d$ be given. M. R\"{o}sler \cite{Ros03} proved that the operator
$\tau^y$ is positive
on
$C^{\infty}_\mathrm{rad}(\R^d)$, i.e., \ $\tau^y\geq 0$, and moreover,
 for
any $x\in \R^d$,
\begin{equation}
\tau^yf(x)=\int_{\mathbb{R}^d}f(z)\,d\rho_{x,y}^k(z), \label{eq17}
\end{equation}
where $\rho_{x,y}^k$ is a radial probability Borel measure such that $\supp
\rho_{x,y}^k\subset B_{|x|+|y|}$.

\begin{theorem}\label{thm3.3}
If $1\leq p\leq\infty$, then, for any $x\in\R^d$ and $f\in\mathcal{S}(\R^d)$,
\begin{equation}
\|T^tf(x)\|_{p,d\nu_{\lambda_k}}=
\Bigl(\int_{\R_{+}}|T^tf(x)|^{p}\,d\nu_{\lambda}(t)\Bigr)^{1/p}\leq
\|f\|_{p,d\mu_k}.\label{eq18}
\end{equation}
\end{theorem}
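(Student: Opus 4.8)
I would prove \eqref{eq18} by reducing everything to the single inequality
\[
\int_{\R_+}T^th(x)\,d\nu_{\lambda_k}(t)\le\int_{\R^d}h\,d\mu_k\qquad(h\ge0),
\]
first established for $h\in\mathcal{S}(\R^d)$, $h\ge0$, and then bootstrapped to arbitrary $f$ via Jensen's inequality. Fix $x\in\R^d$. For $g_0\in C_c^{\infty}(\R_+)$ with $0\le g_0\le1$ the radial function $g(z)=g_0(|z|)$ belongs to $C_c^{\infty}(\R^d)\subset C^{\infty}_{\mathrm{rad}}(\R^d)$ (here I use that $g_0$ is even and smooth), and since $\mathcal{S}(\R^d)\subset\mathcal{A}_k$ and $g_0\in L^{1}(\R_+,d\nu_{\lambda_k})$, Lemma \ref{lem3.2} together with \eqref{eq12} gives
\[
\int_0^{\infty}T^th(x)g_0(t)\,d\nu_{\lambda_k}(t)=(h\Ast{\lambda_k}g_0)(x)=(h\Ast{k}g)(x)=\int_{\R^d}h(y)\,\tau^x g(-y)\,d\mu_k(y).
\]
By R\"osler's positivity of $\tau^x$ on $C^{\infty}_{\mathrm{rad}}(\R^d)$ and the representation \eqref{eq17}, $\tau^x g(-y)=\int_{\R^d}g\,d\rho^k_{-y,x}\in[0,1]$ because $\rho^k_{-y,x}$ is a probability measure and $0\le g\le1$; hence the right-hand side is at most $\int_{\R^d}h\,d\mu_k$. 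Letting $g_0\uparrow1$ and invoking monotone convergence (note $T^th(x)\ge0$ by \eqref{eq5}) yields the claimed inequality for nonnegative $h\in\mathcal{S}(\R^d)$.

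Next I would upgrade this to a statement about measures. Writing $T^th(x)=\int_{\R^d}h\,d\sigma^k_{x,t}$ and using Tonelli (the kernel $t\mapsto\sigma^k_{x,t}$ is weakly continuous, hence Borel), the inequality above reads $\int_{\R^d}h\,dm_x\le\int_{\R^d}h\,d\mu_k$ for the positive Borel measure $m_x:=\int_{\R_+}\sigma^k_{x,t}\,d\nu_{\lambda_k}(t)$. Testing against $h\in C_c^{\infty}(\R^d)$ with $0\le h\le\mathbf{1}_{B_R}$ shows that $m_x$ is locally finite; approximating the indicator of an open set $U$ from below by such $h$ gives $m_x(U)\le\mu_k(U)$; and by outer regularity of both (Radon) measures this extends to $m_x(E)\le\mu_k(E)$ for every Borel set $E$, i.e.\ $m_x\le\mu_k$.

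Finally, for an arbitrary $f\in\mathcal{S}(\R^d)$ and $1\le p<\infty$, convexity of $u\mapsto|u|^{p}$ and the fact that $\sigma^k_{x,t}$ is a probability measure give, via Jensen's inequality, $|T^tf(x)|^{p}=\bigl|\int f\,d\sigma^k_{x,t}\bigr|^{p}\le\int|f|^{p}\,d\sigma^k_{x,t}=T^t(|f|^{p})(x)$. Integrating in $t$, applying Tonelli, and then $m_x\le\mu_k$,
\[
\int_{\R_+}|T^tf(x)|^{p}\,d\nu_{\lambda_k}(t)\le\int_{\R_+}T^t(|f|^{p})(x)\,d\nu_{\lambda_k}(t)=\int_{\R^d}|f|^{p}\,dm_x\le\int_{\R^d}|f|^{p}\,d\mu_k=\|f\|_{p,d\mu_k}^{p},
\]
which is \eqref{eq18} for $1\le p<\infty$; the case $p=\infty$ is immediate from $|T^tf(x)|\le\|f\|_{\infty}$ (one could equally well interpolate between $p=1$ and $p=\infty$ by Riesz--Thorin, but the Jensen step settles all $p$ at once).

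The main obstacle is the first paragraph: one must choose the auxiliary functions $g_0$ (equivalently $g$) in a class to which \emph{both} Lemma \ref{lem3.2} (which requires $g_0\in L^{1}(\R_+,d\nu_{\lambda_k})$) and R\"osler's positivity theorem for $\tau^x$ (which requires $g\in C^{\infty}_{\mathrm{rad}}(\R^d)$) apply simultaneously — this is precisely why $g_0\in C_c^{\infty}(\R_+)$ is the right test class. Granting that, the conceptual content is that the ``$t$-averaged'' transported measure $m_x$ is dominated by the Dunkl measure $\mu_k$, and the remaining steps (local finiteness, regularity, Jensen) are routine.
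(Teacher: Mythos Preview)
Your argument is correct and takes a genuinely different route from the paper's. The paper treats the exponents separately: $p=2$ via Plancherel (unitarity of $\mathcal{H}_{\lambda_k}$ and $\mathcal{F}_k$), $p=1$ by duality with $g_0\in\mathcal{S}(\R_+)$, $\|g_0\|_\infty\le1$, using Lemma~\ref{lem3.2} and R\"osler's representation \eqref{eq17} to get $\|\tau^x g\|_\infty\le1$; then Riesz--Thorin for $1<p<2$; then for $2<p<\infty$ a second duality that invokes the Thangavelu--Xu estimate \eqref{eq14} for $\tau^y$ on radial functions in $L^{p'}$; and $p=\infty$ from \eqref{eq5}. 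Your first paragraph is essentially the paper's $p=1$ step, but you then go further: you promote it to the pointwise measure domination $m_x:=\int_{\R_+}\sigma^k_{x,t}\,d\nu_{\lambda_k}(t)\le\mu_k$, and finish all $1\le p<\infty$ at once via Jensen's inequality against the probability measure $\sigma^k_{x,t}$. This is more elementary and more self-contained --- it bypasses both interpolation and the external input \eqref{eq14} --- and it yields the same constant~$1$. The paper's approach, on the other hand, makes the operator $B^x\colon f\mapsto T^{(\cdot)}f(x)$ explicit and shows its $L^p\to L^p(d\nu_{\lambda_k})$ boundedness range by range, which is closer in spirit to how the companion Theorem~3.1 was proved. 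One small remark: when you write $T^t(|f|^p)(x)$ you are using \eqref{eq5} on $|f|^p\in C_b(\R^d)$ (not on a Schwartz function), which is exactly the extension the paper records after~\eqref{eq5+}; your argument does not actually need that notation, since you only use $\int|f|^p\,d\sigma^k_{x,t}$ directly.
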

\begin{proof} Let $x\in\R^d$ be given. Let an operator $B^x$ be defined on $\mathcal{S}(\R^d)$ as follows (cf. \eqref{eq4} and \eqref{eq5}): for $f\in\mathcal{S}(\R^d)$,
\[
B^xf(t)=T^tf(x)=\int_{\mathbb{R}^d}j_{\lambda_k}(t|y|)e_k(x,
y)\mathcal{F}_k(f)(y)\,d\mu_k(y)=\int_{\mathbb{R}^d}f(z)\,d\sigma_{x,t}^k(z).
\]

Let $p=2$. We have
\[
T^tf(x)=\int_{0}^{\infty}j_{\lambda_k}(tr)\int_{\mathbb{S}^{d-1}}e_k(x,
ry')\mathcal{F}_k(f)(ry')\,d\sigma_k(y')\,d\nu_{\lambda_k}(r)
\]
and
\[
\mathcal{H}_{\lambda_k}(T^tf(x))(r)=\int_{\mathbb{S}^{d-1}}e_k(x, ry')\mathcal{F}_k(f)(ry')\,d\sigma_k(y').
\]
This, H\"{o}lder's inequality, and the fact that the operators $\mathcal{H}_{\lambda_k}$ and $\mathcal{F}_{k}$ are unitary imply
\begin{align*}
\|T^tf(x)\|_{2,d\nu_{\lambda_k}}^2&=\|\mathcal{H}_{\lambda_k}(T^tf(x))(r)\|_{2,d\nu_{\lambda_k}}^2\\
&=\int_{0}^{\infty}\Bigl|\int_{\mathbb{S}^{d-1}}e_k(x, ry')\mathcal{F}_k(f)(ry')\,d\sigma_k(y')
\Bigr|^2\,d\nu_{\lambda_k}(r)\\
&\leq
\int_{0}^{\infty}\int_{\mathbb{S}^{d-1}}|\mathcal{F}_k(f)(ry')|^2\,d\sigma_k(y')\,d\nu_{\lambda_k}(r)\\
&=\|\mathcal{F}_k(f)\|_{2,d\mu_k}^2=\|f\|_{2,d\mu_k}^2,
\end{align*}
which yields inequality \eqref{eq18} for $p=2$. Moreover, $B^x$ can be extended to the space $L^{2}(\mathbb{R}_+,d\nu_{\lambda_k})$ with preservation of norm, and, moreover, this extension coincides with \eqref{eq4}.

Let $p=1$. By \eqref{eq15} and \eqref{eq17}, we obtain
\begin{align*}
\|T^tf(x)\|_{1,d\nu_{\lambda_k}}&=\sup\Bigl\{\int_0^{\infty}T^tf(x)g_0(t)\,d\nu_{\lambda_k}(t)\colon g_0\in
\mathcal{S}(\R_+),\ \|g_0\|_{\infty}\leq 1\Bigr\}\\
&=\sup\Bigl\{\int_{\R^d}f(y)\tau^{x}g(-y)\,d\mu_{k}(y)\colon g\in
\mathcal{S}_\mathrm{rad}(\R^d),\ \|g\|_{\infty}\leq 1\Bigr\}\\
&\leq \|f\|_{1,d\mu_k}\sup\bigl\{\|\tau^{x}g(-y)\|_{\infty}\colon g\in
\mathcal{S}_\mathrm{rad}(\R^d),\ \|g\|_{\infty}\leq 1\bigr\}\leq\|f\|_{1,d\mu_k},
\end{align*}
which is the desired inequality \eqref{eq18} for $p=1$. Moreover, $B^x$ can be extended to $L^{1}(\mathbb{R}_+,d\nu_{\lambda_k})$ with preservation of norm such that the extension coincides with \eqref{eq4} on $L^{1}(\mathbb{R}_+,d\nu_{\lambda_k})\cap L^{2}(\mathbb{R}_+,d\nu_{\lambda_k})$.

By the Riesz--Thorin interpolation theorem we obtain \eqref{eq18} for $1<p<2$.

If $2<p<\infty$, $1/p+1/p'=1$, then 
 by \eqref{eq15} and \eqref{eq14},
\begin{align*}
\|T^tf(x)\|_{p,d\nu_{\lambda_k}}&=\sup\Bigl\{\int_0^{\infty}T^tf(x)g_0(t)\,d\nu_{\lambda_k}(t)\colon g_0\in
\mathcal{S}(\R_+),\ \|g_0\|_{p',d\nu_{\lambda_k}}\leq 1\Bigr\}\\
&=\sup\Bigl\{\int_{\R^d}f(y)\tau^{x}g(-y)\,d\mu_{k}(y)\colon g\in
\mathcal{S}_\mathrm{rad}(\R^d),\ \|g\|_{p',d\mu_k}\leq 1\Bigr\}\\
&\leq \|f\|_{p,d\mu_k}\sup\{\|\tau^{x}g(-y)\|_{p',d\mu_k}\colon g\in
\mathcal{S}_\mathrm{rad}(\R^d),\ \|g\|_{p',d\mu_k}\leq 1\}\\
&\leq \|f\|_{p,d\mu_k}.
\end{align*}

Finally, for $p=\infty$, \eqref{eq18} follows from representation \eqref{eq5}.
\end{proof}

We are now in a position to prove the Young inequality for the convolutions \eqref{eq11} and \eqref{eq12}.
\begin{theorem}
Let $1\leq p, q\leq\infty$, $\frac{1}{p}+\frac{1}{q}\geq 1$, and
$\frac{1}{r}=\frac{1}{p}+\frac{1}{q}-1$. We have that, for any $f\in\mathcal{S}(\R^d)$,
$g_0\in\mathcal{S}(\R_+)$ and $g\in\mathcal{S}_\mathrm{rad}(\R^d)$,
\begin{equation}
\|(f\Ast{\lambda_k}g_0)\|_{r,d\nu_{\lambda_k}}\leq
\|f\|_{p,d\mu_k}\|g_0\|_{q,d\nu_{\lambda_k}},\label{eq19}
\end{equation}
\begin{equation}
\|(f\Ast{k}g)\|_{r,d\mu_k}\leq \|f\|_{p,d\mu_k}\|g\|_{q,d\mu_k}.\label{eq20}
\end{equation}
\end{theorem}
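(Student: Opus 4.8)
The plan is to deduce the Young inequality from the two $L^p$-norm estimates for the generalized translation operators already established, namely \eqref{eq6} in the first theorem (boundedness of $T^t$ on $L^p(\R^d,d\mu_k)$) together with the ``dual'' mixed-norm estimate \eqref{eq18} of Theorem~\ref{thm3.3} (boundedness of $t\mapsto T^tf(x)$ from $L^p(\R^d,d\mu_k)$ to $L^p(\R_+,d\nu_{\lambda_k})$). Since by Lemma~\ref{lem3.2} the two convolutions \eqref{eq11} and \eqref{eq12} agree on the relevant classes of functions, it suffices to prove \eqref{eq19}; then \eqref{eq20} follows from \eqref{eq19} and \eqref{eq15} together with the first identity in \eqref{eq10}, which gives $\|(f\Ast{k}g)\|_{r,d\mu_k}=\|(f\Ast{\lambda_k}g_0)\|_{r,d\nu_{\lambda_k}}$ and $\|g\|_{q,d\mu_k}=\|g_0\|_{q,d\nu_{\lambda_k}}$.

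For \eqref{eq19} I would run the classical Young argument on the measure space $(\R_+,d\nu_{\lambda_k})$, using $T^t$ in place of the ordinary translation. First treat the three corner cases. If $q=1$ and $r=p$: by Minkowski's integral inequality in $L^p(\R_+,d\nu_{\lambda_k})$ and \eqref{eq6},
\[
\|(f\Ast{\lambda_k}g_0)\|_{p,d\nu_{\lambda_k}}\le\int_0^\infty\|T^tf\|_{p,d\nu_{\lambda_k}}\,|g_0(t)|\,d\nu_{\lambda_k}(t)\le\|f\|_{p,d\mu_k}\|g_0\|_{1,d\nu_{\lambda_k}},
\]
where one should note $\|T^tf\|_{p,d\nu_{\lambda_k}}$ is shorthand for $\|T^tf\|_{p,d\mu_k}$ viewed as a function of $x$ after using \eqref{eq1} — more precisely, apply \eqref{eq6} pointwise-in-$t$ and then integrate the $x$-variable out. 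If $p=1$ and $r=q$: write $(f\Ast{\lambda_k}g_0)(x)=\int_0^\infty T^tf(x)g_0(t)\,d\nu_{\lambda_k}(t)$, take $\|\cdot\|_{q,d\nu_{\lambda_k}}$ in $t$ against... — here instead use the second representation \eqref{eq15}, $(f\Ast{k}g)(x)=\int_{\R^d}\tau^{-y}f(x)g(y)\,d\mu_k(y)$, Minkowski's inequality, and \eqref{eq14} for $\tau^{-y}g$... actually the cleanest route for the $p=1$ corner is Minkowski again combined with Theorem~\ref{thm3.3}: $\|(f\Ast{\lambda_k}g_0)\|_{q,d\nu_{\lambda_k}}\le\int_{\R^d}|f(y)|\,\|\tau^{-y}g_0\|_{q,d\nu_{\lambda_k}}\,d\mu_k(y)$... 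The third corner $p=q'$, $r=\infty$: by Hölder in $t$ and \eqref{eq18}, $|(f\Ast{\lambda_k}g_0)(x)|\le\|T^{\Cdot}f(x)\|_{p,d\nu_{\lambda_k}}\|g_0\|_{q,d\nu_{\lambda_k}}\le\|f\|_{p,d\mu_k}\|g_0\|_{q,d\nu_{\lambda_k}}$.

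Having the three corners, interpolate. Fix $g_0$ and regard $f\mapsto f\Ast{\lambda_k}g_0$: the corners give boundedness $L^1\to L^q$ and $L^{q'}\to L^\infty$, so Riesz--Thorin yields $L^p\to L^r$ with $1/r=1/p+1/q-1$ for $1\le p\le q'$; symmetrically, fixing $f$ and varying $g_0$ gives the remaining range. Alternatively, and more transparently, one can give the direct proof: write $|T^tf(x)g_0(t)|=\big(|T^tf(x)|^p|g_0(t)|^q\big)^{1/r}\cdot|T^tf(x)|^{1-p/r}\cdot|g_0(t)|^{1-q/r}$, apply the three-function Hölder inequality with exponents $r$, $(1-p/r)^{-1}=(1/q')^{-1}\cdot\!\ldots$, track the conjugate exponents, and then integrate in $t$ against $d\nu_{\lambda_k}$, using \eqref{eq18} to control $\int_0^\infty|T^tf(x)|^p\,d\nu_{\lambda_k}(t)\le\|f\|_{p,d\mu_k}^p$ uniformly in $x$ and \eqref{eq6} (integrated in $x$) to control the cross term. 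The main obstacle is purely bookkeeping: one must be careful that the estimate \eqref{eq18} is genuinely \emph{uniform in $x$}, so that after the inner $t$-integration the surviving $x$-integral can be closed by \eqref{eq6}; the positivity of $T^t$ (representation \eqref{eq5}) is what guarantees $|T^tf(x)|\le T^t|f|(x)$, which is the one nontrivial ingredient making the Hölder splitting legitimate. Once the inequalities are proved for Schwartz $f$, $g_0$ they extend by density where needed, but the statement as given is already for $f\in\mathcal{S}(\R^d)$, $g_0\in\mathcal{S}(\R_+)$, so no further limiting argument is required.
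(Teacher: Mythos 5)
Your ``direct'' variant is essentially the paper's own proof: the paper factors $|T^tf(x)\,g_0(t)|$ exactly as you do, applies the three-exponent H\"older inequality with $r$, $\mu=(1/p-1/r)^{-1}$, $\nu=(1/q-1/r)^{-1}$, uses \eqref{eq18} (uniformly in $x$) to absorb the factor $\bigl(\int_0^\infty|T^tf(x)|^p\,d\nu_{\lambda_k}\bigr)^{1/\mu}$, and then integrates in $x$ and closes with \eqref{eq6}; the reduction of \eqref{eq20} to \eqref{eq19} via \eqref{eq10} and \eqref{eq15} is also the paper's. Two caveats on the surrounding remarks. First, positivity of $T^t$ and the domination $|T^tf(x)|\le T^t|f|(x)$ are not needed anywhere in this computation: only the triangle inequality for the $t$-integral and the norm bounds \eqref{eq6}, \eqref{eq18} enter, so that ``nontrivial ingredient'' is extraneous. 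Second, in your interpolation detour the $p=1$ corner is left garbled, and the natural way to finish it --- Minkowski plus $\|\tau^{-y}g\|_{q,d\mu_k}\le\|g\|_{q,d\mu_k}$ --- is only available from \eqref{eq14} when $q\le2$; for $q>2$ that bound is \eqref{eq21}, which the paper \emph{deduces from} \eqref{eq20}, so completing the corner that way would be circular. The non-circular interpolation is the one you hint at in passing: fix $f$ and interpolate in $g_0$ between the Minkowski corner $q=1$ (via \eqref{eq6}) and the H\"older corner $r=\infty$ (via \eqref{eq18}), which already covers the whole range $1/r=1/p+1/q-1$.
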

\begin{proof}
Since for $g(y)=g_0(|y|)$ we have
\[
\|(f\Ast{\lambda_k}g_0)\|_{r,d\nu_{\lambda_k}}=\|(f\Ast{k}g)\|_{r,d\mu_k},\quad
\|g_0\|_{q,d\nu_{\lambda_k}}=\|g\|_{q,d\mu_k},
\]
it is enough to show inequality \eqref{eq19}. The proof is straightforward using
H\"{o}lder's inequality and estimates \eqref{eq6} and \eqref{eq18}.
For the sake of completeness, we give it here.
Let
$\frac{1}{\mu}=\frac{1}{p}-\frac{1}{r}$ and
$\frac{1}{\nu}=\frac{1}{q}-\frac{1}{r}$, then $\frac{1}{\mu}\geq 0$,
$\frac{1}{\nu}\geq 0$ and $\frac{1}{r}+\frac{1}{\mu}+\frac{1}{\nu}=1$.
In virtue of \eqref{eq18}, we have
\begin{align*}
&\Bigl|\int_0^{\infty}T^tf(x)g_0(t)\,d\nu_{\lambda_k}(t)\Bigr|\leq
\Bigl(\int_0^{\infty}|T^tf(x)|^p|g_0(t)|^q\,d\nu_{\lambda_k}(t)\Bigr)^{1/r}\\
&\qquad \times \Bigl(\int_0^{\infty}|T^tf(x)|^p\,d\nu_{\lambda_k}(t)\Bigr)^{1/\mu}
\Bigl(\int_0^{\infty}|g_0(t)|^q\,d\nu_{\lambda_k}(t)\Bigr)^{1/\nu}\\
&\qquad \leq\Bigl(\int_0^{\infty}|T^tf(x)|^p|g_0(t)|^q\,d\nu_{\lambda_k}(t)\Bigr)^{1/r}
\|f\|_{p,d\mu_k}^{p/\mu}\|g_0\|_{q,d\nu_{\lambda_k}}^{q/\nu}.
\end{align*}
 Using \eqref{eq6}, this gives
\begin{align*}
&\|(f\Ast{\lambda_k}g_0)\|_{r,d\nu_{\lambda_k}}\leq
\Bigl(\int_{\R^d}\int_0^{\infty}|T^tf(x)|^p|g_0(t)|^q\,d\nu_{\lambda_k}(t)\,d\mu_k(x)\Bigr)^{1/r}\\
&\qquad \times \|f\|_{p,d\mu_k}^{p/\mu}\|g_0\|_{q,d\nu_{\lambda_k}}^{q/\nu}\leq\|f\|_{p,d\mu_k}\|g_0\|_{q,d\nu_{\lambda_k}}.
\qedhere
\end{align*}
\end{proof}

\begin{theorem}
Let $1\leq p\leq\infty$ and $g\in\mathcal{S}_\mathrm{rad}(\R^d)$. We have that, for any $y\in\R^d$,
\begin{equation}
\|\tau^{y}g\|_{p,d\mu_k}\leq \|g\|_{p,d\mu_k}.\label{eq21}
\end{equation}
\end{theorem}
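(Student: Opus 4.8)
The plan is to deduce \eqref{eq21} from the already-established Young inequality \eqref{eq20} together with the duality trick used repeatedly above. For $1\le p\le 2$ this is precisely \eqref{eq14} proved in \cite{ThaXu05}, so the only new content is $2<p\le\infty$; still, I would give a unified argument. First I would note that since $g$ is radial, $g=g_0(|\cdot|)$, and the map $h\mapsto \int_{\R^d}\tau^{y}g(-z)h(z)\,d\mu_k(z)$ can be rewritten using the convolution: for $h\in\mathcal S(\R^d)$ we have $\int_{\R^d}h(z)\tau^{y}g(-z)\,d\mu_k(z)=(h\Ast{k}g)(y)$ by the very definition \eqref{eq12}. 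Hence by the self-duality of the $L^p$ pairing with respect to $d\mu_k$,
\[
\|\tau^{y}g\|_{p,d\mu_k}=\sup\Bigl\{\Bigl|\int_{\R^d}h(z)\,\tau^{y}g(-z)\,d\mu_k(z)\Bigr|\colon h\in\mathcal S(\R^d),\ \|h\|_{p',d\mu_k}\le 1\Bigr\}=\sup\bigl\{|(h\Ast{k}g)(y)|\colon \|h\|_{p',d\mu_k}\le 1\bigr\}.
\]

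Next I would apply the Young inequality \eqref{eq20} with exponents chosen so that the convolution of an $L^{p'}$ function with $g\in L^{p}_{\mathrm{rad}}$ lands in $L^\infty$: take the first factor in $L^{p'}$, the second in $L^{p}$, and note $\frac1{p'}+\frac1p=1$, so $\frac1r=\frac1{p'}+\frac1p-1=0$, i.e. $r=\infty$. Then \eqref{eq20} gives $\|h\Ast{k}g\|_{\infty}\le\|h\|_{p',d\mu_k}\|g\|_{p,d\mu_k}$, and in particular $|(h\Ast{k}g)(y)|\le\|h\|_{p',d\mu_k}\|g\|_{p,d\mu_k}$ for every $y$. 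Combining with the supremum representation above yields $\|\tau^{y}g\|_{p,d\mu_k}\le\|g\|_{p,d\mu_k}$, which is \eqref{eq21}. One should double-check the boundary cases: for $p=1$ the dual exponent is $p'=\infty$ and the pairing/density argument needs $h\in C_b$ rather than $\mathcal S$, but this is the same device already used in the proof of Theorem (the $\|T^tf\|_1$ computation), so it is routine; for $p=\infty$ one uses $p'=1$ and \eqref{eq20} with $r=\infty$ directly, or simply invokes that $\tau^{y}$ is an integral against the probability measure $\rho_{x,y}^k$ from \eqref{eq17}.

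The main obstacle — really the only subtlety — is justifying the duality identity $\|\tau^{y}g\|_{p,d\mu_k}=\sup_{\|h\|_{p'}\le1}|(h\Ast{k}g)(y)|$ rigorously, which requires knowing a priori that $\tau^{y}g$ is a locally integrable (indeed, by \eqref{eq17}, bounded) function against which one may pair Schwartz functions, and that the supremum is attained over a norm-dense set. Since $g\in\mathcal S_{\mathrm{rad}}(\R^d)$, representation \eqref{eq17} shows $\tau^{y}g\in C_b(\R^d)$ with $\|\tau^{y}g\|_\infty\le\|g\|_\infty$ and, by Lemma(2)-type support control from \eqref{eq17}, $\supp\tau^{y}g\subset B_{|x|+|y|}$ in the relevant variable; hence $\tau^{y}g(-\cdot)\in L^{p}(\R^d,d\mu_k)$ for every $p$, and the standard $L^p$–$L^{p'}$ duality applies with the supremum over $\mathcal S(\R^d)$ (dense for $p'<\infty$) or over $C_b$ (for $p'=\infty$). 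After that, the application of \eqref{eq20} is immediate and the proof is complete; I would write it in three or four lines.
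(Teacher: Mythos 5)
Your proposal is correct and follows essentially the same route as the paper: for $2<p\le\infty$ the paper also dualizes, rewrites the pairing as $(f\Ast{k}g)(y)$ (via $\tau^{y}g(-x)=\tau^{-y}g(x)$), and applies the Young inequality \eqref{eq20} with $r=\infty$, $q=p$, while quoting \eqref{eq14} for $1\le p\le2$ and \eqref{eq17} for $p=\infty$. One small inessential slip: for $g\in\mathcal{S}_\mathrm{rad}(\R^d)$ the translate $\tau^{y}g$ is not compactly supported (the support bound in \eqref{eq17} concerns the measure $\rho_{x,y}^k$, not $\tau^y g$), but your duality step only needs that $\tau^{y}g$ is continuous and bounded, which \eqref{eq17} does give, so the argument stands.
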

\begin{remark}
Since $\mathcal{S}(\R^d)$ is dense in $L^{p}(\mathbb{R}^{d},d\mu_{k})$,
$1\leq p<\infty$, the operator $\tau^{y}$ can be defined on
$L^{p}_\mathrm{rad}(\mathbb{R}^{d},d\mu_{k})$ so that
\eqref{eq21} holds.
\end{remark}

\begin{proof}
In the case $1\leq p\le 2$ this result was proved in \cite{ThaXu05}. The case $p=\infty$
follows from \eqref{eq17}.

Let $2<p<\infty$. Since $\mathcal{F}_k(g)$ is a radial function and
\begin{align*}
\tau^{y}g(-x)&=\int_{\mathbb{R}^d}e_k(y, z)e_k(-x,
z)\mathcal{F}_k(g)(z)\,d\mu_k(z)\\
&=\int_{\mathbb{R}^d}e_k(-y, z)e_k(x, z)\mathcal{F}_k(g)(z)\,d\mu_k(z)=\tau^{-y}g(x),
\end{align*}
then using \eqref{eq20} for $r=\infty$, $q=p$ we obtain
\begin{align*}
\|\tau^{-y}g\|_{p,d\mu_k}&=\sup \Bigl\{\int_{\R^d}\tau^{-y}g(x)f(x)\,d\mu_{k}(x)\colon f\in
\mathcal{S}(\R^d),\ \|f\|_{p',d\mu_k}\leq 1\Bigr\}\\
&\leq \sup\{\|(f\Ast{k}g)(y)\|_{\infty,d\mu_k}\colon f\in
\mathcal{S}(\R^d),\ \|f\|_{p',d\mu_k}\leq 1\}\leq\|g\|_{p,d\mu_k}.
\qedhere
\end{align*}
\end{proof}

Now we give an analogue of Lemma \ref{lem3.2} for the case when $f\in L^{p}$.
\begin{lemma}\label{lem3.6}
Let $1\leq p\leq\infty$, $f\in L^{p}(\mathbb{R}^{d},d\mu_{k})\cap C_b(\R^d)\cap
C^{\infty}(\R^d)$, $g_0\in \mathcal{S}(\R_{+})$, and $g(y)=g_0(|y|)$. Then, for any
$x\in\R^d$,
\begin{equation}
(f\Ast{\lambda_k}g_0)(x)=(f\Ast{k}g)(x)\in L^{p}(\mathbb{R}^{d},d\mu_{k})\cap C_b(\R^d)\cap C^{\infty}(\R^d) \label{eq22}
\end{equation}
and, in the sense of tempered distributions,
\begin{equation}
\mathcal{F}_k(f\Ast{\lambda_k}g_0)=\mathcal{F}_k(f\Ast{k}g)=\mathcal{F}_k(f)\mathcal{F}_k(g).\label{eq23}
\end{equation}
\end{lemma}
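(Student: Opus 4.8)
The plan is to reduce everything to the already-established facts on Schwartz functions by a density-and-truncation argument. First I would record that the identities in question are known when $f\in\mathcal{S}(\R^d)$: indeed $\mathcal{S}(\R^d)\subset\mathcal{A}_k$, so Lemma~\ref{lem3.2} gives $(f\Ast{\lambda_k}g_0)=(f\Ast{k}g)=\int_{\R^d}\tau^{-y}f(\Cdot)g(y)\,d\mu_k(y)$ together with $\mathcal{F}_k(f\Ast{\lambda_k}g_0)=\mathcal{F}_k(f)\mathcal{F}_k(g)$. The task is to upgrade this from $f\in\mathcal{S}(\R^d)$ to $f\in L^{p}\cap C_b\cap C^{\infty}$. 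For the membership statement \eqref{eq22}, the $L^p$-bound is immediate from \eqref{eq19} (or \eqref{eq13}): $\|f\Ast{\lambda_k}g_0\|_{p,d\mu_k}\le\|f\|_{p,d\mu_k}\|g_0\|_{1,d\nu_{\lambda_k}}$, since $g_0\in\mathcal{S}(\R_+)\subset L^1$. Boundedness and continuity follow from the representation $(f\Ast{\lambda_k}g_0)(x)=\int_0^\infty T^tf(x)g_0(t)\,d\nu_{\lambda_k}(t)$ using $\|T^tf\|_\infty\le\|f\|_\infty$ (valid since $f\in C_b$, via \eqref{eq5}) and the weak continuity of $(x,t)\mapsto\sigma^k_{x,t}$ recorded after \eqref{eq5+}, so that $x\mapsto T^tf(x)$ is continuous and dominated convergence applies; infinite differentiability comes from the fact that $g$ is Schwartz, so all derivatives can be moved onto $g$ via the $\tau^x$-representation \eqref{eq12} of $(f\Ast{k}g)$, differentiating under the integral sign (the Dunkl translation $\tau^xg$ depends smoothly on $x$ for $g\in\mathcal{S}$).

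For the core identities, I would approximate $f$ by $f_n\in\mathcal{S}(\R^d)$ in $L^p(\R^d,d\mu_k)$ when $p<\infty$. By the Young inequality \eqref{eq19}--\eqref{eq20}, $f_n\Ast{\lambda_k}g_0\to f\Ast{\lambda_k}g_0$ and $f_n\Ast{k}g\to f\Ast{k}g$ in $L^r$ for the appropriate $r$ (in particular one may take $q=1$, $r=p$), hence $(f\Ast{\lambda_k}g_0)=(f\Ast{k}g)$ a.e., and since both sides are continuous (by the previous paragraph) they agree everywhere. The Fourier-transform identity \eqref{eq23} then follows in $\mathcal{S}'(\R^d)$ by continuity of $\mathcal{F}_k$ on tempered distributions: $\mathcal{F}_k(f_n)\mathcal{F}_k(g)\to\mathcal{F}_k(f)\mathcal{F}_k(g)$ in $\mathcal{S}'$ because $f_n\to f$ in $L^p\hookrightarrow\mathcal{S}'$ gives $\mathcal{F}_k(f_n)\to\mathcal{F}_k(f)$ in $\mathcal{S}'$, and multiplication by the fixed function $\mathcal{F}_k(g)\in\mathcal{S}(\R^d)$ (recall part~(3) of the Proposition on $\mathcal{F}_k$ preserving $\mathcal{S}$) is continuous on $\mathcal{S}'$; simultaneously $\mathcal{F}_k(f_n\Ast{\lambda_k}g_0)\to\mathcal{F}_k(f\Ast{\lambda_k}g_0)$ in $\mathcal{S}'$ since the convolutions converge in $L^p\hookrightarrow\mathcal{S}'$. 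Matching limits yields \eqref{eq23}.

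The case $p=\infty$ needs separate care because $\mathcal{S}(\R^d)$ is not dense in $L^\infty$. Here I would instead use a truncation: set $f_n=f\varphi_n$ with $\varphi_n\in\mathcal{S}(\R^d)$, $0\le\varphi_n\le1$, $\varphi_n\equiv1$ on $B_n$, and $\varphi_n\to1$ pointwise. Then $f_n\in L^1\cap C_b\cap C^\infty$ (it has, say, compact support up to a Schwartz factor; if exact compact support is wanted one takes $\varphi_n$ compactly supported and smooth — but then $f_n\notin\mathcal{S}$ unless $f\in\mathcal{S}$, so instead I would run the $L^1$-version of the density argument on $f_n$, which is legitimate since $f_n\in L^1$). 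For fixed $x$, the representations \eqref{eq11}--\eqref{eq12} together with the compact-support localization from Lemma~3.1(2) (the measures $\sigma^k_{x,t}$ and $\rho^k_{x,y}$ are supported in a bounded set once $x,t$ or $x,y$ are fixed and $g_0$ is Schwartz hence rapidly decaying) let me pass to the limit by dominated convergence in $n$ on each side, using $|T^tf_n(x)|\le\|f\|_\infty$ and $|\tau^xg_n(-y)|\le$ const. The identity \eqref{eq23} for $p=\infty$ then follows from the $p=1$ case applied to each $f_n$ plus the same $\mathcal{S}'$-limit argument, noting $f_n\to f$ in $\mathcal{S}'$.

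The main obstacle I anticipate is the $p=\infty$ case, specifically justifying that $(x,t)\mapsto T^tf(x)$ is genuinely continuous (not merely bounded) for $f\in C_b\cap C^\infty$ so that the everywhere-identity — not just a.e. — makes sense; this rests on the stated weak continuity of $(x,t)\mapsto\sigma^k_{x,t}$ combined with $f\in C_b$, and on the analogous continuity of $x\mapsto\tau^xg(-y)$ for $g\in\mathcal{S}_\mathrm{rad}$, which follows from R\"osler's representation \eqref{eq17} and weak continuity of $\rho^k_{x,y}$ in $x$. A secondary technical point is the interchange of $\mathcal{F}_k$ with the convolution for non-integrable $f$: this is handled purely distributionally as above, never pointwise.
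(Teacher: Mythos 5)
Your proposal follows essentially the same route as the paper's proof: Lemma~\ref{lem3.2} on $\mathcal{S}(\R^d)$, the bounds \eqref{eq6}, \eqref{eq19}, \eqref{eq13} and the representation \eqref{eq5} for membership in $L^{p}(\mathbb{R}^{d},d\mu_{k})\cap C_b(\R^d)$, a density/limiting argument combined with continuity of both convolutions to get the pointwise identity, and a purely distributional passage to the limit for \eqref{eq23}. Your explicit treatment of $p=\infty$ by truncation is in fact more careful than the paper, whose proof only invokes density of $\mathcal{S}(\R^d)$ in $L^{p}$ (valid for $p<\infty$) and is silent about $p=\infty$; that supplement is reasonable.

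The one step where your sketch under-justifies is the $C^{\infty}$ claim. Differentiating under the integral sign in $(f\Ast{k}g)(x)=\int_{\R^d}f(y)\tau^{x}g(-y)\,d\mu_k(y)$ requires, for $f\in L^{p}$, a bound on $y\mapsto\partial_x^{\alpha}\tau^{x}g(-y)$ in $L^{p'}(\mathbb{R}^{d},d\mu_{k})$ uniformly (locally) in $x$; smooth dependence of $\tau^{x}g$ on $x$ alone does not give this. The difficulty is that the kernel obtained by moving an ordinary partial derivative onto $g$ corresponds to the multiplier $\partial_x^{\alpha}e_k(x,z)\mathcal{F}_k(g)(z)$, which is not radial, and in this paper $L^{p'}$-boundedness of $\tau^{x}$ is only available on radial functions (inequality \eqref{eq21}); for $p>2$, i.e.\ $p'<2$, no usable bound is at hand. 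The paper circumvents exactly this by differentiating with powers of the Dunkl Laplacian: then the differentiated kernel is $\tau^{x}G(-y)$ with $G\in\mathcal{S}_\mathrm{rad}(\R^d)$, $\mathcal{F}_k(G)(z)=|z|^{2r}\mathcal{F}_k(g)(z)$, and H\"older's inequality together with \eqref{eq21} gives $\bigl|\int_{\R^d}f(y)\tau^{x}G(-y)\,d\mu_k(y)\bigr|\leq\|f\|_{p,d\mu_k}\|G\|_{p',d\mu_k}$ uniformly in $x$, which yields the uniform convergence needed for the $C^{\infty}$ conclusion. Your differentiation step should be rerouted through this radial device rather than through arbitrary partial derivatives; with that adjustment the argument matches the paper's.
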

\begin{proof}
First, in light of \eqref{eq6} and \eqref{eq19}, we
note that the convolution \eqref{eq11} belongs to $L^{p}(\mathbb{R}^{d},d\mu_{k})$. Moreover, \eqref{eq5} implies that it is
in $C_b(\R^d)$.

Taking into account that
$g\in \mathcal{S}(\R^d)$ and $
(-\Delta_k)^re_k(\Cdot, z)=|z|^{2r}e_k(\Cdot, z),
$
we have
\[
(-\Delta_k)^r(f\Ast{k}g)(x)=\int_{\R^d}f(y)\int_{\R^d}e_k(x, z)e_k(-y,
z)|z|^{2r}\mathcal{F}_k(g)(z)\,d\mu_{k}(z)\,d\mu_{k}(y).
\]
Let us show that the integral converges uniformly in $x$.
 We have
\[
\int_{\R^d}e_k(x, z)e_k(-y, z)|z|^{2r}\mathcal{F}_k(g)(z)\,d\mu_{k}(z)=\tau^xG(-y),
\]
where $G\in \mathcal{S}_\mathrm{rad}(\R^d)$ is such that $\mathcal{F}_k(G)(z)=|z|^{2r}\mathcal{F}_k(g)(z)$. Using H\"{o}lder's inequality and \eqref{eq21},
we get
\begin{align*}
&\Bigl|\int_{\R^d}f(y)\int_{\R^d}e_k(x, z)e_k(-y,
z)|z|^{2r}\mathcal{F}_k(g)(z)\,d\mu_{k}(z)\,d\mu_{k}(y)\Bigr|
\\ &\qquad=\Bigl|\int_{\R^d}f(y)\tau^xG(-y)\,d\mu_{k}(y)\Bigr|\leq \|f\|_{p, d\mu_{k}}\|\tau^xG\|_{p', d\mu_{k}}\leq \|f\|_{p, d\mu_{k}}\|G\|_{p', d\mu_{k}}.
\end{align*}
Thus,
 convolution \eqref{eq12} belongs to $C^{\infty}(\R^d)$.

By Lemma \ref{lem3.2}, the equality in \eqref{eq22} holds for any function $f\in
\mathcal{S}(\R^d)$. If $f\in L^{p}(\mathbb{R}^{d},d\mu_{k})$, $f_n\in
\mathcal{S}(\R^d)$ and $f_n\to f$ in $L^{p}(\mathbb{R}^{d},d\mu_{k})$, then
Minkowski's inequality and \eqref{eq6} give
\begin{equation}
\|((f-f_n)\Ast{\lambda_k}g_0)\|_{p, d\mu_{k}}\leq \|f-f_n\|_{p, d\mu_{k}}\,\|g_0\|_{1, d\nu_{\lambda_{k}}}, \label{eq24}
\end{equation}
while H\"{o}lder's inequality and \eqref{eq21} imply
\[
|((f-f_n)\Ast{k}g)(x)|\leq \|f-f_n\|_{p, d\mu_{k}}\,\|g\|_{p', d\mu_{k}}.
\]
By \eqref{eq24}, there is a subsequence $\{n_k\}$ such that
$(f_{n_k}\Ast{\lambda_k}g_0)(x)\to (f\Ast{\lambda_k}g_0)(x)$ a.e., therefore 
the relation $(f\Ast{\lambda_k}g_0)(x)=(f\Ast{k}g)(x)$
 holds almost everywhere. Since both convolutions are continuous, then it holds everywhere.

To prove the second equation of the lemma, we first remark that
Lemma \ref{lem3.2} implies that \eqref{eq23} holds pointwise for any $f\in \mathcal{S}(\R^d)$.
In the general case, since
$f\in L^{p}(\mathbb{R}^{d},d\mu_{k})$, $\, (f\Ast{\lambda_k}g_0)\in
L^{p}(\mathbb{R}^{d},d\mu_{k})$, and $\mathcal{F}_k(g)\in \mathcal{S}(\R^d)$,
the left and right hand sides of \eqref{eq23} are tempered distributions. Recall that
the Dunkl transform of tempered distribution is defined by
\[
\<\mathcal{F}_k(f),\varphi\>=\<f,\mathcal{F}_k(\varphi)\>,\quad f\in
\mathcal{S}'(\R^d),\quad \varphi\in \mathcal{S}(\R^d).
\]
Let
$f_n\in \mathcal{S}(\R^d)$ and $f_n\to f$ in $L^{p}(\mathbb{R}^{d},d\mu_{k})$,
$\varphi\in \mathcal{S}(\R^d)$. Then
\[
\<\mathcal{F}_k((f-f_n)\Ast{\lambda_k}g_0),\varphi\>=\<((f-f_n)\Ast{\lambda_k}g_0),\mathcal{F}_k(\varphi)\>,
\]
\[
\<\mathcal{F}_k(g)\mathcal{F}_k(f-f_n),\varphi\>=\<(f-f_n),\mathcal{F}_k(\mathcal{F}_k(g)\varphi)\>
\]
and
\[
|\<\mathcal{F}_k((f-f_n)\Ast{\lambda_k}g_0),\varphi\>|\leq \|f-f_n\|_{p,
d\mu_{k}}\,\|g_0\|_{1, d\nu_{\lambda_{k}}}\,\|\mathcal{F}_k(\varphi)\|_{p',
d\mu_{k}},
\]
\[
|\<\mathcal{F}_k(g)\mathcal{F}_k(f-f_n),\varphi\>|\leq \|f-f_n\|_{p, d\mu_{k}}\,\|\mathcal{F}_k(\mathcal{F}_k(g)\varphi)\|_{p', d\mu_{k}}.
\]
Thus, the proof of \eqref{eq23} is now complete.
\end{proof}

\bigskip
\section{Boundedness of the Riesz potential}
Recall that $\lambda_k=d/2-1+\sum_{a\in R_+}k(a)$. For
$0<\alpha<2\lambda_k+2$, the weighted Riesz potential $I_{\alpha}^kf$, is
defined on $\mathcal{S}(\R^d)$ (see \cite{ThaXu07}) by
\[
I_{\alpha}^kf(x)=(d_k^{\alpha})^{-1}\int_{\mathbb{R}^d}\tau^{-y}f(x)\frac{1}{|y|^{2\lambda_k+2-\alpha}}\,d\mu_k(y),
\]
where $d_k^{\alpha}=2^{-\lambda_k-1+\alpha}\Gamma(\alpha/2)/\Gamma(\lambda_k+1-\alpha/2)$.
We have, in the sense of tempered distributions,
\begin{equation*}
\mathcal{F}_k(I_{\alpha}^kf)(y)=|y|^{-\alpha}\mathcal{F}_k(f)(y).
\end{equation*}
Using \eqref{eq1} and \eqref{eq4}, we obtain
\begin{equation}
I_{\alpha}^kf(x)=(d_k^{\alpha})^{-1}\int_{0}^{\infty}T^{t}f(x)\frac{1}{t^{2\lambda_k+2-\alpha}}\,d\nu_{\lambda_k}(t).\label{eq26}
\end{equation}

To estimate the $L^p$-norm of this operator, we use the maximal
function defined for $f\in \mathcal{S}(\R^d)$ as follows 
(\cite{ThaXu05}):
\[
M_kf(x)=\sup_{r>0}\frac{|(f\Ast{k}\chi_{B_r})(x)|}{\int_{B_r}\,d\mu_k},
\]
where $\chi_{B_r}$ is the characteristic function of the Euclidean ball $B_r$ of radius $r$ centered at 0.

Using \eqref{eq1}, \eqref{eq4}, and \eqref{eq15}, we get
\begin{equation*}
M_kf(x)
=
\sup_{r>0}\frac{|\int_{0}^{r}T^{t}f(x)\,d\nu_{\lambda_k}(t)|}{\int_{0}^{r}\,d\nu_{\lambda_k}}.
\end{equation*}

It is proved in \cite{ThaXu05} that the maximal function is bounded on
$L^{p}(\mathbb{R}^{d},d\mu_{k})$, $1<p\leq \infty$,
\begin{equation}
\|M_kf\|_{p, d\mu_k}\lesssim \|f\|_{p, d\mu_k} \label{eq28}
\end{equation}
and it is of weak type $(1, 1)$, that is, 
\begin{equation}
\int_{\{x\colon M_kf(x)>a\}}\,d\mu_k\lesssim\frac{\|f\|_{1,d\mu_{k}}}{a},\qquad a>0. \label{eq29}
\end{equation}

\begin{theorem}\label{thm4.1}
If $1<p<q<\infty$, $0<\alpha<2\lambda_k+2$,
$\frac{1}{p}-\frac{1}{q}=\frac{\alpha}{2\lambda_k+2}$,
then
\begin{equation}
\|I_{\alpha}^kf\|_{q,d\mu_k} \lesssim\|f\|_{p,d\mu_k},\qquad f\in\mathcal{S}(\R^d). \label{eq30}
\end{equation}
The mapping $f\mapsto I_{\alpha}^kf$ is of weak type $(1, q)$, that is,
\begin{equation}
\int_{\{x\colon |I_{\alpha}^kf(x)|>a\}}\,d\mu_k\lesssim
\Bigl(\frac{\|f\|_{1,d\mu_{k}}}{a}\Bigr)^q.
\label{eq31}
\end{equation}
\end{theorem}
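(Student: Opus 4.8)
The plan is to adapt the classical Hedberg argument to the Dunkl setting, following \cite{ThaXu07}, and relying on the representation \eqref{eq26}, the mixed-norm bound \eqref{eq18} of Theorem~\ref{thm3.3}, and the maximal-function estimates \eqref{eq28} and \eqref{eq29}. Since $d\nu_{\lambda_k}(t)=b_{\lambda_k}t^{2\lambda_k+1}\,dt$, identity \eqref{eq26} reads $I_\alpha^kf(x)=c_{k,\alpha}\int_0^\infty T^tf(x)\,t^{\alpha-1}\,dt$. The core of the proof is the pointwise inequality
\[
|I_\alpha^kf(x)|\lesssim \|f\|_{p,d\mu_k}^{1-p/q}\,(M_kf(x))^{p/q},\qquad x\in\R^d,
\]
valid whenever $1\le p<q<\infty$ and $\frac1p-\frac1q=\frac{\alpha}{2\lambda_k+2}$ (so that $\frac{p\alpha}{2\lambda_k+2}=1-\frac pq$). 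Granting this, \eqref{eq30} for $1<p<q<\infty$ follows by raising to the $q$-th power, integrating, and using $\|M_kf\|_{p,d\mu_k}\lesssim\|f\|_{p,d\mu_k}$ from \eqref{eq28}; and \eqref{eq31} follows by taking $p=1$: then $\{|I_\alpha^kf|>a\}\subset\{M_kf>c\,(a\,\|f\|_{1,d\mu_k}^{1/q-1})^q\}$, so \eqref{eq29} yields $\mu_k(\{|I_\alpha^kf|>a\})\lesssim\|f\|_{1,d\mu_k}\cdot\|f\|_{1,d\mu_k}^{q-1}a^{-q}\asymp(\|f\|_{1,d\mu_k}/a)^q$.

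To prove the pointwise bound, fix $\delta>0$ and split $\int_0^\infty=\int_0^\delta+\int_\delta^\infty$. For the near part, introduce $\Phi(r)=\int_0^r T^tf(x)\,d\nu_{\lambda_k}(t)$, which by the very definition of $M_k$ satisfies $|\Phi(r)|\le\bigl(\int_0^r d\nu_{\lambda_k}\bigr)M_kf(x)\asymp r^{2\lambda_k+2}M_kf(x)$, and integrate by parts:
\[
\int_0^\delta T^tf(x)\,t^{\alpha-1}\,dt=b_{\lambda_k}^{-1}\Bigl(\delta^{\alpha-2\lambda_k-2}\Phi(\delta)+(2\lambda_k+2-\alpha)\int_0^\delta t^{\alpha-2\lambda_k-3}\Phi(t)\,dt\Bigr),
\]
the boundary contribution at $0$ vanishing because $\alpha>0$. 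Both surviving terms are $\lesssim\delta^\alpha M_kf(x)$, the integral one after inserting $|\Phi(t)|\lesssim t^{2\lambda_k+2}M_kf(x)$ and using $\int_0^\delta t^{\alpha-1}\,dt\asymp\delta^\alpha$. For the far part, apply Hölder's inequality with exponents $p,p'$, factoring $t^{\alpha-1}=t^{(2\lambda_k+1)/p}\cdot t^{\alpha-1-(2\lambda_k+1)/p}$: the $L^p(d\nu_{\lambda_k})$ factor of $T^tf(x)t^{(2\lambda_k+1)/p}$ is $\lesssim\|f\|_{p,d\mu_k}$ by \eqref{eq18}, while $\int_\delta^\infty t^{(\alpha-1-(2\lambda_k+1)/p)p'}\,dt$ converges precisely because $\alpha<(2\lambda_k+2)/p$ (equivalently $\frac1q>0$) and, after taking its $p'$-th root, contributes the factor $\delta^{\alpha-(2\lambda_k+2)/p}$. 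Hence $\bigl|\int_\delta^\infty T^tf(x)t^{\alpha-1}\,dt\bigr|\lesssim\delta^{\alpha-(2\lambda_k+2)/p}\|f\|_{p,d\mu_k}$. (For $p=1$ one bypasses Hölder, bounding $t^{\alpha-2\lambda_k-2}\le\delta^{\alpha-2\lambda_k-2}$ on $[\delta,\infty)$ and invoking \eqref{eq18} with $p=1$.) Adding the two estimates gives $|I_\alpha^kf(x)|\lesssim\delta^\alpha M_kf(x)+\delta^{\alpha-(2\lambda_k+2)/p}\|f\|_{p,d\mu_k}$, and choosing $\delta=(\|f\|_{p,d\mu_k}/M_kf(x))^{p/(2\lambda_k+2)}$ to balance the two terms produces the claimed pointwise inequality.

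I expect the only genuinely delicate point to be the near-part estimate: one cannot simply move the absolute value inside $\int_0^\delta T^tf(x)t^{\alpha-1}\,dt$ and compare directly with $M_kf$, since $M_k$ controls the \emph{signed} averages $\int_0^r T^tf(x)\,d\nu_{\lambda_k}(t)$ and not $\int_0^r|T^tf(x)|\,d\nu_{\lambda_k}(t)$; the integration by parts above (a continuous analogue of summation by parts over dyadic scales) is exactly what circumvents this. The far-part bound, the optimization in $\delta$, and the passage from the pointwise inequality to both \eqref{eq30} and \eqref{eq31} are then routine. It is worth noting that the whole scheme uses \eqref{eq18} in the full range $1\le p\le\infty$, which is precisely what Theorem~\ref{thm3.3} supplies.
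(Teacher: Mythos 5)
Your proposal is correct and follows essentially the same route as the paper: the same splitting of \eqref{eq26} at a scale $R=\delta$, the same integration by parts to control the near part by $\delta^{\alpha}M_kf(x)$, H\"older plus \eqref{eq18} for the far part, optimization in $\delta$ to get the pointwise Hedberg bound $|I_{\alpha}^kf|\lesssim (M_kf)^{p/q}\|f\|_{p,d\mu_k}^{1-p/q}$, and then \eqref{eq28}, \eqref{eq29} for \eqref{eq30} and \eqref{eq31}. Your bookkeeping of the exponents in the far-part estimate and in the choice of $\delta$ is in fact cleaner than the printed version.
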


\begin{remark}
In the case $k\equiv 0$, inequality \eqref{eq30} was proved by S.~Soboleff \cite{Sob38}
and G. O.~Thorin \cite{Tho48} and the weighted inequality 
was studied by E. M.~Stein and G.~Weiss \cite{SteWei58}. For the reflection
group $G=\Z_2^d$, Theorem 4.1 was proved in \cite{ThaXu07}. The general case
was obtained in \cite{HasMusSif09}. 
 We give another simple proof based on the $L^p$-boundedness of $T^t$ given
in Theorem \ref{thm3.3} and follow the proof given in \cite{ThaXu07} for
$G=\Z_2^d$.
\end{remark}

\begin{remark}
In Theorem \ref{thm4.1}, dealing with \eqref{eq30},
 we may assume that $f\in L^{p}(\mathbb{R}^{d},d\mu_{k})$, $1<p<\infty$,
 while proving \eqref{eq31},
 we may assume that $f\in L^{1}(\mathbb{R}^{d},d\mu_{k})$.
\end{remark}

\begin{proof}

Let $R>0$ be fixed. We write \eqref{eq26} as sum of two terms,
\begin{align}
I_{\alpha}^kf(x)&=(d_k^{\alpha})^{-1}
\int_{0}^{R}T^{t}f(x)\frac{1}{t^{2\lambda_k+2-\alpha}}\,d\nu_{\lambda_k}(t)\notag\\
&\qquad +(d_k^{\alpha})^{-1}\int_{R}^{\infty}T^{t}f(x)\frac{1}{t^{2\lambda_k+2-\alpha}}\,d\nu_{\lambda_k}(t)
=J_1+J_2. \label{eq32}
\end{align}
Integrating $J_1$ by parts, we obtain
\begin{align}
d_k^{\alpha}J_1&=\int_{0}^{R}t^{-(2\lambda_k+2-\alpha)}\,d
\Bigl(\int_{0}^{t}T^{s}f(x)\,d\nu_{\lambda_k}(s)\Bigr)\notag\\
&=R^{\alpha}\cdot
R^{-(2\lambda_k+2)}\int_{0}^{R}T^{s}f(x)\,d\nu_{\lambda_k}(s)\notag\\ &\qquad
+(2\lambda_k+2-\alpha)\int_{0}^{R}t^{-(2\lambda_k+2)}\int_{0}^{t}T^{s}f(x)\,d\nu_{\lambda_k}(s)\, t^{\alpha-1}\,dt.\label{eq33}
\end{align}
Here we have used that
\[
\lim_{\varepsilon\to 0+0}\varepsilon^{\alpha}\cdot
\varepsilon^{-(2\lambda_k+2)}\int_{0}^{\varepsilon}T^{s}f(x)\,d\nu_{\lambda_k}(s)=0,
\]
since
\[
\varepsilon^{\alpha}\cdot\varepsilon^{-(2\lambda_k+2)}
\Bigl|\int_{0}^{\varepsilon}T^{s}f(x)\,d\nu_{\lambda_k}(s)\Bigr|
\lesssim
\varepsilon^{\alpha}\sup_{\varepsilon>0}\frac{|\int_{0}^{\varepsilon}T^{t}f(x)\,d\nu_{\lambda_k}(t)|}{\int_{0}^{\varepsilon}\,d\nu_{\lambda_k}}
=\varepsilon^{\alpha}M_kf(x).
\]
In light of \eqref{eq33}, we have
\begin{equation}
|J_1|\lesssim R^{\alpha}M_kf(x)+\int_{0}^{R}M_kf(x)t^{\alpha-1}\,dt\lesssim R^{\alpha}M_kf(x). \label{eq31+}
\end{equation}

To estimate $J_2$, we use H\"{o}lder's inequality, the relation
$\frac{1}{p}-\frac{1}{q}=\frac{\alpha}{2\lambda_k+2}$ and \eqref{eq18}:
\begin{align*}
|J_2|&\leq (d_k^{\alpha})^{-1}
\Bigl(\int_{R}^{\infty}t^{-(2\lambda_k+2-\alpha)p'}\,d\nu_{\lambda_k}(t)\Bigr)^{1/p'}\|T^tf(x)\|_{p,
d\nu_{\lambda_k}}\\
&\lesssim R^{-(2\lambda_k+2)q}\|f\|_{p, d\mu_k}.
\end{align*}
This, \eqref{eq32} and \eqref{eq31+} yield
\[
|I_{\alpha}^kf(x)|\lesssim R^{\alpha}M_kf(x)+R^{-(2\lambda_k+2)q}\|f\|_{p, d\mu_k},
\]
for any $R>0$.
Choosing $R=\bigl(M_kf(x)/\|f\|_{p, d\mu_k}\bigr)^{-q/(2\lambda_k+2)}$ implies the inequality
\begin{equation}
|I_{\alpha}^kf(x)|\lesssim (M_kf(x))^{p/q}(\|f\|_{p, d\mu_k})^{1-p/q}\label{eq32+}
\end{equation}
for any $1\le p<q$.
Integrating \eqref{eq32+} and using \eqref{eq28}, we have
\[
\|I_{\alpha}^kf\|_{q,d\mu_k}\lesssim \|M_kf\|_{p, d\mu_k}^{p/q}\|f\|_{p,
d\mu_k}^{1-p/q}\lesssim \|f\|_{p, d\mu_k},\qquad p>1.
\]

Finally, we use inequality \eqref{eq29} for the maximal
function and inequality \eqref{eq32+} with $p=1$ to obtain
\[
\int_{\{x\colon |I_{\alpha}^kf(x)|>a\}}\,d\mu_k\leq \int_{\{x\colon
(M_kf(x))^{1/q}(\|f\|_{1, d\mu_k})^{1-1/q}\gtrsim a\}}\,d\mu_k\lesssim
\Bigl(\frac{\|f\|_{1,d\mu_{k}}}{a}\Bigr)^q.
\qedhere
\]
\end{proof}

\bigskip
\section{
Entire functions
of exponential type and
Plancherel--Polya--Boas-type inequalities}

Let $\mathbb{C}^d$ be the complex Euclidean space of $d$ dimensions. Let also
$z=(z_1,\dots,z_d)\in \mathbb{C}^d$,
$\mathrm{Im}\,z=(\mathrm{Im}\,z_1,\dots,\mathrm{Im}\,z_d)$, and $\sigma>0$.

In this section we define several classes of entire functions of exponential
type and study their interrelations. Moreover, we prove the
Plancherel--Polya--Boas-type estimates and the Paley--Wiener-type theorems.
These classes will be used later to study the approximation of functions on
$\mathbb{R}^d$ by entire functions of exponential type.

First, we define two classes of entire functions: $B_{p, k}^\sigma$ and
$\widetilde{B}_{p, k}^\sigma$. We say that a function $f\in B_{p, k}^\sigma$
if $f\in L^{p}(\mathbb{R}^{d},d\mu_{k})$ is such that its analytic
continuation to $\mathbb{C}^d$ satisfies
\begin{equation*}
|f(z)|\leq c_{\varepsilon}e^{(\sigma+\varepsilon)|z|},\quad \forall
\varepsilon>0,\ \forall z\in \mathbb{C}.
\end{equation*}
{ The smallest $\sigma=\sigma_{f}$ in this inequality is called a spherical type of $f$.}
In other words, the class $B_{p, k}^\sigma$ is the collection of all entire
functions of spherical type at most $\sigma$.

We say that a function $f\in
\widetilde{B}_{p, k}^\sigma$ if $f\in L^{p}(\mathbb{R}^{d},d\mu_{k})$ is such
that its analytic continuation to $\mathbb{C}^d$ satisfies
\begin{equation*}
|f(z)|\leq c_{f}e^{\sigma|\mathrm{Im}\,z|},\quad \forall z\in \mathbb{C}^d.
\end{equation*}
Historically, functions from $\widetilde{B}_{p, k}^\sigma$ were basic objects
in the Dunkl harmonic analysis. It is clear that $\widetilde{B}_{p,
k}^\sigma\subset B_{p, k}^\sigma$. Moreover, if $k\equiv 0$, then both classes
coincide (see, e.g., \cite{NesWil78}). Indeed, if $f\in B_{p, 0}^\sigma$,
$1\leq p<\infty$, then Nikol'skii's inequality \cite[3.3.5]{Nik75}
\[
\|f\|_{\infty}\leq 2^d\sigma^{d/p}\|f\|_{p,d\mu_0}
\]
and the inequality \cite[3.2.6]{Nik75}
\[
\|f(\Cdot+iy)\|_{\infty}\leq e^{\sigma |y|}\|f\|_{\infty},\quad y\in \R^{d},
\]
imply that, for $z=x+iy\in \mathbb{C}^{d}$,
\[
|f(z)|\leq 2^d\sigma^{d/p} \|f\|_{p,d\mu_0}e^{\sigma |\mathrm{Im}\,z|},
\]
i.e., $f\in \widetilde{B}_{p, 0}^\sigma$.

In fact, the classes $B_{p, k}^\sigma$ and $\widetilde{B}_{p, k}^\sigma$
coincide in the weighted case ($k\neq 0$) as well. To see that it is enough to
show that functions from $B_{p, k}^\sigma$ are bounded on $\mathbb{R}^d$.

\begin{theorem}\label{thm5.1}
 If $0<p<\infty$, then $B_{p, k}^\sigma=\widetilde{B}_{p, k}^\sigma$.
\end{theorem}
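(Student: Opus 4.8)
The plan is to show $B_{p,k}^\sigma\subseteq\widetilde B_{p,k}^\sigma$ (the reverse inclusion being trivial), and for this it suffices, as indicated in the text, to prove that every $f\in B_{p,k}^\sigma$ is bounded on $\R^d$; once boundedness is known, one applies the classical one-variable Bernstein--type estimate along each complex line through a real point to upgrade the exponential bound $|f(z)|\le c_\varepsilon e^{(\sigma+\varepsilon)|z|}$ into the Paley--Wiener--type bound $|f(z)|\le c_f e^{\sigma|\mathrm{Im}\,z|}$. Concretely, for fixed $x\in\R^d$ and a unit vector $\theta$, the entire function of one variable $w\mapsto f(x+w\theta)$ is of exponential type $\le\sigma$ and bounded on $\R$ by $\|f\|_\infty$, so by the Phragmén--Lindelöf/Bernstein inequality $|f(x+w\theta)|\le\|f\|_\infty e^{\sigma|\mathrm{Im}\,w|}$; taking $w$ imaginary and optimizing over $\theta$ gives the claim with $c_f=\|f\|_\infty$.

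So the real content is: $f\in L^p(\R^d,d\mu_k)$ together with the exponential-type condition forces $f\in L^\infty$. First I would reduce to the case $1\le p<\infty$ for the main argument (for $0<p<1$ one can bootstrap: a local sup bound in terms of the $L^1$ norm on a slightly larger ball combined with the $L^p$ bound and interpolation of the exponential growth recovers boundedness, or one first proves $f\in L^1$). For $p\ge 1$ the natural route is a Dunkl analogue of the classical fact that an $L^p$ function of exponential type is bandlimited: show that $\mathcal F_k(f)$, a priori a tempered distribution, is supported in the ball $B_\sigma$. This follows from the exponential-type bound by the standard Paley--Wiener argument (shifting contours / testing against $e_k(\cdot,z)$ for $z$ in a complex neighbourhood and using $|e_k(x,y)|\le1$ together with the known complex bounds on the Dunkl kernel $|e_k(x,z)|\le e^{|x||\mathrm{Im}\,z|}$). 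Then, knowing $\supp\mathcal F_k(f)\subset B_\sigma$, I would convolve: pick a radial Schwartz function $g$ with $g_0\in\mathcal S(\R_+)$ such that $\mathcal F_k(g)\equiv 1$ on $B_\sigma$; then by Lemma~\ref{lem3.6} (applied once $f$ is known to be, say, continuous — which the analyticity gives) we get $f=f\Ast{k}g$ in the sense of \eqref{eq23}, and the representation \eqref{eq5} of $T^t$ as an average against a probability measure yields the pointwise bound
\[
|f(x)|=|(f\Ast{\lambda_k}g_0)(x)|\le\int_0^\infty|T^tf(x)|\,|g_0(t)|\,d\nu_{\lambda_k}(t)\le\|f\|_{p,d\mu_k}\,\|g_0\|_{p',d\nu_{\lambda_k}},
\]
using Hölder together with Theorem~\ref{thm3.3} (the estimate \eqref{eq18} for $\|T^tf(x)\|_{p,d\nu_{\lambda_k}}$ in the $t$-variable). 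This is exactly where the positive $L^p$-bounded translation $T^t$ does the work that the non-positive $\tau^y$ could not.

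The main obstacle I anticipate is the rigorous identification $f=f\Ast{k}g$ for a general $f\in B_{p,k}^\sigma$: Lemma~\ref{lem3.6} is stated for $f\in L^p\cap C_b\cap C^\infty$, so one must first establish that members of $B_{p,k}^\sigma$ are continuous and bounded, which is circular unless handled carefully. I would break the circularity by a mollification argument: for $\delta>0$ let $f_\delta=f\Ast{\lambda_k}\phi_{\delta,0}$ with $\phi_\delta$ a radial approximate identity whose Dunkl transform is supported in $B_\delta$; then $\mathcal F_k(f_\delta)$ is supported in $B_{\sigma+\delta}$, $f_\delta\in L^p$ by \eqref{eq19}, and $f_\delta$ is smooth and bounded (it is itself a convolution with a Schwartz function, so \eqref{eq5}–\eqref{eq22} apply cleanly); run the convolution-with-$g$ argument for $f_\delta$ to get $\|f_\delta\|_\infty\lesssim\|f_\delta\|_{p,d\mu_k}\le\|f\|_{p,d\mu_k}$ with a constant depending only on $\sigma$ (choosing $g$ adapted to $B_{\sigma+1}$, say, uniformly for $\delta<1$); and finally let $\delta\to0$, using $f_\delta\to f$ in $L^p$ and a.e., to conclude $\|f\|_\infty\le C(\sigma)\|f\|_{p,d\mu_k}<\infty$. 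A secondary technical point is verifying the Paley--Wiener support statement for the Dunkl transform of an $L^p$ (not $L^1$ or $L^2$) function; this is handled by the density/mollification step as well, since $f_\delta\in L^1\cap L^2$ and one passes to the limit in the distributional sense.
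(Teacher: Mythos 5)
Your reduction of the theorem to the boundedness of $f$ on $\R^d$, and the final Phragm\'en--Lindel\"of step upgrading boundedness to the bound $|f(z)|\le \|f\|_\infty e^{\sigma|\mathrm{Im}\,z|}$, are correct and agree with the paper's strategy at that level. The gap is in how you propose to obtain boundedness. Your key step is the claim that $\supp\mathcal{F}_k(f)\subset B_\sigma$ for $f\in L^{p}(\R^d,d\mu_k)$ satisfying only the $B_{p,k}^\sigma$ bound $|f(z)|\le c_\varepsilon e^{(\sigma+\varepsilon)|z|}$. The ``standard Paley--Wiener argument'' you invoke (contour shifting, testing against $e_k(\cdot,z)$) requires control of $f$ on complex strips of the form $|f(x+iy)|\lesssim e^{\sigma|y|}$ with some integrability or temperedness in $x$ --- that is, essentially the $\widetilde{B}$ bound you are trying to prove. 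With only the stated hypotheses you have no pointwise control of $f$ on $\R^d$ (the weighted $L^p$ norm gives nothing near the zero set of $v_k$) and nothing off $\R^d$ beyond the $\varepsilon$-type bound. The classical chain of implications runs the other way: one first proves boundedness (Plancherel--P\'olya/Nikol'skii, via local subharmonicity of $|f|^p$) and only then deduces the spectral support. That local argument is precisely what fails in the weighted setting, because near a hyperplane $\<a,x\>=0$ the measure $d\mu_k$ is too thin to control local sup-norms; overcoming this degeneracy is the whole content of the paper's Lemmas \ref{lem5.6}--\ref{lem5.8} and Theorems \ref{thm5.3}, \ref{thm5.4}, \ref{thm5.9} (weighted Plancherel--P\'olya--Boas sampling along sequences avoiding the zeros of the weight), which reduce Theorem \ref{thm5.1} to the unweighted embedding $B_{p,v}^{\bsigma}\subset L^p(\R^d)$. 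Your proposal never confronts this difficulty.

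The mollification does not repair the circularity. If $\mathcal{F}_k(\phi_\delta)$ is supported in $B_\delta$, then $\mathcal{F}_k(f_\delta)=\mathcal{F}_k(f)\mathcal{F}_k(\phi_\delta)$ is supported in $B_\delta$, not $B_{\sigma+\delta}$; and on any reading of the construction, locating $\supp\mathcal{F}_k(f_\delta)$ in $B_{\sigma+\delta}$ presupposes either the unknown support of $\mathcal{F}_k(f)$ or a Paley--Wiener theorem applied to $f_\delta$, which again needs an $e^{(\sigma+\delta)|\mathrm{Im}\,z|}$ bound together with the analyticity of $f_\delta$ --- not evident from R\"osler's representation \eqref{eq5}, which is given only for real arguments. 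The assertion $f_\delta\in L^1\cap L^2$ is false for $p>2$: Young's inequality \eqref{eq19} only raises the exponent. For $0<p<1$ the situation is worse: $\mathcal{F}_k(f)$ is not even defined as a tempered distribution before boundedness is known, and the ``local sup via the $L^1$ norm on a slightly larger ball'' bootstrap is blocked by the vanishing weight for exactly the same reason as above. In short, your concluding estimate via H\"older and \eqref{eq18} (essentially the paper's Nikol'skii inequality, Theorem \ref{thm7.1}, whose proof rests on Theorem \ref{thm5.10} and hence on Theorem \ref{thm5.1}) is sound once the spectral support is known, but your derivation of that support is circular, so the proof as proposed does not go through.
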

We will actually prove the more general statement. Let $m\in \Z_+,$
$\alpha^1,\dots,\alpha^m\in\mathbb{R}^d\setminus\{0\}$, $k_0\ge 0,$ $k_1,\dots,k_m>0$,
and
\begin{equation}\label{eq35+}
v(x)=|x|^{k_0}\prod_{j=1}^m|\<\alpha^j,x\>|^{k_j}
\end{equation}
be the power weight.
The Dunkl weight is a particular case of such weighted functions.
{The weighted function \eqref{eq35+} arises in the study of the generalized Fourier transform (see, e.g., \cite{BenKobOrs12}).}

Let $L^{p,v}(\mathbb{R}^d)$, $0<p<\infty$, be the space of complex-valued Lebesgue measurable functions $f$ for which
\[
\|f\|_{p,v}=\Bigl(\int_{\mathbb{R}^d}|f(x)|^pv(x)\,dx\Bigr)^{1/p}<\infty.
\]

Let $\bsigma=(\sigma_1,\dots,\sigma_d)$, $\sigma_1,\dots,\sigma_d>0$.
 Again, let us define three \textit{anisotropic} classes of entire
functions: $B^{\bsigma}$, $B_{p,v}^{\bsigma}$, and
$\widetilde{B}_{p,v}^{\bsigma}$.

We say that a function $f$ defined on $\mathbb{R}^d$ belongs to $B^{\bsigma}$ if
its analytic continuation to $\mathbb{C}^d$ satisfies
\[
|f(z)|\leq
c_{\varepsilon}e^{(\sigma_1+\varepsilon)|z_1|+\dots+(\sigma_d+\varepsilon)|z_d|},\quad
\forall \varepsilon>0,\ \forall z\in \mathbb{C}^d.
\]
We say that a function $f\in B_{p,v}^{\bsigma}$ if $f\in L^{p}(\mathbb{R}^{d},d\mu_{k})$ is such that its analytic continuation to $\mathbb{C}^d$ belongs to $B^{\bsigma}$.

We say that a function $f\in \widetilde{B}_{p,v}^{\bsigma}$ if $f\in L^{p}(\mathbb{R}^{d},d\mu_{k})$ is such that its analytic continuation to $\mathbb{C}^d$ satisfies
\[
|f(z)|\leq c_fe^{\sigma_1|\mathrm{Im}\,z_1|+\dots+\sigma_d|\mathrm{Im}\,z_d|},\quad
\forall z\in\mathbb{C}^d.
\]
We will use the notation $L^p(\mathbb{R}^d)$, $\|\Cdot\,\|_{p}$,
$B_p^{\bsigma}$ and $\widetilde{B}_p^{\bsigma}$ in the case of the unit weight,
i.e., $v\equiv1$.

\begin{theorem}\label{thm5.2}
If $0<p<\infty$, then

\textup{(1)}~$B_{p,v}^{\bsigma}\subset B_{p}^{\bsigma}$,

\textup{(2)}~$B_{p,v}^{\bsigma}=\widetilde{B}_{p,v}^{\bsigma}$,

\textup{(3)}~$B_{p,v}^{\sigma}=\widetilde{B}_{p,v}^{\sigma}$.
\end{theorem}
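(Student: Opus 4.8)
\textbf{Proof proposal for Theorem \ref{thm5.2}.}
The plan is to reduce everything to a one-dimensional estimate and then bootstrap from one variable to the product structure. Part (3) is just the isotropic special case $\sigma_1=\dots=\sigma_d=\sigma$ of parts (1)--(2) once one notes that the isotropic bound $|f(z)|\le c_\varepsilon e^{(\sigma+\varepsilon)|z|}$ sits between the anisotropic one with $\sigma_j=\sigma$ and the anisotropic one with $\sigma_j=\sqrt d\,\sigma$ (using $|z|\le |z_1|+\dots+|z_d|\le\sqrt d\,|z|$), so it suffices to prove (1) and (2). The implication $\widetilde B_{p,v}^{\bsigma}\subset B_{p,v}^{\bsigma}$ is trivial since $|\mathrm{Im}\,z_j|\le|z_j|$, so the content of (2) is $B_{p,v}^{\bsigma}\subset\widetilde B_{p,v}^{\bsigma}$, and by (1) it is enough to show that a function $f\in B_{p,v}^{\bsigma}$ is bounded on $\mathbb R^d$: indeed, once $\|f\|_\infty<\infty$, the classical one-variable Phragm\'en--Lindel\"of/Bernstein argument applied in each variable separately (freezing the other $d-1$ real variables, as in the $k\equiv0$ computation reproduced in the excerpt via \cite[3.2.6]{Nik75}) upgrades the type bound in $z_j$ to growth controlled by $e^{\sigma_j|\mathrm{Im}\,z_j|}$, and iterating over $j=1,\dots,d$ gives $|f(z)|\le\|f\|_\infty e^{\sigma_1|\mathrm{Im}\,z_1|+\dots+\sigma_d|\mathrm{Im}\,z_d|}$, i.e. $f\in\widetilde B_{p,v}^{\bsigma}$.

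So the crux is: if $f\in L^{p,v}(\mathbb R^d)$ with $v$ the power weight \eqref{eq35+} and $f$ extends to an entire function of anisotropic exponential type $\bsigma$, then $f\in L^\infty(\mathbb R^d)$; this simultaneously yields (1), since boundedness plus $f\in L^{p,v}$ forces $f\in L^p$ by interpolating against the weight near its zero set. The approach I would take is a local Nikol'skii-type inequality obtained by a subharmonicity/mean-value argument. Fix $x_0\in\mathbb R^d$; on a cube $Q$ of fixed side (say $1$) centered at $x_0$, the function $|f|^p$ is plurisubharmonic, so $|f(x_0)|^p$ is dominated by an average of $|f|^p$ over a polydisc of radius $\sim 1$ around $x_0$ in $\mathbb C^d$; using the type bound, $|f(x_0)|^p\lesssim e^{C(\sigma_1+\dots+\sigma_d)}\int_{Q'}|f(x)|^p\,dx$ where $Q'$ is the real cube obtained by widening $Q$ slightly. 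The remaining point is to restore the weight: one must bound $\int_{Q'}|f|^p\,dx$ by $\int_{Q'}|f|^p v\,dx$ up to a constant depending on $x_0$, which fails only where $v$ is small, namely near the hyperplanes $\langle\alpha^j,x\rangle=0$ and near the origin.

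The main obstacle, therefore, is handling the degeneracy of $v$. I would dispose of it by a second, weight-adapted mean-value inequality: near a point $x_0$ lying on one of the singular hyperplanes (or at the origin), instead of the Lebesgue average use the fact that $|f|^p$ is subharmonic and $v$ is an $A_\infty$-type (in fact, a Muckenhoupt-class) weight in the transverse variable — concretely, $|t|^{k_j}$ with $k_j>0$ is locally integrable and its reciprocal local integrability lets one write $\int_{Q'}|f|^p\,dx\lesssim (\text{const depending on }x_0)\int_{Q''}|f|^p v\,dx$ after possibly enlarging the cube again, because the zero set of $v$ has measure zero and $f\in L^{p,v}$ already controls $f$ off that set. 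An alternative, perhaps cleaner, route: prove the bound first on the region $\{x: v(x)\ge\delta\}$ for each $\delta>0$ getting $\|f\|_{L^\infty(\{v\ge\delta\})}\lesssim\delta^{-1/p}\|f\|_{p,v}$ with a constant independent of $x_0$ inside that region but depending on $\delta$, then observe that $f$, being continuous, and the set $\{v\ge\delta\}$ exhausting $\mathbb R^d$ up to a null set as $\delta\to0$, combined with a uniform type bound, force a uniform sup bound — here one plugs in the type growth to trade a small $\delta$ against a controlled exponential factor on a cube of size $\sim\log(1/\delta)$, optimizing to get $\|f\|_\infty<\infty$ outright. Either way the heart of the matter is a weighted Plancherel--Polya/Nikol'skii inequality localized on cubes, and once $f$ is known bounded the passage to $\widetilde B_{p,v}^{\bsigma}$ and the inclusions (1), (3) are routine.
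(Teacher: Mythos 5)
Your reduction of (2) and (3) to (1) — i.e. to showing that every $f\in B^{\bsigma}_{p,v}$ is bounded and lies in $L^p(\R^d)$ — matches the paper's skeleton, but the two steps you delegate to ``weight-adapted mean-value'' arguments are exactly where the theorem lives, and the justifications you sketch do not hold up. First, the claim that boundedness plus $f\in L^{p,v}(\R^d)$ forces $f\in L^p(\R^d)$ ``by interpolating against the weight near its zero set'' is unjustified and false for general bounded functions: the zero set of $v$ is the union of the hyperplanes $\<\alpha^j,x\>=0$ (plus the origin), which is unbounded, so $|\{v<1\}|=\infty$ and a sup bound gives no control of $\int_{\{v<1\}}|f|^p\,dx$; one needs a quantitative statement, uniform along the hyperplanes, that an entire function of exponential type cannot concentrate near them. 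Second, the local inequality $\int_{Q'}|f|^p\,dx\lesssim C(x_0)\int_{Q''}|f|^pv\,dx$ cannot be extracted from ``$A_\infty$'' or ``reciprocal local integrability'' of $v$: the function $1/|t|^{k_j}$ is not even locally integrable when $k_j\ge1$, and $A_\infty$/reverse-H\"older properties compare measures of sets — they never bound an unweighted integral of $|f|^p$ by a weighted one when $v$ vanishes inside the cube (for general $f\in L^{p,v}$ such an inequality is simply false). Your alternative route ($\|f\|_{L^\infty(\{v\ge\delta\})}\lesssim\delta^{-1/p}\|f\|_{p,v}$ and then ``optimizing'' as $\delta\to0$) has the same defect: the constant blows up and the type bound alone does not transfer it to the zero set. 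You correctly identify that the heart of the matter is a weighted Plancherel--Polya/Nikol'ski\u{\i} inequality, but the proposal never proves one, so the crux of (1)–(3) remains open.

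The missing ingredient is precisely what the paper constructs. Lemma \ref{lem5.6} produces an even entire function of exponential type comparable to the one-dimensional weight profile, from which one builds $w$ entire of type $2\bmu$ with $w^p\lesssim v$ everywhere and $w^p\gtrsim v\gtrsim1$ at points a fixed distance $\delta$ from the zero set of $v$; Lemmas \ref{lem5.7}–\ref{lem5.8} construct sampling sequences $\lambda^{(n)}$ that are separated, close to a lattice with spacing parameter $\ba>\bsigma$, and avoid the zeros of all factors $\xi_j$ of $v$; and the unweighted Plancherel--Polya and Boas-type inequalities (Theorems \ref{thm5.3} and \ref{thm5.4}, the latter applied with $\bsigma'=\bsigma+2s\bmu<\ba$) give the chain
\begin{equation*}
\int_{\R^d}|f|^p\,dx\lesssim\sum_{n}|f(\lambda^{(n)})|^p\lesssim\sum_{n}|f(\lambda^{(n)})w(s\lambda^{(n)})|^p\lesssim\int_{\R^d}|f(x)w(sx)|^p\,dx\lesssim\int_{\R^d}|f|^pv\,dx,
\end{equation*}
which is part (1); boundedness and parts (2)–(3) then follow by the routine Phragm\'en--Lindel\"of step you describe. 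If you prefer your fiberwise picture, you would still need a one-dimensional weighted sampling inequality (Duffin--Schaeffer/Boas type) in the variable transverse to each hyperplane, uniform along it — which is the same machinery in disguise; the cube-by-cube subharmonicity argument by itself cannot deliver it.
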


\begin{remark}
(i) Part (3) of Theorem \ref{thm5.2} implies Theorem \ref{thm5.1}.

\smallbreak
(ii) Note that in some particular cases ($k_0=0$ and $p\geq 1$) a similar
result was discussed in \cite{IvaSmiLiu14}.
\end{remark}

Parts (2) and (3) of Theorem \ref{thm5.2} follows from (1). Indeed, the
embedding in (1) implies that $B_{p,v}^{\bsigma}\subset B_{p}^{\bsigma}\subset
B_{\infty}^{\bsigma}$. Hence, a function $f\in B_{p,v}^{\bsigma}$ is bounded on
$\R^{d}$ and then $f\in \widetilde{B}_{p,v}^{\bsigma}$, which gives (2).
Further, there holds $B_{p,v}^{\sigma}\subset B_{p,v}^{\bsigma}$, where
$\bsigma=(\sigma,\dots,\sigma)\in \R_{+}^{d}$ since $|z|\le
|z_{1}|+\dots+|z_{d}|$. Hence, similar to the above, we have
$B_{p,v}^{\sigma}\subset B_{\infty}^{\bsigma}$ and (3) follows. Thus, to prove
Theorem \ref{thm5.2}, it is sufficient to verify part (1).

The main difficulty to prove Theorem \ref{thm5.2} is that the weight $v(x)$
vanishes. In order to overcome this problem we will first prove two-sided
estimates of the $L^p$ norm of an entire functions in terms of the
weighted $l_p$ norm, $\bigl(\sum_{n}
v(\lambda^{(n)})|f(\lambda^{(n)})|^p\bigr)^{1/p}$, $0<p<\infty$, where $v$ does
not vanish at $\{\lambda^{(n)}\}\subset \mathbb{R}^d$.

Such estimates are of their own interest. They
generalize the Plancherel--Polya inequality (\cite{PlaPol37}, \cite[Chapt. 6, 6.7.15]{Boa54})
\[
\sum_{k\in\mathbb{Z}}|f(\lambda_k)|^p\leq
c(\delta,\sigma,p)\int_{-\infty}^{\infty}|f(x)|^p\,dx,\quad 0<p<\infty,
\]
where $\lambda_k$ is increasing sequence such that $\lambda_{k+1}-\lambda_k\geq
\delta>0$, and $f$ is an entire function of exponential type at most $\sigma$,
and the Boas inequality \cite{Boa52}, \cite[Chapt. 10, 10.6.8]{Boa54},
\begin{equation}
\int_{-\infty}^{\infty}|f(x)|^p\,dx\leq
C(\delta,L,\sigma,p)\sum_{k\in\mathbb{Z}}|f(\lambda_k)|^p,\quad 0<p<\infty,
\label{eq36}
\end{equation}
where, additionally, $\left|\lambda_k-\frac{\pi}{\sigma}\,k\right|\leq L$ and
the type of $f$ is $<\sigma$.

We write
$\bsigma'=(\sigma_1',\dots,\sigma_d')<\bsigma=(\sigma_1,\dots,\sigma_d)$ if
$\sigma_1'<\sigma_1,\dots,\sigma_d'<\sigma_d$. Let $n=(n_1,\dots,n_d)\in
\mathbb{Z}^d$ and $\lambda^{(n)}\colon \mathbb{Z}^d\rightarrow \mathbb{R}^d$.
In what follows we consider the sequences of the following type:
\begin{equation}
\lambda^{(n)}=(\lambda_1(n_1), \lambda_2(n_1, n_2),\dots, \lambda_d(n_1,\dots, n_d)), \label{eq37}
\end{equation}
where $\lambda_{i}^{(n)}=\lambda_i(n_1,\dots,n_i)$ are sequences increasing with
respect to $n_i$, $i=1,\dots,d$ for fixed $n_1,\dots,n_{i-1}$.

\begin{definition} We say that the sequence $\lambda^{(n)}$ satisfies
\textit{the separation condition} $\Omega_\mathrm{sep}[\delta]$, $\delta>0$, if, for
any $n\in \mathbb{Z}^d$,
\[
\lambda_i(n_1,\dots,n_{i-1},n_i+1)-\lambda_i(n_1,\dots,n_{i-1},n_i)\geq\delta,\quad
i=1,\dots,d.
\]
\end{definition}
Note that if the sequence $\lambda^{(n)}$ satisfies the separation condition
$\Omega_\mathrm{sep}[\delta]$ then it also satisfies the condition $\inf_{n\ne
m}|\lambda^{(n)}-\lambda^{(m)}|> 0$.

\begin{definition} We say that the sequence $\lambda^{(n)}$ satisfies
\textit{the close-lattice condition} $\Omega_\mathrm{lat}[\ba,L]$,
$\ba=(a_1,\dots,a_d)>0$, $L>0$, if, for any $n\in \mathbb{Z}^d$,
\[
\Bigl|\lambda_i(n_1,\dots,n_i)-\frac{\pi n_i}{a_i}\Bigr|\leq L,\quad
i=1,\dots,d.
\]
\end{definition}
We start with the Plancherel--Polya-type inequality.
\begin{theorem}\label{thm5.3}
Assume that $\lambda^{(n)}$ satisfies
the condition $\inf_{n\ne m}|\lambda^{(n)}-\lambda^{(m)}|> 0$. Then for $f\in B^{\bsigma}_p$, $0<p<\infty$, we have
\[
\sum_{n\in\mathbb{Z}^d}|f(\lambda^{(n)})|^p\lesssim\int_{\mathbb{R}^d}|f(x)|^p\,dx.
\]
\end{theorem}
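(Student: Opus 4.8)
The plan is to reduce the $d$-dimensional estimate to a one-dimensional Plancherel--Polya inequality applied iteratively, coordinate by coordinate, using the special triangular structure \eqref{eq37} of the sequence $\lambda^{(n)}$. First I would record the classical one-dimensional fact: if $g$ is an entire function of exponential type at most $\sigma$ in one variable with $g\in L^p(\R)$, $0<p<\infty$, and $\{\mu_j\}_{j\in\Z}$ is increasing with $\mu_{j+1}-\mu_j\ge\delta>0$, then $\sum_j|g(\mu_j)|^p\le c(\delta,\sigma,p)\int_\R|g(x)|^p\,dx$; this is the Plancherel--Polya inequality of \cite{PlaPol37}, \cite[Chapt.~6, 6.7.15]{Boa54}. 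A minor technical point: the hypothesis here is only $\inf_{n\ne m}|\lambda^{(n)}-\lambda^{(m)}|>0$, not a per-coordinate gap, so I would first observe that this separation of the vectors $\lambda^{(n)}$ forces, for each fixed $(n_1,\dots,n_{i-1})$, the one-dimensional slices $n_i\mapsto\lambda_i(n_1,\dots,n_i)$ to be uniformly separated with some $\delta>0$ independent of the frozen indices (otherwise two distinct vectors differing only in the last varying coordinate would come arbitrarily close). Actually, since the coordinates $\lambda_i$ depend on $n_1,\dots,n_i$, I would argue more carefully: freezing $n_1,\dots,n_{i-1}$ and letting only $n_i$ vary gives vectors whose first $i-1$ coordinates agree, so their distance equals $|\lambda_i(n_1,\dots,n_i)-\lambda_i(n_1,\dots,n_i')|$ plus contributions from later coordinates; taking $n_{i+1},\dots,n_d$ equal as well isolates exactly the $i$-th gap, yielding $\delta>0$ uniform.

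The core argument is an iteration. Fix $x_2,\dots,x_d\in\R$ real and consider $z_1\mapsto f(z_1,x_2,\dots,x_d)$: this is entire of exponential type at most $\sigma_1$ in $z_1$ (from membership in $B^{\bsigma}_p$, one extracts the one-variable type bound by restricting the several-variable estimate), so by the one-dimensional Plancherel--Polya inequality, for every fixed real $x_2,\dots,x_d$,
\[
\sum_{n_1\in\Z}|f(\lambda_1(n_1),x_2,\dots,x_d)|^p\lesssim\int_\R|f(x_1,x_2,\dots,x_d)|^p\,dx_1.
\]
Here the crucial observation is that $\lambda_1(n_1)$ does not depend on $n_2,\dots,n_d$, so this first sum is clean. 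Now integrate both sides over $x_2,\dots,x_d$. On the left I would interchange sum and integral (Tonelli, everything nonnegative) to get $\sum_{n_1}\int_{\R^{d-1}}|f(\lambda_1(n_1),x_2,\dots,x_d)|^p\,dx_2\cdots dx_d$. For each fixed $n_1$, apply the same one-dimensional inequality in the variable $z_2$ to the function $z_2\mapsto f(\lambda_1(n_1),z_2,x_3,\dots,x_d)$ — which is entire of exponential type at most $\sigma_2$ in $z_2$ — with the separated sequence $n_2\mapsto\lambda_2(n_1,n_2)$; the separation constant is uniform in $n_1$ by the preliminary reduction, so the Plancherel--Polya constant does not blow up. Repeating for coordinates $3,\dots,d$ and collecting the $d$ applications yields
\[
\sum_{n\in\Z^d}|f(\lambda^{(n)})|^p\lesssim\int_{\R^d}|f(x)|^p\,dx,
\]
with an implied constant depending only on $\delta$, $\bsigma$, $p$, and $d$.

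The main obstacle, and the place requiring genuine care, is the bookkeeping of the triangular dependence: $\lambda_i$ depends on $n_1,\dots,n_i$, so at stage $i$ of the iteration the relevant one-dimensional sequence is $n_i\mapsto\lambda_i(n_1,\dots,n_{i-1},n_i)$ with the earlier indices already fixed, and one must know that all these sequences (as $(n_1,\dots,n_{i-1})$ ranges) are uniformly separated by a single $\delta>0$ — this is exactly what the hypothesis $\inf_{n\ne m}|\lambda^{(n)}-\lambda^{(m)}|>0$ delivers once one checks the extraction argument sketched above. A secondary technical point is justifying that membership in $B^{\bsigma}_p$ gives, upon restriction to real values of all but one variable, a genuine one-variable entire function of the correct exponential type with finite $L^p$ norm for \emph{almost every} choice of the frozen real variables; the $L^p$ finiteness for a.e.\ slice is immediate from Fubini applied to $f\in L^p(\R^d)$, and the exponential-type bound for every slice is immediate from the definition of $B^{\bsigma}$. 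With these two points in hand, the iteration runs mechanically.
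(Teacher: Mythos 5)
Your iterated one-dimensional Plancherel--Polya scheme would be fine under the per-coordinate condition $\Omega_\mathrm{sep}[\delta]$, but the theorem only assumes $\inf_{n\ne m}|\lambda^{(n)}-\lambda^{(m)}|>0$, and your preliminary "extraction" of a uniform per-coordinate gap from this hypothesis is wrong. When you freeze $n_1,\dots,n_{i-1},n_{i+1},\dots,n_d$ and change only $n_i$, the later coordinates $\lambda_j(n_1,\dots,n_j)$, $j>i$, also change (they depend on $n_i$), so the distance between the two vectors is \emph{not} the $i$-th gap: the separation of the vectors can be carried entirely by the coordinates $j>i$ while the $i$-th gap is arbitrarily small. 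Concretely, for $d=2$ take $\lambda_1(n_1)=\operatorname{sign}(n_1)\sqrt{|n_1|}$ and let $\lambda_2(n_1,\cdot)$ be the increasing enumeration of $A_{n_1}$, where $\{A_j\}_{j\in\Z}$ is a partition of $\Z$ into doubly unbounded pieces; then any two distinct points of the family differ by at least $1$ in the second coordinate, so the hypothesis holds with constant $1$, yet the nodes $\lambda_1(n_1)$ cluster at infinity. For such a sequence your very first step fails: the one-dimensional inequality $\sum_{n_1}|g(\lambda_1(n_1))|^p\lesssim\int_\R|g|^p\,dx_1$ is false with a uniform constant when the nodes are not separated (test it on translates of a fixed peak function of type $\sigma_1$ centered in a cluster region). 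So the argument only proves the theorem for the last coordinate index and, more generally, only under $\Omega_\mathrm{sep}[\delta]$, not under the stated hypothesis.

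The paper avoids slicing altogether: it uses plurisubharmonicity of $|f|^p$ to get the polydisc mean-value bound \eqref{eq38}, observes that the vector separation alone makes the $d$-dimensional boxes of a fixed small half-side $\delta$ around the $\lambda^{(n)}$ pairwise disjoint, sums over $n$, and then removes the imaginary shift with $\|f(\Cdot+iy)\|_{p}\lesssim e^{\sigma_1|y_1|+\dots+\sigma_d|y_d|}\|f\|_{p}$ (via the Taylor expansion in $iy$ and Bernstein's inequality). This genuinely multivariate localization is exactly what lets the weak hypothesis suffice, and is what your coordinate-by-coordinate reduction loses after the first summation over $n_1$. To salvage your write-up you would either have to strengthen the hypothesis to $\Omega_\mathrm{sep}[\delta]$ (enough for the later applications, but not what is stated) or replace the iteration by a $d$-dimensional mean-value argument of the above type.
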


\begin{proof}
For the simplicity we prove this result for $d=2$. The proof in the general
case is similar.

The function $|f(z)|^p$ is plurisubharmonic, and therefore for any
$x=(x_1, x_2)\in \mathbb{R}^2$ one has \cite{Ron71}
\[
|f(x_1,x_2)|^p\leq \frac{1}{(2\pi)^2}\int_0^{2\pi}\int_0^{2\pi}|f(x_1+\rho_1e^{i\theta_1},\, x_2+\rho_2e^{i\theta_2}|^p\,d\theta_1 d\theta_2,
\]
where $\rho_1,\,\rho_2>0$. Following \cite[3.2.5]{Nik75}, for $\delta>0$ and
$\xi+i\eta=(\xi_1+i\eta_1, \xi_2+i\eta_2)$ we obtain that
\begin{equation}
|f(x_1,x_2)|^p\leq
\frac{1}{(\pi\delta^2)^2}\int_{-\delta}^{\delta}\int_{-\delta}^{\delta}
\int_{x_1-\delta}^{x_1+\delta}\int_{x_2-\delta}^{x_2+\delta}|f(\xi+i\eta)|^p\,d\xi_1\,d\xi_2\,d\eta_1\,d\eta_2.
\label{eq38}
\end{equation}
The separation condition implies that for some $\delta>0$ the boxes
$[\lambda_1^{(n)}-\delta, \lambda_1^{(n)}+\delta]\times [\lambda_2^{(n)}-\delta,
\lambda_2^{(n)}+\delta]$ do not overlap for any $n$.

Since
\[
f(x+iy)=\sum_{k\in \Z_+^2}\frac{f^{(k)}(x)}{k!}\,(iy)^{k},
\]
where $f^{(k)}$ is a partial derivative $f$ of order $k=(k_1,k_2)$,
$k!=k_1!\,k_2!$, and $(iy)^k=(iy_1)^{k_{1}}(iy_{2})^{k_{2}}$, then, applying
Bernstein's inequality (see \cite[3.2.2 and 3.3.5]{Nik75} and \cite{RahSch90}), we
derive that
\[
\|f(\Cdot+iy)\|_{p}\lesssim e^{\sigma_1|y_1|+\sigma_2|y_2|}\|f\|_{p}.
\]
Using this and \eqref{eq38} we derive that
\begin{align*}
\sum_{n\in\mathbb{Z}^2}|f(\lambda^{(n)})|^p&\leq
\frac{1}{(\pi\delta^2)^2}\int_{-\delta}^{\delta}\int_{-\delta}^{\delta}
\int_{-\infty}^{\infty}\int_{-\infty}^{\infty}|f(\xi+i\eta)|^p\,d\xi_1\,d\xi_2\,d\eta_1\,d\eta_2\\
&\lesssim\int_{-\delta}^{\delta}\int_{-\delta}^{\delta}e^{p(\sigma_1|\eta_1|+\sigma_2|\eta_d|)}\,d\eta_1\,d\eta_2
\int_{-\infty}^{\infty}\int_{-\infty}^{\infty}|f(\xi)|^p\,d\xi_1\,d\xi_2\\
&\lesssim \int_{\mathbb{R}^2}|f(x)|^p\,dx.
\qedhere
\end{align*}
\end{proof}

\begin{theorem}\label{thm5.4}
Let the sequence $\lambda^{(n)}$ of form \eqref{eq37} satisfy the conditions
$\Omega_\mathrm{sep}[\delta]$ and $\Omega_\mathrm{lat}[\bsigma,L]$. Assume that $f\in
B^{\bsigma'}$, $\bsigma'<\bsigma$, is such that
$\sum_{n\in\mathbb{Z}^d}|f(\lambda^{(n)})|^p<\infty$, $0<p<\infty$. Then $f\in
L^p(\mathbb{R}^d)$ and
\begin{equation*}
\int_{\mathbb{R}^d}|f(x)|^p\,dx\lesssim\sum_{n\in\mathbb{Z}^d}|f(\lambda^{(n)})|^p. 
\end{equation*}
\end{theorem}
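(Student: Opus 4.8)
The plan is to deduce the $d$-dimensional inequality from the one-dimensional Boas inequality \eqref{eq36} by removing one coordinate at a time. What makes this work is precisely the triangular form \eqref{eq37}: once $n_1,\dots,n_{i-1}$ are frozen, $n_i\mapsto\lambda_i(n_1,\dots,n_{i-1},n_i)$ is an honest increasing one-dimensional sequence, and the conditions $\Omega_\mathrm{sep}[\delta]$ and $\Omega_\mathrm{lat}[\bsigma,L]$ say exactly that it is $\delta$-separated and satisfies $|\lambda_i(n_1,\dots,n_i)-\pi n_i/\sigma_i|\le L$, with $\delta$, $L$ and the relevant type $\sigma_i$ not depending on the frozen indices.

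First I would record a trivial but necessary fact: if $f\in B^{\bsigma'}$ with $\bsigma'<\bsigma$ and $a_1,\dots,a_{i-1},x_{i+1},\dots,x_d\in\R$ are fixed, then $z_i\mapsto f(a_1,\dots,a_{i-1},z_i,x_{i+1},\dots,x_d)$ is an entire function of one variable of exponential type at most $\sigma_i'<\sigma_i$; this follows at once from the growth bound defining $B^{\bsigma'}$ by holding all variables except $z_i$ real and letting $\varepsilon\to0$. In particular its type is $<\sigma_i$, which is exactly the hypothesis needed to invoke \eqref{eq36} in the variable $z_i$ with target parameter $\sigma=\sigma_i$.

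Now the iteration. Split $x=(x_1,x')\in\R\times\R^{d-1}$. For fixed real $x'$, \eqref{eq36} applied to $x_1\mapsto f(x_1,x')$ and the sequence $\{\lambda_1(n_1)\}$ gives $\int_\R|f(x_1,x')|^p\,dx_1\le C(\delta,L,\sigma_1,p)\sum_{n_1}|f(\lambda_1(n_1),x')|^p$; integrating this over $x'$ (Tonelli, everything nonnegative) yields
\[
\int_{\R^d}|f(x)|^p\,dx\le C(\delta,L,\sigma_1,p)\sum_{n_1\in\Z}\int_{\R^{d-1}}\bigl|f(\lambda_1(n_1),x')\bigr|^p\,dx'.
\]
For each fixed $n_1$ the function $x'\mapsto f(\lambda_1(n_1),x')$ lies in $B^{(\sigma_2',\dots,\sigma_d')}$ on $\R^{d-1}$, and the sequence $(\lambda_2(n_1,\cdot),\dots,\lambda_d(n_1,\cdot))$ is again of triangular type \eqref{eq37} and still satisfies $\Omega_\mathrm{sep}[\delta]$ and $\Omega_\mathrm{lat}[(\sigma_2,\dots,\sigma_d),L]$ with the very same constants. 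Hence the same step applies to the inner integral in the variable $x_2$, uniformly in $n_1$, then in $x_3$, and so on; after $d$ steps we obtain
\[
\int_{\R^d}|f(x)|^p\,dx\le\Bigl(\prod_{i=1}^dC(\delta,L,\sigma_i,p)\Bigr)\sum_{n\in\Z^d}\bigl|f(\lambda^{(n)})\bigr|^p.
\]
Since $\sum_{n}|f(\lambda^{(n)})|^p<\infty$ by hypothesis, this gives both $f\in L^p(\R^d)$ and the asserted estimate.

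There is no genuinely hard step here; what needs care are bookkeeping points. One must use \eqref{eq36} in the form valid for all $0<p<\infty$ with both sides allowed to be $+\infty$, so that the chain of inequalities is legitimate before finiteness of the right-hand side is invoked at the very end. One must check, as above, that freezing earlier coordinates at points of $\R$ leaves an entire function whose exponential type in the remaining active variable is strictly below the corresponding $\sigma_i$ (so $\bsigma'<\bsigma$ is essential). And one must note that the constant in \eqref{eq36}, being of the form $C(\delta,L,\sigma,p)$, depends neither on the frozen indices nor on the actual sub‑$\sigma_i$ type, so the stage constants simply multiply into the single $d$‑fold product above. If preferred, the case $d=2$ can be written out in full first, exactly as in the proof of Theorem \ref{thm5.3}, the general case being identical.
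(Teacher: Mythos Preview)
Your proof is correct and follows essentially the same approach as the paper: iterate the one-dimensional Boas inequality \eqref{eq36} one coordinate at a time, exploiting the triangular structure \eqref{eq37} so that at each step the frozen indices give an honest one-dimensional sequence satisfying the same $\delta$, $L$, $\sigma_i$ hypotheses uniformly. You are actually more careful than the paper in spelling out why the slice functions have type $<\sigma_i$, why the Boas constant does not depend on the frozen indices, and why the chain of inequalities is legitimate in $[0,\infty]$ before finiteness is established.
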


\begin{remark}
 For $p\geq 1$, a similar two-sided Plancherel--Polya--Boas-type inequality was
  obtained from \cite{Pes07}.
\end{remark}

\begin{proof} For the simplicity we consider the case $d=2$.
Integrating $|f(x_1,x_2)|^p$ at $x_1$ and applying inequality \eqref{eq36}, we get, for any $x_2$,
\[
\int_{-\infty}^{\infty}|f(x_1,x_2)|^p\,dx_1\lesssim\sum_{n_1\in\mathbb{Z}}
|f(\beta_1(n_1),x_{2})|^p.
\]
Since by \eqref{eq36}, for any $n_1$,
\[
\int_{-\infty}^{\infty}|f(\beta_1(n_1),x_2)|^p\,dx_2\lesssim\sum_{n_2\in\mathbb{Z}}
|f(\beta_1(n_1),\beta_2(n_1, n_2))|^p,
\]
then
\begin{align*}
\int_{-\infty}^{\infty}\int_{-\infty}^{\infty}|f(x_1,x_2)|^p\,dx_1\,dx_2&\lesssim
\sum_{n_1\in\mathbb{Z}}\int_{-\infty}^{\infty}|f(\beta_1(n_1),x_2)|^p\,dx_2
\\ &\lesssim\sum_{n_1\in\mathbb{Z}}\sum_{n_2\in\mathbb{Z}}
|f(\beta_1(n_1),\beta_2(n_1, n_2))|^p<\infty.
\qedhere
\end{align*}
\end{proof}

Using Theorems~\ref{thm5.3} and \ref{thm5.4} we arrive at the following statement.

\begin{theorem}
Let the sequence $\{\lambda^{(n)}\}$ of form \eqref{eq37} satisfy the conditions
$\Omega_\mathrm{sep}[\delta]$ and $\Omega_\mathrm{lat}[\bsigma,L]$.
If $f\in B^{\bsigma'}$, $\bsigma'<\bsigma$, then, for $0<p<\infty$,
\[
\sum_{n\in\mathbb{Z}^d}|f(\lambda^{(n)})|^p
\lesssim\int_{\mathbb{R}^d}|f(x)|^p\,dx\lesssim\sum_{n\in\mathbb{Z}^d}|f(\lambda^{(n)})|^p.
\]
\end{theorem}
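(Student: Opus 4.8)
The plan is to simply combine the two one-sided estimates just proved. The statement asserts a two-sided bound
\[
\sum_{n\in\mathbb{Z}^d}|f(\lambda^{(n)})|^p
\lesssim\int_{\mathbb{R}^d}|f(x)|^p\,dx\lesssim\sum_{n\in\mathbb{Z}^d}|f(\lambda^{(n)})|^p
\]
for $f\in B^{\bsigma'}$ with $\bsigma'<\bsigma$, where $\{\lambda^{(n)}\}$ has the form \eqref{eq37} and satisfies $\Omega_\mathrm{sep}[\delta]$ and $\Omega_\mathrm{lat}[\bsigma,L]$. The left inequality is exactly Theorem~\ref{thm5.3}: indeed, by the remark after Definition of $\Omega_\mathrm{sep}[\delta]$, the separation condition forces $\inf_{n\ne m}|\lambda^{(n)}-\lambda^{(m)}|>0$, which is the only hypothesis needed there, and $f\in B^{\bsigma'}\subset B^{\bsigma}$, so $f\in B^{\bsigma}_p$ once we know $f\in L^p$. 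The right inequality is exactly Theorem~\ref{thm5.4}, whose hypotheses ($f\in B^{\bsigma'}$ with $\bsigma'<\bsigma$, sequence of form \eqref{eq37} satisfying $\Omega_\mathrm{sep}[\delta]$ and $\Omega_\mathrm{lat}[\bsigma,L]$, and finiteness of the $\ell^p$ sum) are available; it also yields $f\in L^p(\mathbb{R}^d)$, which is what legitimizes applying Theorem~\ref{thm5.3}.

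The one subtlety to handle carefully is the order of application and the finiteness issue. Theorem~\ref{thm5.3} requires $f\in B^{\bsigma}_p$, i.e.\ $f\in L^p(\mathbb{R}^d)$; Theorem~\ref{thm5.4} requires $\sum_n|f(\lambda^{(n)})|^p<\infty$ and concludes $f\in L^p$. So the logical structure is: if the right-hand sum diverges, both inequalities hold trivially (the right one is $\infty\ge\text{anything}$, and then either $f\in L^p$ — contradicting Theorem~\ref{thm5.3}'s conclusion were the left side then forced finite — or $f\notin L^p$ and the left inequality reads $\sum\le\infty$ vacuously); if the right-hand sum is finite, Theorem~\ref{thm5.4} gives $f\in L^p$ together with the right inequality, and then Theorem~\ref{thm5.3} applies to give the left inequality. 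So the clean write-up is: assume first $\sum_n|f(\lambda^{(n)})|^p<\infty$, invoke Theorem~\ref{thm5.4} to get $f\in L^p$ and the upper bound on the integral; then invoke Theorem~\ref{thm5.3} to get the lower bound on the integral; finally note the remaining case $\sum_n|f(\lambda^{(n)})|^p=\infty$ is trivial.

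There is essentially no obstacle here: this theorem is a corollary obtained by concatenating Theorems~\ref{thm5.3} and~\ref{thm5.4}, and the proof is a two-line observation. The only point requiring a moment's care is checking that the hypotheses of the two theorems are compatible and that the chain of implications is not circular — which is why one should order the argument starting from Theorem~\ref{thm5.4} (to produce membership in $L^p$) and only then apply Theorem~\ref{thm5.3}.

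\begin{proof}
If $\sum_{n\in\mathbb{Z}^d}|f(\lambda^{(n)})|^p=\infty$, the right-hand inequality is trivial, and the left-hand one reads $\sum_{n\in\mathbb{Z}^d}|f(\lambda^{(n)})|^p\lesssim\int_{\mathbb{R}^d}|f(x)|^p\,dx$; if in this case $f\in L^p(\mathbb{R}^d)$ then Theorem~\ref{thm5.3} (applicable since $\Omega_\mathrm{sep}[\delta]$ implies $\inf_{n\ne m}|\lambda^{(n)}-\lambda^{(m)}|>0$ and $f\in B^{\bsigma'}_p\subset B^{\bsigma}_p$) would force the sum to be finite, a contradiction; hence $f\notin L^p(\mathbb{R}^d)$ and both inequalities hold.

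Now assume $\sum_{n\in\mathbb{Z}^d}|f(\lambda^{(n)})|^p<\infty$. Since $f\in B^{\bsigma'}$ with $\bsigma'<\bsigma$ and $\{\lambda^{(n)}\}$ is of the form \eqref{eq37} satisfying $\Omega_\mathrm{sep}[\delta]$ and $\Omega_\mathrm{lat}[\bsigma,L]$, Theorem~\ref{thm5.4} applies and gives $f\in L^p(\mathbb{R}^d)$ together with
\[
\int_{\mathbb{R}^d}|f(x)|^p\,dx\lesssim\sum_{n\in\mathbb{Z}^d}|f(\lambda^{(n)})|^p.
\]
Since now $f\in L^p(\mathbb{R}^d)$ and $f\in B^{\bsigma'}\subset B^{\bsigma}$, we have $f\in B^{\bsigma}_p$; as $\Omega_\mathrm{sep}[\delta]$ implies $\inf_{n\ne m}|\lambda^{(n)}-\lambda^{(m)}|>0$, Theorem~\ref{thm5.3} yields
\[
\sum_{n\in\mathbb{Z}^d}|f(\lambda^{(n)})|^p\lesssim\int_{\mathbb{R}^d}|f(x)|^p\,dx.
\]
Combining the last two displays completes the proof.
\end{proof}
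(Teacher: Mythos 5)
Your proof is correct and follows exactly the paper's route: the theorem is stated there as an immediate consequence of Theorems~\ref{thm5.3} and~\ref{thm5.4}, which is precisely the concatenation you carry out. Your extra care about the order of application (using Theorem~\ref{thm5.4} first to secure $f\in L^p(\mathbb{R}^d)$) and the trivial divergent-sum case is sound and, if anything, slightly more explicit than the paper.
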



We will need the weighted version of the Plancherel--Polya--Boas equivalence. We start with three auxiliary lemmas.

\begin{lemma}\label{lem5.6}\textnormal{\cite{GorIvaVep14}}
If $\gamma\geq -1/2$, then there exists an even entire function
$\omega_\gamma(z)$, $z\in \mathbb{C}$, of exponential type~$2$ such that,
uniformly in $x\in \mathbb{R}_+$,
\[
\omega_\gamma(x)\asymp
\begin{cases}x^{2k+2},& 0 \leq x\leq 1,\\ x^{2\gamma+1},& x\geq 1,\end{cases}
\]
where $k=[\gamma+1/2]$ and $[a]$ is the integral part of $a$. In particular,
we can take
\[
\omega(z)=z^{2k+2}j_{k-\gamma}(z+i)j_{k-\gamma}(z-i).
\]
\end{lemma}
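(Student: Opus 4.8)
\textbf{Proof plan for Lemma~\ref{lem5.6}.}
The plan is to build $\omega_\gamma$ as an explicit product of normalized Bessel functions and then verify the claimed two-sided bound by analyzing the factors on the two ranges $0\le x\le 1$ and $x\ge 1$ separately. Set $k=[\gamma+1/2]\ge 0$ (so that $k-\gamma\ge -1/2$, which keeps the normalized Bessel functions below legitimate) and take
\[
\omega_\gamma(z)=z^{2k+2}j_{k-\gamma}(z+i)\,j_{k-\gamma}(z-i).
\]
First I would check that this is an even entire function of exponential type $2$: each $j_{k-\gamma}(w)=2^{k-\gamma}\Gamma(k-\gamma+1)w^{-(k-\gamma)}J_{k-\gamma}(w)$ is entire in $w$ (the $w^{-(k-\gamma)}$ cancels the branch point of $J_{k-\gamma}$ since the latter is $w^{k-\gamma}$ times an entire even function), it is of exponential type $1$, so the product $j_{k-\gamma}(z+i)j_{k-\gamma}(z-i)$ is of exponential type $2$, and multiplication by the polynomial $z^{2k+2}$ does not change the type. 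Evenness follows because $j_{k-\gamma}$ is even and the map $z\mapsto -z$ swaps the two factors $z\pm i$.

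Next I would establish the asymptotics on $x\ge 1$. Here the key input is the classical large-argument behavior $J_\mu(t)=\sqrt{2/(\pi t)}\bigl(\cos(t-\mu\pi/2-\pi/4)+O(1/t)\bigr)$, which gives $|j_{k-\gamma}(x\pm i)|\asymp x^{-(k-\gamma)-1/2}$ for $x\ge 1$; crucially the shift by $\pm i$ in the argument removes the oscillatory zeros on the real axis, so the lower bound holds uniformly (one should record the estimate $|j_{k-\gamma}(x+i)|\,|j_{k-\gamma}(x-i)|\asymp x^{-2(k-\gamma)-1}$, using that $|\cos(w)|$ stays bounded away from $0$ when $\mathrm{Im}\,w$ is a fixed nonzero constant). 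Multiplying by $x^{2k+2}$ yields $\omega_\gamma(x)\asymp x^{2k+2}\cdot x^{-2(k-\gamma)-1}=x^{2\gamma+1}$ on $x\ge 1$. For $0\le x\le 1$, the factors $j_{k-\gamma}(x+i)$ and $j_{k-\gamma}(x-i)$ are continuous and nonvanishing on the compact set $x\in[0,1]$ (a normalized Bessel function $j_\mu$ with $\mu>-1$ has all its zeros real, hence none at $x\pm i$), so $|j_{k-\gamma}(x+i)j_{k-\gamma}(x-i)|\asymp 1$ there by compactness, and $\omega_\gamma(x)\asymp x^{2k+2}$. Since $2\gamma+1$ and $2k+2$ agree at $x=1$ up to constants, the two pieces patch consistently and we obtain the uniform two-sided estimate on all of $\mathbb{R}_+$.

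The main obstacle is the uniform positivity of the middle (Bessel) factor, both on the compact range and on $[1,\infty)$: one must be sure that $j_{k-\gamma}(x\pm i)$ never vanishes and that $|j_{k-\gamma}(x+i)\,j_{k-\gamma}(x-i)|$ is bounded below by a constant times $x^{-2(k-\gamma)-1}$ with \emph{no} oscillatory cancellation. This is exactly where the choice to shift off the real axis by $\pm i$ (rather than using, say, $j_{k-\gamma}(x)^2$) pays off: since all zeros of $J_{k-\gamma}$ are real for $k-\gamma>-1$, evaluating at $x\pm i$ keeps us a fixed distance from every zero, and the asymptotic formula for $J_\mu$ on a horizontal line $\mathrm{Im}\,w=\pm1$ gives matching upper and lower bounds. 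Assembling these estimates — entirety, exponential type $2$, evenness, and the two-range asymptotics — completes the proof; the statement that this particular $\omega=\omega_\gamma$ works is then immediate from the construction. (One should only note the harmless typo that the displayed exponent "$x^{2k+2}$" in the small-$x$ regime should read with $k=[\gamma+1/2]$, consistent with the leading polynomial factor $z^{2k+2}$.)
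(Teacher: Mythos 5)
The paper does not prove this lemma itself: it is quoted from \cite{GorIvaVep14}, and the explicit function $\omega(z)=z^{2k+2}j_{k-\gamma}(z+i)j_{k-\gamma}(z-i)$ you analyze is exactly the one named in the statement, so your argument is the expected one and it is correct. Indeed, $k=[\gamma+1/2]$ gives $k-\gamma\in(-1/2,1/2]$, so $j_{k-\gamma}$ is entire, even, of exponential type $1$ with only real zeros, and your two-range analysis then yields the claimed bounds. Two small points are worth pinning down explicitly: (i) on the real axis $j_{k-\gamma}(x-i)=\overline{j_{k-\gamma}(x+i)}$, hence $\omega_\gamma(x)=x^{2k+2}\,|j_{k-\gamma}(x+i)|^2\ge 0$, so the two-sided estimate concerns the function itself and not merely its modulus; (ii) the large-argument asymptotics of $J_\mu$ give $|j_{k-\gamma}(x\pm i)|\asymp x^{-(k-\gamma)-1/2}$ only for $x\ge X_0$ with $X_0$ large enough that the $O(1/x)$ error term is dominated by $\sinh 1$, so on the intermediate compact range $1\le x\le X_0$ you must invoke the same continuity-plus-nonvanishing argument you used on $[0,1]$. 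With these routine additions the proof is complete and consistent with the construction indicated in the lemma.
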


\begin{lemma}\label{lem5.7}
Let $m\in \mathbb{N}$, $j=1,\dots,m$,
$b^j=(b_1^j,\dots,b_d^j)\in\mathbb{R}^d\setminus\{0\}$, and either $|b_i^j|\geq
1$, or $b_i^j=0$, $i=1,\dots,d$. Then there exists a sequence
$\{\rho^{(n)}\}\subset \mathbb{Z}^d\setminus \{0\}$ of the form \eqref{eq37}
such that, for any $j=1,\dots,m$ and $i=1,\dots,d$,
\begin{equation}
|\rho_i(n_1,\dots,n_i)-n_i|\leq m, \label{eq44}
\end{equation}
\begin{equation}
|\<b^j, \rho^{(n)}\>|\geq 1/2. \label{eq45}
\end{equation}
\end{lemma}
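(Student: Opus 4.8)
The plan is to construct the coordinates $\rho_i(n_1,\dots,n_i)$ inductively on $i$, one coordinate at a time, so that the resulting sequence has the triangular form \eqref{eq37} automatically. Fix $i$ and suppose $\rho_1(n_1),\dots,\rho_{i-1}(n_1,\dots,n_{i-1})$ have already been chosen. For a given tuple $(n_1,\dots,n_{i-1})$, the first $i-1$ coordinates of $\rho^{(n)}$ are now fixed integers, so for each $j$ the quantity $\<b^j,\rho^{(n)}\>$ becomes an affine function of the remaining integers $n_i,\dots,n_d$; more precisely, its part depending on $\rho_i$ is $b_i^j\rho_i(n_1,\dots,n_i) + (\text{constant})$. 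We want to choose, for each $n_i\in\mathbb{Z}$, an integer $\rho_i=\rho_i(n_1,\dots,n_i)$ with $|\rho_i-n_i|\le m$ such that $|b_i^j\rho_i + c_{i,j}|\ge 1/2$ simultaneously for all $j$ with $b_i^j\ne 0$ (where $c_{i,j}$ collects the already-fixed contribution from coordinates $1,\dots,i-1$, which at this stage is a constant). This is the core combinatorial step.

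The key observation for that step is a counting/pigeonhole argument. For a single index $j$ with $|b_i^j|\ge 1$, the ``bad'' set $\{t\in\mathbb{R}\colon |b_i^j t + c_{i,j}|<1/2\}$ is an interval of length $1/|b_i^j|\le 1$, hence contains at most one integer. Since there are at most $m$ indices $j$, the number of integers $t$ that are bad for \emph{some} $j$ is at most $m$. On the other hand, the window $\{t\in\mathbb{Z}\colon |t-n_i|\le m\}$ contains $2m+1$ integers. Therefore at least $m+1$ of them are good for all $j$; pick any such integer and call it $\rho_i(n_1,\dots,n_i)$. This guarantees \eqref{eq44} directly, and it guarantees \eqref{eq45} coordinate-by-coordinate in the following sense: once all coordinates are chosen, $\<b^j,\rho^{(n)}\>=\sum_{i}b_i^j\rho_i(n_1,\dots,n_i)$, and we must ensure the full sum — not just one term — stays away from $0$. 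To handle this I would run the induction so that at step $i$ the constant $c_{i,j}$ is exactly $\sum_{\ell<i} b_\ell^j \rho_\ell(n_1,\dots,n_\ell)$, i.e. the partial sum already committed; then the good-integer choice at step $i$ keeps $\bigl|\sum_{\ell\le i} b_\ell^j\rho_\ell\bigr|\ge 1/2$ for every $j$ with $b_i^j\ne 0$. For indices $j$ with $b_i^j=0$ the partial sum is unchanged at step $i$. Since $b^j\ne 0$, there is a largest index $i=i(j)$ with $b_i^j\ne 0$, and after step $i(j)$ the sum $\<b^j,\rho^{(n)}\>$ no longer changes and equals the already-verified partial sum, which has absolute value $\ge 1/2$. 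This yields \eqref{eq45}. Finally, to also achieve $\{\rho^{(n)}\}\subset\mathbb{Z}^d\setminus\{0\}$, note that $0\in\mathbb{Z}^d$ satisfies $\<b^j,0\>=0$, so the requirement \eqref{eq45} already forces $\rho^{(n)}\ne 0$ for every $n$; no extra work is needed.

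The part I expect to require the most care is bookkeeping the order of the induction so that \eqref{eq45} is genuinely controlled by the running partial sums rather than by a single coordinate — in particular making sure that ``freezing'' a partial sum of size $\ge 1/2$ at step $i(j)$ is legitimate, i.e. that no later step (which only modifies coordinates $i>i(j)$, all having $b_i^j=0$) can disturb it. Once the indexing is set up correctly, the estimates themselves (interval length $\le 1$, at most one bad integer per $j$, $2m+1>m$ available integers in the window) are elementary. One should also double-check the degenerate cases: if $b_i^j=0$ for all $i$ then $b^j=0$, excluded by hypothesis; and the sequence has the triangular form \eqref{eq37} because $\rho_i$ was defined as a function of $(n_1,\dots,n_i)$ only, and within the window we may additionally always pick the \emph{smallest} admissible good integer, which makes $\rho_i$ nondecreasing in $n_i$ for fixed $n_1,\dots,n_{i-1}$ (indeed strictly increasing after possibly discarding the trivial overlaps, since consecutive windows shift by $1$), matching the monotonicity assumption on sequences of type \eqref{eq37}.
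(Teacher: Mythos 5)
Your proposal follows essentially the same route as the paper: build $\rho_i(n_1,\dots,n_i)$ coordinate by coordinate, note that for each $j$ with $b_i^j\neq 0$ the ``bad'' set $\{t\colon |b_i^jt+c_{i,j}|<1/2\}$ is an interval of length $1/|b_i^j|\le 1$ and so kills at most one integer, hence at most $m$ integers per coordinate, and observe that $\<b^j,\rho^{(n)}\>$ is frozen after the last index $i(j)$ with $b_i^j\neq 0$ — this is exactly the paper's partition of the constraints into the sets $J_i$ (the paper only enforces the $\ge 1/2$ bound at the step $i(j)$, you enforce it at every step where $b_i^j\neq 0$, which is harmless). The one soft spot is your final monotonicity remark: choosing the \emph{smallest} good integer in the window $|t-n_i|\le m$ yields only a nondecreasing sequence, not an increasing one as required by the form \eqref{eq37} (and needed later for the separation condition $\Omega_\mathrm{sep}$); e.g.\ with $m=1$ and bad integer $0$ one gets $\rho_i(1)=\rho_i(2)=1$, and ``discarding the trivial overlaps'' is not a legitimate fix since the sequence must stay indexed by all of $\mathbb{Z}$ in $n_i$. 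The paper's device repairs this cleanly: delete the at most $m$ bad integers from $\mathbb{Z}$ and renumber the remaining integers in increasing order, which gives a strictly increasing choice of $\rho_i$ still satisfying $|\rho_i(n_1,\dots,n_i)-n_i|\le m$; with that substitution your argument is complete.
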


\begin{proof} To construct a desired sequence
\[
\rho^{(n)}=(\rho_1(n_1), \rho_2(n_1, n_2),\dots,\rho_d(n_1,\dots,n_d))\in \mathbb{Z}^d,
\]
we will use the following simple remark. If we
throw out $m$ points
from $\mathbb{Z}$, then the rest can be numbered such that the obtained sequence will be increasing and
\eqref{eq44} holds.

Let $J_1=\{j\colon b_1^j\neq 0,\, b_2^j=\dots=b_d^j=0\}$. If
$J_1=\varnothing$, then we set $\rho_1(n_1)=n_1$. If
$J_1\neq\varnothing$, then $\rho_1(n_1)$ is increasing sequence formed
from $\mathbb{Z}\setminus \{0\}$. In both cases \eqref{eq44} is valid and,
moreover, for $j\in J_1$ and any
$\rho_2(n_1,n_2),\dots,\rho_d(n_1,\dots,n_d)$, one has \eqref{eq45} since
$|\<b^j, \rho^{(n)}\>|=|b_1^j\rho_1(n_1)|\geq 1$.

Let $J_2=\{j\colon b_2^j\neq 0,\, b_3^j=\dots=b_d^j=0\}$, $n_1\in \mathbb{Z}$.
If $J_2=\varnothing$, then we set $\rho_2(n_1, n_2)=n_2$. Let
$J_2\neq\varnothing$. If $j\in J_2$ and $b_1^j\rho_1(n_1)+b_2^jt_j=0$, then
$t_j=l_j+\varepsilon_j$, $l_j\in \mathbb{Z}$, $|\varepsilon_j|\leq 1/2$. Here
$l_j$ is the nearest integer to $t_j$. Note that if $\rho_2\neq l_j$, then
$|b_1^j\rho_1(n_1)+b_2^j\rho_2|=|b_2^j(\rho_2-l_j-\varepsilon_j)|\geq 1/2$.

Let $\rho_2(n_1, n_2)$ be an increasing sequence at $n_2$
formed from $\mathbb{Z}\backslash
\{l_j\colon j\in J_2\}$. For this sequence \eqref{eq44} holds and,
 for $j\in J_2$ and any $\rho_3(n_1,n_2,n_3),\dots,\rho_d(n_1,\dots,n_d)$, one has
\[
|\<b^j, \rho^{(n)}\>|=|b_1^j\rho_1(n_1)+b_2^j\rho_2(n_1, n_2)|\geq 1/2,
\]
that is, \eqref{eq45} holds as well.

Assume that we have constructed the sets $J_1,\dots,J_{d-1}$, and the
sequence $(\rho_1(n_1), \rho_2(n_1, n_2),\dots,\rho_{d-1}(n_1,\dots,n_{d-1}))\in \mathbb{Z}^{d-1}$.

Let $J_d=\{j\colon b_d^j\neq 0\}$, $(n_1,\dots,n_{d-1})\in
\mathbb{Z}^{d-1}$. If $J_d=\varnothing$, then we set $\rho_d(n_1,\dots,n_{d-1},n_d)=n_d$.
Assume now that $J_d\neq\varnothing$. If $j\in J_d$ and
\[
b_1^j\rho_1(n_1)+\cdots+b_{d-1}^j\rho_{d-1}(n_1,\dots,n_{d-1})+b_d^jt_j=0,
\]
then $t_j=l_j+\varepsilon_j$, $|\varepsilon_j|\leq
1/2$. Note that if $\rho_d\neq l_j$, then
\[
|b_1^j\rho_1(n_1)+\cdots+b_{d-1}^j\rho_{d-1}(n_1,\dots,n_{d-1})+b_d^j\rho_d|=|b_d^j(\rho_d-l_j-\varepsilon_j)|\geq 1/2.
\]

Let $\rho_d(n_1,\dots,n_d)$ be an increasing sequence in $n_d$ formed from
$\mathbb{Z}\setminus \{l_j\colon j\in J_d\}$, $\rho^{(n)}=(\rho_1(n_1),
\rho_2(n_1, n_2),\dots,\rho_d(n_1,\dots,n_d))$. For the sequence
$\rho_d(n_1,\dots,n_d)$ inequality \eqref{eq44} holds and, for $j\in J_d$, one has $|\<b^j,
\rho^{(n)}\>|\geq 1/2$.

Thus, we construct the desired sequence since, for any $j\in\{1,\dots,m\}$ and
some $i\in\{1,\dots,d\}$, there holds $b^j\in J_i$.
\end{proof}

An important ingredient of the proof of Theorem \ref{thm5.2} is the following
corollary of Lemma \ref{lem5.7}.

\begin{lemma}\label{lem5.8}
If $\ba>0$, $\alpha^1,\dots,\alpha^m\in\mathbb{R}^d\setminus\{0\}$, then there
exists a sequence $\lambda^{(n)}$ of the form \eqref{eq37} such that for some
$\delta,\,L>0$ the conditions $\Omega_\mathrm{sep}[\delta]$,
$\Omega_\mathrm{lat}[\ba, L]$, and $\xi_{j}(\lambda^{(n)})\ge \delta$,
$j=0,1,\dots,m$, $n\in \mathbb{Z}^d$, hold, where
\begin{equation}\label{eq46++}
\xi_0(x)=|x|,\quad \xi_j(x)=|\<\alpha^j,x\>|,\ j=1,\dots,m.
\end{equation}
\end{lemma}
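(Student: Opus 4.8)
The plan is to reduce this statement to Lemma \ref{lem5.7} by a rescaling argument, after first observing that we have freedom to dilate both the lattice spacing and the functionals $\xi_j$. First I would fix a scale parameter $h>0$, to be chosen at the end, and look for the desired sequence in the dilated form $\lambda^{(n)}=h\rho^{(n)}$, where $\{\rho^{(n)}\}\subset\Z^d$ is the integer sequence produced by Lemma \ref{lem5.7}. The separation condition $\Omega_\mathrm{sep}[\delta]$ for $\lambda^{(n)}$ then becomes the separation of $\rho^{(n)}$ scaled by $h$: since consecutive values of $\rho_i(n_1,\dots,n_i)$ differ by at least $1$ (they form an increasing integer sequence), we get $\Omega_\mathrm{sep}[h]$ automatically. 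For the close-lattice condition, I would exploit that Lemma \ref{lem5.7} gives $|\rho_i(n_1,\dots,n_i)-n_i|\le m$, so $|\lambda_i(n_1,\dots,n_i)-hn_i|\le hm$; choosing $h=\pi/a_i$ coordinate-wise — that is, working with the anisotropic scaling $\lambda_i^{(n)}=(\pi/a_i)\rho_i^{(n)}$ — yields $|\lambda_i(n)-\pi n_i/a_i|\le \pi m/a_i=:L_i$, hence $\Omega_\mathrm{lat}[\ba,L]$ with $L=\max_i L_i$.

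Next I would handle the nonvanishing conditions $\xi_j(\lambda^{(n)})\ge\delta$. The key point is to arrange that each $\alpha^j$, after the anisotropic rescaling, becomes (a positive multiple of) a vector $b^j$ satisfying the hypothesis of Lemma \ref{lem5.7}, namely each component is either $0$ or has absolute value $\ge 1$. Concretely, write $\alpha^j=(\alpha_1^j,\dots,\alpha_d^j)$ and set $b_i^j=(\pi/a_i)\alpha_i^j$ whenever $\alpha_i^j\ne 0$; if some of these are nonzero but small, I would further multiply the whole vector $\alpha^j$ by a large constant $c_j>0$ (this does not change the problem, since $\xi_j(x)=|\langle\alpha^j,x\rangle|$ is only required to be bounded below, and scaling $\alpha^j$ just scales the lower bound) so that every nonzero component of $b^j=c_j(\pi/a_1,\dots,\pi/a_d)\odot\alpha^j$ has modulus $\ge 1$. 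Applying Lemma \ref{lem5.7} to these $m$ vectors $b^1,\dots,b^m$ gives $|\langle b^j,\rho^{(n)}\rangle|\ge 1/2$ for all $n$, which unwinds to $|\langle\alpha^j,\lambda^{(n)}\rangle|=c_j^{-1}|\langle b^j,\rho^{(n)}\rangle|\ge (2c_j)^{-1}>0$; taking $\delta$ smaller than all these constants and smaller than $h=\min_i\pi/a_i$ covers $j=1,\dots,m$.

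The remaining case is $j=0$, i.e.\ the condition $|\lambda^{(n)}|\ge\delta$, equivalently $\rho^{(n)}\ne 0$. This is exactly why Lemma \ref{lem5.7} is stated with $\{\rho^{(n)}\}\subset\Z^d\setminus\{0\}$: the construction there already excludes the origin (in the step building $\rho_1$ one may always discard $0$ from $\Z$). So $|\lambda^{(n)}|=|h\rho^{(n)}|\ge h\cdot\frac{1}{\sqrt d}$ — or simply $\ge\min_i(\pi/a_i)$ using that some integer coordinate of $\rho^{(n)}$ is nonzero, hence $\ge 1$ in absolute value — which again is controlled by our choice of $\delta$. Putting all three families of bounds together and taking $\delta=\min\bigl\{\min_i\pi/a_i,\ \min_j(2c_j)^{-1}\bigr\}$ and $L=\max_i\pi m/a_i$ finishes the proof.

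The main obstacle, and the only genuinely nontrivial input, is Lemma \ref{lem5.7} itself — the simultaneous combinatorial construction of a single "triangular" integer sequence avoiding finitely many affine hyperplanes while staying within bounded distance of the standard lattice. Given that lemma, the present statement is a bookkeeping exercise in anisotropic rescaling; the one subtlety to be careful about is that the $\Omega_\mathrm{lat}$ parameter $\ba$ in the conclusion must be matched to the dilation factors, so the rescaling must be done coordinate-by-coordinate with factor $\pi/a_i$ rather than by a single global constant.
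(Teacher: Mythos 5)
Your proposal is correct and is essentially the paper's own argument: the paper defines $\lambda_i^{(n)}=\pi\rho_i^{(n)}/a_i$ with $\rho^{(n)}$ the sequence from Lemma~\ref{lem5.7} (taking $\{\rho^{(n)}\}=\Z^d\setminus\{0\}$ when $m=0$), exactly your anisotropic rescaling. You merely make explicit what the paper leaves implicit, namely applying Lemma~\ref{lem5.7} to the rescaled and suitably dilated vectors $b^j$ (nonzero components of modulus $\ge 1$) and checking $\Omega_\mathrm{sep}$, $\Omega_\mathrm{lat}[\ba,L]$ and the lower bounds $\xi_j(\lambda^{(n)})\ge\delta$.
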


Indeed, for $m\ge 1$ it is enough to define
\begin{align}
\lambda^{(n)}&=(\lambda_1(n_1), \lambda_2(n_1, n_2),\dots, \lambda_d(n_1,\dots, n_d))\notag\\
&:=\Bigl(\frac{\pi \rho_1(n_1)}{a_1}, \frac{\pi \rho_2(n_1, n_2)}{a_2},\dots,
\frac{\pi \rho_d(n_1,\dots, n_d)}{a_d}\Bigr),
\label{lambdan}
\end{align}
where $\rho^{(n)}$ is the sequence defined in Lemma~\ref{lem5.7}. For $m=0$ in
\eqref{lambdan}, we can take $\{\rho^{(n)}\}=\Z^{d}\setminus \{0\}$.

We are now in a position to state the Plancherel--Polya--Boas inequalities with weights.
\begin{theorem}\label{thm5.9}
Let $f\in B^{\bsigma}$ and $\lambda^{(n)}$ be the sequence satisfying all
conditions of Lemma~\ref{lem5.8} with some $\ba>\bsigma$,
 then, for $0<p<\infty$,
\[
\sum_{n\in\mathbb{Z}^d}v(\lambda^{(n)})|f(\lambda^{(n)})|^p
\lesssim\int_{\mathbb{R}^d}|f(x)|^pv(x)\,dx\lesssim
\sum_{n\in\mathbb{Z}^d}v(\lambda^{(n)})|f(\lambda^{(n)})|^p.
\]
\end{theorem}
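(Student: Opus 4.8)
The plan is to derive both inequalities from the unweighted Plancherel--Polya--Boas estimates of Theorems~\ref{thm5.3} and~\ref{thm5.4} by multiplying $f$ by entire functions that reproduce the weight $v$ and carry only a small amount of extra exponential type. Write $v(x)=\prod_{j=0}^{m}\xi_j(x)^{k_j}$ with $\xi_j$ as in \eqref{eq46++}. For each $j=0,1,\dots,m$ I would set $\gamma_j=(k_j/p-1)/2\ge-1/2$ and take the even entire function $\omega_{\gamma_j}$ of exponential type $2$ from Lemma~\ref{lem5.6}. Its order of vanishing at the origin equals $2[\gamma_j+1/2]+2\ge 2\gamma_j+1=k_j/p$, so Lemma~\ref{lem5.6} yields, for $s\ge0$, the estimates $\omega_{\gamma_j}(s)^p\lesssim s^{k_j}$ and $\omega_{\gamma_j}(s)^p\asymp s^{k_j}$ for $s\ge1$, while $\omega_{\gamma_j}(s)\asymp s^{k_j/p}$ for $s\ge\delta$, with constants depending on $\delta$. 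Since $\omega_{\gamma_j}$ is even, $\omega_{\gamma_j}(\eta\<\alpha^j,z\>)$ ($j\ge1$) and $\omega_{\gamma_j}(\eta|z|)$ ($j=0$; this depends on $z$ only through $z_1^2+\dots+z_d^2$) extend to entire functions on $\mathbb{C}^d$, and the products
\[
w_1(z)=\prod_{j=0}^m\omega_{\gamma_j}(\eta\xi_j(z)),\qquad w_2(z)=\prod_{j=0}^m\bigl(\omega_{\gamma_j}(\eta\xi_j(z))+1\bigr)
\]
are entire, nonnegative on $\R^d$, and of anisotropic exponential type at most $2\eta\,\mathbf{c}$ for a fixed vector $\mathbf{c}=\mathbf{c}(\alpha^1,\dots,\alpha^m)$ with positive entries. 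I would fix $\eta>0$ so small that $\bsigma+2\eta\,\mathbf{c}<\ba$, which is possible since $\bsigma<\ba$; then $f\,w_1,f\,w_2\in B^{\bsigma+2\eta\,\mathbf{c}}$.

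The two multipliers are both needed because $v$ vanishes on the hyperplanes $\{\xi_j=0\}$, $j\ge1$. The estimates above give $\omega_{\gamma_j}(\eta\xi_j(x))^p\lesssim\xi_j(x)^{k_j}$ for all $x$, hence $|w_1(x)|^p\lesssim v(x)$ on $\R^d$; whereas $\omega_{\gamma_j}(s)+1\asymp(1+s)^{k_j/p}$ gives $|w_2(x)|^p\asymp\prod_{j}(1+\eta\xi_j(x))^{k_j}\gtrsim v(x)$ on $\R^d$. At the nodes, by contrast, Lemma~\ref{lem5.8} provides $\xi_j(\lambda^{(n)})\ge\delta$ for all $j$ and $n$, so $\omega_{\gamma_j}(\eta\xi_j(\lambda^{(n)}))\asymp\xi_j(\lambda^{(n)})^{k_j/p}$ and $\omega_{\gamma_j}(\eta\xi_j(\lambda^{(n)}))+1\asymp\xi_j(\lambda^{(n)})^{k_j/p}$, whence
\[
|w_1(\lambda^{(n)})|^p\asymp v(\lambda^{(n)})\asymp|w_2(\lambda^{(n)})|^p,
\]
with constants depending only on $\delta$ and $\eta$.

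With these comparisons in hand the proof concludes quickly. For the first inequality, if $\int_{\R^d}|f|^pv\,dx=\infty$ there is nothing to prove, so assume it is finite; then $g_1:=f\,w_1\in L^{p}(\R^d)$, i.e.\ $g_1\in B_p^{\bsigma+2\eta\mathbf{c}}$, and since $\Omega_{\mathrm{sep}}[\delta]$ implies $\inf_{n\ne m}|\lambda^{(n)}-\lambda^{(m)}|>0$, Theorem~\ref{thm5.3} applied to $g_1$ gives
\[
\sum_{n\in\Z^d}v(\lambda^{(n)})|f(\lambda^{(n)})|^p\lesssim\sum_{n\in\Z^d}|g_1(\lambda^{(n)})|^p\lesssim\int_{\R^d}|g_1|^p\,dx=\int_{\R^d}|f|^p|w_1|^p\,dx\lesssim\int_{\R^d}|f|^pv\,dx.
\]
For the second inequality, assume $\sum_{n}v(\lambda^{(n)})|f(\lambda^{(n)})|^p<\infty$; then $g_2:=f\,w_2\in B^{\bsigma'}$ with $\bsigma':=\bsigma+2\eta\mathbf{c}<\ba$ and $\sum_n|g_2(\lambda^{(n)})|^p<\infty$, so by Theorem~\ref{thm5.4} (applicable since $\lambda^{(n)}$ satisfies $\Omega_{\mathrm{sep}}[\delta]$, $\Omega_{\mathrm{lat}}[\ba,L]$ and $\bsigma'<\ba$) we get $g_2\in L^{p}(\R^d)$ and
\[
\int_{\R^d}|f|^pv\,dx\lesssim\int_{\R^d}|f|^p|w_2|^p\,dx=\int_{\R^d}|g_2|^p\,dx\lesssim\sum_{n\in\Z^d}|g_2(\lambda^{(n)})|^p\lesssim\sum_{n\in\Z^d}v(\lambda^{(n)})|f(\lambda^{(n)})|^p.
\]

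I expect the routine part to be the first two paragraphs: the choice of $\gamma_j$ and $\eta$ and the verification, with uniform constants, of $|w_1|^p\lesssim v$ and $|w_2|^p\gtrsim v$ on $\R^d$ together with $|w_i(\lambda^{(n)})|^p\asymp v(\lambda^{(n)})$ on the sequence. The one essential point, forced by the vanishing of $v$, is that a single multiplier cannot serve both directions: one needs $w_1$ vanishing on each $\{\xi_j=0\}$ at least as fast as $v^{1/p}$ and $w_2$ not vanishing there at all, and it is precisely the lower bound $\xi_j(\lambda^{(n)})\ge\delta$ from Lemma~\ref{lem5.8} that makes the two-sided comparison with $v$ hold at the nodes. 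Keeping the exponential type of $f\,w_i$ strictly below $\ba$ by means of the dilation $\eta$ is what lets Theorem~\ref{thm5.4} be invoked in the second step.
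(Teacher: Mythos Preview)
Your argument is correct. The first inequality is handled exactly as in the paper: multiply $f$ by the entire function $w_1=\prod_j\omega_{\gamma_j}(\eta\xi_j(\,\cdot\,))$, use $|w_1|^p\lesssim v$ on $\R^d$ and $|w_1(\lambda^{(n)})|^p\asymp v(\lambda^{(n)})$ at the nodes, and apply Theorem~\ref{thm5.3}.

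For the second inequality your route differs from the paper's. You introduce a second, non-vanishing multiplier $w_2=\prod_j(\omega_{\gamma_j}(\eta\xi_j(\,\cdot\,))+1)$, observe that $(\omega_{\gamma_j}(s)+1)^p\asymp(1+s)^{k_j}$ gives $|w_2|^p\gtrsim v$ globally while $|w_2(\lambda^{(n)})|^p\asymp v(\lambda^{(n)})$, and finish with a single application of Theorem~\ref{thm5.4} to $fw_2$. The paper instead works only with the factors $w_j=\omega_{\gamma_j}(s\xi_j(\,\cdot\,))$ (which vanish on the hyperplanes $\{\xi_j=0\}$) and compensates by partitioning $\R^d$ into the $2^{m+1}$ regions $E_\delta(J)=\{\xi_j\ge\delta\ (j\in J),\ \xi_j\le\delta\ (j\notin J)\}$; on each region only the subproduct $\prod_{j\in J}w_j$ is used, Theorem~\ref{thm5.4} is applied separately to each $f\prod_{j\in J}w_j(s\,\cdot\,)$, and the resulting sums over $J$ are recombined via $\sum_J\prod_{j\in J}w_j^p\lesssim w^p$. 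Your device of adding $1$ to each factor sidesteps this decomposition entirely and yields a shorter proof; the paper's version has the minor advantage of using a single auxiliary function $w$ throughout, but at the cost of the combinatorial splitting.
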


\begin{proof}
Recall that $v(x)=\prod_{j=0}^mv_j(x)$, where
$v_j(x)=\xi_j^{k_j}(x)$, $j=0,1,\dots,m$ (see \eqref{eq35+} and \eqref{eq46++}).

By Lemma \ref{lem5.6}, we construct entire function of exponential type
$$w(z)=\prod_{j=0}^m w_j(z),$$ where $w_0(z)=\omega_{\gamma_{0}}(|z|)$,
$w_j(z)=\omega_{\gamma_j}(\<\alpha^j, z\>)$, $j=1,\dots,m$, and
\[
\gamma_j=\frac{k_j}{2p}-\frac{1}{2},\quad j=0,1,\dots,m.
\]
For $j=0,1,\dots,m$, we have $w_j\in B^{2\bmu^j}$, where
\[
\bmu^0=(1,\dots,1)\in \R^{d},\quad \bmu^j=
(|\alpha_1^j|,\dots,|\alpha_d^j|),\ j=1,\dots,m,
\]
and $w\in B^{2\bmu}$, $\bmu=\sum_{j=0}^m\bmu^j$. Moreover,
for any $j=0,1,\dots,m$,
\begin{equation}\label{eq46}
\begin{aligned}
w_j^p(x)&\lesssim v_j(x),\quad x\in \mathbb{R}^d,\\
w_j^p(x)&\gtrsim v_j(x)\gtrsim 1,\quad \text{for}\quad \xi_j(x)\geq \delta>0.
\end{aligned}
\end{equation}

Let $f\in B^{\bsigma}_{p,v}$, $0<p<\infty$, $\bsigma<\ba$, and $\lambda^{(n)}$
be the sequence satisfying all conditions of Lemma~\ref{lem5.8}, then, for some
$s>0$ such that $\bsigma+2s\bmu<\ba$, we have that $f(x)w(sx)\in B^{\bsigma+2s\bmu}$.

Using Theorem \ref{thm5.3}, and properties \eqref{eq46}, we derive
\begin{align*}
\sum_{n\in\mathbb{Z}^d}v(\lambda^{(n)})|f(\lambda^{(n)})|^p&\lesssim
\sum_{n\in\mathbb{Z}^d}|f(\lambda^{(n)})w(\lambda^{(n)})|^p\\
&\lesssim \int_{\mathbb{R}^d}|f(x)w(x)|^p\,dx\lesssim\int_{\mathbb{R}^d}|f(x)|^pv(x)\,dx.
\end{align*}

Let $\delta>0$, $J\subset J_{m}:=\{0,1,\dots,m\}$ or $J=\varnothing$,
\[
E_{\delta}(J)=\{x\in\R^d\colon \xi_{j}(x)\geq\delta,\ j\in J\ \text{and}\
\xi_{j}(x)\leq\delta,\ j\in J_{m}\setminus J\}.
\]

Since $f(x)\prod_{j\in J}w_j(sx)\in B^{\bsigma+2s\bmu}$, then using
Theorems~\ref{thm5.3}, \ref{thm5.4} and properties \eqref{eq46} for $\delta$
from Lemma~\ref{lem5.8}, we obtain
\begin{align*}
\int_{\mathbb{R}^d}|f(x)|^pv(x)\,dx&=\sum_{J}\int_{E_{\delta}(J)}|f(x)|^pv(x)\,dx\lesssim
\sum_{J}\int_{E_{\delta}(J)}|f(x)|^p\prod_{j\in J}v_j(sx)\,dx\\ &\lesssim
\sum_{J}\int_{E_{\delta}(J)}\Bigl|f(x)\prod_{j\in J}w_j(sx)\Bigr|^p\,dx\lesssim
\sum_{J}\int_{\R^d}\Bigl|f(x)\prod_{j\in J}w_j(sx)\Bigr|^p\,dx\\ &\lesssim
\sum_{n}|f(\lambda^{(n)})|^p\sum_{J}\prod_{j\in
J}w_j^p(s\lambda^{(n)})
\lesssim\sum_{n}|f(\lambda^{(n)})w(s\lambda^{(n)})|^p
\\
&\lesssim\sum_{n}|f(\lambda^{(n)})|^pv(s\lambda^{(n)})
\lesssim\sum_{n}|f(\lambda^{(n)})|^pv(\lambda^{(n)}),
\end{align*}
where we have assumed that $\prod_{j\in\varnothing}=1$.
\end{proof}

\begin{proof}[Proof of Theorem~\ref{thm5.2}]
Recall that it is enough to show that $B_{p,v}^{\bsigma}\subset B_{p}^{\bsigma}$ and the latter follows from
 $B^{\bsigma}_{p,v}\subset L^{p}(\mathbb{R}^d)$.

Let $f\in B^{\bsigma}_{p,v}$, $0<p<\infty$, $\ba>\bsigma$, and $\lambda^{(n)}$
be the sequence satisfying all conditions of Lemma \ref{lem5.8}. Using Theorem
\ref{thm5.3}, and properties \eqref{eq46} as in Theorem~\ref{thm5.9} we have
\begin{align*}
\int_{\mathbb{R}^d}|f(x)|^p\,dx&\lesssim
\sum_{n\in\mathbb{Z}^d}|f(\lambda^{(n)})|^p\lesssim
\sum_{n\in\mathbb{Z}^d}|w(\lambda^{(n)})f(\lambda^{(n)})|^p\\
&\lesssim \int_{\mathbb{R}^d}|f(x)w(x)|^p\,dx\lesssim\int_{\mathbb{R}^d}|f(x)|^pv(x)\,dx.
\qedhere
\end{align*}
\end{proof}


By the Paley--Wiener theorem for tempered distributions (see \cite{Jeu06,Tri02}) and Theorem \ref{thm5.1}, we arrive at the following result.

\begin{theorem}\label{thm5.10}
A function $f\in B_{p, k}^\sigma$, $1\leq p<\infty$, if and only if $f\in
L^{p}(\mathbb{R}^{d},d\mu_{k})\cap C_b(\R^d)$ and $\supp
\mathcal{F}_k(f)\subset B_\sigma$.
\end{theorem}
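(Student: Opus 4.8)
The plan is to deduce Theorem~\ref{thm5.10} from two ingredients already at our disposal: the Paley--Wiener theorem for the Dunkl transform of tempered distributions (\cite{Jeu06,Tri02}), which asserts that a tempered distribution $u$ satisfies $\supp\mathcal{F}_k(u)\subset B_\sigma$ if and only if $u$ agrees with an entire function on $\mathbb{C}^d$ obeying a bound of the form $|u(z)|\le C(1+|z|)^N e^{\sigma|\mathrm{Im}\,z|}$ for some $C>0$, $N\in\Z_+$; and Theorem~\ref{thm5.1}, which identifies $B_{p,k}^\sigma$ with $\widetilde{B}_{p,k}^\sigma$.

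First I would prove the ``only if'' direction. Let $f\in B_{p,k}^\sigma$. By Theorem~\ref{thm5.1}, $f\in\widetilde{B}_{p,k}^\sigma$, so $|f(z)|\le c_f e^{\sigma|\mathrm{Im}\,z|}$ for all $z\in\mathbb{C}^d$; restricting to real arguments gives $|f(x)|\le c_f$ on $\R^d$, so the entire function $f$ is bounded and continuous on $\R^d$, i.e.\ $f\in C_b(\R^d)$, while $f\in L^{p}(\R^d,d\mu_k)$ by the very definition of the class. Being bounded, $f$ is a tempered distribution, and the bound $|f(z)|\le c_f e^{\sigma|\mathrm{Im}\,z|}$ is exactly the Paley--Wiener growth condition with $N=0$; hence $\supp\mathcal{F}_k(f)\subset B_\sigma$.

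Conversely, suppose $f\in L^{p}(\R^d,d\mu_k)\cap C_b(\R^d)$ with $\supp\mathcal{F}_k(f)\subset B_\sigma$. Since $f$ is bounded it is a tempered distribution, so by the Paley--Wiener theorem $f$ coincides almost everywhere---hence, by continuity, everywhere---with an entire function satisfying $|f(z)|\le C(1+|z|)^N e^{\sigma|\mathrm{Im}\,z|}$. Using $(1+|z|)^N\le c_\varepsilon e^{\varepsilon|z|}$ for every $\varepsilon>0$ together with $|\mathrm{Im}\,z|\le|z|$, we obtain $|f(z)|\le c_\varepsilon e^{(\sigma+\varepsilon)|z|}$ for every $\varepsilon>0$; combined with $f\in L^{p}(\R^d,d\mu_k)$ this says precisely that $f\in B_{p,k}^\sigma$.

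The step requiring the most care is the correct invocation of the Dunkl Paley--Wiener theorem in the distributional category and the bookkeeping of the polynomial factor $(1+|z|)^N$: in the ``if'' direction it must be absorbed into an arbitrarily small exponential so that the resulting spherical type is exactly $\sigma$ rather than $\sigma+\varepsilon$, and in the ``only if'' direction the sharpness of the support (the \emph{closed} ball $B_\sigma$) relies on Theorem~\ref{thm5.1} upgrading the a priori estimate $|f(z)|\le c_\varepsilon e^{(\sigma+\varepsilon)|z|}$ to the sharp bound $|f(z)|\le c_f e^{\sigma|\mathrm{Im}\,z|}$. Everything else is routine.
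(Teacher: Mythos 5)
Your proposal is correct and follows essentially the same route as the paper, which derives Theorem~\ref{thm5.10} in one line from exactly the two ingredients you use: the Paley--Wiener theorem for tempered distributions from \cite{Jeu06,Tri02} and the identification $B_{p,k}^\sigma=\widetilde{B}_{p,k}^\sigma$ of Theorem~\ref{thm5.1}. You merely spell out the details (boundedness on $\R^d$, absorbing the polynomial factor $(1+|z|)^N$ into $e^{\varepsilon|z|}$) that the paper leaves implicit.
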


The Dunkl transform $\mathcal{F}_k(f)$ in Theorem \ref{thm5.10} is understood as a function for $1\leq
p\leq 2$ and as a tempered distribution for $p>2$.

We conclude this section by presenting the concept of the best approximation.
Let
\[
E_{\sigma}(f)_{p,d\mu_k}=\inf\{\|f-g\|_{p,d\mu_k}\colon g\in B_{p, k}^\sigma\}
\]
be the best approximation of a function $f\in
L^{p}(\mathbb{R}^{d},d\mu_{k})$ by entire functions of spherical exponential
type $\sigma$. We show that the best approximation is achieved.

\begin{theorem}\label{thm5.11}
For any $f\in L^{p}(\mathbb{R}^{d},d\mu_{k})$, $1\leq p\leq\infty$, there exist a
function $g^{\ast}\in B_{p, k}^\sigma$ such that
$E_{\sigma}(f)_{p,d\mu_k}=\|f-g^{\ast}\|_{p,d\mu_k}$.
\end{theorem}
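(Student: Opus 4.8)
The plan is to produce $g^{\ast}$ as the limit of a minimizing sequence, extracted by a normal--families (Montel) argument in several complex variables. Set $E:=E_{\sigma}(f)_{p,d\mu_k}$ and choose $g_n\in B_{p, k}^\sigma$ with $\|f-g_n\|_{p,d\mu_k}\to E$. The triangle inequality gives $\sup_n\|g_n\|_{p,d\mu_k}\le\|f\|_{p,d\mu_k}+\sup_n\|f-g_n\|_{p,d\mu_k}=:C_0<\infty$, so the minimizing sequence is bounded in $L^{p}(\mathbb{R}^d,d\mu_k)$.

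The next step would be to upgrade this $L^p$ bound to one that is uniform on compact subsets of $\mathbb{C}^d$. For $1\le p<\infty$ one uses the weighted Nikol'ski\v{i} inequality for entire functions of spherical exponential type, $\|g\|_{\infty}\lesssim\|g\|_{p,d\mu_k}$ for $g\in B_{p, k}^\sigma$ (the implied constant depending on $\sigma$, $p$, $k$; this is exactly the kind of estimate obtained later in Section~7), to conclude $M:=\sup_n\|g_n\|_{\infty}<\infty$; for $p=\infty$ the bound $M\le\|f\|_{\infty}+\sup_n\|f-g_n\|_{\infty}<\infty$ is immediate. Since each $g_n$ is entire of spherical exponential type at most $\sigma$, restricting to a complex line $\zeta\mapsto g_n(x+\zeta u)$ with $u\in\mathbb{S}^{d-1}$ reduces matters to one variable, and the classical Phragm\'en--Lindel\"of/Bernstein bound on a half-plane (used for $k\equiv 0$ already in the excerpt) yields $|g_n(x+iy)|\le Me^{\sigma|y|}$ for all $x,y\in\mathbb{R}^d$. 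Hence $\{g_n\}$ is uniformly bounded on every compact subset of $\mathbb{C}^d$, and Montel's theorem provides a subsequence $g_{n_j}$ converging locally uniformly on $\mathbb{C}^d$ to an entire function $g^{\ast}$.

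It then remains to check that $g^{\ast}$ is a best approximant. Letting $j\to\infty$ in $|g_{n_j}(x+iy)|\le Me^{\sigma|y|}$ gives $|g^{\ast}(z)|\le Me^{\sigma|\mathrm{Im}\,z|}\le Me^{\sigma|z|}$, so $g^{\ast}$ has spherical exponential type at most $\sigma$; Fatou's lemma gives $\|g^{\ast}\|_{p,d\mu_k}\le\liminf_j\|g_{n_j}\|_{p,d\mu_k}\le C_0<\infty$, so $g^{\ast}\in B_{p, k}^\sigma$ (for $p=\infty$ one has $\|g^{\ast}\|_{\infty}\le M$ and $g^{\ast}\in C_b$, again giving $g^{\ast}\in B_{\infty, k}^\sigma$). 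Finally, since $g_{n_j}\to g^{\ast}$ pointwise, Fatou's lemma once more yields $\|f-g^{\ast}\|_{p,d\mu_k}\le\liminf_j\|f-g_{n_j}\|_{p,d\mu_k}=E$, and together with $\|f-g^{\ast}\|_{p,d\mu_k}\ge E_{\sigma}(f)_{p,d\mu_k}=E$ this gives $\|f-g^{\ast}\|_{p,d\mu_k}=E$.

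I expect the main obstacle to be the uniform sup--norm estimate on $B_{p, k}^\sigma$ for $1\le p<\infty$: without it the minimizing sequence need not be locally bounded in $\mathbb{C}^d$ and the compactness scheme breaks down. This is precisely the weighted Nikol'ski\v{i} inequality for the Dunkl transform, which is known and which the paper recovers in Section~7; the remaining ingredients (Montel's theorem, the Phragm\'en--Lindel\"of principle, Theorems~\ref{thm5.1} and \ref{thm5.10} to identify $B_{p, k}^\sigma$ with a space of bounded continuous functions, and Fatou's lemma) are standard. For $1<p<\infty$ a softer route is also available: the same Nikol'ski\v{i} inequality together with Theorem~\ref{thm5.10} shows that $B_{p, k}^\sigma$ is a closed subspace of the reflexive space $L^{p}(\mathbb{R}^d,d\mu_k)$, hence proximinal (best approximations exist); this does not cover the endpoints $p=1$ and $p=\infty$, so the Montel argument is the one I would present.
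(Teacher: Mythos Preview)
Your proposal is correct and follows essentially the same scheme as the paper: take a minimizing sequence, upgrade the $L^{p}$ bound to a uniform sup-bound, extract a locally uniformly convergent subsequence by normal-families compactness, and pass to the limit in the norm. The only cosmetic differences are that the paper obtains the sup-norm bound from Theorem~\ref{thm5.1} (already proved at that point) rather than the forward reference to the Nikol'ski\v{i} inequality of Section~7, and it cites the ready-made compactness theorem for entire functions of exponential type in \cite[3.3.6]{Nik75} instead of unpacking Montel plus Phragm\'en--Lindel\"of; in the final step it uses truncated norms $\|\,\cdot\,\chi_{B_R}\|_{p,d\mu_k}$ and lets $R\to\infty$, which is equivalent to your Fatou argument.
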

\begin{proof}
The proof is standard.
 Let $g_n$ be a sequence from $B_{p, k}^\sigma$ such that $\|f-g_n\|_{p,d\mu_k}\to
E_{\sigma}(f)_{p,d\mu_k}$. Since it is bounded in
$L^{p}(\mathbb{R}^{d},d\mu_{k})$, then it is also bounded in $C_b(\R^d)$. A
compactness theorem for entire functions \cite[3.3.6]{Nik75} implies
that there exist a subsequence $g_{n_k}$ and an entire function $g^{\ast}$ of
exponential type at most $\sigma$ such that
\[
\lim_{k\to\infty}g_{n_k}(x)=g^{\ast}(x),\quad x\in \R^d,
\]
and, moreover, convergence is uniform on compact sets. Therefore, for any $R>0$,
\[
\|g^{\ast}\chi_{B_R}\|_{p,d\mu_k}=\lim_{k\to\infty}\|g_{n_k}\chi_{B_R}\|_{p,d\mu_k}\leq M.
\]
Letting $R\to\infty$, we have that $g^{\ast}\in B_{p, k}^\sigma$. In light of
\begin{align*}
\|(f-g^{\ast})\chi_{B_R}\|_{p,d\mu_k}&=\lim_{k\to\infty}\|(f-g_{n_k})\chi_{B_R}\|_{p,d\mu_k}\\
&\leq
\lim_{k\to\infty}\|f-g_{n_k}\|_{p,d\mu_k}=E_{\sigma}(f)_{p,d\mu_k},
\end{align*}
we have $\|f-g^{\ast}\|_{p,d\mu_k}\leq E_{\sigma}(f)_{p,d\mu_k}$.
\end{proof}

\bigskip
\section{Jackson's inequality and equivalence of modulus of smoothness and $K$-functional
}

\subsection{Smoothness characteristics and $K$-functional}
Let $\mathcal{S}'(\R^d)$ be the space of tempered distributions, $r\in\N$.
We can multiply tempered distributions on functions from $C^{\infty}_{\Pi}(\mathbb{R}^d)$.
Observe that $|x|^{2r},e_k(x,\Cdot),j_{\lambda_k}(|x|)\in C^{\infty}_{\Pi}(\mathbb{R}^d)$.

Further, using multipliers from $C^{\infty}_{\Pi}(\mathbb{R}^d)$, Laplacian, and Dunkl transform, we define several functionals on the Schwartz space. If a sequence $\{\varphi_l\}\subset\mathcal{S}(\R^d)$ converges to zero in topology of $\mathcal{S}(\mathbb{R}^{d})$, then the sequences $\{(-\Delta_k)^r\varphi_l\}$, $\{\mathcal{F}_k(\varphi_l)\}$, $\{g\varphi_l\}$, $g\in C^{\infty}_{\Pi}(\mathbb{R}^d)$, also converge to zero in topology of $\mathcal{S}(\mathbb{R}^{d})$. Hence all functionals will be continue.

First we define the Dunkl transform for tempered distribution
\[
\<\mathcal{F}_k(f),\varphi\>=\<f,\mathcal{F}_k(\varphi)\>,\quad f\in
\mathcal{S}'(\R^d),\quad \varphi\in \mathcal{S}(\R^d).
\]
If $\check{\varphi}(y)=\varphi(-y)$, the inverse  Dunkl transform will be
\[
\<\mathcal{F}_k^{-1}(f),\varphi\>=\<f,\mathcal{F}_k^{-1}(\varphi)\>=\<f,\mathcal{F}_k(\check{\varphi})\>,\quad f\in
\mathcal{S}'(\R^d),\quad \varphi\in \mathcal{S}(\R^d).
\]

Let $f,g\in\mathcal{S}'(\R^d)$. We have $\mathcal{F}_k^{-1}(\mathcal{F}_k(f))=\mathcal{F}_k(\mathcal{F}_k^{-1}(f))$, and $f=g$ iff $\mathcal{F}_k(f)=\mathcal{F}_k(g)$.

We define the $r$-th power of the Dunkl Laplacian $(-\Delta_k)^rf$ as follows
\[
\<(-\Delta_k)^rf,\varphi\>=\<f,(-\Delta_k)^r\varphi\>=\<f,\mathcal{F}_k^{-1}|\Cdot|^{2r}\mathcal{F}_k(\varphi))\>,\quad f\in
\mathcal{S}'(\R^d),\quad \varphi\in \mathcal{S}(\R^d).
\]

Let $W^{2r}_{p, k}$ be the Sobolev space, that is,
\[
W^{2r}_{p, k}=\{f\in L^{p}(\mathbb{R}^{d},d\mu_{k})\colon (-\Delta_k)^rf\in
L^{p}(\mathbb{R}^{d},d\mu_{k})\}
\]
equipped with the  Banach norm
\[
\|f\|_{W^{2r}_{p,k}}=\|f\|_{p,d\mu_k}+\|(-\Delta_k)^rf\|_{p,d\mu_k}.
\]
Note that $(-\Delta_k)^rf\in \mathcal{S}(\R^d)$ whenever $f\in \mathcal{S}(\R^d)$.

For $f\in\mathcal{S}'(\R^d)$ the generalized translation operators $\tau^yf,T^tf\in\mathcal{S}'(\R^d)$ are defined as follows
\begin{equation*}
\begin{gathered}
\<\tau^yf,\varphi\>=\<f,\tau^{-y}\varphi\>=\<f,\mathcal{F}_k^{-1}(e_k(-y,\Cdot)\mathcal{F}_k(\varphi))\>,
\quad \varphi\in \mathcal{S}(\R^d),\quad y\in\mathbb{R}^{d},\\
\<T^tf,\varphi\>=\<f,T^t\varphi\>=\<f,\mathcal{F}_k^{-1}(j_{\lambda_k}(t|\Cdot|)\mathcal{F}_k(\varphi))\>,
\quad \varphi\in \mathcal{S}(\R^d),\quad t\in\mathbb{R}_+.
\end{gathered}
\end{equation*}

For the Dunkl transform of the considered operators and their compositions we have the following easily verifiable equalities
\begin{equation}\label{distributiontransform}
\begin{gathered}
\mathcal{F}_k((-\Delta_k)^{r}f)=|\Cdot|^{2r}\mathcal{F}_k(f),\quad \mathcal{F}_k(\tau^yf)=e_k(y,\Cdot)\mathcal{F}_k(f),\\
\mathcal{F}_k((-\Delta_k)^{r}\tau^yf)=|\Cdot|^{2r}e_k(y,\Cdot)\mathcal{F}_k(f), \quad
\mathcal{F}_k(T^tf)=j_{\lambda_k}(t|\Cdot|)\mathcal{F}_k(f),\\
\mathcal{F}_k((-\Delta_k)^{r}T^tf)=|\Cdot|^{2r}j_{\lambda_k}(t|\Cdot|)\mathcal{F}_k(f),\\
\mathcal{F}_k(T^t(\tau^yf))=j_{\lambda_k}(t|\Cdot|)e_k(y,\Cdot)\mathcal{F}_k(f).
\end{gathered}
\end{equation}
This implies the commutativity of these compositions.

Let $\varphi\in\mathcal{S}(\R^d)$, $\check{\varphi}(y)=\varphi(-y)$. We call $f\in\mathcal{S}'(\R^d)$ even if $\<f,\check{\varphi}\>=\<f,\varphi\>$. Note that
$f\in\mathcal{S}'(\R^d)$ is even iff $\mathcal{F}(f)$ is even.

Let $N_k$ be a set of even $f\in\mathcal{S}'(\R^d)$ for which $\mathcal{F}_k(f)\in C^{\infty}_{\Pi}(\mathbb{R}^d)$.
For $f\in N_k$ and $\varphi\in\mathcal{S}(\R^d)$ we set
\[
(f\Ast{k}\varphi)(x)=\<\tau^{x}f,\check{\varphi}\>=\<f,\tau^{-x}\check{\varphi}\>.
\]

If $g\in N_k$, $\varphi\in\mathcal{S}(\R^d)$, then $(g\Ast{k}\varphi)\in\mathcal{S}(\R^d)$ and
\begin{equation}\label{convolution1}
\mathcal{F}_k(g\Ast{k}\varphi)(y)=\mathcal{F}_k(g)(y)\mathcal{F}_k(\varphi)(y).
\end{equation}
Indeed, we have
\begin{align*}
\tau^{-x}\check{\varphi}(y)&=\mathcal{F}_k^{-1}(e_k(-x,\Cdot)\mathcal{F}_k(\check{\varphi}))(y)\\&
=\int_{\mathbb{R}^d}e_k(-x,z)e_k(y,z)\mathcal{F}_k(\check{\varphi})(z)\,d\mu_k(z)\\&
=\int_{\mathbb{R}^d}e_k(-x,z)e_k(y,z)\mathcal{F}_k(\varphi)(-z)\,d\mu_k(z)\\&
=\int_{\mathbb{R}^d}e_k(x,z)e_k(-y,z)\mathcal{F}_k(\varphi)(z)\,d\mu_k(z)\\&
=\mathcal{F}_k(e_k(x,\Cdot)\mathcal{F}_k(\varphi))(y)=\tau^{x}\varphi(-y)\in\mathcal{S}(\R^d).
\end{align*}
Hence, by definition we get
\begin{align*}
(g\Ast{k}\varphi)(x)&=\<g,\tau^{-x}\check{\varphi}\>=\<g,\mathcal{F}_k(e_k(x,\Cdot)\mathcal{F}_k(\varphi))\>
=\<\mathcal{F}_k(g),e_k(x,\Cdot)\mathcal{F}_k(\varphi)\>\\&
=\int_{\mathbb{R}^d}e_k(x,z)\mathcal{F}_k(g)(z)\mathcal{F}_k(\varphi)(z)\,d\mu_k(z)\\&
=\mathcal{F}_k^{-1}(\mathcal{F}_k(g)\mathcal{F}_k(\varphi))(x)\in\mathcal{S}(\R^d)
\end{align*}
and the equality \eqref{convolution1}.

Now we can define a convolution $(f\Ast{k}g)\in\mathcal{S}'(\R^d)$ for $f\in\mathcal{S}'(\R^d)$ and $g\in N_k$ as follows
\begin{equation}\label{convolution2}
\<(f\Ast{k}g),\varphi\>=\<f,(g\Ast{k}\varphi)\,\>,\quad \varphi\in\mathcal{S}(\R^d).
\end{equation}
Applying \eqref{convolution1} and $\mathcal{F}_k(\mathcal{F}_k(\varphi))=\check{\varphi}$ for $\varphi\in\mathcal{S}(\R^d)$, we obtain
\begin{align*}
\<\mathcal{F}_k(f\Ast{k}g),\varphi\>&=\<(f\Ast{k}g),\mathcal{F}_k(\varphi)\>=\<f,(g\Ast{k}\mathcal{F}_k(\varphi))\>\\&
=\<f,\mathcal{F}_k^{-1}(\mathcal{F}_k(g)\mathcal{F}_k(\mathcal{F}_k(\varphi)))\>=\<f,\mathcal{F}_k^{-1}(\mathcal{F}_k(g)\check{\varphi})\>\\&
=\<f,\mathcal{F}_k(\mathcal{F}_k(g)\varphi)\>=\<\mathcal{F}_k(f),\mathcal{F}_k(g)\varphi\>=\<\mathcal{F}_k(g)\mathcal{F}_k(f),\varphi\>,
\end{align*}
hence
\begin{equation}\label{transformconvolution2}
\mathcal{F}_k(f\Ast{k}g)=\mathcal{F}_k(g)\mathcal{F}_k(f).
\end{equation}

The distribution $|\Cdot|^{2r}\in\mathcal{S}'(\R^d)$ is even,
$G_r=\mathcal{F}_k^{-1}(|\Cdot|^{2r})\in N_k$ and $(-\Delta)^{r}f=(f\Ast{k}G_r)$ for $f\in\mathcal{S}'(\R^d)$.If $g_1,g_2\in N_k$, then $(g_1\Ast{k}g_2)\in N_k$ and $(g_1\Ast{k}g_2)=(g_2\Ast{k}g_1)$. If $f\in\mathcal{S}'(\R^d)$ and $g_1,g_2\in N_k$ then
\begin{equation*}
(f\Ast{k}(g_1\Ast{k}g_2))=((f\Ast{k}g_1)\Ast{k}g_2).
\end{equation*}

We have
\begin{equation*}
(-\Delta_k)^{r}(f\Ast{k}g)=((-\Delta_k)^{r}f\Ast{k}g).
\end{equation*}
Indeed, by \eqref{distributiontransform}, \eqref{transformconvolution2},
\begin{align*}
\mathcal{F}_k((-\Delta_k)^{r}(f\Ast{k}g))&=|\Cdot|^{2r}\mathcal{F}_k(f\Ast{k}g)=|\Cdot|^{2r}\mathcal{F}_k(f)\mathcal{F}_k(g)\\&
=\mathcal{F}_k((-\Delta_k)^{r}f)\mathcal{F}_k(g)=\mathcal{F}_k((-\Delta_k)^{r}f\Ast{k}g).
\end{align*}

\begin{lemma}\label{lem6.1}
If $f\in L^{p}(\mathbb{R}^{d},d\mu_{k})$, $g\in L^{1}_{\mathrm{rad}}(\mathbb{R}^d,d\mu_{k})$, $\mathcal{F}_k(g)\in N_k$, then both convolutions
\eqref{eq12} and \eqref{convolution2} of these functions coincide.
\end{lemma}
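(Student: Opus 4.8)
The plan is to verify that both convolutions, regarded as tempered distributions, have the same Dunkl transform, and then to invoke the fact (recalled at the beginning of this section) that $\mathcal{F}_k$ is one-to-one on $\mathcal{S}'(\R^d)$.

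First I would record that the two objects are genuine tempered distributions. For \eqref{convolution2} this is built into its construction in this section: the hypotheses on $g$ (which in particular are what makes that convolution meaningful) guarantee $\mathcal{F}_k(g)\in C^{\infty}_{\Pi}(\mathbb{R}^d)$ and $(g\Ast{k}\varphi)\in\mathcal{S}(\R^d)$ for every $\varphi\in\mathcal{S}(\R^d)$, so $\<f,(g\Ast{k}\varphi)\>$ is well defined and continuous in $\varphi$, and \eqref{transformconvolution2} gives $\mathcal{F}_k(f\Ast{k}g)=\mathcal{F}_k(g)\mathcal{F}_k(f)$. For \eqref{eq12}, the Thangavelu--Xu inequality \eqref{eq13} — in the form stated just after \eqref{eq14}, with the boundedness hypothesis on $g$ removed — yields $(f\Ast{k}g)\in L^{p}(\mathbb{R}^{d},d\mu_{k})$ and $\|(f\Ast{k}g)\|_{p,d\mu_k}\le\|f\|_{p,d\mu_k}\|g\|_{1,d\mu_k}$, hence a tempered distribution.

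Next I would identify $\mathcal{F}_k$ of \eqref{eq12}. Writing $g(y)=g_0(|y|)$ with $g_0\in L^{1}(\R_{+},d\nu_{\lambda_k})$ (by \eqref{eq10}), Lemma~\ref{lem3.2} applied to $f\in\mathcal{S}(\R^d)\subset\mathcal{A}_k$ gives the pointwise identity $\mathcal{F}_k(f\Ast{k}g)=\mathcal{F}_k(f)\mathcal{F}_k(g)$. For an arbitrary $f\in L^{p}(\mathbb{R}^{d},d\mu_{k})$ I would choose $f_n\in\mathcal{S}(\R^d)$ with $f_n\to f$ in $L^{p}(\mathbb{R}^{d},d\mu_{k})$; then \eqref{eq13} shows $(f_n\Ast{k}g)\to(f\Ast{k}g)$ in $L^{p}(\mathbb{R}^{d},d\mu_{k})$, hence in $\mathcal{S}'(\R^d)$, so $\mathcal{F}_k(f_n\Ast{k}g)\to\mathcal{F}_k(f\Ast{k}g)$ in $\mathcal{S}'(\R^d)$; at the same time $\mathcal{F}_k(f_n)\to\mathcal{F}_k(f)$ in $\mathcal{S}'(\R^d)$, and since multiplication by $\mathcal{F}_k(g)\in C^{\infty}_{\Pi}(\mathbb{R}^d)$ is continuous on $\mathcal{S}'(\R^d)$, also $\mathcal{F}_k(f_n)\mathcal{F}_k(g)\to\mathcal{F}_k(f)\mathcal{F}_k(g)$ in $\mathcal{S}'(\R^d)$. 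Passing to the limit in $\mathcal{F}_k(f_n\Ast{k}g)=\mathcal{F}_k(f_n)\mathcal{F}_k(g)$ gives $\mathcal{F}_k(f\Ast{k}g)=\mathcal{F}_k(f)\mathcal{F}_k(g)$ for \eqref{eq12} as well. Thus both convolutions have Dunkl transform $\mathcal{F}_k(f)\mathcal{F}_k(g)$, and by injectivity of $\mathcal{F}_k$ on $\mathcal{S}'(\R^d)$ they coincide.

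I expect the main obstacle to be the limiting step of the previous paragraph, which rests on $\mathcal{F}_k(g)$ being a smooth multiplier, i.e.\ $\mathcal{F}_k(g)\in C^{\infty}_{\Pi}(\mathbb{R}^d)$; this is precisely the regularity carried by the hypothesis on $g$ (and is also what makes \eqref{convolution2} meaningful and \eqref{transformconvolution2} valid). For $1\le p\le2$ the limiting step may alternatively be handled through the Hausdorff--Young inequality — then $\mathcal{F}_k(f_n)\to\mathcal{F}_k(f)$ in $L^{p'}(\mathbb{R}^{d},d\mu_{k})$, and multiplying by the bounded function $\mathcal{F}_k(g)$ preserves this convergence — but the multiplier argument covers all $p\in[1,\infty]$ at once. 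A minor point to check en route is that the expression $(g\Ast{k}\varphi)$ entering \eqref{convolution2} coincides with the pointwise convolution $\int_{\mathbb{R}^d}g(y)\,\tau^{x}\varphi(-y)\,d\mu_k(y)$, which again follows from Lemma~\ref{lem3.2} since $\varphi\in\mathcal{A}_k$.
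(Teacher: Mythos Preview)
Your proposal is correct and follows the same overall strategy as the paper: show that the pointwise convolution \eqref{eq12} lies in $L^{p}$ via \eqref{eq13}, compute its Dunkl transform as $\mathcal{F}_k(g)\mathcal{F}_k(f)$, and compare with \eqref{transformconvolution2}. The difference is in how that Dunkl transform is obtained. The paper does it directly for a general $f\in L^{p}$: it pairs $\mathcal{F}_k(f\Ast{k}g)$ against a test function, applies Fubini to the resulting double integral, recognizes the inner integral as $\mathcal{F}_k(\mathcal{F}_k(g)\varphi)(y)$, and rewrites the pairing as $\<\mathcal{F}_k(g)\mathcal{F}_k(f),\varphi\>$. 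You instead invoke Lemma~\ref{lem3.2} for $f\in\mathcal{S}(\R^d)$ and then pass to general $f$ by density, using continuity of multiplication by $\mathcal{F}_k(g)\in C^{\infty}_{\Pi}$ on $\mathcal{S}'$. Your route is natural and reuses earlier machinery; the paper's direct computation is a bit more self-contained and, importantly, works uniformly for $1\le p\le\infty$. Your density step ``choose $f_n\in\mathcal{S}$ with $f_n\to f$ in $L^{p}$'' is fine for $1\le p<\infty$ but breaks at $p=\infty$ (recall $L^{\infty}\equiv C_b$ here, where $\mathcal{S}$ is not norm-dense); if you want to cover that endpoint you should either argue with weak*-type convergence together with a dominated-convergence justification for $(f_n\Ast{k}g)\to(f\Ast{k}g)$, or simply adopt the paper's direct Fubini computation for that case.
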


\begin{proof} Set
\[
(f\Ast{k}g)(x)=\int_{\mathbb{R}^d}f(y)\tau^{x}g(-y)\,d\mu_{k}(y).
\]
By \eqref{eq13} $(f\Ast{k}g)\in L^{p}(\mathbb{R}^{d},d\mu_{k})$ and $(f\Ast{k}g)\in \mathcal{S}'(\R^d)$.
It is sufficiently to prove the equality $\mathcal{F}_k(f\Ast{k}g)=\mathcal{F}_k(g)\mathcal{F}_k(f)$ in $\mathcal{S}'(\R^d)$. For any $\varphi\in \mathcal{S}(\R^d)$
we have
\begin{align*}
\<\mathcal{F}_k(f\Ast{k}g),\varphi\>&=\<(f\Ast{k}g),\mathcal{F}_k(\varphi)\>\\&=
\int_{\mathbb{R}^d}\int_{\mathbb{R}^d}f(y)\tau^{x}g(-y)\,d\mu_{k}(y)\mathcal{F}_k(\varphi)(x)\,d\mu_{k}(x)\\&
=\int_{\mathbb{R}^d}f(y)\int_{\mathbb{R}^d}\tau^{-y}g(x)\mathcal{F}_k(\varphi)(x)\,d\mu_{k}(x)d\mu_{k}(y).
\end{align*}
Since
\begin{align*}
&\int_{\mathbb{R}^d}\tau^{-y}g(x)\mathcal{F}_k(\varphi)(x)\,d\mu_{k}(x)\\&=
\int_{\mathbb{R}^d}\int_{\mathbb{R}^d}e_k(-y,z)e_k(x,z)\mathcal{F}_k(g)(z)\,d\mu_{k}(z)\mathcal{F}_k(\varphi)(x)\,d\mu_{k}(x)
\\&=\int_{\mathbb{R}^d}e_k(-y,z)\mathcal{F}_k(g)(z)\varphi(z)\,d\mu_{k}(z)=\mathcal{F}_k(\mathcal{F}_k(g)\varphi)(y),
\end{align*}
then
\begin{align*}
\<\mathcal{F}_k(f\Ast{k}g),\varphi\>&=\int_{\mathbb{R}^d}f(y)\mathcal{F}_k(\mathcal{F}_k(g)\varphi)\,d\mu_{k}
=\<f,\mathcal{F}_k(\mathcal{F}_k(g)\varphi)\>
\\&=\<\mathcal{F}_k(f),\mathcal{F}_k(g)\varphi\>=\<\mathcal{F}_k(g)\mathcal{F}_k(f),\varphi\>.
\end{align*}
Lemma~\ref{lem6.1} is proved.
\end{proof}

Define the $K$-functional for the couple $(L^{p}(\mathbb{R}^{d},d\mu_{k}), W^{2r}_{p, k})$
as follows\[
K_{2r}(t,
f)_{p,d\mu_k}=\inf\{\|f-g\|_{p,d\mu_k}+t^{2r}\|(-\Delta_k)^rg\|_{p,d\mu_k}\colon
g\in W^{2r}_{p, k}\}.
\]
Note that
for any $f_1,f_2\in L^{p}(\mathbb{R}^{d},d\mu_{k})$ and $g\in W^{2r}_{p, k}$, we have
\begin{align*}
&\|f_1-g\|_{p,d\mu_k}+t^{2r}\|(-\Delta_k)^rg\|_{p,d\mu_k}\\
&\qquad \leq \|f_2-g\|_{p,d\mu_k}+t^{2r}\|(-\Delta_k)^rg\|_{p,d\mu_k}+\|f_1-f_2\|_{p,d\mu_k}
\end{align*}
and hence,
\begin{equation}
|K_{2r}(t, f_1)_{p,d\mu_k}-K_{2r}(t, f_2)_{p,d\mu_k}|\leq \|f_1-f_2\|_{p,d\mu_k}.\label{eq48}
\end{equation}
If $f\in W^{2r}_{p, k}$, then $K_{2r}(t, f)_{p,d\mu_k}\leq t^{2r}\|(-\Delta_k)^rf\|_{p,d\mu_k}$ and
$\lim_{t\to 0}K_{2r}(t, f)_{p,d\mu_k}=0$. This and \eqref{eq48} imply that, for any
 $f\in L^{p}(\mathbb{R}^{d},d\mu_{k})$,
\begin{equation}
\lim_{t\to 0}K_{2r}(t, f)_{p,d\mu_k}=0.\label{eq49}
\end{equation}

Another important property of the $K$-functional is
\begin{equation}
K_{2r}(\lambda t, f)_{p,d\mu_k}\leq \max\{1,\,\lambda^{2r}\}K_{2r}(t, f)_{p,d\mu_k}. \label{eq50}
\end{equation}

Let $I$ be an identical operator and $m\in \mathbb{N}$. Consider the following three differences:
\begin{equation}
\varDelta_t^mf(x)=(I-T^t)^mf(x)= \sum_{s=0}^m(-1)^s\binom{m}{s}(T^{t})^sf(x),\label{eq51}
\end{equation}
\begin{equation}
\aDelta_t^mf(x)=\sum_{s=0}^m(-1)^s\binom{m}{s}T^{st}f(x),\label{eq52}
\end{equation}
\begin{equation}
\aaDelta_t^mf(x)=\binom{2m}{m}^{-1}\sum_{s=-m}^m(-1)^s
\binom{2m}{m-s}T^{st}f(x).\label{eq53}
\end{equation}
Differences \eqref{eq51} and \eqref{eq52} coincide with the classical
difference for the translation operator $T^{t}f(x)=f(x+t)$ and correspond to
the usual definition of the modulus of smoothness of order $m$. Difference
\eqref{eq53} can be seen as follows. Define $\mu_s=(-1)^s\binom{m}{s}$, $s\in
\Z$. Then the convolution $\mu*\mu$ is given by
\[
\nu_s:=(\mu*\mu)_s=\sum_{l\in\Z}\mu_l\mu_{s+l}=(-1)^s\binom{2m}{m-s}.
\]
 Note that $\nu_s\ne 0$ if $|s|\leq m$. Moreover, if $k\equiv0$, then
\[
\frac{1}{\nu_0}\sum_{s=-m}^m\nu_sT^{st}f(x)=f(x)+\frac{2}{\nu_0}\sum_{s=1}^m\nu_sS^{st}f(x)=f(x)-V_{m, t}f(x),
\]
where the operator $S^{t}$ was given in \eqref{s-operator} and the averages
\[
V_{m, t}f(x)=\frac{-2}{\nu_0}\sum_{s=1}^m\nu_sS^{st}f(x)
\]
were defined by F.~Dai and Z.~Ditzian in \cite{DaiDit04}.

\begin{definition}\label{moduli}
The moduli of smoothness of a function $f\in L^{p}(\mathbb{R}^{d},d\mu_{k})$ are defined by
\begin{equation}
\omega_m(\delta, f)_{p,d\mu_k}=\sup_{0<t\leq\delta}\|\varDelta_t^mf(x)\|_{p,d\mu_k},\label{eq54}
\end{equation}
\begin{equation}
\aomega_m(\delta, f)_{p,d\mu_k}=\sup_{0<t\leq\delta}\|\aDelta_t^mf(x)\|_{p,d\mu_k}, \label{eq55}
\end{equation}
\begin{equation}
\aaomega_m(\delta, f)_{p,d\mu_k}=\sup_{0<t\leq\delta}\|\aaDelta_t^mf(x)\|_{p,d\mu_k}. \label{eq56}
\end{equation}
\end{definition}
Let us mention some basic properties of these moduli of smoothness. Define by
$\Omega_m(\delta,
f)_{p,d\mu_k}$ any of the three moduli in Definition \ref{moduli}. 
Using
 the triangle inequality, estimate \eqref{eq6} reveals
\begin{equation}\label{eq57}
\begin{gathered}
\Omega_m(\delta, f_1+f_2)_{p,d\mu_k}\leq \Omega_m(\delta, f_1)_{p,d\mu_k}+\Omega_m(\delta, f_2)_{p,d\mu_k},\\
\Omega_m(\delta, f)_{p,d\mu_k}\lesssim \|f\|_{p,d\mu_k}.
\end{gathered}
\end{equation}
If $f\in \mathcal{S}^{'}(\R^d)$, then, by \eqref{distributiontransform},
\begin{equation}
\begin{aligned}
\mathcal{F}_k(\varDelta_t^mf)(y)&=j_{\lambda_{k},m}(t|y|)\mathcal{F}_k(f)(y),
\\
\mathcal{F}_k(\aDelta_t^rf)(y)&=j_{\lambda_{k},m}^*(t|y|)\mathcal{F}_k(f)(y),
\\
\mathcal{F}_k(\aaDelta_t^mf)(y)&=\aaj_{\lambda_{k},m}(t|y|)\mathcal{F}_k(f)(y),
\end{aligned}
\label{eq58}
\end{equation}
where $\lambda_k=d/2-1+\sum_{a\in R_+}k(a)>-1/2$,
\begin{align*}
j_{\lambda_{k},m}(t)&=\sum_{s=0}^m(-1)^s\binom{m}{s}\bigl(j_{\lambda_{k}}(t)\bigr)^s=(1-j_{\lambda_{k}}(t))^m,
\\
j_{\lambda_{k},m}^*(t)&=\sum_{s=0}^m(-1)^s\binom{m}{s}j_{\lambda_{k}}(st),
\end{align*}
and
\begin{align}
\aaj_{\lambda_{k},m}(t)&=\binom{2m}{m}^{-1}\sum_{s=-m}^m(-1)^s\binom{2m}{m-s}j_{\lambda_{k}}(st)\notag\\
&=1+2\binom{2m}{m}^{-1}\sum_{s=1}^m(-1)^s\binom{2m}{m-s}j_{\lambda_{k}}(st). \label{eq59}
\end{align}

Since $j_{\lambda_{k},m}(t|y|),j_{\lambda_{k},m}^*(t|y|),\aaj_{\lambda_{k},m}(t|y|)\in C^{\infty}_{\Pi}(\mathbb{R}^d)$, then all differences and their Dunkl transforms are tempered distributions.

Let us prove the following remark, which will be important further in Theorem \ref{thm6.2}.

\begin{remark}\label{rem6.1}
 The functions $j_{\lambda_{k},m}(t)$ and $\aaj_{\lambda_{k},m}(t)$
have zero of order $2m$ at the origin, while the function
$\aj_{\lambda_{k},m}(t)$ has zero of order $m+1$ if $m$ is odd and of order $m$
if $m$ is even.
\end{remark}
Indeed, first we study $j_{\lambda_{k},m}(t)=(1-j_{\lambda_{k}}(t))^{m}$. Since, for
any $t$,
\begin{equation}\label{j-ser}
j_{\lambda}(t)=\sum_{k=0}^{\infty}\frac{(-1)^{k}\Gamma(\lambda+1)(t/2)^{2k}}{k!\,\Gamma(k+\lambda+1)},
\end{equation}
we get $j_{\lambda_{k},m}(t)\asymp t^{2m}$ as $t\to 0$.
Second, since
\[
\sum_{s=0}^m(-1)^s\binom{m}{s}s^{2k}=0, \qquad 0\le 2k\le m-1,
\]
(see \cite[Sect.~4.2]{prud}), using \eqref{j-ser}, we obtain
that
$j_{\lambda_{k},m}^*(t)\asymp t^{2[(m+1)/2]}$.
Finally, taking into account
\begin{equation*}
\begin{aligned}
&\sum_{s=1}^m(-1)^s\binom{2m}{m-s}=-\frac{1}{2}\binom{2m}{m},
\\
&\sum_{s=1}^m(-1)^s\binom{2m}{m-s}s^{2k}=0,\qquad k=1,\dots,m-1,
\end{aligned}
\end{equation*}
 (see \cite[Sect.~4.2]{prud}) and using again \eqref{j-ser},
 we arrive at $\aaj_{\lambda_{k},m}(t)\asymp t^{2m}$.
Some of these properties were known (see \cite{Pla07,Pla09,DaiDit04}).
\begin{remark}\label{rem6.2}
In the paper \cite{DaiDit04}, the authors obtained that $\aaj_{\lambda_{k},m}(t)>0$ for $t>0$.
\end{remark}

\begin{lemma}\label{lem6.2}
If $m,r\in \N$, $1\leq p\leq\infty$, and $f\in L^{p}(\mathbb{R}^{d},d\mu_{k})$, then
\begin{equation}\label{omega-inequality2}
\omega_{m+r}(\delta, f)_{p,d\mu_k}\leq 2^r\omega_{m}(\delta, f)_{p,d\mu_k}.
\end{equation}
\end{lemma}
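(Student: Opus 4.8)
The plan is to reduce the whole statement to the $L^p$-contractivity of $T^t$ (inequality \eqref{eq6}) by means of the obvious algebraic factorization of the finite difference. Concretely, I would first record that for $f\in L^{p}(\mathbb{R}^{d},d\mu_{k})$, $1\le p\le\infty$, each operator $T^t$, and hence each iterate $(T^t)^s$, $s\in\Z_+$, is a contraction on $L^{p}(\mathbb{R}^{d},d\mu_{k})$ by \eqref{eq6} and the remark following it. This already guarantees that the differences $\varDelta_t^mf$ and $\varDelta_t^{m+r}f$ from \eqref{eq51} are genuine $L^p$ functions (they were introduced a priori only as tempered distributions), and this bookkeeping is the only point requiring a word of care.

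Next, since powers of the single operator $T^t$ commute, I would use the factorization
\[
\varDelta_t^{m+r}f=(I-T^t)^{m+r}f=(I-T^t)^r\bigl((I-T^t)^mf\bigr)=(I-T^t)^r\varDelta_t^mf.
\]
Expanding $(I-T^t)^r$ by the binomial formula exactly as in \eqref{eq51}, and combining the triangle inequality with the bound $\|(T^t)^s\varDelta_t^mf\|_{p,d\mu_k}\le\|\varDelta_t^mf\|_{p,d\mu_k}$ from the previous step, I obtain, for every fixed $t$,
\[
\|\varDelta_t^{m+r}f\|_{p,d\mu_k}\le\sum_{s=0}^r\binom{r}{s}\|\varDelta_t^mf\|_{p,d\mu_k}=2^r\|\varDelta_t^mf\|_{p,d\mu_k}.
\]

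Finally, for $0<t\le\delta$ the right-hand side is at most $2^r\omega_m(\delta,f)_{p,d\mu_k}$ by the definition \eqref{eq54}, so taking the supremum over such $t$ on the left-hand side yields \eqref{omega-inequality2}. The argument uses no further input; the \emph{hard part}, such as it is, is merely the remark in the first step that for $L^p$ data the distributional differences coincide with the $L^p$ objects produced by the bounded operators $T^t$, after which the estimate is one line.
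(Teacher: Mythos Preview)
Your proof is correct and follows essentially the same route as the paper: both reduce to the binomial expansion $\varDelta_t^{m+r}f=\sum_{s=0}^r(-1)^s\binom{r}{s}(T^t)^s\varDelta_t^mf$ and then apply the $L^p$-contractivity of $T^t$ from \eqref{eq6}. The only cosmetic difference is that the paper justifies this identity on the Fourier side via \eqref{distributiontransform} and \eqref{eq58} (consistent with its distributional setup), whereas you invoke the operator factorization $(I-T^t)^{m+r}=(I-T^t)^r(I-T^t)^m$ directly and then note, as you do in your first paragraph, that for $L^p$ data these distributional differences are the honest $L^p$ objects.
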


\begin{proof} Since for $f\in\mathcal{S}'(\R^d)$ by~\eqref{distributiontransform}, \eqref{eq58},
\begin{align*}
\mathcal{F}_k(\varDelta_t^{m+r}f)&=(1-j_{\lambda_{k}}(t))^{m+r}\mathcal{F}_k(f)\\&
=(1-j_{\lambda_{k}}(t))^{r}(1-j_{\lambda_{k}}(t))^{m}\mathcal{F}_k(f)\\&
=\sum_{s=0}^{r}(-1)^s\binom{r}{s}(j_{\lambda_{k}}(t))^s\mathcal{F}_k(\varDelta_t^{m}f),
\end{align*}
then
\[
\varDelta_t^{m+r}f=\sum_{s=0}^{r}(-1)^s\binom{r}{s}(T^{t})^s(\varDelta_t^{m}f).
\]
Using for $f\in L^{p}(\mathbb{R}^{d},d\mu_{k})$ Theorem~\ref{thm3.3} and \eqref{eq57}, we get
\[
\|\varDelta_t^{m+r}f\|_{p,d\mu_k}\leq \sum_{s=0}^{r}\binom{r}{s}\|\varDelta_t^{m}f\|_{p,d\mu_k}=2^r\|\varDelta_t^{m}f\|_{p,d\mu_k}.
\]
\end{proof}

\subsection{Main results}
First we state the Jackson-type inequality.

\begin{theorem}\label{thm6.1}
Let $\sigma> 0$, $1\leq p \leq \infty$, $r\in \mathbb{Z}_+$, $m\in \mathbb{N}$. We have,
for any $f\in W_{p, k}^{2r}$,
\begin{equation}
E_{\sigma}(f)_{p,d\mu_k} \lesssim
\frac{1}{\sigma^{2r}}\,\Omega_m\Bigl(\frac{1}{\sigma},
(-\Delta_k)^rf\Bigr)_{p,d\mu_k}, \label{eq60}
\end{equation}
where $\Omega_m$ is any of the three moduli of smoothness \eqref{eq54}--\eqref{eq56}.
\end{theorem}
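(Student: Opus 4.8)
The plan is to produce, for each of the three moduli $\Omega_m$, a bounded linear operator $P_\sigma$ mapping $L^p(\R^d,d\mu_k)$ into $B_{p,k}^{c\sigma}=\widetilde B_{p,k}^{c\sigma}$ (the equality being Theorem~\ref{thm5.1}), with $c$ a fixed constant $>1$, such that for $f\in W^{2r}_{p,k}$
\[
\|f-P_\sigma f\|_{p,d\mu_k}\lesssim \sigma^{-2r}\,\Omega_m\bigl(1/\sigma,(-\Delta_k)^rf\bigr)_{p,d\mu_k}.
\]
Since $E_{c\sigma}(f)_{p,d\mu_k}\le\|f-P_\sigma f\|_{p,d\mu_k}$, the desired estimate \eqref{eq60} then follows upon rescaling $\sigma\mapsto\sigma/c$, using the monotonicity of $E_\sigma$ and the elementary scaling property $\Omega_m(c\delta,g)_{p,d\mu_k}\lesssim\Omega_m(\delta,g)_{p,d\mu_k}$ (which, like \eqref{eq57}, follows from the triangle inequality and \eqref{eq6}). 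By \eqref{eq48}, its analogue for $W^{2r}_{p,k}$, and density it is enough to carry out the argument for $f$ in a dense subclass, e.g.\ $f\in\mathcal{S}(\R^d)$.

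First I would fix a radial $\phi\in C^\infty(\R_+)$ with $0\le\phi\le1$, $\phi\equiv1$ on $[0,1]$, $\supp\phi\subset[0,c]$, and set $\mathcal{F}_k(P_\sigma f)(y)=\phi(|y|/\sigma)\mathcal{F}_k(f)(y)$; equivalently $P_\sigma f=f\Ast{k}\Phi_\sigma$, where $\Phi_\sigma$ is the radial function with $\mathcal{F}_k(\Phi_\sigma)(y)=\phi(|y|/\sigma)$, so that $\supp\mathcal{F}_k(\Phi_\sigma)\subset B_{c\sigma}$. By the Paley--Wiener theorem (Theorem~\ref{thm5.10}) we get $P_\sigma f\in B_{p,k}^{c\sigma}$, and, since $\|\mathcal{F}_k^{-1}(\phi(|\Cdot|/\sigma))\|_{1,d\mu_k}$ is independent of $\sigma$ by scaling, Lemma~\ref{lem3.6} and the convolution bound \eqref{eq13} give $\|P_\sigma f\|_{p,d\mu_k}\lesssim\|f\|_{p,d\mu_k}$ for all $1\le p\le\infty$.

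Next, $\mathcal{F}_k(f-P_\sigma f)(y)=(1-\phi(|y|/\sigma))\mathcal{F}_k(f)(y)$, which is supported in $\{|y|\ge\sigma\}$. On this set one has $\mathcal{F}_k(f)(y)=|y|^{-2r}\mathcal{F}_k((-\Delta_k)^rf)(y)$, and, by \eqref{eq58} and the commutativity in \eqref{distributiontransform}, the difference operator attached to $\Omega_m$ has symbol $h_m(|y|/\sigma)$ — namely $(1-j_{\lambda_k}(w))^m$ for $\omega_m$, $\aj_{\lambda_k,m}(w)$ for $\aomega_m$, or $\aaj_{\lambda_k,m}(w)$ for $\aaomega_m$ — which is bounded below by a positive constant on $\{w\ge1\}$: for $(1-j_{\lambda_k})^m$ because $|j_{\lambda_k}(w)|<1$ for $w>0$ with $j_{\lambda_k}(w)\to0$, for $\aaj_{\lambda_k,m}$ by Remark~\ref{rem6.2}, and for $\aj_{\lambda_k,m}$ by the analogous non-vanishing together with Remark~\ref{rem6.1}. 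Dividing, and writing $\theta(w)=w^{-2r}(1-\phi(w))/h_m(w)$, one obtains the identity
\[
f-P_\sigma f=\sigma^{-2r}\,\Theta_\sigma\Ast{k}\bigl(\varDelta^m_{1/\sigma}(-\Delta_k)^rf\bigr),\qquad \mathcal{F}_k(\Theta_\sigma)(y)=\theta(|y|/\sigma),
\]
with $\varDelta$ replaced by $\aDelta$ or $\aaDelta$ (and $h_m$ correspondingly) in the two remaining cases.

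The hard part will be to verify that $\mathcal{F}_k^{-1}(\theta(|\Cdot|))$ is a finite radial Borel measure on $\R^d$ whose total variation does not depend on $\sigma$ (in fact $\theta$ itself is $\sigma$-free). Granting this, convolution with $\Theta_\sigma$ is the averaging $g\mapsto\int_0^\infty T^tg\,d\rho(t)$ against that measure, hence is bounded on every $L^p(\R^d,d\mu_k)$, $1\le p\le\infty$, with one and the same constant, because $\|T^t\|_{p\to p}\le1$ by Theorem~\ref{thm3.3} (estimate \eqref{eq6}); combined with the displayed identity and $\|\varDelta^m_{1/\sigma}(-\Delta_k)^rf\|_{p,d\mu_k}\le\Omega_m(1/\sigma,(-\Delta_k)^rf)_{p,d\mu_k}$ this yields the estimate of the first paragraph. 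Now $\theta$ is a bounded $C^\infty$ radial symbol of order $-2r\le0$ that vanishes identically near the origin, and the required property follows from the standard asymptotics of $j_{\lambda_k}$, exactly as in \cite{DaiDit04,ThaXu07} for $G=\Z_2^d$ and in \cite{Pla07,Pla09} in the one-dimensional Bessel case: for $r\ge1$ the inverse transform is an $L^1(d\mu_k)$ function comparable to $|x|^{2r-2\lambda_k-2}$ near $x=0$ (integrable against $d\mu_k$ precisely because $r>0$) and of rapid decay at infinity, while for $r=0$ one separates off the identity part and checks that the residual symbol $\theta-1$, which tends to $0$ at infinity, still has a finite-measure inverse transform (equivalently, one replaces $P_\sigma$ by the composition of a de la Vallée Poussin band-limiter with a Jackson-type averaging kernel). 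The main obstacle is precisely this kernel estimate; the doubling property of the $\Omega_m$ and the non-vanishing of $\aj_{\lambda_k,m}$ on $[1,\infty)$ are the only other, minor, points to be pinned down.
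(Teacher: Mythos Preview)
Your approach matches the paper's in outline: build a de~la~Vall\'ee Poussin approximant $P_\sigma f$ and write $f-P_\sigma f$ as a radial multiplier applied to the appropriate difference of $(-\Delta_k)^rf$; this is exactly the content of Lemma~\ref{lem6.6}, and for the moduli $\omega_m$ and $\aaomega_m$ your sketch is essentially correct (the symbols $(1-j_{\lambda_k})^m$ and $\aaj_{\lambda_k,m}$ are indeed strictly positive on $(0,\infty)$, the latter by Remark~\ref{rem6.2}).

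The gap is in the $\aomega_m$ case. You assert that $\aj_{\lambda_k,m}(w)$ is bounded away from zero on $\{w\ge1\}$, citing ``analogous non-vanishing'' and Remark~\ref{rem6.1}. But Remark~\ref{rem6.1} only records the order of the zero at the origin; as the paper notes in Remark~\ref{rem6.8}, $\aj_{\lambda_k,m}$ \emph{does not keep its sign} on $(0,\infty)$, so it has positive zeros, and there is no reason these avoid $[1,\infty)$. Your multiplier $\theta(w)=w^{-2r}(1-\phi(w))/\aj_{\lambda_k,m}(w)$ is therefore singular in general, and the kernel estimate cannot be salvaged as written. The paper's remedy is to work with step $a/\sigma$ rather than $1/\sigma$ in Lemma~\ref{lem6.6}, with $a$ chosen large enough that the symbol is $\ge 1/2$ on the support of $1-\eta(2\,\cdot)$ (using only that it tends to $1$ at infinity); one then passes from step $a/\sigma$ back to $1/\sigma$ through the $K$-functional equivalence of Theorem~\ref{thm6.2}, which is proved first and independently.

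This is also why your other ``minor point'' is not minor: the inequality $\Omega_m(c\delta,g)_{p,d\mu_k}\lesssim\Omega_m(\delta,g)_{p,d\mu_k}$ for fixed $c>1$ is \emph{not} a consequence of the triangle inequality and \eqref{eq6}. For these non-translation moduli the doubling property is obtained only \emph{after} the $K$-functional equivalence (see Remark~\ref{rem.6.9}). So both of the points you flag as loose ends are precisely where the substantive work of the paper sits: without the $K$-functional bridge (Theorem~\ref{thm6.2} and property~\eqref{eq50}), the argument for $\aomega_m$ does not close, and even for the other two moduli the rescaling step is not elementary.
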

\begin{remark}\label{rem6.3}
(i) For radial functions inequality \eqref{eq60} is the Jackson inequality in $L^{p}(\R_{+},d\nu_{\lambda_k})$.
In this case it was obtained in \cite{Pla07,Pla09} for moduli
\eqref{eq54} and \eqref{eq55}. For $k\equiv 0$ and the modulus of smoothness \eqref{eq56},
inequality \eqref{eq60}
was obtained by F. Dai and Z. Ditzian \cite{DaiDit04}, see also the paper \cite{DaiDitTik08}.

(ii) From the proof of Theorem
\ref{thm6.1} we will see that inequality \eqref{eq60} for
moduli \eqref{eq54} and \eqref{eq56} can be equivalently written as
\[
E_{\sigma}(f)_{p,d\mu_k}
\lesssim
\frac{1}{\sigma^{2r}}
\|\varDelta_{1/\sigma}^m ((-\Delta_k)^rf) \|_{p,d\mu_k},
\]
\[
E_{\sigma}(f)_{p,d\mu_k}
\lesssim
\frac{1}{\sigma^{2r}}
\|\aaDelta_{1/\sigma}^m ((-\Delta_k)^rf) \|_{p,d\mu_k}.
\]
\end{remark}

The next theorem provides an equivalence between moduli of smoothness and the $K$-functional.
\begin{theorem}\label{thm6.2}
If $\delta> 0$, $1\leq p\leq \infty$, $r\in \mathbb{N}$, then for any $f\in
L^{p}(\mathbb{R}^{d},d\mu_{k})$

\begin{equation}\label{eq61}
\begin{aligned}
K_{2r}(\delta, f)_{p,d\mu_k}&\asymp \omega_r(\delta, f)_{p,d\mu_k}\asymp
\aaomega_r(\delta, f)_{p,d\mu_k}\\
&\asymp\aomega_{2r-1}(\delta,
f)_{p,d\mu_k}\asymp\aomega_{2r}(\delta, f)_{p,d\mu_k}.
\end{aligned}
\end{equation}

\end{theorem}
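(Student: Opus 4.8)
The plan is to prove that each of the four moduli appearing in \eqref{eq61} is equivalent to $K_{2r}(\delta,f)_{p,d\mu_k}$, so that \eqref{eq61} follows by transitivity; I will carry out the argument for $\omega_r$ in detail, the other three requiring only cosmetic changes. Throughout I use the Fourier description \eqref{eq58}: the difference $\varDelta_t^r$ acts on $\mathcal{F}_k$ by multiplication by $j_{\lambda_k,r}(t|\Cdot|)=(1-j_{\lambda_k}(t|\Cdot|))^r$, which by \eqref{j-ser} and Remark \ref{rem6.1} is an even real-analytic function of $w=t|y|$ with a zero of order exactly $2r$ at the origin, is bounded on $\R$, and satisfies $j_{\lambda_k}(w)<1$ for every $w>0$.

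\emph{Step 1 (the estimate $\omega_r(\delta,f)_{p,d\mu_k}\lesssim K_{2r}(\delta,f)_{p,d\mu_k}$).} For $g\in W^{2r}_{p,k}$ split $f=(f-g)+g$; by \eqref{eq57} one has $\omega_r(\delta,f-g)_{p,d\mu_k}\lesssim\|f-g\|_{p,d\mu_k}$, so it remains to bound the smooth part. Writing $(1-j_{\lambda_k}(w))^r=w^{2r}k_r(w)$, where, by Sonine's integral for the Bessel function and the product formula $j_{\lambda_k}(aw)j_{\lambda_k}(bw)=\int j_{\lambda_k}(cw)\,d\pi_{a,b}(c)$ (equivalently, the positivity of the translation $R^t$), the bounded function $k_r$ is a fixed constant times an average of dilates $j_{\lambda_k}(c\,\Cdot)$ over a probability measure, we obtain $\varDelta_t^rg=(I-T^t)^rg=t^{2r}\Pi_t\bigl((-\Delta_k)^rg\bigr)$ where $\Pi_t$ is an average of the contractions $T^s$; hence $\|\Pi_t\|_{p,d\mu_k\to p,d\mu_k}\lesssim 1$ uniformly in $t$ by \eqref{eq6}, and $\|\varDelta_t^rg\|_{p,d\mu_k}\lesssim t^{2r}\|(-\Delta_k)^rg\|_{p,d\mu_k}$. (Alternatively, one derives the case $r=1$ from the Bessel-type equation $\partial_t^2(T^tg)+\tfrac{2\lambda_k+1}{t}\partial_t(T^tg)=\Delta_k T^tg$, which follows from \eqref{j-ser}, and iterates using the commutativity \eqref{distributiontransform}.) Consequently $\omega_r(\delta,g)_{p,d\mu_k}\lesssim\delta^{2r}\|(-\Delta_k)^rg\|_{p,d\mu_k}$, and taking the infimum over $g\in W^{2r}_{p,k}$ gives the assertion.

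\emph{Step 2 (a Bernstein-type inequality and the reverse estimate).} I claim that if $h\in B_{p,k}^\sigma$ then $\|(-\Delta_k)^rh\|_{p,d\mu_k}\lesssim\sigma^{2r}\|\varDelta_{1/\sigma}^rh\|_{p,d\mu_k}$, and in particular $B_{p,k}^\sigma\subset W^{2r}_{p,k}$. Indeed, on a fixed interval $[0,c_0]$ the function $w\mapsto w^{2r}/(1-j_{\lambda_k}(w))^r$ is a bounded $C^\infty$ function (the denominator vanishes there only at $w=0$, to order $2r$), hence extends to an even $\widetilde m\in C_c^\infty(\R)$ with $\widetilde m(w)(1-j_{\lambda_k}(w))^r=w^{2r}$ on a neighbourhood of $0$. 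By Theorem \ref{thm5.10}, $\supp\mathcal{F}_k(h)\subset B_\sigma$, so for $\tau\asymp1/\sigma$ small enough we get on $\supp\mathcal{F}_k(h)$ the identity $|y|^{2r}=\tau^{-2r}\widetilde m(\tau|y|)(1-j_{\lambda_k}(\tau|y|))^r$, whence $(-\Delta_k)^rh=\tau^{-2r}\,\psi_\tau\Ast{k}\varDelta_\tau^rh$ with $\psi_\tau=\mathcal{F}_k^{-1}\bigl(\widetilde m(\tau|\Cdot|)\bigr)$. Since $\widetilde m(|\Cdot|)\in\mathcal{S}(\R^d)$ we have $\psi_\tau\in L^1_{\mathrm{rad}}(\R^d,d\mu_k)$, and by the homogeneity of $d\mu_k$ (degree $2\lambda_k+2$) and of the Dunkl transform, $\|\psi_\tau\|_{1,d\mu_k}$ is independent of $\tau$; Young's inequality \eqref{eq13} then yields the claim. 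Now take a best approximant $g_\delta\in B_{p,k}^{1/\delta}$, which exists by Theorem \ref{thm5.11}; by the Jackson inequality (Theorem \ref{thm6.1} with $r=0$), $\|f-g_\delta\|_{p,d\mu_k}=E_{1/\delta}(f)_{p,d\mu_k}\lesssim\omega_r(\delta,f)_{p,d\mu_k}$, and applying the Bernstein-type bound to $h=g_\delta$ together with \eqref{eq57},
\[
\delta^{2r}\|(-\Delta_k)^rg_\delta\|_{p,d\mu_k}\lesssim\|\varDelta_{c_1\delta}^rg_\delta\|_{p,d\mu_k}\le\|\varDelta_{c_1\delta}^rf\|_{p,d\mu_k}+\|\varDelta_{c_1\delta}^r(f-g_\delta)\|_{p,d\mu_k}\lesssim\omega_r(\delta,f)_{p,d\mu_k},
\]
where $c_1\le1$ so that $\|\varDelta_{c_1\delta}^rf\|_{p,d\mu_k}\le\omega_r(\delta,f)_{p,d\mu_k}$. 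Hence $K_{2r}(\delta,f)_{p,d\mu_k}\le\|f-g_\delta\|_{p,d\mu_k}+\delta^{2r}\|(-\Delta_k)^rg_\delta\|_{p,d\mu_k}\lesssim\omega_r(\delta,f)_{p,d\mu_k}$.

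\emph{Step 3 (the remaining moduli, and the main obstacle).} For $\aaomega_r$, $\aomega_{2r-1}$ and $\aomega_{2r}$ the same two steps apply. In Step 1 the difference is no longer a pure power of $I-T^t$, so I would instead use the Taylor expansion $T^tg=\sum_{\ell=0}^{r-1}c_\ell t^{2\ell}(-\Delta_k)^\ell g+t^{2r}R_tg$ (with $R_t$ an average of contractions $T^s$ applied to $(-\Delta_k)^rg$, as in Step 1), substitute it into \eqref{eq52}--\eqref{eq53}, and invoke the binomial identities of Remark \ref{rem6.1}, namely $\sum_s(-1)^s\binom{m}{s}s^{2\ell}=0$ and $\sum_s(-1)^s\binom{2m}{m-s}s^{2\ell}=0$ for $2\ell\le m-1$ (and the analogue for $\ell=0$), which kill all polynomial terms of $t$-degree $\le2(r-1)$ — here one uses $m=r$ for $\aaomega_r$ and $m\ge2r-1$ for $\aomega_{2r-1},\aomega_{2r}$ — leaving only the remainder, controlled by \eqref{eq6}. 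In Step 2 the only delicate point is that $w^{2r}/\Phi(w)$ (with $\Phi=\aaj_{\lambda_k,r}$ or $\aj_{\lambda_k,m}$) be $C^\infty$ on the interval used: for $\Phi=\aaj_{\lambda_k,r}$ this holds on all of $(0,\infty)$ since $\aaj_{\lambda_k,r}(w)>0$ there by Remark \ref{rem6.2}, whereas $\aj_{\lambda_k,m}$ need \emph{not} be positive on $(0,\infty)$, so one localizes to a small interval $[0,c_0]$ on which $0$ is its only zero (possible as the zeros of the real-analytic $\aj_{\lambda_k,m}$ are isolated and $\aj_{\lambda_k,m}(w)\asymp w^{2r}$ near $0$) and runs Step 2 with step $c\delta\le\delta$, the harmless constant $c^{-2r}$ aside. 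I expect this last point — handling the non-positive symbol $\aj_{\lambda_k,m}$ — to be the main technical nuisance; it is also exactly the place where the positive $L^p$-bounded translation $T^t$ is used essentially, through the convolution Young inequality \eqref{eq13} built from it.
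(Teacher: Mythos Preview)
Your multiplier strategy is the same as the paper's, and Steps~1 and~2 are largely correct, but there is one genuine logical gap: in Step~2 you invoke Theorem~\ref{thm6.1} (Jackson) to get $\|f-g_\delta\|_{p,d\mu_k}=E_{1/\delta}(f)_{p,d\mu_k}\lesssim\omega_r(\delta,f)_{p,d\mu_k}$, but in this paper the proof of Theorem~\ref{thm6.1} \emph{uses} Theorem~\ref{thm6.2} (via \eqref{eq73} and \eqref{eq77}), so the citation is circular. The paper avoids this by taking the de~la~Vall\'ee~Poussin approximant $P_\sigma(f)$ instead of a best approximant and proving directly, with the \emph{same} step on both pieces, that $\|f-P_\sigma(f)\|_{p,d\mu_k}\lesssim\|\aaDelta_{a/(2\sigma)}^r f\|_{p,d\mu_k}$ (Lemma~\ref{lem6.6}, $r=0$) and $\|(-\Delta_k)^r P_\sigma(f)\|_{p,d\mu_k}\lesssim\sigma^{2r}\|\aaDelta_{a/(2\sigma)}^r f\|_{p,d\mu_k}$ (Lemma~\ref{lem6.7}); setting $\sigma=a/(2\delta)$ then gives $K_{2r}(\delta,f)_{p,d\mu_k}\lesssim\|\aaDelta_\delta^r f\|_{p,d\mu_k}$ with no appeal to Jackson and no doubling inequality for the modulus. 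Your argument is easily repaired the same way: replace $g_\delta$ by $P_{c/\delta}(f)$ and prove the Jackson bound directly by the very multiplier technique you used for Bernstein --- since $(1-j_{\lambda_k}(w))^r>0$ for all $w>0$, the multiplier $(1-\eta(2y/\sigma))\big/(1-j_{\lambda_k}(|y|/\sigma))^r$ is smooth with no constraint on the scaling, so for $\omega_r$ (and $\aaomega_r$; cf.\ Remark~\ref{rem6.8}) the two steps can be matched. For $\aomega_{2r-1}$, $\aomega_{2r}$ the symbol may vanish and a specific constant $a$ appears, but the paired-lemma device still works: choose the steps in the Jackson-type and Bernstein-type bounds to coincide.

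Two smaller points. Your first argument in Step~1 (writing $k_r(w)=(1-j_{\lambda_k}(w))^r/w^{2r}$ as a probability average of dilates $j_{\lambda_k}(c\,\Cdot)$) is correct but very compressed: it follows from $1-j_\lambda(w)=\dfrac{w^2}{2(\lambda+1)}\int_0^1 u\,j_{\lambda+1}(uw)\,du$ (integrate $j_\lambda'(w)=-\frac{w}{2(\lambda+1)}j_{\lambda+1}(w)$), then Sonine's first integral to pass from $j_{\lambda+1}$ back to $j_\lambda$, and finally the product formula to handle the $r$-th power. The paper instead proves the corresponding bound as an $L^1$ multiplier estimate (Lemma~\ref{lem6.4} via Lemma~\ref{lem6.3}); your average-of-contractions argument is a pleasant alternative for Step~1 but does not replace Lemma~\ref{lem6.3}, which the paper also needs in Lemma~\ref{lem6.6}. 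Second, in Step~2 the phrase ``$\widetilde m(w)(1-j_{\lambda_k}(w))^r=w^{2r}$ on a neighbourhood of~$0$'' should read ``on $[-c_0,c_0]$'', since you need the identity on all of $\{\tau|y|:|y|\le\sigma\}$, not just near the origin.
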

\begin{remark}\label{rem6.4}
If $k\equiv0$, the equivalence between the classical modulus of smoothness and
the $K$-functional is well known \cite{johnen, DevoreLorentz}, while the
equivalence between modulus \eqref{eq56} and the $K$-functional was shown in
\cite{DaiDit04}. For radial functions a partial result of \eqref{eq61}, more
precisely, an equivalence of the $K$-functional and moduli of smoothness
\eqref{eq54} and \eqref{eq55} was proved in \cite{Pla07,Pla09}.
\end{remark}
\begin{remark}\label{rem6.5}
One can continue equivalence \eqref{eq61} as follows (see also Remark \ref{rem6.8})
\[
\ldots \asymp
\|\varDelta_\delta^r f \|_{p,d\mu_k}
\asymp
\|\aaDelta_\delta^r f \|_{p,d\mu_k}.
\]
\end{remark}

We give the proof for the difference \eqref{eq53} and the modulus of smoothness
\eqref{eq56}. We partially follow the proofs in \cite{LiSuIva11,Pla07,Pla09}
which are different from those given  in \cite{DaiDit04}. For moduli of smoothness
\eqref{eq54} and \eqref{eq55} the proofs are similar and will be omitted here
(see also \cite{Pla07,Pla09}). The proof makes use of radial multipliers and
based on boundedness of the translation operator $T^t$.

\subsection{Properties of the de la Vall\'{e}e Poussin type operators }

Let $\eta\in \mathcal{S}_\text{rad}(\R^d)$ be such that $\eta(x)=1$ if $|x|\leq 1$,
$\eta(x)>0$ if $|x|<2$, and $\eta(x)=0$ if $|x|\geq 2$. Denote
\[
\eta_r(x)=\frac{1-\eta(x)}{|x|^{2r}},\quad \widehat{\eta}_{k,r}(y)=\mathcal{F}_{k}(\eta_r)(y),
\]
where $\mathcal{F}_{k}(\eta_r)$ is a tempered distribution.
If $t=|x|$, $\eta_0(t)=\eta(x)$, and $\eta_{r 0}(t)=\eta_r(x)$, then
$\mathcal{F}_{k}(\eta_r)(y)=\mathcal{H}_{\lambda_k}(\eta_{r 0})(|y|)$.

\begin{lemma}\label{lem6.3}
We have $\widehat{\eta}_{k,r}\in L^{1}(\R^d, d\mu_k)$, where $r>0$.
\end{lemma}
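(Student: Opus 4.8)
The plan is to decompose $\eta_r$ into dyadic annular pieces and compute the Dunkl transform of each piece separately, using the homogeneity of the measure $d\mu_k$ and of the Dunkl kernel. First observe that, since $1-\eta$ vanishes on $B_1$, the function $\eta_r$ lies in $C^{\infty}(\R^d)$, vanishes on $B_1$, equals $|x|^{-2r}$ for $|x|\ge 2$, and is bounded; in particular it is a tempered distribution.

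Fix a radial $\psi_0\in\mathcal{S}(\R^d)$ with $\psi_0\equiv 1$ on $B_1$ and $\supp\psi_0\subset B_2$, and for $j\ge 1$ put $\psi_j(x)=\psi_0(2^{-j}x)-\psi_0(2^{-j+1}x)$, so that $\sum_{j=0}^{J}\psi_j=\psi_0(2^{-J}\Cdot)\to 1$ pointwise and hence $\eta_r=\sum_{j\ge 0}\eta_r\psi_j$ with convergence in $\mathcal{S}'(\R^d)$ (dominated convergence against Schwartz test functions, using $\|\eta_r\|_{\infty}<\infty$). For $j\ge 1$ the function $\psi_j$ is supported in $\{2^{j-1}\le |x|\le 2^{j+1}\}$. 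For $j\in\{0,1\}$ the product $\eta_r\psi_j$ is smooth and compactly supported, hence lies in $\mathcal{S}(\R^d)$, so $\mathcal{F}_k(\eta_r\psi_j)\in\mathcal{S}(\R^d)\subset L^{1}(\R^d,d\mu_k)$ by the invariance of the Schwartz space under $\mathcal{F}_k$.

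The key step is the estimate for $j\ge 2$. For such $j$ the support of $\psi_j$ is contained in $\{|x|\ge 2\}$, where $\eta_r(x)=|x|^{-2r}$, so that $\eta_r\psi_j(x)=2^{-2rj}g(2^{-j}x)$, where $g(u):=|u|^{-2r}\bigl(\psi_0(u)-\psi_0(2u)\bigr)$ is a fixed function in $\mathcal{S}(\R^d)$ supported in $\{1/2\le|u|\le 2\}$, independent of $j$. Since $d\mu_k$ is homogeneous of degree $2\lambda_k+2$ and $e_k(2^jx,y)=e_k(x,2^jy)$, a change of variables gives $\mathcal{F}_k\bigl(g(2^{-j}\Cdot)\bigr)(y)=2^{j(2\lambda_k+2)}\mathcal{F}_k(g)(2^jy)$, whence
\[
\|\mathcal{F}_k(\eta_r\psi_j)\|_{1,d\mu_k}=2^{-2rj}\,2^{j(2\lambda_k+2)}\int_{\R^d}|\mathcal{F}_k(g)(2^jy)|\,d\mu_k(y)=2^{-2rj}\,\|\mathcal{F}_k(g)\|_{1,d\mu_k},
\]
and $\mathcal{F}_k(g)\in\mathcal{S}(\R^d)\subset L^{1}(\R^d,d\mu_k)$ again by Schwartz invariance. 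Consequently $\sum_{j\ge 0}\|\mathcal{F}_k(\eta_r\psi_j)\|_{1,d\mu_k}<\infty$, the $j=0,1$ terms being finite and $\sum_{j\ge 2}2^{-2rj}<\infty$ (this is precisely where $r>0$ enters). Therefore $\sum_j\mathcal{F}_k(\eta_r\psi_j)$ converges in $L^{1}(\R^d,d\mu_k)$; since it also converges to $\mathcal{F}_k(\eta_r)=\widehat{\eta}_{k,r}$ in $\mathcal{S}'(\R^d)$ (by continuity of $\mathcal{F}_k$ on $\mathcal{S}'$ and the $\mathcal{S}'$-convergence of $\sum_j\eta_r\psi_j$, together with $L^{1}(\R^d,d\mu_k)\hookrightarrow\mathcal{S}'(\R^d)$), the two limits coincide and $\widehat{\eta}_{k,r}\in L^{1}(\R^d,d\mu_k)$.

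The delicate point is that $\eta_r$ itself is generally not in $L^{1}(\R^d,d\mu_k)$ — it decays only like $|x|^{-2r}$ — so one cannot argue directly from $\mathcal{F}_k$ mapping $L^1$ into $C_0$; the dyadic splitting converts this into a geometric series of ratio $2^{-2r}<1$, and the argument is uniform in $r>0$, with nothing special happening at the borderline exponent $r=\lambda_k+1$. A less uniform alternative would be to split off the tail of $\eta_r$ as $|\Cdot|^{-2r}$ times a smooth cutoff away from the origin and to invoke $\mathcal{F}_k(|\Cdot|^{-2r})(y)=c_r|y|^{2r-2\lambda_k-2}$, which follows from the Riesz-potential identities of Section~4, but this requires $0<r<\lambda_k+1$ and a separate logarithmic treatment of the endpoint.
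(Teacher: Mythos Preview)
Your argument is correct, and it takes a genuinely different route from the paper's proof.

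The paper first reduces to the radial Hankel transform and then expands $t^{-2r}$ as the binomial series $\sum_{j\ge 0}\binom{j+r-1}{j}(1+t^2)^{-(j+r)}$, splitting $\eta_{r0}$ into three pieces: a finite linear combination of Bessel-potential kernels $(1+t^2)^{-s}$ (whose Hankel transforms are known to lie in $L^1(\R_+,d\nu_{\lambda_k})$), a Schwartz remainder, and a tail $\psi_3$ handled by repeated application of the Bessel differential operator $\mathcal{B}_{\lambda_k}$ to gain decay of $\mathcal{H}_{\lambda_k}(\psi_3)$. This is essentially a one-dimensional computation leaning on explicit kernel identities and careful integration by parts.

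Your dyadic decomposition instead exploits two structural facts: the homogeneity $e_k(\lambda x,y)=e_k(x,\lambda y)$ and the homogeneity of $d\mu_k$, which together make $\|\mathcal{F}_k(\,\cdot\,)\|_{1,d\mu_k}$ dilation-invariant. Because $\eta_r$ is exactly $|x|^{-2r}$ outside $B_2$, each dyadic piece for $j\ge 2$ is a rescaled copy of one fixed compactly supported smooth function, contributing $2^{-2rj}$ times a fixed $L^1$ quantity; the hypothesis $r>0$ is exactly the summability of the resulting geometric series. No radial reduction, no special-function identities, and no case distinction near $r=\lambda_k+1$ are needed. The trade-off is that the paper's method also yields more explicit pointwise decay information on $\widehat{\eta}_{k,r}$ through the $\mathcal{B}_{\lambda_k}^s$ estimates, whereas your argument gives only the $L^1$ membership---but that is all the lemma asserts.
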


\begin{proof} It is sufficient to prove that
 $\mathcal{H}_{\lambda_k}(\eta_{r 0})\in L^{1}(\R_{+},d\nu_{\lambda_k})$. In the case $r\geq 1$ this was proved in \cite[(4.25)]{Pla09}.
We give the proof for any $r>0$.

Letting $u_j(t)=(1+t^2)^{-j}$ and taking into account that
\[
\frac{1}{t^{2r}}=\frac{1}{(1+t^2)^r}\Bigl(1-\frac{1}{1+t^2}\Bigr)^{-r}=
\sum_{j=0}^{\infty}\binom{j+r-1}{j}\frac{1}{(1+t^2)^{j+r}},\quad t\ne 0,
\]
we obtain, for any $M\in\N$ and $t\ge 0$,
\begin{align*}
\eta_{r 0}(t)&=\sum_{j=0}^{\infty}\binom{j+r-1}{j}(1-\eta_0(t))u_{j+r}(t)
\\ &=\sum_{j=0}^{M-1}\binom{j+r-1}{j}u_{j+r}(t)-
\eta_0(t)\sum_{j=0}^{M-1}\binom{j+r-1}{j}u_{j+r}(t)\\ &+
\sum_{j=M}^{\infty}\binom{j+r-1}{j}(1-\eta_0(t))u_{j+r}(t)
=:\psi_1(t)+\psi_2(t)+\psi_3(t).
\end{align*}

Since for any $r>0$ we have $\mathcal{H}_{\lambda_k}(u_r)\in
L^{1}(\R_{+},d\nu_{\lambda_k})$ (see \cite[Lemma 3.2]{Pla09}, \cite[Chapt 5,
5.3.1]{Ste70}, \cite[Chapt 8, 8.1]{Nik75}), then
$\mathcal{H}_{\lambda_k}(\psi_1)\in L^{1}(\R_{+},d\nu_{\lambda_k})$. Because of
$\psi_2\in\mathcal{S}(\R_+)$, then $\mathcal{H}_{\lambda_k}(\psi_2)\in
L^{1}(\R_{+},d\nu_{\lambda_k})$. Thus, we are left to show that, for
sufficiently large $M$, $\mathcal{H}_{\lambda_k}(\psi_3)\in
L^{1}(\R_{+},d\nu_{\lambda_k})$.

Let $M+r> \lambda_k+1$, $t\geq 1$. Since
$\frac{\Gamma(j+r)}{\Gamma(j+1)}\lesssim j^{r-1}$, we have
\[
|\psi_3(t)|\leq \frac{M^{r-1}}{(1+t^2)^{M+r}}\sum_{j=0}^{\infty}\frac{(1+j)^{r-1}}{2^{j}}\lesssim \frac{1}{(1+t^2)^{M+r}}\lesssim\frac{1}{t^{2M+2r}},
\]
and
\[
\int_0^{\infty}|\psi_3(t)|\,d\nu_{\lambda_k}(t)\lesssim \int_1^{\infty}t^{-(2M+2r-2\lambda_k-1)}\,dt<\infty.
\]
Thus, $\psi_3\in L^{1}(\R_{+},d\nu_{\lambda_k})$, $\mathcal{H}_{\lambda_k}(\psi_3)\in C(\R_{+})$, and $\mathcal{H}_{\lambda_k}(\psi_3)\in L^{1}([0,2],d\nu_{\lambda_k})$.

Recall that the Bessel differential operator is defined by
\[
\mathcal{B}_{\lambda_k}=\frac{d^2}{dt^2}+\frac{(2\lambda_k+1)}{t}\frac{d}{dt}.
\]
Using $\psi_3\in C^{\infty}(\R_{+})$, we have, for any $s\in \N$, $\mathcal{B}_{\lambda_k}^s\psi_3\in L^{1}([0,2],d\nu_{\lambda_k})$.

If $t\geq 2$, then $(1-\eta_0(t))u_{j+r}(t)=u_{j+r}(t)$ and
\[
\mathcal{B}_{\lambda_k}u_{j+r}(t)=4(j+r)(j+r-\lambda_k)u_{j+r+1}(t)-4(j+r)(j+r+1)u_{j+r+2}(t).
\]
This gives \[
|\mathcal{B}_{\lambda_k}u_{j+r}(t)|\leq 2^3(j+r+\lambda_k+1)^2u_{j+r+1}(t).
\]
By induction on $s$,
\[
|\mathcal{B}_{\lambda_k}^su_{j+r}(t)|\leq 2^{3s}(j+r+2s+\lambda_k-1)^{2s}u_{j+r+s}(t),
\]
and then, for $t\geq 2$,
\[
|\mathcal{B}_{\lambda_k}^s\psi_3(t)|\lesssim \frac{1}{(1+t^2)^{M+r+s}}\sum_{j=0}^{\infty}\frac{(1+j)^{r+2s-1}}{5^{j}}\lesssim \frac{1}{(1+t^2)^{M+r+s}}\lesssim\frac{1}{t^{2M+2r+2s}},
\]
and $\mathcal{B}_{\lambda_k}^s\psi_3\in L^{1}([2,\infty),d\nu_{\lambda_k})$.
Thus, we have
$\mathcal{B}_{\lambda_k}^s\psi_3\in L^{1}(\R_+,d\nu_{\lambda_k})$ for any $s$. Choosing $s>\lambda_k+1$ and using the inequality
\[
|\mathcal{H}_{\lambda_k}(\psi_3)(\tau)|\leq\frac{1}{\tau^{2s}}
\int_{0}^{\infty}|\mathcal{B}_{\lambda_k}^s\psi_3(t)|\,d\nu_{\lambda_k}(t)\lesssim \frac{1}{\tau^{2s}},
\]
we arrive at $\mathcal{H}_{\lambda_k}(\psi_3)\in L^{1}([2,\infty),d\nu_{\lambda_k})$.
Finally, we obtain that $\mathcal{H}_{\lambda_k}(\psi_3)\in L^{1}(\R_+,d\nu_{\lambda_k})$.
\end{proof}

For $m, r\in\mathbb{N}$ and $m\geq r$, we set
\[
g_{m, r}^*(y):= |y|^{-2r}\aaj_{\lambda_{k},m}(|y|),\quad
g_{m,r}(x):=\mathcal{F}_{k}(g_{m, r}^*)(x),
\]
\[
g_{m,r}^t(x) :=t^{2r-2\lambda_k-2}g_{m,r}\Bigl(\frac{x}{t}\Bigr).
\]
Since
\[
g_{m,r}^*(y)=\aaj_{\lambda_{k},m}(|y|)\eta_r(y)+\frac{\aaj_{\lambda_{k},m}(|y|)}{|y|^{2r}}\,\eta(y),
\]
\[
\frac{\aaj_{\lambda_{k},m}(|y|)}{|y|^{2r}}\in
C^{\infty}_{\Pi}(\R^d),\quad\frac{\aaj_{\lambda_{k},m}(|y|)}{|y|^{2r}}\,\eta(y)\in
\mathcal{S}(\R^d),
\]
and
\[
\mathcal{F}_{k}(\aaj_{\lambda_{k},m}\eta_r)(x)=\binom{2m}{m}^{-1}
\sum_{s=-m}^m(-1)^s\binom{2m}{m-s}T^{s}\widehat{\eta}_{\lambda_k,r}(x),
\]
then boundedness of the operator $T^s$ in $L^{1}(\R^d,d\mu_k)$ and Lemma
\ref{lem6.3} imply that
\begin{equation}\label{eq62}
\begin{gathered}
g_{m,r},\,g_{m,r}^t\in L^{1}(\R^d,d\mu_{k}),\quad \|g_{m,r}^t\|_{1,d\mu_{k}}=t^{2r}\|g_{m,r}\|_{1,d\mu_{k}},
\\
\mathcal{F}_{k}^{-1}(g_{m,r}^t)(y)=
\mathcal{F}_{k}(g_{m,r}^t)(y)=t^{2r}g_{m,r}^*(ty)=|y|^{-2r}\aaj_{\lambda_{k},m}(t|y|).
\end{gathered}
\end{equation}

\begin{lemma}\label{lem6.4}
Let $m, r \in \mathbb{N}$, $m \geq r$, $1\leq p\leq\infty$, and $f\in
W_{p, k}^{2r}$. We have
\begin{equation}
\aaDelta_t^mf = (-\Delta_k)^rf\Ast{k}g_{m,r}^t \label{eq63}
\end{equation}
and
\begin{equation}
\|\aaDelta_t^mf\|_{p,d\mu_k}\lesssim t^{2r}\|(-\Delta_k)^rf\|_{p,d\mu_k}.\label{eq64}
\end{equation}
\end{lemma}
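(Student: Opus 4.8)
The plan is to establish the representation \eqref{eq63} first, as an identity of tempered distributions obtained by passing to the Dunkl transform, and then to deduce the norm bound \eqref{eq64} by duality together with the Young inequality \eqref{eq13}.

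Since $f\in W_{p,k}^{2r}$, both $f$ and $(-\Delta_k)^rf$ belong to $L^{p}(\R^d,d\mu_k)\subset\mathcal{S}'(\R^d)$. By \eqref{eq62} we have $g_{m,r}^t\in L^{1}_{\mathrm{rad}}(\R^d,d\mu_k)$, $\|g_{m,r}^t\|_{1,d\mu_k}=t^{2r}\|g_{m,r}\|_{1,d\mu_k}$, and $\mathcal{F}_k(g_{m,r}^t)(y)=|y|^{-2r}\aaj_{\lambda_{k},m}(t|y|)$. By \eqref{j-ser} the function $\aaj_{\lambda_{k},m}$ is an even power series in its argument, and by Remark~\ref{rem6.1} together with the hypothesis $m\ge r$ it vanishes at the origin to order $2m\ge 2r$; hence $y\mapsto|y|^{-2r}\aaj_{\lambda_{k},m}(t|y|)$ is a smooth function on $\R^d$, and since it and all its derivatives are bounded, it lies in $C^{\infty}_{\Pi}(\R^d)$. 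Consequently $g_{m,r}^t\in N_k$, so the convolution $(-\Delta_k)^rf\Ast{k}g_{m,r}^t\in\mathcal{S}'(\R^d)$ is defined by \eqref{convolution2}. Applying \eqref{transformconvolution2} and \eqref{distributiontransform},
\[
\mathcal{F}_k\bigl((-\Delta_k)^rf\Ast{k}g_{m,r}^t\bigr)(y)=|y|^{-2r}\aaj_{\lambda_{k},m}(t|y|)\cdot|y|^{2r}\mathcal{F}_k(f)(y)=\aaj_{\lambda_{k},m}(t|y|)\mathcal{F}_k(f)(y),
\]
which by \eqref{eq58} is exactly $\mathcal{F}_k(\aaDelta_t^mf)(y)$. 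Since the Dunkl transform is injective on $\mathcal{S}'(\R^d)$, \eqref{eq63} follows.

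For the norm estimate, fix $\varphi\in\mathcal{S}(\R^d)$. By \eqref{convolution2} and H\"older's inequality, with $1/p+1/p'=1$,
\[
\bigl|\langle(-\Delta_k)^rf\Ast{k}g_{m,r}^t,\varphi\rangle\bigr|=\Bigl|\int_{\R^d}(-\Delta_k)^rf\cdot(g_{m,r}^t\Ast{k}\varphi)\,d\mu_k\Bigr|\le\|(-\Delta_k)^rf\|_{p,d\mu_k}\,\|g_{m,r}^t\Ast{k}\varphi\|_{p',d\mu_k}.
\]
Because $g_{m,r}^t$ is radial and $\varphi\in\mathcal{S}(\R^d)\subset\mathcal{A}_k$, Lemma~\ref{lem3.2} and \eqref{convolution1} show that $g_{m,r}^t\Ast{k}\varphi$ agrees with the convolution \eqref{eq12} of $\varphi$ and $g_{m,r}^t$ (both are continuous functions with the same Dunkl transform $\mathcal{F}_k(\varphi)\mathcal{F}_k(g_{m,r}^t)$); the Young inequality \eqref{eq13}, whose hypothesis that $g_{m,r}^t$ be bounded is not needed as remarked after its statement, then gives $\|g_{m,r}^t\Ast{k}\varphi\|_{p',d\mu_k}\le\|\varphi\|_{p',d\mu_k}\|g_{m,r}^t\|_{1,d\mu_k}$. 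Combining this with \eqref{eq63} and \eqref{eq62},
\[
\Bigl|\int_{\R^d}\aaDelta_t^mf\cdot\varphi\,d\mu_k\Bigr|\le t^{2r}\|g_{m,r}\|_{1,d\mu_k}\,\|(-\Delta_k)^rf\|_{p,d\mu_k}\,\|\varphi\|_{p',d\mu_k},\qquad\varphi\in\mathcal{S}(\R^d).
\]
By \eqref{eq53} and the $L^p$-boundedness of $T^{st}$ from \eqref{eq6}, $\aaDelta_t^mf\in L^{p}(\R^d,d\mu_k)$; taking the supremum over $\varphi\in\mathcal{S}(\R^d)$ with $\|\varphi\|_{p',d\mu_k}\le1$ yields \eqref{eq64}, since $\|g_{m,r}\|_{1,d\mu_k}$ is a finite constant depending only on $m$, $r$ and the group data. (For $1<p<\infty$ this supremum equals $\|\aaDelta_t^mf\|_{p,d\mu_k}$ by $L^{p}$--$L^{p'}$ duality and density of $\mathcal{S}(\R^d)$ in $L^{p'}(\R^d,d\mu_k)$; the cases $p=1$ and $p=\infty$ follow similarly, using that for $h\in L^{1}$, resp.\ $h\in L^{\infty}$, one has $\|h\|_{1,d\mu_k}=\sup\{|\int_{\R^d}h\varphi\,d\mu_k|:\varphi\in\mathcal{S}(\R^d),\,\|\varphi\|_{\infty}\le1\}$, resp.\ the analogous identity with $\|\varphi\|_{1,d\mu_k}\le1$.)

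The step that genuinely uses the hypothesis $m\ge r$ — and which I expect to be the only delicate point — is the claim $g_{m,r}^t\in N_k$: it rests on the smoothness of the radial multiplier $|y|^{-2r}\aaj_{\lambda_{k},m}(t|y|)$ across the origin, which would fail for $m<r$, and without which the distributional convolution \eqref{convolution2}, hence the clean identity \eqref{eq63}, would not be available in this form.
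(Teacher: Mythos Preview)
Your argument is correct and follows the same blueprint as the paper: you obtain \eqref{eq63} by comparing Dunkl transforms on $\mathcal{S}'(\R^d)$, exactly as the paper does. For \eqref{eq64}, the paper takes a shorter path: once \eqref{eq63} is known and $(-\Delta_k)^rf\in L^{p}(\R^d,d\mu_k)$, it invokes Lemma~\ref{lem6.1} to identify the distributional convolution $(-\Delta_k)^rf\Ast{k}g_{m,r}^t$ with the integral convolution \eqref{eq12}, and then applies \eqref{eq13} directly, yielding $\|\aaDelta_t^mf\|_{p,d\mu_k}\le\|g_{m,r}^t\|_{1,d\mu_k}\|(-\Delta_k)^rf\|_{p,d\mu_k}=t^{2r}\|g_{m,r}\|_{1,d\mu_k}\|(-\Delta_k)^rf\|_{p,d\mu_k}$. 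Your duality argument reaches the same conclusion but replaces this single citation of Lemma~\ref{lem6.1} by unpacking \eqref{convolution2}, passing the kernel onto the test function via \eqref{convolution1}, and applying Young there; this is a legitimate workaround, though somewhat longer, and it effectively re-derives the content of Lemma~\ref{lem6.1} in the special case needed.
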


\begin{proof}
Let $f\in \mathcal{S}^{'}(\R^d)$. Applying \eqref{eq58}, \eqref{distributiontransform}, \eqref{transformconvolution2}, and \eqref{eq62}, we obtain
\begin{align*}
\mathcal{F}_k(\aaDelta_t^mf)(y)&=\aaj_{\lambda_{k},m}(t|y|)\mathcal{F}_k(f)(y)
=|y|^{2r}\mathcal{F}_k(f)(y)\frac{\aaj_{\lambda_{k},m}(t|y|)}{|y|^{2r}}\\
&=\mathcal{F}_k((-\Delta_k)^rf)(y)
\mathcal{F}_k(g_{m,r}^t)(y)
\end{align*}
and \eqref{eq63}. If $f\in W_{p, k}^{2r}$, then $(-\Delta_k)^rf\in L^{p}(\mathbb{R}^{d},d\mu_{k})$. Inequality \eqref{eq64} follows from \eqref{eq62}, \eqref{eq63}, Lemma
\ref{lem6.1}, and \eqref{eq13}. Note that a constant in \eqref{eq64} can be taken as $\|g_{m,r}\|_{1,d\mu_{k}}$.
\end{proof}


Let $f\in\mathcal{S}^{'}(\R^d)$. We set $\theta(x)=\mathcal{F}_k(\eta)(x)$ and
$\theta_\sigma(x) = \mathcal{F}_k(\eta(\Cdot/\sigma))(x)$. Then $\theta$, $\theta_\sigma\in
\mathcal{S}(\R^d)$.
The de la Vall\'{e}e Poussin type operator is given by $P_\sigma (f)= f\Ast{k}\theta_\sigma$.
By \eqref{transformconvolution2},
\begin{equation}\label{approxtransform}
\mathcal{F}_k(P_\sigma(f))=\eta(\Cdot/\sigma)\mathcal{F}_k (f)(y).
\end{equation}

\begin{lemma}\label{lem6.5}
If $\sigma >0$, $1 \leq p\leq\infty$, $f\in L^{p}(\mathbb{R}^{d},d\mu_{k})$, then

\smallbreak
\textup{(1)} $\|P_\sigma (f)\|_{p,d\mu_{k}} \lesssim \|f\|_{p,d\mu_{k}}$;

\smallbreak
\textup{(2)} $P_\sigma (f)\in B_{p, k}^{2\sigma}$ and $P_\sigma(g)=g$ for any $g\in B_{p, k}^\sigma$;

\smallbreak
\textup{(3)} $\|f-P_\sigma(f)\|_{p,d\mu_{k}} \lesssim E_{\sigma} (f)_{p,d\mu_{k}}$.
\end{lemma}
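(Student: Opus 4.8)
The plan is to reduce everything to the multiplier identity \eqref{approxtransform}, Young's inequality \eqref{eq13}, and the Paley--Wiener description of $B_{p, k}^{\sigma}$. First I would record the scaling of the Dunkl transform: since $e_k(x/\sigma,y)=e_k(x,y/\sigma)$ and $d\mu_k$ is homogeneous of degree $2\lambda_k+2$, a change of variables gives $\theta_\sigma(x)=\sigma^{2\lambda_k+2}\theta(\sigma x)$, so that $\theta_\sigma\in\mathcal{S}_{\mathrm{rad}}(\R^d)\subset L^1_{\mathrm{rad}}(\R^d,d\mu_k)$ and
\[
\|\theta_\sigma\|_{1,d\mu_k}=\|\theta\|_{1,d\mu_k}=:c_\eta ,
\]
a constant independent of $\sigma$. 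Moreover $\mathcal{F}_k(\theta_\sigma)=\eta(\Cdot/\sigma)\in N_k$, so Lemma~\ref{lem6.1} applies and shows that for every $f\in L^p(\R^d,d\mu_k)$ the distribution $P_\sigma(f)=f\Ast{k}\theta_\sigma$ is in fact the function given by \eqref{eq12}, and that \eqref{approxtransform} holds, i.e. $\mathcal{F}_k(P_\sigma(f))=\eta(\Cdot/\sigma)\mathcal{F}_k(f)$.

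Part \textup{(1)} is then immediate from Young's inequality \eqref{eq13} with $g=\theta_\sigma$:
\[
\|P_\sigma(f)\|_{p,d\mu_k}=\|f\Ast{k}\theta_\sigma\|_{p,d\mu_k}\le c_\eta\,\|f\|_{p,d\mu_k}.
\]
For part \textup{(2)}, the identity $\mathcal{F}_k(P_\sigma(f))=\eta(\Cdot/\sigma)\mathcal{F}_k(f)$ shows that $\mathcal{F}_k(P_\sigma(f))$ is a tempered distribution supported in $B_{2\sigma}$; by the Paley--Wiener theorem for the Dunkl transform of tempered distributions (see \cite{Jeu06,Tri02}), $P_\sigma(f)$ has an entire extension satisfying $|P_\sigma(f)(z)|\lesssim(1+|z|)^N e^{2\sigma|\mathrm{Im}\,z|}\le c_\varepsilon e^{(2\sigma+\varepsilon)|z|}$, so, together with $P_\sigma(f)\in L^p(\R^d,d\mu_k)$ from part \textup{(1)}, we get $P_\sigma(f)\in B_{p, k}^{2\sigma}$. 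If $g\in B_{p, k}^{\sigma}$, then by the Paley--Wiener theorem (Theorem~\ref{thm5.10}, together with the standard Phragm\'en--Lindel\"of argument when $p=\infty$) one has $\supp\mathcal{F}_k(g)\subset B_\sigma$, and since $\eta(\Cdot/\sigma)\equiv1$ on $B_\sigma$ we obtain $\mathcal{F}_k(P_\sigma(g))=\eta(\Cdot/\sigma)\mathcal{F}_k(g)=\mathcal{F}_k(g)$; as $\mathcal{F}_k$ is injective on $\mathcal{S}'(\R^d)$, this gives $P_\sigma(g)=g$.

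Part \textup{(3)} follows formally: for any $g\in B_{p, k}^{\sigma}$, linearity of $P_\sigma$ and $P_\sigma(g)=g$ give $f-P_\sigma(f)=(f-g)-P_\sigma(f-g)$, whence by part \textup{(1)}
\[
\|f-P_\sigma(f)\|_{p,d\mu_k}\le(1+c_\eta)\,\|f-g\|_{p,d\mu_k},
\]
and taking the infimum over $g\in B_{p, k}^{\sigma}$ yields \textup{(3)}. I do not expect a genuine obstacle here: the only point that needs care is making sure that the $\mathcal{S}'$-convolution in \eqref{convolution2}, the integral convolution in \eqref{eq12}, and the multiplier formula \eqref{approxtransform} all refer to the same object, which is precisely the content of Lemma~\ref{lem6.1} and is what forces the hypotheses $\theta_\sigma\in L^1_{\mathrm{rad}}(\R^d,d\mu_k)$ and $\mathcal{F}_k(\theta_\sigma)\in N_k$ used above; once this identification is in place, parts \textup{(1)}--\textup{(3)} are routine consequences of Young's inequality and the Paley--Wiener theorem.
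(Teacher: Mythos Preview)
Your proof is correct and follows essentially the same approach as the paper: Young's inequality \eqref{eq13} together with the scaling $\|\theta_\sigma\|_{1,d\mu_k}=\|\theta\|_{1,d\mu_k}$ for part~(1), the multiplier identity \eqref{approxtransform} plus Paley--Wiener for part~(2), and the algebraic identity $f-P_\sigma(f)=(f-g)-P_\sigma(f-g)$ for part~(3). The only cosmetic differences are that the paper cites Theorem~\ref{thm5.10} directly for $P_\sigma(f)\in B_{p,k}^{2\sigma}$ (your distributional Paley--Wiener route is slightly more self-contained, especially at $p=\infty$) and invokes the existence of a best approximant (Theorem~\ref{thm5.11}) in part~(3) rather than passing to the infimum as you do.
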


\begin{remark}\label{rem6.7}
Property (3) in this lemma means that $P_\sigma(f)$ is \textit{the near best approximant} of
$f$ in $L^p(\R^d,d\mu_{k})$.
\end{remark}
\begin{proof}

(1) Since $f\in L^{p}(\mathbb{R}^{d},d\mu_{k})$, $\eta(\Cdot/\sigma),\theta_\sigma\in \mathcal{S}(\R^d)$ then using Lemma~\ref{lem6.1}, \eqref{eq13}, and the equality
$\|\theta\|_{1,d\mu_{k}}=\|\theta_\sigma \|_{1,d\mu_{k}}$, we get
\begin{align*}
\|P_\sigma (f)\|_{p,d\mu_{k}}&=\|f\Ast{k}\theta_\sigma\|_{p,d\mu_{k}}\leq \|\theta_\sigma \|_{1,d\mu_{k}}
\|f\|_{p,d\mu_{k}}\\
&=\|\theta\|_{1,d\mu_{k}} \|f\|_{p,d\mu_{k}}\lesssim\|f\|_{p,d\mu_{k}}.
\end{align*}

(2) We observe that $\supp \eta(\Cdot/\sigma)\subset B_{2\sigma}$ and then
$\supp \mathcal{F}_k(P_\sigma(f))\subset B_{2\sigma}$. Theorem \ref{thm5.10}
yields $P_\sigma (f)\in B_{p, k}^{2\sigma}$. If $g\in B_{p,k}^\sigma$, then by
Theorem~\ref{thm5.10}, $\supp \mathcal{F}_k(g)\subset B_{\sigma}$ and
$\mathcal{F}_k(P_\sigma(g))(y)=\eta(y/\sigma)\mathcal{F}_k(g)(y)=\mathcal{F}_k
(g)(y).$ Hence, $P_\sigma(g)=g$.

(3) Using Theorem \ref{thm5.11}, there exists an entire function $g^*\in B_{p, k}^\sigma$ such that
 $\|f-g^*\|_{p,d\mu_{k}}=E_\sigma(f)_{p,d\mu_{k}}$. Then using
$P_\sigma(g^*)=g^*$ implies
\begin{align*}
\|f- P_\sigma(f)\|_{p,d\mu_{k}} &= \|f-g^*+P_\sigma (g^*-f)\|_{p,d\mu_{k}}\\
&\leq \|f-g^*\|_{p,d\mu_{k}}+\|P_\sigma(f-g^*)\|_{p,d\mu_{k}} \lesssim E_\sigma (f)_{p,d\mu_{k}}.
\qedhere
\end{align*}
\end{proof}

In the proof of the next lemma we will use the estimate
\begin{equation}
|j^{(n)}_{\lambda}(t)|\lesssim (|t|+1)^{-(\lambda+1/2)},\quad t\in \R,\ \lambda\geq-1/2,\ n\in \Z_+,
\label{eq65}
\end{equation}
which follows, by induction on $n$, from the known properties of the Bessel function (\cite{BatErd53})
\[
|j_{\lambda}(t)|\lesssim (|t|+1)^{-(\lambda+1/2)},\quad
j'_{\lambda}(t)=-\frac{t}{2(\lambda+1)}j_{\lambda+1}(t).
\]

\begin{lemma}\label{lem6.6}
If $\sigma >0$, $1 \leq p\leq\infty$, $m\in \N$, $r\in \Z_+$, $f\in W_{p, k}^{2r}$, then
\begin{equation}
\|f-P_{\sigma/2}(f)\|_{p,d\mu_{k}} \lesssim \sigma^{-2r}\|\aaDelta_{a/\sigma}^m((-\Delta_k)^rf)\|_{p,d\mu_k}\label{eq66}
\end{equation}
for some $a=a(\lambda_k, m)>0$.
\end{lemma}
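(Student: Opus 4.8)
The plan is to realize $f-P_{\sigma/2}(f)$ as a Dunkl convolution $\aaDelta_{a/\sigma}^m((-\Delta_k)^rf)\Ast{k}h_\sigma$ with a radial kernel $h_\sigma\in L^1(\R^d,d\mu_k)$ satisfying $\|h_\sigma\|_{1,d\mu_k}\lesssim\sigma^{-2r}$, and then to apply Young's inequality \eqref{eq13} together with Lemma~\ref{lem6.1}. On the Fourier side, $\mathcal{F}_k(f-P_{\sigma/2}(f))(y)=(1-\eta(2y/\sigma))\mathcal{F}_k(f)(y)$ by \eqref{approxtransform}, while $\mathcal{F}_k(\aaDelta_{a/\sigma}^m((-\Delta_k)^rf))(y)=|y|^{2r}\,\aaj_{\lambda_{k},m}(\tfrac a\sigma|y|)\,\mathcal{F}_k(f)(y)$ by \eqref{distributiontransform} and \eqref{eq58}. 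I would fix $a=a(\lambda_k,m)$ large enough that $\aaj_{\lambda_{k},m}(t)\asymp 1$ for $t\ge a/2$ — possible since $\aaj_{\lambda_{k},m}>0$ on $(0,\infty)$ (Remark~\ref{rem6.2}) and $\aaj_{\lambda_{k},m}(t)\to1$. Then the multiplier
\[
M_\sigma(y):=\frac{1-\eta(2y/\sigma)}{|y|^{2r}\,\aaj_{\lambda_{k},m}(\tfrac a\sigma|y|)}
\]
is a well-defined radial $C^\infty$ function, since the numerator vanishes for $|y|\le\sigma/2$ and so the $|y|^{-2r}$ singularity disappears. From the scaling identity $M_\sigma(y)=a^{2r}\sigma^{-2r}M_1(\tfrac a\sigma y)$ and the dilation behaviour of the Dunkl transform already used for \eqref{eq62}, it is enough to produce $K\in L^1(\R^d,d\mu_k)$ with $\mathcal{F}_k(K)=M_1$; then $h_\sigma$ is the corresponding dilate and $\|h_\sigma\|_{1,d\mu_k}\lesssim\sigma^{-2r}\|K\|_{1,d\mu_k}$.

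Since $M_1$ is radial, this reduces, exactly as in Lemma~\ref{lem6.3}, to proving $\mathcal{H}_{\lambda_k}(M_{1,0})\in L^1(\R_+,d\nu_{\lambda_k})$ for $M_{1,0}(t)=\dfrac{1-\eta_0(2t/a)}{t^{2r}\,\aaj_{\lambda_{k},m}(t)}$. I would split
\[
M_{1,0}(t)=\frac{1-\eta_0(2t/a)}{t^{2r}}+\frac{1-\eta_0(2t/a)}{t^{2r}}\cdot\frac{1-\aaj_{\lambda_{k},m}(t)}{\aaj_{\lambda_{k},m}(t)}.
\]
The first summand is covered by Lemma~\ref{lem6.3} (replacing $\eta_0$ by $\eta_0(2\cdot/a)$ costs only a Schwartz function). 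For the second, write $1-\aaj_{\lambda_{k},m}(t)=\sum_{s=1}^m c_s\,j_{\lambda_k}(st)$ (a finite combination, cf.\ \eqref{eq59}) and, using positivity of $\aaj_{\lambda_{k},m}$, expand the reciprocal in the Neumann series $\tfrac1{\aaj_{\lambda_{k},m}(t)}=\sum_{n\ge0}(1-\aaj_{\lambda_{k},m}(t))^n$, which converges on the support $\{t\ge a/2\}$ because there $|1-\aaj_{\lambda_{k},m}(t)|\le\tfrac12$; this writes the second summand as $\sum_{n\ge1}\phi_n$, where $\phi_n$ is smooth, supported in $\{t\ge a/2\}$, and $|\phi_n^{(j)}(t)|\le C_j\,n^{j}2^{-n}(1+t)^{-2r}$ by \eqref{eq65} and Leibniz. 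Each $\phi_n$ is then treated by the Lemma~\ref{lem6.3}-machinery: decompose the slowly decaying factor $t^{-2r}(1-\eta_0(2t/a))$ into a finite sum of terms $(1+t^2)^{-(j+r)}$, whose Hankel transforms lie in $L^1(\R_+,d\nu_{\lambda_k})$, plus a rapidly decaying remainder, and control the remainder via the eigen-relation $\mathcal{B}_{\lambda_k}j_{\lambda_k}(st)=-s^2j_{\lambda_k}(st)$ and the estimate $|\mathcal{H}_{\lambda_k}(\phi)(\tau)|\le\tau^{-2s_0}\|\mathcal{B}_{\lambda_k}^{s_0}\phi\|_{1,d\nu_{\lambda_k}}$ for a fixed $s_0=s_0(\lambda_k,r)$; tracking the geometric factor $2^{-n}$ gives $\|\mathcal{H}_{\lambda_k}(\phi_n)\|_{1,d\nu_{\lambda_k}}\le C\,n^{N}2^{-n}$, and summation over $n$ yields $\mathcal{H}_{\lambda_k}(M_{1,0})\in L^1(\R_+,d\nu_{\lambda_k})$.

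With $K\in L^1(\R^d,d\mu_k)$ in hand, Lemma~\ref{lem6.1} identifies convolution with $h_\sigma$ as the multiplier operator of $M_\sigma$, so $f-P_{\sigma/2}(f)=\aaDelta_{a/\sigma}^m((-\Delta_k)^rf)\Ast{k}h_\sigma$; here $\aaDelta_{a/\sigma}^m((-\Delta_k)^rf)\in L^p(\R^d,d\mu_k)$ since $(-\Delta_k)^rf\in L^p(\R^d,d\mu_k)$ and $\aaDelta_{a/\sigma}^m$ is $L^p$-bounded by Theorem~\ref{thm3.3} and \eqref{eq57}. Young's inequality \eqref{eq13} then gives
\[
\|f-P_{\sigma/2}(f)\|_{p,d\mu_k}\le\|h_\sigma\|_{1,d\mu_k}\,\|\aaDelta_{a/\sigma}^m((-\Delta_k)^rf)\|_{p,d\mu_k}\lesssim\sigma^{-2r}\,\|\aaDelta_{a/\sigma}^m((-\Delta_k)^rf)\|_{p,d\mu_k},
\]
which is \eqref{eq66}. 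The main obstacle is the second paragraph, namely the $L^1(d\nu_{\lambda_k})$-integrability of the Hankel transform of the reciprocal multiplier $1/\aaj_{\lambda_{k},m}$ against the weight $t^{-2r}$ and the cutoff: unlike a classical Mikhlin symbol, $1/\aaj_{\lambda_{k},m}$ has derivatives that do not gain decay (by \eqref{eq65}), so no multiplier theorem applies directly, and one must combine the positivity of $\aaj_{\lambda_{k},m}$ (Remark~\ref{rem6.2}) through the Neumann series with the Bessel-operator machinery of Lemma~\ref{lem6.3}.
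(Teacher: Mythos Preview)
Your overall architecture---write the multiplier, expand $1/\aaj_{\lambda_k,m}$ in a Neumann series, and appeal to Young's inequality---is exactly the paper's, and for $r\ge 1$ your argument is close to the paper's execution. The paper, however, truncates the Neumann series at a fixed level $N$: the finite partial sum $\varphi_2=2^{2r}\eta_r(2\,\cdot)\,S_N(\tau_0(a\,\cdot))$ is recognized as the $L^1$-bounded operator $B^a=\sum_{j<N}(A^a)^j$ (a finite combination of products of $T^{as}$) applied to $\mathcal{F}_k(\eta_r)\in L^1$ (Lemma~\ref{lem6.3}), while the tail $\varphi_1$ carries an extra factor $(1+t)^{-N(\lambda_k+1/2)}$ which makes the direct Bessel-operator estimate trivial. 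This avoids the delicate term-by-term bound $\|\mathcal{H}_{\lambda_k}(\phi_n)\|_{1}\lesssim n^{N}2^{-n}$ that you sketch; your per-term estimate is plausible but not fully justified (note that the operator associated with the multiplier $\tau_0^n$ has $L^1\to L^1$ norm $\le(\sum_s|\nu_s|)^n$, which exceeds $1$ for $m\ge 2$, so the $2^{-n}$ must come entirely from pointwise decay pushed through the $\mathcal{B}_{\lambda_k}^{s_0}$-machinery, and that bookkeeping is nontrivial).

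There is, however, a genuine gap at $r=0$. In that case $M_1(y)=\dfrac{1-\eta(2y/a)}{\aaj_{\lambda_k,m}(|y|)}\to 1$ as $|y|\to\infty$, so $M_1\notin C_0(\R^d)$ and, by Riemann--Lebesgue for the Dunkl transform, there is \emph{no} $K\in L^1(\R^d,d\mu_k)$ with $\mathcal{F}_k(K)=M_1$; your reduction ``it is enough to produce $K\in L^1$'' therefore fails. Concretely, your first summand $1-\eta_0(2t/a)$ is not covered by Lemma~\ref{lem6.3}, which requires $r>0$. The paper treats $r=0$ by a different mechanism: the tail piece $\varphi_1$ still has an $L^1$ Dunkl transform (its extra $\tau_0^N$ factor supplies the missing decay), but the finite piece $\varphi_2=(1-\eta(2\,\cdot))S_N(\tau_0(a\,\cdot))$ is handled not as an $L^1$ convolution kernel but as an operator identity
\[
A_2g=B^{a/\sigma}g-P_{\sigma/2}(B^{a/\sigma}g),
\]
whose $L^p$-boundedness follows from the $L^p$-boundedness of $B^{a/\sigma}$ (finite sums of $T^t$'s) together with Lemma~\ref{lem6.5}. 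You need to add this separate argument for $r=0$.
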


\begin{proof}
If $f\in \mathcal{S}^{'}(\R^d)$, then by \eqref{approxtransform}, \eqref{eq58}, and \eqref{distributiontransform}
\begin{align*}
\mathcal{F}_k(f-P_{\sigma/2}(f))&=(1-\eta(2\Cdot/\sigma))\mathcal{F}_kf\notag\\
&=\sigma^{-2r}\frac{1-\eta(2\Cdot/\sigma)}{(|\Cdot|/\sigma)^{2r}
\aaj_{\lambda_{k},m}(a|\Cdot|/\sigma)}\mathcal{F}_k(\aaDelta_{a/\sigma}^m
((-\Delta_k)^r f))\notag\\ &=\sigma^{-2r} \varphi
(\Cdot/\sigma)\mathcal{F}_k(\aaDelta_{a/\sigma}^m ((-\Delta_k)^r
f)),\label{eq67}
\end{align*}
where
\begin{equation}
\varphi (y)= \frac{1-\eta(2y)}{|y|^{2r} \aaj_{\lambda_{k},m}(a|y|)}, \quad
\aaDelta_{a/\sigma}^m ((-\Delta_k)^r f)\in L^{p}(\mathbb{R}^{d},d\mu_{k}).\label{eq68}
\end{equation}
From here and by \eqref{transformconvolution2} we have
\begin{equation}\label{eq67}
f-P_{\sigma/2}(f)=(\aaDelta_{a/\sigma}^m ((-\Delta_k)^r
f)\Ast{k}\mathcal{F}_k(\varphi(\Cdot/\sigma))).
\end{equation}

Setting $\aaj_{\lambda_{k},m}(t)= 1- \tau_0(t)$, in light of \eqref{eq59}
and \eqref{eq65}, we observe that $\aaj_{\lambda_{k},m}(t)\to 1$ as $t\to
\infty$. Then we can choose $a>0$ such that $|\tau_0(t)| \leq 1/2$ for $|t|
\geq a/2$. For such $a=a(\lambda_k, m)$, we have that $\varphi(y)=0$ for
$|y|\leq 1/2$, $\varphi(y)>0$ for $|y|> 1/2$, and $\varphi\in
C^{\infty}_{\Pi}(\mathbb{R}^d)$.


We will use the following decomposition
\[
\varphi(y)=\varphi_1(|y|)+\varphi_2(|y|),
\]
where
\[
\varphi_1(|y|)=2^{2r}\eta_{r}(2y)\Bigl(\frac{1}{1- \tau_0(a|y|)}-S_N(\tau_0(a|y|)\Bigr)
\]
and
\[
\varphi_2(|y|)=2^{2r}\eta_{r}(2y)S_N(\tau_0(a|y|)), \quad
\eta_{r}(y)=\frac{1-\eta(y)}{|y|^{2r}}, \quad
S_N(t)=\sum_{j=0}^{N-1}t^j.
\]

First, we show that $\mathcal{F}_k(\varphi_1(|\Cdot|))\in L^{1}(\R^d, d\mu_k)$.
Since for a radial function we have
\[
\Delta_k\varphi_1(|y|)=\varphi_1''(|y|)+\frac{2\lambda_k+1}{|y|}\varphi_1'(|y|)
\]
and, for $|t|\leq1/2$,
\[
(1-t)^{-1}-\sum_{j=0}^{N-1}t^j=(1-t)^{-1}-S_N(t)=\frac{t^N}{1-t},
\]
then, by \eqref{eq59} and \eqref{eq65}, we obtain
\[
\Delta_k^s\varphi_1(|y|)=O(|y|^{-2r-N(\lambda_k+1/2)}),\quad |y|\geq 1/2,\ s\in \Z_+.
\]
Hence, for a fixed $N\geq 2+2/(2\lambda_k+1)$, we have $\Delta_k^s\varphi_1(|y|)\in
L^{1}(\R^d, d\mu_k)$, where $s\in \Z_+$. Since $\mathcal{F}_k((-\Delta_k)^s\varphi_1(|\Cdot|))(x)=|x|^{2s}\mathcal{F}_k(\varphi_1(|\Cdot|))(x)$, then
\begin{align*}
|\mathcal{F}_k(\varphi_1(|\Cdot|))(x)|&=|x|^{-2s}\bigl|\int_{\R^d}e_k(-x,y)(-\Delta_k)^s\varphi_1(|y|)\,d\mu_k(y)\bigr|
\\&\leq\frac{\|(-\Delta_k)^s \varphi_1(|y|)\|_{1,d\mu_k}}{|x|^{2s}}.
\end{align*}
Setting $s>\lambda_k+1$ yields $\mathcal{F}_k(\varphi_1(|\Cdot|))\in L^{1}(\R^d, d\mu_k)$.

Second, let us show that $\mathcal{F}_k(\varphi_2(|\Cdot|))\in L^{1}(\R^d, d\mu_k)$ for $r\in \N$.
Let
\[
\tau_0(t)=\sum_{s=1}^m\nu_sj_{\lambda_{k}}(st),\quad \psi_r(x)=2^{2r}\mathcal{F}_k(\eta_{r}(2\Cdot))(x),
\]
\[
A^af(x)=\sum_{s=1}^m\nu_sT^{as}f(x),\quad B^af(x)=\sum_{j=0}^{N-1}(A^a)^jf(x).
\]
Boundedness of the operator $T^t$ in $L^{p}(\R^d, d\mu_k)$ implies
\[
\|A^a\|_{p\to p}=\sup\{\|Af\|_{p,d\mu_k}\colon \|f\|_{p,d\mu_k}\leq 1\}\leq \sum_{s=1}^m|\nu_s|
\]
and
\begin{equation}
\|B^a\|_{p\to p}\leq \sum_{j=0}^{N-1}(\|A\|_{p\to p})^j\leq
N\Bigl(1+\sum_{s=1}^m|\nu_s|\Bigr)^{N-1},\quad 1\leq p<\infty.\label{eq69}
\end{equation}
Then for $p=1$, taking into account Lemma \ref{lem6.3}, we have
\[
\|\mathcal{F}_k(\varphi_2(|\Cdot|))\|_{1,d\mu_k}=\bigl\|B^a\psi_r\bigr\|_{1,d\mu_k}\leq
N\Bigl(1+\sum_{s=1}^m|\nu_s|\Bigr)^{N-1}\|\psi_r\|_{1,d\mu_k}<\infty.
\]
 Thus, $
\mathcal{F}_k(\varphi)
\in L^{1}(\R^d, d\mu_k)$.
Combining Lemma \ref{lem6.1}, relations
\eqref{eq13}, \eqref{eq67}, \eqref{eq68}, and the formula
$\|\mathcal{F}_k((\Cdot/\sigma))\|_{1,d\mu_k}=\|\mathcal{F}_k(\varphi)\|_{1,d\mu_k}$,
we obtain inequality \eqref{eq66} for $r\in \N$.

Let now $r=0$. { Define} the operators $A_1$ and $A_2$ as follows:
\[
\mathcal{F}_k(A_1g)(y)=\varphi_1(|y|/\sigma)\mathcal{F}_k(g)(y)
\]
and
\[
\mathcal{F}_k(A_2g)(y)=\varphi_2(|y|/\sigma)\mathcal{F}_k(g)(y),\quad \varphi_2(|y|)=(1-\eta(2y))S_N(\tau_0(a|y|)).
\]
Since $\mathcal{F}_k(\varphi_1(|\Cdot|))\in L^{1}(\R^d, d\mu_k)$, then by \eqref{eq13} for $1\leq p\leq\infty$
\begin{equation}
\|A_1g\|_{p,d\mu_k}\leq
\|\mathcal{F}_k(\varphi_1(|y|))\|_{1,d\mu_k}\,\|g\|_{p,d\mu_k}\lesssim
\|g\|_{p,d\mu_k},\quad g\in L^{p}(\mathbb{R}^{d},d\mu_{k}). \label{eq70}
\end{equation}

We are left to show that
\[
\|A_2g\|_{p,d\mu_k}\lesssim \|g\|_{p,d\mu_k},\quad 1\leq p\leq\infty,\ g\in L^{p}(\mathbb{R}^{d},d\mu_{k}).
\]
We have
\begin{align*}
\mathcal{F}_k(A_2g)(y)&=(1-\eta(2y/\sigma))S_N(\tau_0(a|y|/\sigma))\mathcal{F}_k(g)(y)\\
&=(1-\eta(2y/\sigma))\mathcal{F}_k(B^{a/\sigma}g)(y)\\
&=\mathcal{F}_k(B^{a/\sigma}g-P_{\sigma/2}(B^{a/\sigma}g))(y).
\end{align*}
Since $B^{a/\sigma}g\in L^{p}(\mathbb{R}^{d},d\mu_{k})$, using Lemma \ref{lem6.5} and inequality
 \eqref{eq69}, we get
\begin{equation}
\|A_2g\|_{p,d\mu_k}\lesssim \|B^{a/\sigma}g\|_{p,d\mu_k}\leq
N\bigl(1+\sum_{s=1}^m|\nu_s|\bigr)^{N-1}\|g\|_{p,d\mu_k}\lesssim
\|g\|_{p,d\mu_k}. \label{eq71}
\end{equation}
Using \eqref{eq70} and \eqref{eq71} with $g= \aaDelta_{a/\sigma}^m
f$, we finally obtain \eqref{eq66} for $r=0$.
\end{proof}

\begin{lemma}\label{lem6.7}
If $\sigma >0$, $1 \leq p\leq\infty$, $m\in \N$, $f\in L^{p}(\R^d, d\mu_k)$, then
\begin{equation}
\|((-\Delta_k)^{m}P_\sigma(f)\|_{p,d\mu_k} \lesssim \sigma^{2m}
\|\aaDelta_{a/(2\sigma)}^{m} f\|_{p,d\mu_k}, \label{eq72}
\end{equation}
where $a=a(\lambda_k, m)>0$ is given in Lemma \ref{lem6.6}.
\end{lemma}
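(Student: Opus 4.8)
The plan is to realise $(-\Delta_k)^m P_\sigma(f)$ as a convolution of the difference $\aaDelta_{a/(2\sigma)}^m f$ with a fixed Schwartz kernel (suitably rescaled), and then to close the estimate by the Young-type inequality \eqref{eq13} together with Lemma~\ref{lem6.1}, in the spirit of the proof of Lemma~\ref{lem6.6}. On the Dunkl transform side, \eqref{distributiontransform} and \eqref{approxtransform} give
\[
\mathcal{F}_k\bigl((-\Delta_k)^m P_\sigma(f)\bigr)(y)=|y|^{2m}\eta(y/\sigma)\,\mathcal{F}_k(f)(y),
\]
while \eqref{eq58} gives $\mathcal{F}_k(\aaDelta_{a/(2\sigma)}^m f)(y)=\aaj_{\lambda_{k},m}(a|y|/(2\sigma))\,\mathcal{F}_k(f)(y)$. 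This suggests introducing the radial profile
\[
\Phi(x)=\frac{|x|^{2m}\eta(x)}{\aaj_{\lambda_{k},m}(a|x|/2)},
\]
so that $|y|^{2m}\eta(y/\sigma)=\sigma^{2m}\,\Phi(y/\sigma)\,\aaj_{\lambda_{k},m}(a|y|/(2\sigma))$.

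The first genuine step — and the one I expect to be the main obstacle — is to show that $\Phi\in\mathcal{S}_{\mathrm{rad}}(\mathbb{R}^d)$. Since $\eta$ is supported in $B_2$, only the behaviour of the denominator on $B_2$ is relevant: by Remark~\ref{rem6.2} (the Dai--Ditzian positivity $\aaj_{\lambda_{k},m}(t)>0$ for $t>0$) it does not vanish on $B_2\setminus\{0\}$, and near the origin the power series \eqref{j-ser}, combined with the precise order of vanishing from Remark~\ref{rem6.1}, lets me factor $\aaj_{\lambda_{k},m}(t)=t^{2m}\Psi_0(t^2)$ with $\Psi_0$ entire and $\Psi_0>0$ on $[0,\infty)$. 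Hence $\Phi(x)=\eta(x)/\Psi_0(a^2|x|^2/4)$ is a genuinely $C^{\infty}$, radial, compactly supported function, i.e. $\Phi\in\mathcal{S}_{\mathrm{rad}}(\mathbb{R}^d)$.

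Next I would put $\Theta_\sigma=\mathcal{F}_k^{-1}(\Phi(\Cdot/\sigma))$. Since $\Phi$ is radial and Schwartz, $\Theta_\sigma\in\mathcal{S}_{\mathrm{rad}}(\mathbb{R}^d)\subset L^{1}_{\mathrm{rad}}(\mathbb{R}^d,d\mu_k)$, and $\mathcal{F}_k(\Theta_\sigma)=\Phi(\Cdot/\sigma)\in C^{\infty}_{\Pi}(\mathbb{R}^d)$ is even, so $\mathcal{F}_k(\Theta_\sigma)\in N_k$; moreover, by the dilation invariance of $\|\Cdot\,\|_{1,d\mu_k}$ under $x\mapsto x/t$ already exploited in \eqref{eq62}, $\|\Theta_\sigma\|_{1,d\mu_k}=\|\mathcal{F}_k^{-1}(\Phi)\|_{1,d\mu_k}=:C(\lambda_k,m)$, independently of $\sigma$. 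Multiplying the pointwise identity above by $\mathcal{F}_k(f)\in\mathcal{S}'(\mathbb{R}^d)$ and applying \eqref{transformconvolution2} — the cancellation of $\aaj_{\lambda_{k},m}$ being legitimate because it is nonzero off the origin while both sides of the identity vanish at the origin — I obtain, in $\mathcal{S}'(\mathbb{R}^d)$,
\[
(-\Delta_k)^m P_\sigma(f)=\sigma^{2m}\bigl(\aaDelta_{a/(2\sigma)}^m f\Ast{k}\Theta_\sigma\bigr).
\]

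Finally, since $f\in L^{p}(\mathbb{R}^{d},d\mu_{k})$ we have $\aaDelta_{a/(2\sigma)}^m f\in L^{p}(\mathbb{R}^{d},d\mu_{k})$ by \eqref{eq57} (which rests on the boundedness \eqref{eq6} of $T^t$); Lemma~\ref{lem6.1} identifies the convolution on the right with the integral convolution \eqref{eq12}, and \eqref{eq13} yields
\[
\|(-\Delta_k)^m P_\sigma(f)\|_{p,d\mu_k}\le\sigma^{2m}\,\|\Theta_\sigma\|_{1,d\mu_k}\,\|\aaDelta_{a/(2\sigma)}^m f\|_{p,d\mu_k}=C(\lambda_k,m)\,\sigma^{2m}\,\|\aaDelta_{a/(2\sigma)}^m f\|_{p,d\mu_k},
\]
which is \eqref{eq72}. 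The argument is uniform in $1\le p\le\infty$. Apart from the smoothness of $\Phi$ at the origin and the non-vanishing of its denominator on $B_2$, everything is routine manipulation within the distributional Dunkl calculus developed in this section.
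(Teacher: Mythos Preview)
Your proof is correct and follows exactly the paper's approach: the function $\Phi$ you introduce is precisely the paper's $\varphi(y)=|y|^{2m}\eta(y)/\aaj_{\lambda_{k},m}(a|y|/2)$, and the argument then proceeds identically via Lemma~\ref{lem6.1} and \eqref{eq13}. You are in fact more careful than the paper in justifying $\Phi\in\mathcal{S}_{\mathrm{rad}}(\mathbb{R}^d)$, making explicit both the factorisation $\aaj_{\lambda_{k},m}(t)=t^{2m}\Psi_0(t^2)$ from Remark~\ref{rem6.1} and the appeal to the Dai--Ditzian positivity (Remark~\ref{rem6.2}) on the support of $\eta$, points the paper passes over in a single clause.
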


\begin{proof}
We have\begin{align*}
\mathcal{F}_k(((-\Delta_k)^{m}P_\sigma(f))(y)&=|y|^{2m}\eta(y/\sigma)\mathcal{F}_k(f)(y)\\
&=\sigma^{2m}\varphi(y/\sigma)\aaj_{\lambda_{k},m}(a/(2\sigma))\mathcal{F}_k(f)(y)\\
&=\sigma^{2m}\varphi(y/\sigma)\mathcal{F}_k(\aaDelta_{a/(2\sigma)}^{m} f)(y),
\end{align*}
where
\[
\varphi(y)=\frac{|y|^{2m}\eta(y)}{\aaj_{\lambda_{k},m}(a|y|/2)}.
\]

Since $\aaj_{\lambda_{k},m}(a|y|/2)/|y|^{2m}>0$ for $|y|>0$, we observe that
$\varphi\in \mathcal{S}(\R^d)$ and $\mathcal{F}_k(\varphi)\in L^{1}(\R^d,
d\mu_k)$. Then estimate \eqref{eq72} follows from Lemma \ref{lem6.1}, \eqref{eq13}, $\aaDelta_{a/(2\sigma)}^{m}f\in L^{p}(\R^d, d\mu_k)$, and $\|\mathcal{F}_k(\varphi(\Cdot/\sigma))\|_{1,d\mu_k}=\|\mathcal{F}_k(\varphi)\|_{1,d\mu_k}$.
\end{proof}

\subsection{Proofs of Theorem \ref{thm6.1} and \ref{thm6.2} }

\begin{proof}[Proof of Theorem \ref{thm6.2}]
In connection with Lemma \ref{lem6.4}, observe that, for $f\in
L^{p}(\R^d, d\mu_k)$ and $g\in W_{p, k}^{2r}$,
\begin{align*}
\|\aaDelta_{\delta}^rf\|_{p,d\mu_k}&\leq\aaomega_r(\delta,
f)_{p,d\mu_k} \leq \aaomega_r(\delta, f-g)_{p,d\mu_k}
+\aaomega_r(\delta, g)_{p,d\mu_k}\\
&\lesssim (\|f-g\|_{p,d\mu_k}+\delta^{2r}\|(-\Delta_k)^{r}g\|_{p,d\mu_k}).
\end{align*}
Then
\begin{equation}
\|\aaDelta_{\delta}^rf\|_{p,d\mu_k}\leq \aaomega_r(\delta, f)_{p,d\mu_k} \lesssim K_{2r}(\delta, f)_{p,d\mu_k}. \label{eq73}
\end{equation}
On the other hand, $P_\sigma(f)\in W_{p, k}^{2r}$ and
\begin{equation}
K_{2r}(\delta, f)_{p,d\mu_k} \leq \|f-P_\sigma (f)\|_{p,d\mu_k}+
\delta^{2r}\|(-\Delta_k)^{r}P_\sigma(f) \|_{p,d\mu_k}. \label{eq74}
\end{equation}
In light of Lemma \ref{lem6.6},
\begin{equation*}
\|f-P_{\sigma}(f)\|_{p,d\mu_{k}} \lesssim \|\aaDelta_{a/(2\sigma)}^rf\|_{p,d\mu_k}.
\end{equation*}
Further, Lemma \ref{lem6.7} yields
\begin{equation}
\|((-\Delta_k)^{r}P_\sigma(f)\|_{p,d\mu_k} \lesssim \sigma^{2r}
\|\aaDelta_{a/(2\sigma)}^{r} f\|_{p,d\mu_k}.\label{eq76}
\end{equation}
Setting $\sigma=a/(2\delta)$, from \eqref{eq74}--\eqref{eq76} we arrive at
\begin{equation}
K_{2r}(\delta, f)_{p,d\mu_k}\lesssim \|\aaDelta_{\delta}^{r}
f\|_{p,d\mu_k}\lesssim \aaomega_r(\delta, f)_{p,d\mu_k}.
\label{eq77}
\end{equation}
\end{proof}

\begin{proof}[Proof of Theorem \ref{thm6.1}]
Using property \eqref{eq50}, inequalities
\eqref{eq73} and \eqref{eq77}, we obtain
\begin{align}
E_\sigma (f)_{p,d\mu_k}&\leq \|f-P_{\sigma/2}(f)\|_{p,d\mu_{k}} \lesssim
\sigma^{-2r}\|\aaDelta_{a/\sigma}^m((-\Delta_k)^rf)\|_{p,d\mu_k}\notag\\
&\lesssim \frac{1}{\sigma^{2r}}\,K_{2m}\Bigl(\frac{a}{\sigma},
(-\Delta_k)^rf\Bigr)_{p,d\mu_k}\lesssim\frac{1}{\sigma^{2r}}\,K_{2m}\Bigl(\frac{1}{\sigma},
(-\Delta_k)^rf\Bigr)_{p,d\mu_k}\notag\\
&\lesssim \frac{1}{\sigma^{2r}}\,\|\aaDelta_{1/\sigma}^m((-\Delta_k)^rf)\|_{p,d\mu_k}
\lesssim \frac{1}{\sigma^{2r}}\,\aaomega_m\Bigl(\frac{1}{\sigma}, (-\Delta_k)^rf\Bigr)_{p,d\mu_k}.
\label{eq78}
\end{align}
\end{proof}

\begin{remark}\label{rem6.8}
The proofs of estimates \eqref{eq77} and \eqref{eq78} for the difference
\eqref{eq53} is based on the fact that the parameter $a$ in Lemmas
\ref{lem6.6} and \ref{lem6.7} is the same. It is possible due to the fact that
$\aaj_{\lambda_{k},m}(t)>0$ for $t>0$, see Remark \ref{rem6.2}. This estimate
is valid for the difference \eqref{eq51} as well, since
$j_{\lambda_{k},m}(t)=(1-j_{\lambda_{k}}(t))^m>0$ for $t>0$.

Therefore, the moduli of smoothness \eqref{eq54} and \eqref{eq56} in
inequalities \eqref{eq60} and \eqref{eq61} can be replaces by the norms of the
corresponding differences \eqref{eq51} and \eqref{eq53}. For the modulus of
smoothness \eqref{eq55} this observation is not valid since
$\aj_{\lambda_{k},m}(t)$ does not
 keep its sign.
\end{remark}
\begin{remark}\label{rem.6.9}
Properties \eqref{eq49} and \eqref{eq50} of the $K$-functional, inequality \eqref{omega-inequality2} and the equivalence \eqref{eq61} imply the following
properties of moduli of smoothness
\begin{align*}
&\textup{(1)}\ \lim_{\delta\to 0+0}\omega_m(\delta,
f)_{p,d\mu_k}=\lim_{\delta\to 0+0} \aomega_m(\delta,
f)_{p,d\mu_k}=\lim_{\delta\to 0+0} \aaomega_m(\delta,
f)_{p,d\mu_k}=0;
\\
&\textup{(2)}\ \omega_m(\lambda\delta, f)_{p,d\mu_k}\lesssim\max\{1,
\lambda^{2m}\}\omega_m(\delta, f)_{p,d\mu_k};
\\
&\textup{(3)}\ \aomega_{l}(\lambda\delta, f)_{p,d\mu_k}\lesssim\max\{1,
\lambda^{2m}\}\aomega_l(\delta, f)_{p,d\mu_k},\quad l=2m-1,\,2m;
\\
&\textup{(4)}\ \aaomega_m(\lambda\delta, f)_{p,d\mu_k}\lesssim\max\{1,
\lambda^{2m}\}\aaomega_m(\delta, f)_{p,d\mu_k};
\\
&\textup{(5)}\ \aomega_{m+r}(\delta, f)_{p,d\mu_k}\lesssim\aomega_m(\delta, f)_{p,d\mu_k};
\\
&\textup{(6)}\ \aaomega_{m+r}(\delta, f)_{p,d\mu_k}\lesssim\aaomega_m(\delta, f)_{p,d\mu_k}.
\end{align*}
\end{remark}

\bigskip
\section{Some inequalities for entire functions}
In this section, we study weighted analogues of the inequalities for entire
functions. In particular, we obtain Nikolskii's inequality (\cite{Nik75}, see
Theorem \ref{thm7.1} below), Bernstein's inequality (\cite{Nik75}, Theorem
\ref{thm7.2}), Nikolskii--Stechkin's inequality (\cite{Nik48,Ste48}, Theorem
\ref{thm7.4}), and Boas-type inequality (\cite{Boa48}, Theorem \ref{thm7.5}).

\begin{theorem}\label{thm7.1}
If $\sigma>0$, $0<p\leq q\leq\infty$, $f\in B_{p, k}^\sigma$, then
\begin{equation}
\|f\|_{q,d\mu_k} \lesssim \sigma^{(2\lambda_k+2)(1/p-1/q)}\|f\|_{p,d\mu_k}.\label{eq79}
\end{equation}
\end{theorem}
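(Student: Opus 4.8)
The plan is to reduce \eqref{eq79} to the endpoint case $q=\infty$, i.e. to the bound
\[
\|f\|_{\infty}\lesssim\sigma^{(2\lambda_k+2)/p}\|f\|_{p,d\mu_k},\qquad f\in B_{p,k}^{\sigma},
\]
and then to recover the general case by interpolation. Indeed, by Theorem \ref{thm5.1} every $f\in B_{p,k}^{\sigma}$ is bounded on $\mathbb{R}^d$, so for $p<q<\infty$ one has
\[
\|f\|_{q,d\mu_k}^{q}=\int_{\mathbb{R}^d}|f|^{q-p}|f|^{p}\,d\mu_k\le\|f\|_{\infty}^{q-p}\|f\|_{p,d\mu_k}^{p}\lesssim\sigma^{(2\lambda_k+2)(q-p)/p}\|f\|_{p,d\mu_k}^{q},
\]
and taking the $q$-th root gives \eqref{eq79}; the case $q=\infty$ is the endpoint bound itself, and for $p=\infty$ the statement is trivial.

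For $1\le p<\infty$ the endpoint bound should come from a reproducing formula. Choose $\eta\in\mathcal{S}_{\mathrm{rad}}(\mathbb{R}^d)$ with $\eta\equiv1$ on a neighbourhood of $B_1$ and compact support (such $\eta$ exists, e.g. by rescaling the function used in Section~6), and set $\theta=\mathcal{F}_k(\eta)$, $\theta_{\sigma}=\mathcal{F}_k(\eta(\Cdot/\sigma))$. Both are radial Schwartz functions, and the dilation behaviour of $\mathcal{F}_k$ and of $d\mu_k$ gives $\theta_{\sigma}(x)=\sigma^{2\lambda_k+2}\theta(\sigma x)$, hence $\|\theta_{\sigma}\|_{p',d\mu_k}=\sigma^{(2\lambda_k+2)/p}\|\theta\|_{p',d\mu_k}$ with $\|\theta\|_{p',d\mu_k}<\infty$, where $1/p+1/p'=1$. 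By Theorem \ref{thm5.10}, $\supp\mathcal{F}_k(f)\subset B_{\sigma}$, and since $f$ is entire, bounded and lies in $L^{p}(\mathbb{R}^d,d\mu_k)$, i.e. $f\in L^{p}(\mathbb{R}^d,d\mu_k)\cap C_b(\mathbb{R}^d)\cap C^{\infty}(\mathbb{R}^d)$, Lemma \ref{lem3.6} applies and yields $\mathcal{F}_k(f\Ast{k}\theta_{\sigma})=\eta(\Cdot/\sigma)\mathcal{F}_k(f)=\mathcal{F}_k(f)$, so that $f=f\Ast{k}\theta_{\sigma}$ pointwise, i.e.
\[
f(x)=\int_{\mathbb{R}^d}f(y)\,\tau^{x}\theta_{\sigma}(-y)\,d\mu_k(y).
\]
H\"{o}lder's inequality, the invariance of $d\mu_k$ under $y\mapsto-y$, and the $L^{p'}$-boundedness of $\tau^{x}$ on radial functions \eqref{eq21} then give
\[
|f(x)|\le\|f\|_{p,d\mu_k}\,\|\tau^{x}\theta_{\sigma}\|_{p',d\mu_k}\le\|f\|_{p,d\mu_k}\,\|\theta_{\sigma}\|_{p',d\mu_k}=\sigma^{(2\lambda_k+2)/p}\|\theta\|_{p',d\mu_k}\,\|f\|_{p,d\mu_k},
\]
and the endpoint bound for $1\le p<\infty$ follows on taking the supremum over $x$.

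For $0<p<1$ the plan is to bootstrap from the case $p=1$. By Theorem \ref{thm5.1}, $f\in\widetilde{B}_{p,k}^{\sigma}$, so $\|f\|_{\infty}<\infty$; therefore $f\in L^{1}(\mathbb{R}^d,d\mu_k)$, since $\int|f|\,d\mu_k\le\|f\|_{\infty}^{1-p}\|f\|_{p,d\mu_k}^{p}$, and $\supp\mathcal{F}_k(f)\subset B_{\sigma}$ still holds, so $f\in B_{1,k}^{\sigma}$. Applying the already-proved case $p=1$ gives $\|f\|_{\infty}\lesssim\sigma^{2\lambda_k+2}\|f\|_{1,d\mu_k}\le\sigma^{2\lambda_k+2}\|f\|_{\infty}^{1-p}\|f\|_{p,d\mu_k}^{p}$, and dividing by the finite quantity $\|f\|_{\infty}^{1-p}$ yields $\|f\|_{\infty}\lesssim\sigma^{(2\lambda_k+2)/p}\|f\|_{p,d\mu_k}$, completing the endpoint bound for all $0<p\le\infty$. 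The main obstacle is this low-integrability regime: for $p<1$ duality is unavailable, and one genuinely needs the a priori boundedness of functions in $B_{p,k}^{\sigma}$ from Theorems \ref{thm5.1}/\ref{thm5.10}, both to give meaning to $\mathcal{F}_k(f)$ and to carry out the division step; the remaining ingredients --- the dilation identities for $\mathcal{F}_k$ and $d\mu_k$, the pointwise reproducing formula via Lemma \ref{lem3.6}, and the use of \eqref{eq21} --- are routine.
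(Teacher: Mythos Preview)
Your proof is correct and follows the same overall architecture as the paper: reduce to the endpoint $q=\infty$, obtain that endpoint for $p\ge 1$ via a reproducing formula $f=f\ast\theta_{\sigma}$ combined with H\"older, then bootstrap to $0<p<1$ using boundedness from Theorem~\ref{thm5.1}, and finally interpolate. The only cosmetic difference is that for $p\ge 1$ the paper writes the reproducing formula in the $T^{t}$-convolution form \eqref{eq11} and appeals to Theorem~\ref{thm3.3}, whereas you use the equivalent $\tau^{x}$-form \eqref{eq12} together with \eqref{eq21}; by Lemma~\ref{lem3.6} these are interchangeable.
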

\begin{remark}
Observe that the obtained Nikolskii inequality is sharp, i.e., we actually have
\[
\sup_{f\in B_{p, k}^\sigma, f\ne 0}\frac{\|f\|_{q,d\mu_k}}
{\|f\|_{p,d\mu_k}}\asymp \sigma^{(2\lambda_k+2)(1/p-1/q)},
\]
and an extremizer can be taken as 
\[
f_{\sigma,m}(x)=\frac{\sin^{2m}(\theta|x|)}{|x|^{2m}},\quad \theta=\frac{\sigma}{2m},
\]
for sufficiently large $m\in\N$.
\end{remark}

\begin{proof}
Let $f\in B_{p, k}^\sigma$, $p\geq 1$, $q=\infty$. By Theorem \ref{thm5.10}, we have
$\supp \mathcal{F}_k(f)\subset B_\sigma$, and then
\begin{equation}
\mathcal{F}_k(f)(y)=\eta(y/\sigma)\mathcal{F}_k(f)(y),\quad \eta(y)=\eta_0(|y|).
\label{eq79+}
\end{equation}
Lemma \ref{lem3.6} implies
\[
f(x)=(f\Ast{\lambda_k}\mathcal{H}_{\lambda_k}(\eta_0(\Cdot/\sigma)))(x)=
\int_0^{\infty}T^tf(x)\mathcal{H}_{\lambda_k}(\eta_0(\Cdot/\sigma))(t)\,d\nu_{\lambda_k}(t).
\]
Taking into account that
\[
\mathcal{H}_{\lambda_k}(\eta_0(\Cdot/\sigma))(t)=\sigma^{2\lambda_k+2}\mathcal{H}_{\lambda_k}(\eta_0)(\sigma
t),
\]
\[
\|\mathcal{H}_{\lambda_k}(\eta_0)(\sigma
t)\|_{p',d\mu_k}=\sigma^{-\frac{2\lambda_k+2}{p'}}\|\mathcal{H}_{\lambda_k}(\eta_0)(t)\|_{p',d\mu_k},
\]
H\"{o}lder's inequality and Theorem \ref{thm3.3} yield
\begin{align*}
|f(x)|&\leq \sigma^{2\lambda_k+2}\|T^tf(x)\|_{p,d\nu_{\lambda_k}}\|\mathcal{H}_{\lambda_k}(\eta_0)(\sigma
t)\|_{p',d\mu_k}\\
&\leq \sigma^{(2\lambda_k+2)/p}\|\mathcal{H}_{\lambda_k}(\eta_0)(t)\|_{p',d\mu_k}\|f\|_{p,d\mu_k}
\lesssim\sigma^{(2\lambda_k+2)/p}\|f\|_{p,d\mu_k},
\end{align*}
i.e.,
\eqref{eq79} holds.

Let $f\in B_{p, k}^\sigma$, $0<p<1$, $q=\infty$. By Theorem~\ref{thm5.1}, $f$ is bounded and $f\in B_{1, k}^\sigma$. We have
\[
\|f\|_{1,d\mu_k} = \| |f|^{1-p} |f|^p\|_{1,d\mu_k} \le \||f|^{1-p}\|_{\infty} \||f|^p\|_{1,d\mu_k} = \|f\|_\infty^{1-p} \|f\|_{p,d\mu_k}^p.
\]
Using \eqref{eq79} with $p=1$ and $q=\infty$,
\[
\|f\|_{1,d\mu_k}\lesssim
\sigma^{2\lambda_k+2}\|f\|_{1,d\mu_k}
\|f\|_\infty^{-p} \|f\|_{p,d\mu_k}^p,
\]
which gives
\[
 \|f\|_\infty \lesssim \sigma^{(2\lambda_k+2)/p}\|f\|_{p,d\mu_k}.
\]
Thus, the proof of \eqref{eq79} for $q=\infty$ is complete.

If $0<p\leq q<\infty$, we obtain
\begin{align*}
\|f\|_{q,d\mu_k} &= \||f|^{1-p/q}|f|^{p/q}\|_{q,d\mu_k} \le \|f\|_\infty^{1-p/q} \|f\|_{p,d\mu_k}^{p/q}
\\ &\le \sigma^{(2\lambda_k+2)(1-p/q)/p} \|f\|_{p,d\mu_k}^{1-p/q} \|f\|_{p,d\mu_k}^{p/q} = \sigma^{(2\lambda_k+2)(1/p-1/q)}\|f\|_{p,d\mu_k}.
\qedhere
\end{align*}
\end{proof}

\begin{theorem}\label{thm7.2}
If $\sigma>0$, $r\in \N$, $1 \leq p\leq \infty$, $f\in B_{p, k}^\sigma$, then
\begin{equation}
\|(-\Delta_k)^rf\|_{p,d\mu_k} \lesssim \sigma^{2r}\|f\|_{p,d\mu_k}.\label{eq80}
\end{equation}
\end{theorem}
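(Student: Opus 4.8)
The plan is to follow the scheme used in the proof of Lemma~\ref{lem6.7}, realizing $(-\Delta_k)^r$ as a convolution with an $L^1$-kernel whose norm scales like $\sigma^{2r}$. Let $f\in B_{p,k}^\sigma$, $1\le p\le\infty$; then $f\in L^{p}(\mathbb{R}^{d},d\mu_{k})\cap C_b(\R^d)$ and, by the Paley--Wiener theorem (see \cite{Jeu06,Tri02} and Theorem~\ref{thm5.10}), $\supp\mathcal{F}_k(f)\subset B_\sigma$, the Dunkl transform being understood as a tempered distribution when $p>2$. In particular $\mathcal{F}_k(f)(y)=\eta(y/\sigma)\mathcal{F}_k(f)(y)$ in $\mathcal{S}'(\R^d)$, where $\eta$ is the function fixed at the beginning of Section~6.3.

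Using \eqref{distributiontransform} this gives
\[
\mathcal{F}_k((-\Delta_k)^rf)(y)=|y|^{2r}\mathcal{F}_k(f)(y)=\sigma^{2r}\varphi(y/\sigma)\mathcal{F}_k(f)(y),\qquad \varphi(y):=|y|^{2r}\eta(y).
\]
Since $r\in\N$, the factor $|y|^{2r}=(|y|^{2})^{r}$ is a polynomial and $\eta\in\mathcal{S}_\mathrm{rad}(\R^d)$ is compactly supported, so $\varphi\in\mathcal{S}_\mathrm{rad}(\R^d)$; consequently $\mathcal{F}_k(\varphi)\in\mathcal{S}_\mathrm{rad}(\R^d)\subset L^{1}_\mathrm{rad}(\R^d,d\mu_k)$ and $\varphi(\Cdot/\sigma)\in N_k$ (being even and Schwartz, hence in $C^\infty_\Pi(\R^d)$). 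Set $h_\sigma=\mathcal{F}_k(\varphi(\Cdot/\sigma))\in\mathcal{S}_\mathrm{rad}(\R^d)$, so that $\mathcal{F}_k(h_\sigma)=\varphi(\Cdot/\sigma)$. Exactly as in the proof of Lemma~\ref{lem6.5}(1), the homogeneity of $d\mu_k$ of degree $2\lambda_k+2$ gives $\|h_\sigma\|_{1,d\mu_k}=\|\mathcal{F}_k(\varphi)\|_{1,d\mu_k}<\infty$.

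Next I would note that $\mathcal{F}_k((-\Delta_k)^rf)=\sigma^{2r}\mathcal{F}_k(h_\sigma)\mathcal{F}_k(f)$, which by \eqref{transformconvolution2} equals $\mathcal{F}_k\bigl(\sigma^{2r}(f\Ast{k}h_\sigma)\bigr)$; hence $(-\Delta_k)^rf=\sigma^{2r}(f\Ast{k}h_\sigma)$, and by Lemma~\ref{lem6.1} the distributional convolution coincides with the integral convolution \eqref{eq12}. Applying \eqref{eq13}, exactly as in the derivation of \eqref{eq70}, we obtain
\[
\|(-\Delta_k)^rf\|_{p,d\mu_k}=\sigma^{2r}\|(f\Ast{k}h_\sigma)\|_{p,d\mu_k}\le\sigma^{2r}\|h_\sigma\|_{1,d\mu_k}\|f\|_{p,d\mu_k}=\sigma^{2r}\|\mathcal{F}_k(\varphi)\|_{1,d\mu_k}\|f\|_{p,d\mu_k}\lesssim\sigma^{2r}\|f\|_{p,d\mu_k},
\]
which is \eqref{eq80}. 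The only steps requiring care are the bookkeeping of the dilation $y\mapsto y/\sigma$ inside the Dunkl transform together with the scaling of $d\mu_k$, and the verification that $\varphi(\Cdot/\sigma)$ meets the hypotheses of Lemma~\ref{lem6.1}; both are routine and already appear in Section~6.
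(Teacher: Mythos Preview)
Your proof is correct and follows essentially the same route as the paper: write $(-\Delta_k)^r$ on $B_{p,k}^\sigma$ as convolution with the radial Schwartz kernel $\mathcal{F}_k(\varphi(\Cdot/\sigma))$, $\varphi(y)=|y|^{2r}\eta(y)$, and apply the Young-type bound \eqref{eq13}. The only cosmetic differences are that the paper first reduces to $r=1$ and invokes Lemma~\ref{lem3.6} rather than Lemma~\ref{lem6.1}, but the argument is identical.
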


\begin{proof}
It is enough to consider the case $r=1$. As in the previous theorem, we use \eqref{eq79+} to obtain
\[
\mathcal{F}_k((-\Delta_k)f)(y)=|y|^2\eta(y/\sigma)\mathcal{F}_k(f)(y)=\sigma^2\varphi_0(|y|/\sigma)\mathcal{F}_k(f)(y),
\]
where $\varphi_0(t)=t^2\eta_0(t)\in \mathcal{S}(\R_+)$. Combining Lemma \ref{lem3.6}, inequality \eqref{eq13}, and
$\|\mathcal{F}_k(\varphi_0(|\Cdot|/\sigma))\|_{1,d\mu_k}=\|\mathcal{F}_k(\varphi_0(|\Cdot|))\|_{1,d\mu_k}$, we arrive at
\[
\|(-\Delta_k)f\|_{p,d\mu_k}\leq \sigma^2 \|\mathcal{F}_k(\varphi_0(|\Cdot|))\|_{1,d\mu_k}\|f\|_{p,d\mu_k}\lesssim\sigma^2 \|f\|_{p,d\mu_k}.
\qedhere
\]
\end{proof}

The next result follows from Lemma \ref{lem6.4}, and Theorem \ref{thm7.2}.

\begin{corollary}\label{cor7.3}
If $\sigma,\,\delta>0$, $m\in \N$, $1 \leq p\leq \infty$, $f\in B_{p, k}^\sigma$, then
\begin{align*}
\omega_m(\delta, f)_{p,d\mu_k}&\lesssim (\sigma\delta)^{2m}\|f\|_{p,d\mu_k},
\\ \aomega_{l}(\delta, f)_{p,d\mu_k}&\lesssim(\sigma\delta)^{2m}\|f\|_{p,d\mu_k},\quad l=2m-1,\,2m,
\\ \aaomega_m(\delta, f)_{p,d\mu_k}&\lesssim (\sigma\delta)^{2m}\|f\|_{p,d\mu_k},
\end{align*}
where constants do not depend on $\sigma, \delta,$ and $f$.
\end{corollary}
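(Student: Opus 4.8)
The plan is to deduce all four estimates from a single one---the estimate for $\aaomega_m$---which itself is an immediate combination of the Bernstein-type inequality of Theorem~\ref{thm7.2} with the kernel representation of $\aaDelta_t^m$ from Lemma~\ref{lem6.4}.

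First I would note that $f\in B_{p,k}^\sigma$ automatically lies in the Sobolev space $W^{2m}_{p,k}$: by definition $f\in L^{p}(\mathbb{R}^{d},d\mu_{k})$, while Theorem~\ref{thm7.2} with $r=m$ gives $(-\Delta_k)^m f\in L^{p}(\mathbb{R}^{d},d\mu_{k})$ together with the quantitative bound $\|(-\Delta_k)^m f\|_{p,d\mu_k}\lesssim \sigma^{2m}\|f\|_{p,d\mu_k}$. Since the hypothesis $m\ge r$ of Lemma~\ref{lem6.4} holds with $r=m$, that lemma---in particular inequality~\eqref{eq64}---yields $\|\aaDelta_t^m f\|_{p,d\mu_k}\lesssim t^{2m}\|(-\Delta_k)^m f\|_{p,d\mu_k}$ for every $t>0$. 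Combining the last two inequalities and taking the supremum over $0<t\le\delta$ in the definition~\eqref{eq56} of the modulus gives
\[
\aaomega_m(\delta, f)_{p,d\mu_k}\;\lesssim\;\delta^{2m}\|(-\Delta_k)^m f\|_{p,d\mu_k}\;\lesssim\;(\sigma\delta)^{2m}\|f\|_{p,d\mu_k}.
\]

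To pass to the remaining moduli I would simply invoke Theorem~\ref{thm6.2}: the equivalence~\eqref{eq61} with $r=m$ asserts $\omega_m(\delta,f)_{p,d\mu_k}\asymp\aaomega_m(\delta,f)_{p,d\mu_k}\asymp\aomega_{2m-1}(\delta,f)_{p,d\mu_k}\asymp\aomega_{2m}(\delta,f)_{p,d\mu_k}$, so the bound just obtained for $\aaomega_m$ transfers, up to a change of constant, to $\omega_m$ and to $\aomega_l$ with $l=2m-1,2m$. Alternatively, one can avoid Theorem~\ref{thm6.2} and argue in complete analogy with Lemma~\ref{lem6.4}: using \eqref{eq58}, the fact (Remark~\ref{rem6.1}) that $j_{\lambda_k,m}$ and $\aj_{\lambda_k,l}$, $l=2m-1,2m$, vanish to order $2m$ at the origin, the $L^{1}(\mathbb{R}^d,d\mu_k)$-boundedness of $T^s$, and Lemma~\ref{lem6.3}, one represents $\varDelta_t^m f$ and $\aDelta_t^l f$ as convolutions of $(-\Delta_k)^m f$ with fixed $L^{1}(\mathbb{R}^d,d\mu_k)$ kernels dilated by $t$, whence \eqref{eq13} gives $\|\varDelta_t^m f\|_{p,d\mu_k}+\|\aDelta_t^l f\|_{p,d\mu_k}\lesssim t^{2m}\|(-\Delta_k)^m f\|_{p,d\mu_k}$ and one concludes as before.

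There is no serious obstacle here---each step is an application of an already proven result. The only points that need a word of care are the verification that $f\in W^{2m}_{p,k}$ before Lemma~\ref{lem6.4} is invoked and, in the direct route, the check that $\aj_{\lambda_k,l}$ vanishes to order \emph{exactly} $2m$ at the origin for both $l=2m-1$ and $l=2m$, so that dividing by $|\cdot|^{2m}$ leaves a smooth radial factor nonvanishing at the origin; this is exactly what Remark~\ref{rem6.1} provides.
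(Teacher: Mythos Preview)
Your proposal is correct and matches the paper's approach: the paper's one-line justification ``follows from Lemma~\ref{lem6.4} and Theorem~\ref{thm7.2}'' is exactly your combination of the Bernstein inequality with the kernel estimate \eqref{eq64}, with the understanding (stated just before Lemma~\ref{lem6.4}) that the analogues of Lemma~\ref{lem6.4} for $\varDelta_t^m$ and $\aDelta_t^l$ are proved in the same way---precisely the ``alternative'' route you sketch. Your primary route via Theorem~\ref{thm6.2} is also valid and non-circular, since Theorem~\ref{thm6.2} is established earlier and does not rely on Corollary~\ref{cor7.3}.
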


\begin{theorem}\label{thm7.4}
If $\sigma>0$, $m\in \N$, $1 \leq p\leq \infty$, $0<t\leq 1/(2\sigma)$, $f\in B_{p, k}^\sigma$, then
\begin{equation}
\|(-\Delta_k)^mf\|_{p,d\mu_k} \lesssim t^{-2m}\|\aaDelta_{t}^{m} f\|_{p,d\mu_k}.\label{eq82}
\end{equation}
\end{theorem}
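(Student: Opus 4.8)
The plan is to follow the radial--multiplier scheme already used for Lemmas~\ref{lem6.6}, \ref{lem6.7} and Theorem~\ref{thm7.2}. Since $f\in B_{p,k}^\sigma$, Theorem~\ref{thm5.10} (and, for $p=\infty$, the Paley--Wiener theorem for tempered distributions) gives $\supp\mathcal{F}_k(f)\subset B_\sigma$, so I may insert the cut-off and write $\mathcal{F}_k(f)=\eta(\Cdot/\sigma)\mathcal{F}_k(f)$ in $\mathcal{S}'(\R^d)$. Combining this with \eqref{distributiontransform} and \eqref{eq58}, I would factor $|y|^{2m}\eta(y/\sigma)=\varphi_t(y)\,\aaj_{\lambda_{k},m}(t|y|)$ with the radial multiplier $\varphi_t(y)=|y|^{2m}\eta(y/\sigma)/\aaj_{\lambda_{k},m}(t|y|)$, which yields $\mathcal{F}_k((-\Delta_k)^mf)=\varphi_t\,\mathcal{F}_k(\aaDelta_t^mf)$. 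Then Lemma~\ref{lem6.1}, formula \eqref{transformconvolution2}, and the $L^1$-bound \eqref{eq13} give $(-\Delta_k)^mf=\aaDelta_t^mf\Ast{k}\mathcal{F}_k(\varphi_t)$ and hence $\|(-\Delta_k)^mf\|_{p,d\mu_k}\le\|\mathcal{F}_k(\varphi_t)\|_{1,d\mu_k}\,\|\aaDelta_t^mf\|_{p,d\mu_k}$, so everything reduces to controlling $\|\mathcal{F}_k(\varphi_t)\|_{1,d\mu_k}$.

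For this to be legitimate I must check that $\varphi_t$ is a well-behaved multiplier. By Remark~\ref{rem6.1} the even entire function $\aaj_{\lambda_{k},m}$ vanishes to order exactly $2m$ at the origin, so $\aaj_{\lambda_{k},m}(u)=u^{2m}\Psi(u)$ with $\Psi$ even entire and $\Psi(0)>0$, and by Remark~\ref{rem6.2} one has $\Psi>0$ on $[0,\infty)$. The factor $|y|^{2m}$ therefore cancels, $\varphi_t(y)=\eta(y/\sigma)/\bigl(t^{2m}\Psi(t|y|)\bigr)$, and since $0<t\le 1/(2\sigma)$ we have $t|y|\le 1$ on $\supp\eta(\Cdot/\sigma)\subset B_{2\sigma}$, where $\Psi>0$. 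Hence $\varphi_t\in C_c^\infty(\R^d)$; in particular $\mathcal{F}_k(\varphi_t)\in\mathcal{S}_{\mathrm{rad}}(\R^d)\subset L^1_{\mathrm{rad}}(\R^d,d\mu_k)$ and $\varphi_t\in N_k$, so Lemma~\ref{lem6.1} does apply and the manipulations above are justified.

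The main point — and the only genuinely delicate step — is the uniform estimate $\|\mathcal{F}_k(\varphi_t)\|_{1,d\mu_k}\lesssim t^{-2m}$ with constant independent of $t$ and $\sigma$. I would set $s=t\sigma\in(0,1/2]$ and note $\varphi_t(y)=t^{-2m}H_s(ty)$ with $H_s(z)=\eta(z/s)/\Psi(|z|)$; the dilation invariance of the $L^1$ norm of the Dunkl transform then reduces the claim to $\|\mathcal{F}_k(H_s)\|_{1,d\mu_k}\lesssim 1$ uniformly in $s$. Passing to the radial profile via \eqref{eq1} and rescaling once more, the task becomes a uniform Hankel-transform bound for $\widetilde H_s(\rho)=\eta_0(\rho)/\Psi(s\rho)$, which is supported in $[0,2]$ and bounded in every $C^{N}$-norm uniformly in $s\in(0,1/2]$ because $1/\Psi\in C^\infty[0,1]$ and each $\rho$-derivative produces a harmless factor $s^j\le 1$. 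Then, exactly as in the proof of Lemma~\ref{lem6.3}, iterating the Bessel operator $\mathcal{B}_{\lambda_k}$ gives $|\mathcal{H}_{\lambda_k}(\widetilde H_s)(\tau)|\le \tau^{-2N}\int_0^\infty|\mathcal{B}_{\lambda_k}^N\widetilde H_s|\,d\nu_{\lambda_k}\lesssim \tau^{-2N}$ for $\tau\ge 1$ with an $s$-independent constant, together with $|\mathcal{H}_{\lambda_k}(\widetilde H_s)(\tau)|\le\|\widetilde H_s\|_{1,d\nu_{\lambda_k}}\lesssim1$ for $\tau\le1$; choosing $N>\lambda_k+1$ makes $\|\mathcal{H}_{\lambda_k}(\widetilde H_s)\|_{1,d\nu_{\lambda_k}}\lesssim 1$ uniformly. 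I expect the bookkeeping of this uniformity in both $t$ and $\sigma$ to be the part requiring the most care, while the convolution and transform identities are routine given the machinery developed earlier.
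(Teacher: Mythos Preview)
Your argument is correct, and the overall multiplier/convolution scheme is the same as the paper's. The one genuine difference is how the parameters $t$ and $\sigma$ are handled.

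You keep a single multiplier $\varphi_t(y)=\eta(y/\sigma)\big/\bigl(t^{2m}\Psi(t|y|)\bigr)$ and, after two rescalings, reduce to a uniform-in-$s$ bound $\|\mathcal{H}_{\lambda_k}(\widetilde H_s)\|_{1,d\nu_{\lambda_k}}\lesssim 1$ for the family $\widetilde H_s(\rho)=\eta_0(\rho)/\Psi(s\rho)$, $s\in(0,1/2]$. You then earn this uniform bound by iterating $\mathcal B_{\lambda_k}$ and using the uniform $C^N$ control of $\widetilde H_s$ on $[0,2]$. This is valid (your observation that each $\rho$-derivative of $1/\Psi(s\rho)$ gains a harmless factor $s\le 1/2$ is exactly what makes the $C^N$ norms uniform), but it is extra work.

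The paper avoids the uniformity argument entirely by inserting a second cut-off: since $0<t\le 1/(2\sigma)$ implies $t|y|\le 1$ on $\supp\eta(\Cdot/\sigma)$, one has $\eta(y/\sigma)=\eta(y/\sigma)\,\eta(ty)$, and hence
\[
\mathcal{F}_k\bigl((-\Delta_k)^mf\bigr)(y)=t^{-2m}\,\eta(y/\sigma)\,\varphi(ty)\,\mathcal{F}_k\bigl(\aaDelta_t^mf\bigr)(y),
\qquad
\varphi(y)=\frac{|y|^{2m}\eta(y)}{\aaj_{\lambda_k,m}(|y|)}\in\mathcal{S}(\R^d).
\]
Now $\varphi$ is a \emph{fixed} Schwartz function (independent of $t,\sigma$), and one simply applies two convolutions with $\mathcal{F}_k(\eta(\Cdot/\sigma))$ and $\mathcal{F}_k(\varphi(t\Cdot))$, whose $L^1(d\mu_k)$ norms equal $\|\mathcal{F}_k(\eta)\|_{1,d\mu_k}$ and $\|\mathcal{F}_k(\varphi)\|_{1,d\mu_k}$ by pure dilation invariance. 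No parameter-uniform estimate is needed. In effect, the paper's trick decouples the $\sigma$- and $t$-scales into two independent dilations of fixed functions, whereas your single-multiplier approach keeps them coupled through $s=t\sigma$ and must then prove uniformity in $s$. Your route works; the paper's is shorter.
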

\begin{remark}
By Remark \ref{rem6.5}, this inequality can be equivalently written as
\[
\|(-\Delta_k)^mf\|_{p,d\mu_k} \lesssim t^{-2m}
K_{2m}(t, f)_{p,d\mu_k}.
\]
\end{remark}
\begin{proof}
We have
\[
\mathcal{F}_k((-\Delta_k)^mf)(y)=\frac{|y|^{2m}\eta(y/\sigma)}{\aaj_{\lambda_{k},m}(t|y|)}\,
\aaj_{\lambda_{k},m}(t|y|)\mathcal{F}_k(f)(y).
\]
Since for $0<t\leq 1/(2\sigma)$
\[
\eta(y/\sigma)=\eta(y/\sigma)\eta(ty),
\]
we obtain that
\[
\mathcal{F}_k((-\Delta_k)^mf)(y)=t^{-2m}\eta(y/\sigma)\varphi(ty)
\aaj_{\lambda_{k},m}(t|y|)\mathcal{F}_k(f)(y),
\]
where
\[
\varphi(y)=\frac{|y|^{2m}\eta(y)}{\aaj_{\lambda_{k},m}(|y|)}\in \mathcal{S}(\R^d).
\]
Using \[
\aaj_{\lambda_{k},m}(t|\Cdot|)\mathcal{F}_k(f)=\mathcal{F}_k(\aaDelta_{t}^{m}f),\quad
\aaDelta_{t}^{m}f\in L^{p}(\R^d, d\mu_k),
\]
and
\[
\|\mathcal{F}_k(\eta(\Cdot/\sigma))\|_{1,d\mu_k}=\|\mathcal{F}_k(\eta)\|_{1,d\mu_k},\quad
\|\mathcal{F}_k(\varphi(t\Cdot))\|_{1,d\mu_k}=\|\mathcal{F}_k(\varphi)\|_{1,d\mu_k},
\]
and combining Lemma \ref{lem3.6} and inequality \eqref{eq13}, we have
\begin{align*}
\|(-\Delta_k)^mf\|_{p,d\mu_k}&\leq t^{-2m}\|\mathcal{F}_k(\eta(\Cdot/\sigma))\|_{1,d\mu_k}\|\mathcal{F}_k(\varphi(t\Cdot))\|_{1,d\mu_k}
\|\aaDelta_{t}^{m} f\|_{p,d\mu_k}\\
&=t^{-2m}\|\mathcal{F}_k(\eta)\|_{1,d\mu_k}\|\mathcal{F}_k(\varphi)\|_{1,d\mu_k}
\|\aaDelta_{t}^{m} f\|_{p,d\mu_k}\\
&\lesssim t^{-2m}\|\aaDelta_{t}^{m} f\|_{p,d\mu_k}.
\qedhere
\end{align*}
\end{proof}

\begin{theorem}\label{thm7.5}
If $\sigma>0$, $m\in \N$, $1 \leq p\leq \infty$, $0<\delta\leq t\leq 1/(2\sigma)$, $f\in B_{p, k}^\sigma$, then
\begin{equation}
\delta^{-2m}\|\aaDelta_{\delta}^{m} f\|_{p,d\mu_k} \lesssim t^{-2m}\|\aaDelta_{t}^{m} f\|_{p,d\mu_k}.\label{eq84}
\end{equation}
\end{theorem}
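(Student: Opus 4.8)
The plan is to deduce \eqref{eq84} by concatenating three facts already available in the paper for entire functions of exponential type: the Bernstein-type inequality (Theorem~\ref{thm7.2}), which shows that $f\in B_{p,k}^\sigma$ lies in the Sobolev space $W_{p,k}^{2m}$; the representation of the difference through the Dunkl Laplacian (Lemma~\ref{lem6.4} with $r=m$), which bounds $\|\aaDelta_\delta^m f\|_{p,d\mu_k}$ from above by $\delta^{2m}\|(-\Delta_k)^m f\|_{p,d\mu_k}$; and the Nikol'ski\v{\i}--Stechkin-type inequality (Theorem~\ref{thm7.4}), which bounds $\|(-\Delta_k)^m f\|_{p,d\mu_k}$ from above by $t^{-2m}\|\aaDelta_t^m f\|_{p,d\mu_k}$ as soon as $t\le 1/(2\sigma)$. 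Reading these in the right order gives the inequality at once.

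Carrying it out: let $f\in B_{p,k}^\sigma$. Theorem~\ref{thm7.2} with $r=m$ gives $\|(-\Delta_k)^m f\|_{p,d\mu_k}\lesssim\sigma^{2m}\|f\|_{p,d\mu_k}<\infty$, hence $f\in W_{p,k}^{2m}$, so Lemma~\ref{lem6.4} applies with $r=m$ (the condition $m\ge r$ holding with equality) and yields
\[
\|\aaDelta_\delta^m f\|_{p,d\mu_k}\lesssim\delta^{2m}\|(-\Delta_k)^m f\|_{p,d\mu_k},
\]
where the implicit constant, which may be taken to be $\|g_{m,m}\|_{1,d\mu_{k}}$, does not depend on $\delta$, $\sigma$, $f$. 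Since $0<t\le 1/(2\sigma)$, Theorem~\ref{thm7.4} gives
\[
\|(-\Delta_k)^m f\|_{p,d\mu_k}\lesssim t^{-2m}\|\aaDelta_t^m f\|_{p,d\mu_k}.
\]
Combining the two displays and dividing by $\delta^{2m}$ gives exactly \eqref{eq84}. (The argument in fact uses only $t\le 1/(2\sigma)$; the hypothesis $\delta\le t$ is not needed.)

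There is essentially no obstacle beyond keeping track of constants: the implicit constants in Lemma~\ref{lem6.4}, Theorem~\ref{thm7.2} and Theorem~\ref{thm7.4} depend only on $m$, $\lambda_k$ and the fixed cut-off $\eta$, i.e.\ on nonessential parameters, so the chain produces \eqref{eq84} with a constant independent of $\sigma$, $\delta$, $t$, $f$. One could also give a self-contained proof following the pattern of Theorems~\ref{thm7.2}--\ref{thm7.4}: setting $\psi(s)=\aaj_{\lambda_{k},m}(s)/s^{2m}$ --- a smooth and everywhere positive function, by Remarks~\ref{rem6.1} and \ref{rem6.2} --- and using $\eta(\Cdot/\sigma)=\eta(\Cdot/\sigma)\eta(\delta\Cdot)=\eta(\Cdot/\sigma)\eta(t\Cdot)$ for $\delta\le t\le 1/(2\sigma)$, one obtains $\aaDelta_\delta^m f=\frac{\delta^{2m}}{t^{2m}}\bigl(\aaDelta_t^m f\Ast{k}G\bigr)$ with $G=\mathcal{F}_k^{-1}\bigl(\eta(\Cdot/\sigma)\,g_1(\delta\Cdot)\,g_2(t\Cdot)\bigr)$, $g_1=\psi(|\Cdot|)\eta$, $g_2=\eta/\psi(|\Cdot|)\in\mathcal{S}_\mathrm{rad}(\R^d)$; scale invariance of $\|\mathcal{F}_k(h(\Cdot/\lambda))\|_{1,d\mu_{k}}=\|\mathcal{F}_k(h)\|_{1,d\mu_{k}}$ together with \eqref{eq13} then bound $\|G\|_{1,d\mu_{k}}\le\|\mathcal{F}_k(\eta)\|_{1,d\mu_{k}}\|\mathcal{F}_k(g_1)\|_{1,d\mu_{k}}\|\mathcal{F}_k(g_2)\|_{1,d\mu_{k}}$ uniformly, and a last use of \eqref{eq13} closes the argument. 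On that route, the only mildly delicate point is the verification that $g_1$ and $g_2$ are Schwartz, which is exactly where the exact vanishing order of $\aaj_{\lambda_{k},m}$ at the origin and its positivity on $(0,\infty)$ enter.
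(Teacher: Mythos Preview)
Your proof is correct and takes a cleaner route than the paper's. The paper proves Theorem~\ref{thm7.5} directly by writing
\[
\mathcal{F}_k(\aaDelta_{\delta}^{m} f)(y)=\theta^{2m}\,\eta(y/\sigma)\,\varphi_{\theta}(ty)\,\mathcal{F}_k(\aaDelta_{t}^{m} f)(y),\qquad \theta=\delta/t\in(0,1],
\]
with $\varphi_{\theta}(y)=\psi(\theta y)\eta(y)/\psi(y)$ and $\psi(y)=\aaj_{\lambda_{k},m}(|y|)/|y|^{2m}$, and then has to show that $\sup_{0\le\theta\le1}\|\mathcal{F}_k(\varphi_\theta)\|_{1,d\mu_k}<\infty$; it does this by proving that $\theta\mapsto\|\mathcal{F}_k(\varphi_\theta)\|_{1,d\mu_k}$ is continuous on $[0,1]$, via repeated integration by parts and the Bessel-function decay estimate~\eqref{eq65}. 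Your main argument bypasses this $\theta$-uniformity issue entirely by factoring through $(-\Delta_k)^m f$: Lemma~\ref{lem6.4} (with $r=m$) gives the upper bound $\|\aaDelta_\delta^m f\|_{p,d\mu_k}\lesssim\delta^{2m}\|(-\Delta_k)^m f\|_{p,d\mu_k}$, and Theorem~\ref{thm7.4} gives the matching lower bound in terms of $\|\aaDelta_t^m f\|_{p,d\mu_k}$, so no parameter-dependent multiplier needs to be analyzed. This is shorter and, as you observe, does not actually use the hypothesis $\delta\le t$. The paper's direct approach has the merit of being self-contained and making the multiplier structure explicit (close in spirit to your sketched alternative, except that the paper packages everything into a single $\theta$-dependent multiplier rather than a product of three fixed Schwartz multipliers), at the price of the continuity verification.
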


\begin{remark}
Using Remark \ref{rem6.5}, Theorem \ref{thm7.4}, and taking into account that
$\delta^{-2m} K_{2m}(\delta, f)_{p,d\mu_k}$ is decreasing in $\delta$ (see
\eqref{eq50}), inequality \eqref{eq84} can be equivalently written as
\begin{align*}
&
\|(-\Delta_k)^mf\|_{p,d\mu_k} \asymp
\delta^{-2m}\|\aaDelta_{\delta}^{m} f\|_{p,d\mu_k} \asymp t^{-2m}\|\aaDelta_{t}^{m} f\|_{p,d\mu_k},
\\
&
\|(-\Delta_k)^mf\|_{p,d\mu_k} \asymp
\delta^{-2m}
 K_{2m}(\delta, f)_{p,d\mu_k}
 \asymp t^{-2m}K_{2m}(t, f)_{p,d\mu_k}.
\end{align*}
\end{remark}

\begin{proof}
We have
\begin{align*}
\mathcal{F}_k(\aaDelta_{\delta}^{m} f)(y)&=\aaj_{\lambda_{k},m}(\delta|y|)
\mathcal{F}_k(f)(y)\\
&=\eta(y/\sigma)\frac{\aaj_{\lambda_{k},m}(\delta|y|)\eta(ty)}{\aaj_{\lambda_{k},m}(t|y|)}\mathcal{F}_k(\aaDelta_{t}^{m}
f)(y)\\
&=\theta^{2m}\eta(y/\sigma)\varphi_{\theta}(ty)\mathcal{F}_k(\aaDelta_{t}^{m}
f)(y),
\end{align*}
where $\theta=\delta/t\in (0, 1]$,
\[
\varphi_{\theta}(y)=\frac{\psi(\theta y)\eta(y)}{\psi(y)}\in
\mathcal{S}(\R^d),\quad
\psi(y)=\frac{\aaj_{\lambda_{k},m}(|y|)}{|y|^{2m}}\in C^{\infty}(\R^d).
\]
Using Lemma \ref{lem3.6} and estimate \eqref{eq13}, we arrive at inequality \eqref{eq84}:
\begin{align*}
\|\aaDelta_{\delta}^{m} f\|_{p,d\mu_k}&\leq
\theta^{2m}\|\mathcal{F}_k(\eta)\|_{1,d\mu_k}\max_{0\leq \theta\leq
1}\|\mathcal{F}_k(\varphi_{\theta})\|_{1,d\mu_k}\|\aaDelta_{t}^{m}
f\|_{p,d\mu_k}\\
&\lesssim \Bigl(\frac{\delta}{t}\Bigr)^{2m}\|\aaDelta_{t}^{m} f\|_{p,d\mu_k},
\end{align*}
provided that the function $n(\theta)=\|\mathcal{F}_k(\varphi_{\theta})\|_{1,d\mu_k}$ is continuous on $[0, 1]$.
Let us prove this.

Set
 $\varphi_{\theta}(y)=\varphi_{\theta 0}(|y|)$, $r=|y|$, $\rho=|x|$. Then
\begin{align*}
n(\theta)&=\int_{\R^d}\Bigl|\int_{\R^d}\varphi_{\theta}(y)e_k(x,y)\,d\mu_k(y)\Bigr|\,d\mu_k(x)\\
&=\int_{0}^{\infty}\Bigl|\int_{0}^2\varphi_{\theta 0}(r)j_{\lambda_k}(\rho
r)\,d\nu_{\lambda_k}(r)\Bigr|\,d\nu_{\lambda_k}(\rho)\\
&=b_{\lambda_k}^2\int_{0}^{\infty}\Bigl|\int_{0}^2\varphi_{\theta 0}(r)j_{\lambda_k}(\rho
r)r^{2\lambda_k+1}\,dr\Bigr|\rho^{2\lambda_k+1}\,d\rho.
\end{align*}
The inner integral continuously depends on $\theta$. Let us show that the outer
integral converges uniformly in $\theta\in [0,1]$. Since
\cite[Sect.~7.2]{BatErd53}
\[
\frac{d}{dr}\bigl(j_{\lambda_k+1}(\rho r)r^{2\lambda_k+2}\bigr)=(2\lambda_k+2)j_{\lambda_k}(\rho r)r^{2\lambda_k+1},
\]
integrating by parts implies
\begin{align*}
&\int_{0}^2\varphi_{\theta 0}(r)j_{\lambda_k}(\rho
r)r^{2\lambda_k+1}\,dr=\int_{0}^2\varphi_{\theta 0}(r)\,d
\Bigl(\int_{0}^{r}j_{\lambda_k}(\rho \tau)\tau^{2\lambda_k+1}\Bigr)\\
&\qquad =-\frac{1}{2\lambda_k+2}\int_{0}^2\frac{(\varphi_{\theta 0}(r))'}{r}j_{\lambda_k+1}(\rho
r)r^{2\lambda_k+3}\,dr=\dots\\
&\qquad =(-1)^s\Bigl(\prod_{j=1}^s(2\lambda_k+2s)\Bigr)^{-1}
\int_{0}^2
\varphi_{\theta 0}^{[s]}(r)j_{\lambda_k+s}(\rho r)r^{2\lambda_k+2s+1}\,dr,
\end{align*}
where
\[
\varphi_{\theta 0}^{[s]}(r):=\frac{\frac{d}{dr}\varphi_{\theta 0}^{[s-1]}(r)}{r}\in C^{\infty}(\mathbb{R}_+\times [0,1]),
\]
since $\varphi_{\theta 0}(r)$ is even in $r$ and $\varphi_{\theta 0}\in C^{\infty}(\mathbb{R}_+\times [0,1])$. This and \eqref{eq65} give
\[
\Bigl|\int_{0}^2\varphi_{\theta 0}(r)j_{\lambda_k}(\rho
r)r^{2\lambda_k+1}\,dr\Bigr|\leq \frac{c_1(\lambda_k, m,
s)}{(\rho+1)^{\lambda_k+s+1/2}}
\]
and, for $s>\lambda_k+3/2$,
\[
n(\theta)\leq c_2(\lambda_k, m, s)\int_{0}^{\infty}(1+\rho)^{-(s-\lambda_k-1/2)}\,d\rho\leq c_3(\lambda_k, m, s),
\]
completing the proof.
\end{proof}

\begin{remark}
Combining \eqref{eq79} and \eqref{eq80}, the following Bernstein--Nikolskii inequality is valid
\[
\|(-\Delta_k)^rf\|_{q,d\mu_k}\lesssim \sigma^{2r+(2\lambda_k+2)(1/p-1/q)}\|f\|_{p,d\mu_k},\quad 1\leq p\leq q\leq\infty.
\]
\end{remark}

\begin{remark}
For radial functions, Nikolskii inequality \eqref{eq79}, Bernstein \eqref{eq80}, Nikolskii--Stechkin \eqref{eq82}, and Boas inequality \eqref{eq84} follow from corresponding estimates in the space $L^{p}(\R_{+},d\nu_{\lambda})$ proved in \cite{Pla07}.
\end{remark}

\bigskip
\section{Realization of $K$-functionals and moduli of smoothness}

In the non-weighted case ($k\equiv0$) the equivalence between
 the classical
modulus of smoothness and the $K$-functional between $L^p$ and
the Sobolev space $W_p^{r}$
is well known \cite{DevoreLorentz, johnen}: $1\le p \le\infty$,
\textit{for any integer
$r$ one has
\begin{equation*}
\omega_r(t,f)_{{L^p(\mathbb{R})}}
\asymp {K}_{r}(f,t)_p,\qquad 1\le p \le\infty,
\end{equation*}
where}
\[
 {K}_{r}(f,t)_p:=\inf_{g\in\dot W_p^{r}} \bigl(\|f-g\|_p+ t^r \| g\|_{
{\dot W_p^{r}}}\bigr).
\]

Starting from the paper \cite{DHI} (see also \cite[Lemma 1.1]{gogat} for the
fractional case), the following equivalence between the modulus of smoothness
and 
the realization of the $K$-functional is widely used in approximation theory:
\begin{equation*}
\omega_r(t,f)_{{L^p(\mathbb{R})}
}
\asymp \mathcal{R}_{r}(t,f)_p=\inf_{g}\{\Vert
f-g\Vert_p+t^{r}\Vert g^{(r)}\Vert_p\},
\end{equation*}
where $g$ is an entire function of exponential type $1/t.$

Let the realization of the $K$-functional $K_{2r}(t, f)_{p,d\mu_k}$ be given as follows:
\[
\mathcal{R}_{2r}(t, f)_{p,d\mu_k}=\inf\{\|f-g\|_{p,d\mu_k}+t^{2r}\|(-\Delta_k)^rg\|_{p,d\mu_k}\colon
g\in B_{p, k}^{1/t}\}
\]
and
\[
\mathcal{R}^*_{2r}(t, f)_{p,d\mu_k}=\|f-g^*\|_{p,d\mu_k}+t^{2r}\|(-\Delta_k)^rg^*\|_{p,d\mu_k},
\]
where $g^*\in B_{p, k}^{1/t}$ is a near best approximant.

\begin{theorem}
If $t> 0$, $1\leq p\leq\infty$, $r\in \mathbb{N}$, then for any $f\in
L^{p}(\mathbb{R}^{d},d\mu_{k})$

\begin{equation*}
\begin{aligned}
\mathcal{R}_{2r}(t, f)_{p,d\mu_k}&\asymp
\mathcal{R}^*_{2r}(t, f)_{p,d\mu_k}
\asymp
K_{2r}(t, f)_{p,d\mu_k}\asymp \omega_r(t, f)_{p,d\mu_k}
\\
&
\asymp
\aaomega_r(t, f)_{p,d\mu_k}\asymp\aomega_{2r-1}(t,
f)_{p,d\mu_k}\asymp\aomega_{2r}(t, f)_{p,d\mu_k}.
\end{aligned}
\end{equation*}
\end{theorem}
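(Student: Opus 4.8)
The last four links of the chain,
$K_{2r}(t,f)_{p,d\mu_k}\asymp\omega_r(t,f)_{p,d\mu_k}\asymp\aaomega_r(t,f)_{p,d\mu_k}\asymp\aomega_{2r-1}(t,f)_{p,d\mu_k}\asymp\aomega_{2r}(t,f)_{p,d\mu_k}$,
are exactly Theorem~\ref{thm6.2}, so the plan is to prove only the equivalence $K_{2r}(t,f)_{p,d\mu_k}\asymp\mathcal{R}_{2r}(t,f)_{p,d\mu_k}\asymp\mathcal{R}^*_{2r}(t,f)_{p,d\mu_k}$. One direction is free: Bernstein's inequality (Theorem~\ref{thm7.2}) shows $B_{p,k}^{1/t}\subset W_{p,k}^{2r}$, so the infimum defining $K_{2r}$ runs over a set containing the one defining $\mathcal{R}_{2r}$, while $\mathcal{R}^*_{2r}$ is the same functional evaluated at a single admissible $g^\ast\in B_{p,k}^{1/t}$; hence $K_{2r}(t,f)_{p,d\mu_k}\le\mathcal{R}_{2r}(t,f)_{p,d\mu_k}\le\mathcal{R}^*_{2r}(t,f)_{p,d\mu_k}$. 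It therefore remains to bound $\mathcal{R}_{2r}$ and $\mathcal{R}^*_{2r}$ from above by $\aaomega_r(t,f)_{p,d\mu_k}$, which by Theorem~\ref{thm6.2} is $\asymp K_{2r}(t,f)_{p,d\mu_k}$.

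For $\mathcal{R}_{2r}$ I would test the infimum with the de la Vall\'{e}e Poussin mean $g=P_{1/(2t)}(f)$, which lies in $B_{p,k}^{1/t}$ by Lemma~\ref{lem6.5}(2). Lemma~\ref{lem6.6}, used with Sobolev order $0$ (so $W_{p,k}^{0}=L^{p}$), with $m$ equal to the present $r$, and with $\sigma=1/t$, gives $\|f-P_{1/(2t)}(f)\|_{p,d\mu_k}\lesssim\|\aaDelta_{at}^{r}f\|_{p,d\mu_k}$ for the constant $a=a(\lambda_k,r)>0$ of that lemma; Lemma~\ref{lem6.7}, used with $m=r$ and $\sigma=1/(2t)$, gives $\|(-\Delta_k)^{r}P_{1/(2t)}(f)\|_{p,d\mu_k}\lesssim(2t)^{-2r}\|\aaDelta_{at}^{r}f\|_{p,d\mu_k}$, hence $t^{2r}\|(-\Delta_k)^{r}P_{1/(2t)}(f)\|_{p,d\mu_k}\lesssim\|\aaDelta_{at}^{r}f\|_{p,d\mu_k}$. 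The essential point, recorded in Remark~\ref{rem6.8}, is that the constant $a$ is the same in these two lemmas; adding the two estimates gives $\mathcal{R}_{2r}(t,f)_{p,d\mu_k}\lesssim\|\aaDelta_{at}^{r}f\|_{p,d\mu_k}\le\aaomega_r(at,f)_{p,d\mu_k}\lesssim\aaomega_r(t,f)_{p,d\mu_k}$, the last step using that $a$ is a fixed constant together with monotonicity and the scaling of $\aaomega_r$.

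For $\mathcal{R}^*_{2r}$, let $g^\ast\in B_{p,k}^{1/t}$ be the near best approximant, so $\|f-g^\ast\|_{p,d\mu_k}\lesssim E_{1/t}(f)_{p,d\mu_k}$; Jackson's inequality (Theorem~\ref{thm6.1} with Sobolev order $0$, $m=r$, $\sigma=1/t$, and the modulus $\aaomega$) then yields $\|f-g^\ast\|_{p,d\mu_k}\lesssim\aaomega_r(t,f)_{p,d\mu_k}$. To estimate the smoothness term I would compare $g^\ast$ with $g_0=P_{1/(2t)}(f)$: both lie in $B_{p,k}^{1/t}$, and by the Paley--Wiener description (Theorem~\ref{thm5.10}) so does $g^\ast-g_0$, so Bernstein's inequality applied to $g^\ast-g_0$ gives $\|(-\Delta_k)^{r}(g^\ast-g_0)\|_{p,d\mu_k}\lesssim t^{-2r}\|g^\ast-g_0\|_{p,d\mu_k}\lesssim t^{-2r}\bigl(\|f-g^\ast\|_{p,d\mu_k}+\|f-g_0\|_{p,d\mu_k}\bigr)\lesssim t^{-2r}\aaomega_r(t,f)_{p,d\mu_k}$. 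Combining this with the estimate of $\|(-\Delta_k)^{r}g_0\|_{p,d\mu_k}$ from the previous step gives $t^{2r}\|(-\Delta_k)^{r}g^\ast\|_{p,d\mu_k}\lesssim\aaomega_r(t,f)_{p,d\mu_k}$, whence $\mathcal{R}^*_{2r}(t,f)_{p,d\mu_k}\lesssim\aaomega_r(t,f)_{p,d\mu_k}$. (If one simply takes $g^\ast=P_{1/(2t)}(f)$ as the near best approximant, this comparison is unnecessary and the bound is immediate from the previous paragraph.)

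Assembling the pieces, $K_{2r}(t,f)_{p,d\mu_k}\le\mathcal{R}_{2r}(t,f)_{p,d\mu_k}\le\mathcal{R}^*_{2r}(t,f)_{p,d\mu_k}\lesssim\aaomega_r(t,f)_{p,d\mu_k}\asymp K_{2r}(t,f)_{p,d\mu_k}$ by Theorem~\ref{thm6.2}, so the two realizations and the $K$-functional are all equivalent, and Theorem~\ref{thm6.2} supplies the remaining equivalences with $\omega_r$, $\aaomega_r$, $\aomega_{2r-1}$ and $\aomega_{2r}$. The main obstacle is the smoothness term of $\mathcal{R}^*_{2r}$: a near best approximant is controlled a priori only in $L^p$-norm, and Bernstein's inequality applied directly to $g^\ast$ gives merely $t^{2r}\|(-\Delta_k)^{r}g^\ast\|_{p,d\mu_k}\lesssim\|f\|_{p,d\mu_k}$, which does not tend to $0$ with $t$; the fix is to route the derivative estimate through $P_{1/(2t)}(f)$, whose Laplacian powers are controlled by Lemma~\ref{lem6.7}, and for this it is crucial that the parameter $a$ of Lemma~\ref{lem6.6} coincides with that of Lemma~\ref{lem6.7}.
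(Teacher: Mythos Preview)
Your argument is correct, but the route you take for the smoothness term of $\mathcal{R}^*_{2r}$ differs from the paper's. The paper bounds $\|(-\Delta_k)^r g^*\|_{p,d\mu_k}$ directly via the Nikol'ski\u{\i}--Stechkin inequality (Theorem~\ref{thm7.4}): since $g^*\in B_{p,k}^{1/t}$ and $t/2\le t/2$, one has $\|(-\Delta_k)^r g^*\|_{p,d\mu_k}\lesssim t^{-2r}\|\varDelta_{t/2}^{r}g^*\|_{p,d\mu_k}$, and then splits $\varDelta_{t/2}^{r}g^*=\varDelta_{t/2}^{r}(g^*-f)+\varDelta_{t/2}^{r}f$, bounding the first piece by $\|g^*-f\|_{p,d\mu_k}\lesssim\omega_r(t,f)_{p,d\mu_k}$ (Jackson) and the second by $\omega_r(t,f)_{p,d\mu_k}$. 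You instead compare $g^*$ with $P_{1/(2t)}(f)$, apply Bernstein to the difference, and control $(-\Delta_k)^r P_{1/(2t)}(f)$ through Lemma~\ref{lem6.7}. Both approaches work; the paper's is a bit shorter once Theorem~\ref{thm7.4} is in hand, whereas yours stays entirely within Lemmas~\ref{lem6.6}--\ref{lem6.7} and the plain Bernstein inequality, avoiding Theorem~\ref{thm7.4} altogether.

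One small caveat on your parenthetical remark: $P_{1/(2t)}(f)$ is near best relative to $E_{1/(2t)}(f)$ (Lemma~\ref{lem6.5}(3)), not relative to $E_{1/t}(f)$, so it need not qualify as a near best approximant from $B_{p,k}^{1/t}$ in the sense required by the definition of $\mathcal{R}^*_{2r}$; and in any case the equivalence must hold for whichever near best approximant $g^*$ is used, so one cannot simply choose it. Your main argument already handles the general $g^*$, so this does not affect the proof.
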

\begin{proof}By Theorem \ref{thm6.2},
\begin{equation*}
\begin{aligned}
\omega_r(t, f)_{p,d\mu_k}&\asymp
\aaomega_r(t, f)_{p,d\mu_k}
\asymp\aomega_{2r-1}(t,
f)_{p,d\mu_k}\asymp\aomega_{2r}(t, f)_{p,d\mu_k}
\\
&\asymp
K_{2r}(t, f)_{p,d\mu_k}\leq
\mathcal{R}_{2r}(t, f)_{p,d\mu_k}\leq
\mathcal{R}^*_{2r}(t, f)_{p,d\mu_k},
\end{aligned}
\end{equation*}
where we have used the fact that
$B_{p, k}^{1/t}\subset W_{p, k}^{2r}$, which follows from Theorem \ref{thm7.2}.

Therefore, it is enough to show that
\[
\mathcal{R}^*_{2r}(t, f)_{p,d\mu_k} \le C\omega_r(t, f)_{p,d\mu_k}.
\]
Indeed, for $g^*$ being the best approximant (or near best approximant), the Jackson inequality given in
Theorem~\ref{thm6.1} implies that
\begin{equation}\label{vsp}
\|f-g^{\ast}\|_{p,d\mu_k}\lesssim E_{1/t}(f)_{p,d\mu_k}\lesssim
 \omega_r(t, f)_{p,d\mu_k}.
\end{equation}
Using the first inequality in Theorem \ref{thm7.4} and taking into account \eqref{vsp}, we have
\begin{align*}
\|(-\Delta_k)^rg^*\|_{p,d\mu_k}
&\lesssim t^{-2r}\|\varDelta_{t/2}^{r} g^*\|_{p,d\mu_k}
\\ &\lesssim t^{-2r}\|\varDelta_{t/2}^{r} (g^*-f)\|_{p,d\mu_k}+t^{-2r}\|\varDelta_{t/2}^{r} f\|_{p,d\mu_k}
\\
&\lesssim
t^{-2r}\|g^*-f\|_{p,d\mu_k}+t^{-2r}\omega_r(t/2, f)_{p,d\mu_k}.
\end{align*}
Using again \eqref{vsp}, we arrive at
\[
\|f-g^*\|_{p,d\mu_k}+t^{2r}\|(-\Delta_k)^rg^*\|_{p,d\mu_k}\lesssim \omega_r(t, f)_{p,d\mu_k},
\]
completing the proof.
\end{proof}

The next result answers the following important question (see, e.g.,
\cite{timan,Iva11}): when does the relation
\begin{equation}\label{eq61++}
\omega_m\Bigl(\frac1n, f\Bigr)_{p,d\mu_k} \asymp E_n(f)_{p,d\mu_k}
\end{equation}
(or similar relations with concepts in Theorem \ref{thm8.2})
hold?
\begin{theorem}\label{thm8.2}
Let $1\leq p\leq\infty$ and $m\in \mathbb{N}$. We have that \eqref{eq61++} is
valid if and only if
\begin{equation}\label{eq61+++}
\omega_m\Bigl(\frac1n, f\Bigr)_{p,d\mu_k} \asymp \omega_{m+1}\Bigl(\frac1n, f\Bigr)_{p,d\mu_k}.
\end{equation}
\end{theorem}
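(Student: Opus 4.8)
The statement is an equivalence, and I would prove the two implications separately; \eqref{eq61++}$\Rightarrow$\eqref{eq61+++} is immediate, while \eqref{eq61+++}$\Rightarrow$\eqref{eq61++} carries all the weight. For the first implication, the Jackson inequality (Theorem~\ref{thm6.1} with $r=0$) gives $E_n(f)_{p,d\mu_k}\lesssim\omega_{m+1}(1/n,f)_{p,d\mu_k}$, while Lemma~\ref{lem6.2} gives $\omega_{m+1}(1/n,f)_{p,d\mu_k}\le 2\,\omega_m(1/n,f)_{p,d\mu_k}$; assuming \eqref{eq61++}, these two bounds squeeze $\omega_{m+1}(1/n,f)_{p,d\mu_k}$ between constant multiples of $\omega_m(1/n,f)_{p,d\mu_k}$, which is \eqref{eq61+++}.

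For the converse, since $E_n(f)_{p,d\mu_k}\lesssim\omega_m(1/n,f)_{p,d\mu_k}$ always holds (Theorem~\ref{thm6.1}), the task is to prove $\omega_m(1/n,f)_{p,d\mu_k}\lesssim E_n(f)_{p,d\mu_k}$ under \eqref{eq61+++}. First I would pass to the realization: by the realization theorem of this section, together with $\|f-g_n\|_{p,d\mu_k}\asymp E_n(f)_{p,d\mu_k}$ for a best approximant $g_n\in B_{p,k}^n$ (which exists by Theorem~\ref{thm5.11}),
\[
\omega_m(1/n,f)_{p,d\mu_k}\asymp E_n(f)_{p,d\mu_k}+n^{-2m}\|(-\Delta_k)^m g_n\|_{p,d\mu_k},
\]
and analogously for $\omega_{m+1}$ with $n^{-2m-2}\|(-\Delta_k)^{m+1}g_n\|_{p,d\mu_k}$; thus, in view of the Bernstein estimate $\|(-\Delta_k)^{m+1}g_n\|_{p,d\mu_k}\lesssim n^2\|(-\Delta_k)^m g_n\|_{p,d\mu_k}$ (Theorem~\ref{thm7.2}), hypothesis \eqref{eq61+++} amounts to $n^{-2m}\|(-\Delta_k)^m g_n\|_{p,d\mu_k}\lesssim E_n(f)_{p,d\mu_k}+n^{-2m-2}\|(-\Delta_k)^{m+1}g_n\|_{p,d\mu_k}$, and \eqref{eq61++} is the same bound without the last term. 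Next, I would expand both derivative norms along the scale of best approximations: using the telescoping $g_n=\sum_{i\ge0}(g_{n2^{-i}}-g_{n2^{-i-1}})$, the fact that each block lies in $B_{p,k}^{n2^{-i}}$ with $\|g_{n2^{-i}}-g_{n2^{-i-1}}\|_{p,d\mu_k}\lesssim E_{\lfloor n2^{-i-1}\rfloor}(f)_{p,d\mu_k}$, and Bernstein's inequality again, one obtains $n^{-2m}\|(-\Delta_k)^m g_n\|_{p,d\mu_k}\lesssim\sum_{i\ge0}2^{-2mi}E_{\lfloor n2^{-i}\rfloor}(f)_{p,d\mu_k}+n^{-2m}\|f\|_{p,d\mu_k}$ and the analogous bound with $2^{-(2m+2)i}$ in place of $2^{-2mi}$ for the $(m+1)$st derivative. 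Inserting these into the reformulation of \eqref{eq61+++} and iterating (a discrete Hardy-type argument in $i$, which works because $2^{-(2m+2)i}$ decays strictly faster than $2^{-2mi}$) should upgrade the hypothesis to $\sum_{i\ge0}2^{-2mi}E_{\lfloor n2^{-i}\rfloor}(f)_{p,d\mu_k}\lesssim E_n(f)_{p,d\mu_k}$, which is exactly $\omega_m(1/n,f)_{p,d\mu_k}\lesssim E_n(f)_{p,d\mu_k}$. An equivalent route runs through a Marchaud-type inequality $\omega_m(1/n,f)_{p,d\mu_k}\lesssim n^{-2m}\|f\|_{p,d\mu_k}+n^{-2m}\sum_{\nu=1}^n\nu^{2m-1}\omega_{m+1}(1/\nu,f)_{p,d\mu_k}$ (obtainable from \eqref{distributiontransform}, \eqref{eq58} and the $L^p$-boundedness of $T^t$, Theorem~\ref{thm3.3}): substitute \eqref{eq61+++} on the right, and combine with the inverse, Stechkin-type bound $\omega_m(1/n,f)_{p,d\mu_k}\lesssim n^{-2m}\sum_{\nu\le n}\nu^{2m-1}E_\nu(f)_{p,d\mu_k}$, which follows from the realization and Theorem~\ref{thm7.2}.

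The hard part will be exactly this bootstrapping step — converting the qualitative hypothesis $\omega_m\asymp\omega_{m+1}$ into quantitative regularity of the sequence $\{E_n(f)_{p,d\mu_k}\}$ strong enough to absorb the ``saturation term'' $n^{-2m}\|(-\Delta_k)^m g_n\|_{p,d\mu_k}$ into $E_n(f)_{p,d\mu_k}$. The delicate points are the accumulation of absolute constants over the $\log n$ dyadic scales of the iteration and the endpoint contributions (the $\|f\|_{p,d\mu_k}$ terms, respectively the coarsest block $g_1$); I would control these with a Hardy-type summation, the doubling property \eqref{eq50} of the $K$-functional, and the equivalence $K_{2r}\asymp\omega_r$ from Theorem~\ref{thm6.2}.
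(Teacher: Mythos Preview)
Your treatment of the easy implication \eqref{eq61++}$\Rightarrow$\eqref{eq61+++} is fine and matches the paper's one-line argument.

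For the hard implication, however, there is a genuine gap. Both of your routes (realization~$+$~telescoping~$+$~``Hardy-type iteration'', and Marchaud~$+$~inverse estimate) run into the same obstruction: the absolute constants coming from Jackson, Bernstein, the inverse theorem, and the realization cannot be made small, so a direct absorption of the form
\[
n^{-2m}\|(-\Delta_k)^m g_n\|_{p,d\mu_k}\le C\,E_n(f)_{p,d\mu_k}+C'\,n^{-2m}\|(-\Delta_k)^m g_n\|_{p,d\mu_k}
\]
does not close (nothing forces $C'<1$). Likewise, substituting \eqref{eq61+++} into Marchaud and then using the Stechkin-type bound simply reproduces the inverse theorem for $\omega_m$, with no gain; the hypothesis has not actually been exploited. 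The observation that $2^{-(2m+2)i}$ decays faster than $2^{-2mi}$ is necessary but not sufficient here, because the quantities $E_{\lfloor n2^{-i}\rfloor}(f)_{p,d\mu_k}$ grow with $i$ and, a priori, only at the generic rate $2^{2mi}$, which exactly cancels the extra decay.

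The paper resolves this by first extracting from \eqref{eq61+++} an \emph{improved doubling} for $\omega_{m+1}$:
\[
\omega_{m+1}(lt,f)_{p,d\mu_k}\asymp\omega_m(lt,f)_{p,d\mu_k}\lesssim l^{2m}\omega_m(t,f)_{p,d\mu_k}\asymp l^{2m}\omega_{m+1}(t,f)_{p,d\mu_k},
\]
two powers better than the generic $l^{2(m+1)}$. This, combined with Jackson, upgrades the inverse theorem for $\omega_{m+1}$ to a two-sided equivalence $\omega_{m+1}(1/n,f)_{p,d\mu_k}\asymp n^{-2(m+1)}\sum_{j\le n}(j+1)^{2m+1}E_j(f)_{p,d\mu_k}$. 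One then compares this sum at scales $n$ and $ln$ and uses the free parameter $l$ (chosen large) to beat the accumulated constants; monotonicity of $E_j$ finishes the job. Your proposal never isolates this improved doubling, and without a free parameter of this kind the ``bootstrapping step'' you flag as delicate does not go through.
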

\begin{proof}
We prove only the non-trivial part that \eqref{eq61+++} implies \eqref{eq61++}.
Another part of the Theorem~\ref{thm8.2} follows from \eqref{omega-inequality2}, Jackson's
inequality \eqref{eq60} and Remark~\ref{rem.6.9}.

Since, by \eqref{eq50} we have $\omega_m(nt, f)_{p,d\mu_k} \lesssim n^{2m}
\omega_m(t, f)_{p,d\mu_k}$, relation \eqref{eq61+++} implies that
\begin{equation}\label{eq61-}
\omega_{m+1}(nt, f)_{p,d\mu_k} \lesssim n^{2m} \omega_{m+1}(t, f)_{p,d\mu_k}.
\end{equation}
This and Jackson's inequality give
\begin{align*}
&\frac{1}{n^{2(m+1)}} \sum_{j=0}^n
(j+1)^{2(m+1)-1} E_j(f)_{p,d\mu_k}
\\ &\qquad \lesssim
\frac{1}{n^{2(m+1)}} \sum_{j=0}^n
(j+1)^{2(m+1)-1}
\omega_{m+1}\Bigl(\frac{1}{j+1}, f\Bigr)_{p,d\mu_k}
\\ &\qquad \lesssim
\omega_{m+1}\Bigl(\frac1n, f\Bigr)_{p,d\mu_k}.
\end{align*}
Moreover, Theorem \ref{thm9.1} below implies
\begin{align*}
\omega_{m+1}\Bigl(\frac1{l n}, f\Bigr)_{p,d\mu_k}
&\lesssim \frac{1}{(ln)^{2(m+1)}} \sum_{j=0}^{ln}
(j+1)^{2(m+1)-1} E_j(f)_{p,d\mu_k}
\\ &\lesssim \frac{1}{l^{2(m+1)}}\,\omega_{m+1}\Bigl(\frac1n, f\Bigr)_{p,d\mu_k}
\\ &\qquad +\frac{1}{(ln)^{2(m+1)}} \sum_{j=n+1}^{ln}
(j+1)^{2(m+1)-1} E_j(f)_{p,d\mu_k},
\end{align*}
or, in other words,
\begin{align*}
&\frac{1}{n^{2(m+1)}} \sum_{j=n+1}^{ln}
(j+1)^{2(m+1)-1} E_j(f)_{p,d\mu_k}
\\ &\qquad \gtrsim
C
l^{2(m+1)}
\omega_{m+1}\Bigl(\frac1{l n}, f\Bigr)_{p,d\mu_k}-
\omega_{m+1}\Bigl(\frac1n, f\Bigr)_{p,d\mu_k}.
\end{align*}
Using again \eqref{eq61-}, we obtain
\[
\frac{1}{n^{2(m+1)}} \sum_{j=n+1}^{ln}
(j+1)^{2(m+1)-1} E_j(f)_{p,d\mu_k}\gtrsim
(
C
l^{2}-1)
\omega_{m+1}\Bigl(\frac1n, f\Bigr)_{p,d\mu_k}.
\]
Taking into account monotonicity of $ E_j(f)_{p,d\mu_k}$ and choosing $l$
sufficiently large, we arrive at \eqref{eq61++}.
\end{proof}

\bigskip
\section{Inverse theorems of approximation theory}

\begin{theorem}\label{thm9.1}
Let $m, n \in \mathbb{N}$, $1\leq p\leq\infty$, $f \in L^{p}(\R^d, d\mu_k)$. We have
\begin{equation}
K_{2m}\Bigl(\frac{1}{n}, f\Bigl)_{p,d\mu_k}
\lesssim \frac{1}{n^{2m}} \sum_{j=0}^n
(j+1)^{2m-1} E_j(f)_{p,d\mu_k}. \label{eq85}
\end{equation}
\end{theorem}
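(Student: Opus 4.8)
The plan is to build a good competitor for the $K$-functional out of best approximants and to control its smoothness by Bernstein's inequality, Theorem~\ref{thm7.2}. For each integer $\nu\ge 0$ let $g_\nu\in B_{p,k}^{\nu}$ be a best approximant of $f$, which exists by Theorem~\ref{thm5.11}, so that $\|f-g_\nu\|_{p,d\mu_k}=E_\nu(f)_{p,d\mu_k}$. By Theorem~\ref{thm7.2}, $B_{p,k}^{\nu}\subset W_{p,k}^{2m}$, hence $g_n$ is admissible in the definition of $K_{2m}(1/n,f)_{p,d\mu_k}$ and
\[
K_{2m}(1/n,f)_{p,d\mu_k}\le E_n(f)_{p,d\mu_k}+n^{-2m}\|(-\Delta_k)^m g_n\|_{p,d\mu_k}.
\]
It remains to bound both terms on the right by the right-hand side of \eqref{eq85}.

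Next I would fix $s$ with $2^s\le n<2^{s+1}$ and telescope
\[
g_n=g_0+(g_1-g_0)+\sum_{j=1}^{s}\bigl(g_{2^j}-g_{2^{j-1}}\bigr)+\bigl(g_n-g_{2^s}\bigr).
\]
Since the Dunkl operators, hence $\Delta_k$, annihilate constants, and $g_0=0$ for $p<\infty$ while $g_0$ is a constant for $p=\infty$, one has $(-\Delta_k)^m g_0=0$. Every other summand lies in $B_{p,k}^{\sigma}$ for the appropriate $\sigma\in\{2^0,2^1,\dots,2^s,n\}$, because finite sums of entire functions of a given spherical type are again of that type and remain in $L^{p}(\mathbb{R}^d,d\mu_k)$. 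Applying Theorem~\ref{thm7.2} to each summand together with the estimate $\|g_a-g_b\|_{p,d\mu_k}\le E_a(f)_{p,d\mu_k}+E_b(f)_{p,d\mu_k}\le 2E_{\min(a,b)}(f)_{p,d\mu_k}$ gives
\[
\|(-\Delta_k)^m g_n\|_{p,d\mu_k}\lesssim E_0(f)_{p,d\mu_k}+\sum_{j=1}^{s}2^{2mj}E_{2^{j-1}}(f)_{p,d\mu_k}+n^{2m}E_{2^s}(f)_{p,d\mu_k}.
\]

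Finally I would convert the dyadic sum into the full one. The map $\nu\mapsto E_\nu(f)_{p,d\mu_k}$ is nonincreasing (since $B_{p,k}^{\nu_1}\subset B_{p,k}^{\nu_2}$ for $\nu_1\le\nu_2$), and for $j\ge 2$ the block $\{\,l\in\mathbb{Z}\colon 2^{j-2}\le l<2^{j-1}\,\}$ contains $2^{j-2}$ integers, on it $(l+1)^{2m-1}\gtrsim 2^{(j-2)(2m-1)}$ and $E_l(f)_{p,d\mu_k}\ge E_{2^{j-1}}(f)_{p,d\mu_k}$; multiplying these three facts shows $2^{2mj}E_{2^{j-1}}(f)_{p,d\mu_k}\lesssim\sum_{2^{j-2}\le l<2^{j-1}}(l+1)^{2m-1}E_l(f)_{p,d\mu_k}$ with a constant independent of $j$, while the $j=1$ term is dominated by the $l=0,1$ summands. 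As these blocks are pairwise disjoint subsets of $\{0,1,\dots,n\}$, summing over $j$ gives $\sum_{j=1}^{s}2^{2mj}E_{2^{j-1}}(f)_{p,d\mu_k}\lesssim\sum_{l=0}^{n}(l+1)^{2m-1}E_l(f)_{p,d\mu_k}$. Using $\sum_{l=0}^{n}(l+1)^{2m-1}\asymp n^{2m}$ and monotonicity once more, the remaining pieces satisfy $E_0(f)_{p,d\mu_k}\le\sum_{l=0}^{n}(l+1)^{2m-1}E_l(f)_{p,d\mu_k}$ and $E_n(f)_{p,d\mu_k},\,E_{2^s}(f)_{p,d\mu_k}\lesssim n^{-2m}\sum_{l=0}^{n}(l+1)^{2m-1}E_l(f)_{p,d\mu_k}$ (the last because $2^s>n/2$). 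Combining all of this yields \eqref{eq85}. I expect the only genuinely delicate part to be this dyadic-to-full-sum bookkeeping, keeping all implied constants independent of $n$ and $s$; the rest is a direct application of the existence of best approximants (Theorem~\ref{thm5.11}) and of Bernstein's inequality (Theorem~\ref{thm7.2}).
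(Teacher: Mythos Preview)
Your proof is correct and shares the paper's core strategy: telescope a best approximant along dyadic scales, apply Bernstein's inequality (Theorem~\ref{thm7.2}) to each piece, and convert the resulting dyadic sum to the full sum via monotonicity of $E_j(f)$. The one difference is that the paper argues with the modulus $\omega_m(1/n,f)_{p,d\mu_k}$, splitting $\omega_m(1/n,f)\le\omega_m(1/n,f-f_{2^{s+1}})+\omega_m(1/n,f_{2^{s+1}})$ and using Lemma~\ref{lem6.4} to pass from $\omega_m$ to $\|(-\Delta_k)^m f_{2^{s+1}}\|$, after which the equivalence $K_{2m}\asymp\omega_m$ of Theorem~\ref{thm6.2} closes the loop; you instead plug $g_n$ directly into the definition of the $K$-functional, which bypasses both Lemma~\ref{lem6.4} and Theorem~\ref{thm6.2} and is therefore a little more self-contained. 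The paper's route has the side benefit of giving the inequality for $\omega_m$ (and hence for all the moduli in the Remark) in one stroke, whereas your argument yields only the $K$-functional bound directly and would need Theorem~\ref{thm6.2} to recover the moduli versions.
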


\begin{remark}
By Remark \ref{rem6.5}, $K_{2m}\bigl(\frac{1}{n}, f\bigl)_{p,d\mu_k}$ in this
inequality can be equivalently replaced by $\omega_m\bigl(\frac{1}{n},
f\bigr)_{p,d\mu_k}$, $\aaomega_m\bigl(\frac{1}{n}, f\bigr)_{p,d\mu_k}$, and
$\aomega_l\bigl(\frac{1}{n}, f\bigr)_{p,d\mu_k}$, $l=2m-1,2m$.
\end{remark}

\begin{proof}
Let us prove \eqref{eq85} for $\omega_m \bigl(\frac{1}{n}, f\bigr)_{p,d\mu_k}$.
By Theorem \ref{thm5.11}, for any $\sigma>0$ there exists $f_\sigma\in B_{p,
k}^\sigma$ such that
\[
\|f-f_\sigma\|_{p,d\mu_k}=E_\sigma(f)_{p,d\mu_k},\quad
E_0(f)_{p,d\mu_k}=\|f\|_{p,d\mu_k}.
\]
For any $s\in\mathbb{Z}_+$,
\begin{align*}
\omega_m(1/n, f)_{p,d\mu_k}&\leq \omega_m(1/n, f-f_{2^{s+1}})_{p,d\mu_k}+ \omega_m(1/n,
f_{2^{s+1}})_{p,d\mu_k}\\
&\lesssim E_{2^{s+1}}(f)_{p,d\mu_k}+\omega_m(1/n, f_{2^{s+1}})_{p,d\mu_k}.
\end{align*}
Using Lemma \ref{lem6.4},
\begin{align*}
&\omega_m(1/n, f_{2^{s+1}})_{p,d\mu_k}\lesssim {n^{-2m}}\|(-\Delta_k)^m
f_{2^{s+1}}\|_{p,d\mu_k}\\
&\qquad \lesssim \frac{1}{n^{2m}} \Bigl(\|(-\Delta_k)^mf_1\|_{p,d\mu_k}+\sum_{j=0}^s \|(-\Delta_k)^mf_{2^{j+1}}-
(-\Delta_k)^mf_{2^j}\|_{p,d\mu_k}\Bigr).
\end{align*}
Then Bernstein inequality \eqref{eq80} implies that
\begin{align*}
\|(-\Delta_k)^mf_{2^{j+1}}-(-\Delta_k)^mf_{2^j}\|_{p,d\mu_k} &\lesssim
2^{2m(j+1)}\|f_{2^{j+1}}-f_{2^j}\|_{p,d\mu_k}\\& \lesssim
2^{2m(j+1)}E_{2^j}(f)_{p,d\mu_k},
\\
\|(-\Delta_k)^mf_1\|_{p,d\mu_k}&\lesssim E_0(f)_{p,d\mu_k}.
\end{align*}
Thus,
\[
\omega_m(1/n, f_{2^{s+1}})_{p,d\mu_k}\lesssim
\frac{1}{n^{2m}}\Bigl(E_0(f)_{p,d\mu_k}+ \sum_{j=0}^s2^{2m(j+1)}E_{2^j}(f)_{p,d\mu_k}\Bigr).
\]
Taking into account that
\begin{equation}
\sum_{l=2^{j-1}+1}^{2^j} l^{2m-1}E_l(f)_{p,d\mu_k} \geq
2^{2m(j-1)}E_{2^j}(f)_{p,d\mu_k},\label{eq88}
\end{equation}
we have
\begin{align*}
&\omega_m(1/n, f_{2^{s+1}})_{p,d\mu_k}\lesssim
\frac{1}{n^{2m}}\Bigl(E_0(f)_{p,d\mu_k}+2^{2m} E_1(f)_{p,d\mu_k}\\
&\qquad +\sum_{j=1}^s
2^{4m} \sum_{l=2^{j-1}+1}^{2^j} l^{2m-1} E_l(f)_{p,d\mu_k}\Bigr)\lesssim \frac{1}{n^{2m}}\sum_{j=0}^{2^s}(j+1)^{2m-1}E_j(f)_{p,d\mu_k}.
\end{align*}
Choosing $s$ such that $2^s\leq n<2^{s+1}$ implies \eqref{eq85}.
\end{proof}

Theorem \ref{thm9.1} and Jackson's inequality imply the following Marchaud inequality.
\begin{corollary}
Let $m \in \mathbb{N}$, $1\leq p\leq\infty$, $f \in L^{p}(\R^d, d\mu_k)$. We have
\[
K_{2m}(\delta, f)_{p,d\mu_k}
\lesssim
\delta^{2m} \Bigl(
\|f\|_{p,d\mu_k}+
\int_\delta^1
{t^{-2m}}
{K_{2m+2}(t, f)_{p,d\mu_k}}\,\frac{dt}{t}
\Bigr).
\]
\end{corollary}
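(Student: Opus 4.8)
The plan is to feed the inverse inequality of Theorem~\ref{thm9.1} into the Jackson inequality of Theorem~\ref{thm6.1} and the equivalences of Theorem~\ref{thm6.2}, and then to convert the resulting discrete sum into the integral by a dyadic decomposition. We may assume $0<\delta<1/2$: for $1/2\le\delta<1$ both sides are comparable to $\|f\|_{p,d\mu_k}$ (recall that $K_{2m}(\delta,f)_{p,d\mu_k}\le\|f\|_{p,d\mu_k}$ by taking $g=0$), and for $\delta\ge1$ the integral is void and again $K_{2m}(\delta,f)_{p,d\mu_k}\le\|f\|_{p,d\mu_k}\le\delta^{2m}\|f\|_{p,d\mu_k}$. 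Setting $n=\lfloor1/\delta\rfloor$ gives $\delta\le 1/n<2\delta$, so $n^{-2m}\asymp\delta^{2m}$; since $K_{2m}(\Cdot,f)_{p,d\mu_k}$ is nondecreasing (property \eqref{eq50} with $\lambda\le1$), Theorem~\ref{thm9.1} yields
\[
K_{2m}(\delta,f)_{p,d\mu_k}\le K_{2m}(1/n,f)_{p,d\mu_k}\lesssim\frac{1}{n^{2m}}\sum_{j=0}^{n}(j+1)^{2m-1}E_j(f)_{p,d\mu_k}.
\]

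Next I would estimate the sum. The terms $j=0$ and $j=1$ give $E_0(f)_{p,d\mu_k}=\|f\|_{p,d\mu_k}$ and $E_1(f)_{p,d\mu_k}\le\omega_{m+1}(1,f)_{p,d\mu_k}\lesssim\|f\|_{p,d\mu_k}$ (Jackson's inequality, Theorem~\ref{thm6.1} with $r=0$ and order $m+1$, plus \eqref{eq57}), hence after multiplication by $n^{-2m}\asymp\delta^{2m}$ these contribute $\delta^{2m}\|f\|_{p,d\mu_k}$. For $j\ge2$ I would group the indices into the dyadic blocks $2^{k-1}<j\le 2^k$, $k\ge1$; since $E_j(f)_{p,d\mu_k}$ is nonincreasing in $j$ and the $k$-th block has $\asymp2^k$ indices with $(j+1)^{2m-1}\asymp 2^{k(2m-1)}$, it is majorised by $C\,2^{2mk}E_{2^{k-1}}(f)_{p,d\mu_k}$, and by Jackson's inequality together with Theorem~\ref{thm6.2}, $E_{2^{k-1}}(f)_{p,d\mu_k}\lesssim\omega_{m+1}(2^{-(k-1)},f)_{p,d\mu_k}\asymp K_{2m+2}(2^{-(k-1)},f)_{p,d\mu_k}$. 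With $s$ defined by $2^s\le n<2^{s+1}$ this gives
\[
\sum_{j=0}^{n}(j+1)^{2m-1}E_j(f)_{p,d\mu_k}\lesssim\|f\|_{p,d\mu_k}+\sum_{l=1}^{s}2^{2ml}\,K_{2m+2}(2^{-l},f)_{p,d\mu_k}.
\]

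Finally I would pass to the integral. Since $K_{2m+2}(\Cdot,f)_{p,d\mu_k}$ is nondecreasing (again \eqref{eq50}), for $u\in[2^{-l},2^{-(l-1)}]$ one has $u^{-2m-1}K_{2m+2}(u,f)_{p,d\mu_k}\gtrsim 2^{2ml}K_{2m+2}(2^{-l},f)_{p,d\mu_k}$, so summation over $l=1,\dots,s$ gives
\[
\sum_{l=1}^{s}2^{2ml}\,K_{2m+2}(2^{-l},f)_{p,d\mu_k}\lesssim\int_{2^{-s}}^{1}u^{-2m-1}K_{2m+2}(u,f)_{p,d\mu_k}\,du.
\]
The crucial point is that $2^{-s}\ge 1/n\ge\delta$, so this integral is at most $\int_{\delta}^{1}t^{-2m}K_{2m+2}(t,f)_{p,d\mu_k}\,\frac{dt}{t}$; collecting the estimates and using $n^{-2m}\asymp\delta^{2m}$ then yields the corollary. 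The main obstacle is precisely this matching of the lower endpoint of integration: a direct comparison of the sum with $\int_{c\delta}^{1}$ for a fixed constant $c<1$ produces, after estimating the piece over $[c\delta,\delta]$, a term of order $K_{2m+2}(\delta,f)_{p,d\mu_k}$, which is only known to be $\lesssim K_{2m}(\delta,f)_{p,d\mu_k}$ (via \eqref{omega-inequality2} and Theorem~\ref{thm6.2}) and hence would render the estimate circular; summing along the dyadic scales, so that the smallest scale $2^{-s}$ is automatically $\ge\delta$, is what circumvents this.
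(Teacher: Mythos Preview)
Your proof is correct and follows exactly the route the paper indicates (the paper only says the corollary follows from Theorem~\ref{thm9.1} and Jackson's inequality, without further detail). One tiny imprecision: for $1/2\le\delta<1$ you say ``both sides are comparable to $\|f\|_{p,d\mu_k}$'', but the left-hand side is only $\le\|f\|_{p,d\mu_k}$, not $\asymp$; since you only need the one-sided bound this is harmless.
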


\begin{theorem}
Let $1\leq p\leq\infty$, $f \in L^{p}(\R^d, d\mu_k)$ and $r\in\mathbb{N}$ be
such that $\sum _{j=1}^{\infty} j^{2r-1}E_j(f)_{p,d\mu_k} <\infty.$ Then $f \in
W^{2r}_{p, k}$ and, for any $m,n\in\mathbb{N}$, we have
\begin{equation}
K_{2m}\Bigl(\frac{1}{n}, (-\Delta_k)^rf\Bigl)_{p,d\mu_k}
\lesssim\frac{1}{n^{2r}} \sum_{j=0}^n
(j+1)^{2k+2r-1}E_j(f)_{p,d\mu_k} +\sum_{j=n+1}^\infty j^{2r-1}
E_j(f)_{p,d\mu_k}.\label{eq91}
\end{equation}
\end{theorem}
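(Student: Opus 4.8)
The plan is to follow the classical two-stage scheme for inverse-type estimates: first establish that $f\in W_{p,k}^{2r}$, and then transfer the inverse inequality of Theorem~\ref{thm9.1} from $f$ to $(-\Delta_k)^rf$, using Bernstein's inequality (Theorem~\ref{thm7.2}) to keep track of the weights.

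For the first stage, fix near best approximants $f_\sigma\in B_{p,k}^\sigma$ as provided by Theorem~\ref{thm5.11}, so that $\|f-f_\sigma\|_{p,d\mu_k}=E_\sigma(f)_{p,d\mu_k}$, and write the telescoping series $f=f_1+\sum_{\nu\ge 0}(f_{2^{\nu+1}}-f_{2^\nu})$, whose partial sums are $f_{2^{N+1}}$ and which therefore converges to $f$ in $L^{p}(\mathbb{R}^{d},d\mu_{k})$ (as well as in $\mathcal{S}'(\mathbb{R}^d)$). Applying Theorem~\ref{thm7.2} to $f_{2^{\nu+1}}-f_{2^\nu}\in B_{p,k}^{2^{\nu+1}}$ gives
\[
\|(-\Delta_k)^r(f_{2^{\nu+1}}-f_{2^\nu})\|_{p,d\mu_k}\lesssim 2^{2r(\nu+1)}E_{2^\nu}(f)_{p,d\mu_k}.
\]
By the monotonicity of $j\mapsto E_j(f)_{p,d\mu_k}$ and dyadic comparison, the hypothesis $\sum_j j^{2r-1}E_j(f)_{p,d\mu_k}<\infty$ is equivalent to $\sum_{\nu\ge 0}2^{2r\nu}E_{2^\nu}(f)_{p,d\mu_k}<\infty$; hence $\sum_\nu(-\Delta_k)^r(f_{2^{\nu+1}}-f_{2^\nu})$ converges absolutely in $L^{p}(\mathbb{R}^{d},d\mu_{k})$, and since $(-\Delta_k)^r$ is continuous on $\mathcal{S}'(\mathbb{R}^d)$ its sum equals $(-\Delta_k)^rf$. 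Thus $(-\Delta_k)^rf\in L^{p}(\mathbb{R}^{d},d\mu_{k})$, i.e.\ $f\in W_{p,k}^{2r}$.

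For the second stage, apply Theorem~\ref{thm9.1} to $h=(-\Delta_k)^rf\in L^{p}(\mathbb{R}^{d},d\mu_{k})$, which reduces the problem to bounding $E_j\bigl((-\Delta_k)^rf\bigr)_{p,d\mu_k}$ in terms of the numbers $E_l(f)_{p,d\mu_k}$. The key remark is that if $f_j\in B_{p,k}^j$ is a near best approximant of $f$, then $(-\Delta_k)^rf_j\in B_{p,k}^j$ as well: its Dunkl transform $|y|^{2r}\mathcal{F}_k(f_j)$ is still supported in $B_j$, and $(-\Delta_k)^rf_j\in L^{p}(\mathbb{R}^{d},d\mu_{k})$ by Theorem~\ref{thm7.2}. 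Consequently $E_j\bigl((-\Delta_k)^rf\bigr)_{p,d\mu_k}\le\|(-\Delta_k)^r(f-f_j)\|_{p,d\mu_k}$, and expanding $f-f_j$ in a telescoping series over the types $2^\nu j$ and applying Bernstein's inequality blockwise (then comparing dyadically with $\sum_l l^{2r-1}E_l(f)_{p,d\mu_k}$) yields
\[
E_j\bigl((-\Delta_k)^rf\bigr)_{p,d\mu_k}\lesssim j^{2r}E_j(f)_{p,d\mu_k}+\sum_{l>j}l^{2r-1}E_l(f)_{p,d\mu_k},
\]
the last series being finite by hypothesis. Inserting this into the estimate coming from Theorem~\ref{thm9.1}, splitting the inner sum at $l=n$, and interchanging the order of summation, the indices $l\le n$ collapse into the first term of \eqref{eq91} while the indices $l>n$ produce the tail $\sum_{j>n}j^{2r-1}E_j(f)_{p,d\mu_k}$.

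The telescoping arguments and Bernstein's inequality are routine; the one point requiring care is the dyadic bookkeeping — establishing the transference bound for $E_j\bigl((-\Delta_k)^rf\bigr)_{p,d\mu_k}$ with the correct powers, and then carrying out the interchange of the double summation so that the two terms of \eqref{eq91} appear with the right normalization. One must also absorb the harmless shifts of indices (such as $E_{\lceil j/2\rceil}$ versus $E_j$) using monotonicity of the best approximations, and verify at the outset that convergence of $\sum_j j^{2r-1}E_j(f)_{p,d\mu_k}$ legitimizes all the formal rearrangements.
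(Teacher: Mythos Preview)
Your argument is correct, but it takes a genuinely different route from the paper's. The paper does not pass through Theorem~\ref{thm9.1}: instead it works directly with $(-\Delta_k)^r f$, writing it as the telescoping series $S_N+\bigl((-\Delta_k)^rf-S_N\bigr)$ with $S_N=(-\Delta_k)^rf_{2^{N+1}}$ and $2^N\le n<2^{N+1}$. The tail $\|(-\Delta_k)^rf-S_N\|_{p,d\mu_k}$ is bounded by Bernstein's inequality, producing $\sum_{j>n}j^{2r-1}E_j(f)_{p,d\mu_k}$, while for $S_N$ the paper applies Corollary~\ref{cor7.3} to each block $(-\Delta_k)^r(f_{2^{j+1}}-f_{2^j})\in B_{p,k}^{2^{j+1}}$, obtaining the weighted sum $n^{-2m}\sum_{j\le n}(j+1)^{2m+2r-1}E_j(f)_{p,d\mu_k}$ directly. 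Your approach is more modular---it reuses Theorem~\ref{thm9.1} as a black box and isolates the transference estimate $E_j\bigl((-\Delta_k)^rf\bigr)_{p,d\mu_k}\lesssim j^{2r}E_j(f)_{p,d\mu_k}+\sum_{l>j}l^{2r-1}E_l(f)_{p,d\mu_k}$ as a separate lemma of independent interest---at the cost of the extra interchange-of-summation step. The paper's route avoids that double sum but essentially reproves Theorem~\ref{thm9.1} in situ; both arrive at the same bound with comparable effort. One small point you gloss over: to conclude $(-\Delta_k)^rf_j\in B_{p,k}^j$ you need, beyond the support of its Dunkl transform, that it lies in $L^p\cap C_b$; this follows from the proof of Theorem~\ref{thm7.2} together with Lemma~\ref{lem3.6}.
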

\begin{remark}
We can replace $K_{2m}\bigl(\frac{1}{n}, (-\Delta_k)^rf\bigl)_{p,d\mu_k}$ by
any of moduli $\omega_m\bigl(\frac{1}{n}, (-\Delta_k)^rf\bigr)_{p,d\mu_k}$,
$\aomega_l\bigl(\frac{1}{n}, (-\Delta_k)^rf\bigr)_{p,d\mu_k},$ and $\aaomega_m
\bigl(\frac{1}{n}, (-\Delta_k)^rf\bigr)_{p,d\mu_k}$, $l=2m-1,2m$.
\end{remark}

\begin{proof}Let us prove \eqref{eq91} for $\omega_m
\bigl(\frac{1}{n}, (-\Delta_k)^rf\bigr)_{p,d\mu_k}$. Consider
\begin{equation}
(-\Delta_k)^r
f_1+\sum_{j=0}^{\infty}\bigl((-\Delta_k)^rf_{2^{j+1}}-(-\Delta_k)^rf_{2^{j}}\bigr).\label{eq92}
\end{equation}
By Bernstein's inequality \eqref{eq80},
\[
\|(-\Delta_k)^rf_{2^{j+1}}-(-\Delta_k)^rf_{2^j}\|_{p,d\mu_k}\lesssim
2^{(j+1)r}E_{2^j}(f)_{p,d\mu_k}\lesssim\sum_{l=2^{j-1}+1}^{2^j}l^{r-1}E_l(f)_{p,d\mu_k}.
\]
Therefore, series \eqref{eq92} converges to a function $g\in L^{p}(\R^d,
d\mu_k)$. Let us show that $g=(-\Delta_k)^rf$, i.e., $f\in W^{2r}_{p, k}$. Set
\[
S_N=(-\Delta_k)^rf_1+\sum_{j=0}^N \bigl((-\Delta_k)^rf_{2^{j+1}}-(-\Delta_k)^rf_{2^j}\bigr).
\]
Then
\begin{align*}
\<\mathcal{F}_k(g), \varphi\> &= \<g, \mathcal{F}_k(\varphi)\> =\lim_{N\to\infty} \<S_N,
\mathcal{F}_k(\varphi)\>=\\
&= \lim_{N\to\infty}\<\mathcal{F}_k(S_N), \varphi\>=\lim_{N\to\infty}\<|y|^{2r}\mathcal{F}_k(f_{2^{N+1}}), \varphi\>
= \<|y|^{2r} \mathcal{F}_k(f), \varphi\>,
\end{align*}
where $\varphi\in \mathcal{S}(\R^d)$.
Hence, $\mathcal{F}_k(g)(y) =|y|^{2r}\mathcal{F}_k(f)(y)$ and $g=(-\Delta_k)^rf$.

To obtain \eqref{eq91}, we write
\[
\omega_m(1/n, (-\Delta_k)^r f)_{p,d\mu_k}\leq\omega_m(1/n, (-\Delta_k)^r f-S_N)_{p,d\mu_k}+\omega_m(1/n, S_N)_{p,d\mu_k}.
\]
The first term is estimated as follows
\begin{align*}
\omega_m(1/n, (-\Delta_k)^r f-S_N)_{p,d\mu_k} &\lesssim\|(-\Delta_k)^r
f-S_N\|_{p,d\mu_k}\\
& \lesssim \sum_{j=N+1}^{\infty}2^{2r(j+1)}E_{2^j}(f)_{p,d\mu_k}
\lesssim\sum_{l=2^N+1}^{\infty} l^{2r-1}E_l(f)_{p,d\mu_k}.
\end{align*}
Moreover,
by Corollary \ref{cor7.3},
\begin{align*}
\omega_m(1/n, S_N)_{p,d\mu_k} &\leq \omega_m(1/n, (-\Delta_k)^r
f_1)_{p,d\mu_k}\\
&\qquad +\sum_{j=0}^{N}\omega_m\bigl(1/n, (-\Delta_k)^r
f_{2^{j+1}}-(-\Delta_k)^r f_{2^j}\bigr)_{p,d\mu_k}\\
& \lesssim \frac{1}{n^{2r}}\Bigl(E_0(f)_{p,d\mu_k}+ \sum_{j=0}^{N} 2^{2(m+r)(j+1)}E_{2^j}(f)_{p,d\mu_k}\Bigr).
\end{align*}
Using \eqref{eq88} and choosing $N$ such that $2^N\leq n<2^{N+1}$ completes the proof of \eqref{eq91}.
\end{proof}

\bigskip

\end{document}